\theoremstyle{plain} % Italic
\newtheorem{thm}{Theorem}[section]%[section] 
\newtheorem{lem}[thm]{Lemma}%[subsection]
\newtheorem{prop}[thm]{Proposition}%[subsection]
\theoremstyle{defi} % Roman
\newtheorem{defi}[thm]{Definition}%[section]
\newtheorem{rem}[thm]{Remark}%[section]
\newcommand{\supp}{\rm{supp}}
\def\supp{\, \hbox{\rm supp}\,  }
\let\tilde=\widetilde
\definecolor{green}{rgb}{0,0.6,0} % 緑
\definecolor{pink}{rgb}{1.0,0.5,0.5} % ピンク
\definecolor{cyan}{rgb}{0.3,1.0,1.0} % シアン
\definecolor{blue}{rgb}{0.0,0.0,1.0} % 青
\definecolor{titcolor}{rgb}{0,0.3242,0.5859} % 東工大カラー
\definecolor{background1}{rgb}{1,0.8,1}
\definecolor{background2}{rgb}{0.8,1,1}
\definecolor{background3}{rgb}{1,1,0.8}
\definecolor{orange}{cmyk}{0,0.6,0.76,0} % オレンジ
\definecolor{violet}{cmyk}{0.07,0.90,0,0.34} % バイオレット
\newcommand{\eqsp}[1]{{\begin{equation}\begin{aligned}#1\end{aligned}\end{
equation}}}
\title[Uniqueness for the Hardy-H\'enon parabolic equation]{Unconditional uniqueness and non-uniqueness for Hardy-H\'enon parabolic equations}
\date\today
\author[N. Chikami, M. Ikeda, K. Taniguchi and S. Tayachi]{Noboru Chikami, Masahiro Ikeda, Koichi Taniguchi and Slim Tayachi}
\address[N. Chikami]
{%Department of Computer Science and Engineering, 
Graduate School of Engineering, 
Nagoya Institute of Technology, 
Gokiso-cho, Showa-ku, Nagoya 
466-8555, Japan.}
\email{chikami.noboru@nitech.ac.jp}
\address[M. Ikeda]
{Faculty of Science and Technology,
Keio University, 
3-14-1 Hiyoshi, Kohoku-ku, Yokohama, 223-8522, Japan/ Center for Advanced Intelligence Project
RIKEN, Japan.}
\email{masahiro.ikeda@keio.jp/masahiro.ikeda@riken.jp}
\address[K. Taniguchi]
{Advanced Institute for Materials Research,
Tohoku University,
2-1-1 Katahira, Aoba-ku, Sendai, 980-8577, Japan.}
\email{koichi.taniguchi.b7@tohoku.ac.jp}
\address[S. Tayachi]
{Universit\'e de Tunis El Manar, Facult\'e des Sciences de Tunis, D\'epartement de Math\'ematiques, Laboratoire \'Equations aux D\'eriv\'ees Partielles LR03ES04, 2092 Tunis, Tunisia.}
\email{slim.tayachi@fst.rnu.tn}
\keywords{Hardy-H\'enon parabolic equations, semilinear heat equations, unconditional uniqueness, non-uniqueness, uniqueness criterion, singular stationary solutions, weighted Lorentz spaces}
\begin{document}

\footnote[0]
{2020 {\sl Mathematics Subject Classification.} 
Primary 35A02, 35K58; Secondary 35B33.}

%%%%%%%%%%%%%%%%%%%%%%%%%%%%%%%%
%%%%%%%%%%%%% Abstract %%%%%%%%%%%%%
%%%%%%%%%%%%%%%%%%%%%%%%%%%%%%%%
\begin{abstract}
We study the problems of uniqueness for Hardy-H\'enon parabolic equations, 
which are semilinear heat equations with the singular potential (Hardy type) or the increasing potential (H\'enon type) in the nonlinear term. 
To deal with the Hardy-H\'enon type nonlinearities, we employ weighted Lorentz spaces as solution spaces. 
We prove unconditional uniqueness and non-uniqueness, and we establish uniqueness criterion for Hardy-H\'enon parabolic equations in the weighted Lorentz spaces.
The results extend the previous works on the Fujita equation and Hardy equations in Lebesgue spaces. 
\end{abstract}

\maketitle

%%%%%%%%%%%%
%%% Section 1 %%%
%%%%%%%%%%%%
\section{Introduction and main results}\label{sec:1}

%%% Subsection 1.1 %%%
\subsection{Introduction and our setting}\label{sub:1.1}
% Setting
We consider the Cauchy problem of the Hardy-H\'enon parabolic equation
\begin{equation}\label{HH}
	\begin{cases}
		\partial_t u - \Delta u = |x|^{\gamma} |u|^{\alpha-1} u,
			&(t,x)\in (0,T)\times \mathbb R^d, \\
		u(0) = u_0 \in L^{q,r}_{s}(\mathbb{R}^d),
	\end{cases}
\end{equation}
where   
$T>0,$ $d\in \mathbb{N}$, $\gamma \in \mathbb R$, $\alpha>1$, $q\in [1,\infty]$, $r \in (0,\infty]$ and $s\in \mathbb{R}$. 
Here, $\partial_t:=\frac{\partial}{\partial t}$ is the time derivative, 
$\Delta:=\sum_{j=1}^d\frac{\partial^2}{\partial x_j^2}$ is the Laplace operator on $\mathbb{R}^d$, 
$u=u(t,x)$ is an unknown complex-valued function on $(0,T)\times \mathbb R^d$, $u_0=u_0(x)$ is a prescribed complex-valued function on $\mathbb R^d$, 
and $L^{q,r}_s(\mathbb{R}^d)$ is the weighted Lorentz space (see Definition \ref{def:WLS}), which includes the Lebesgue space $L^q(\mathbb R^d)= L^{q,q}_0(\mathbb R^d)$ as a special case $r=q$ and $s=0$.  
The equation \eqref{HH} in the case $\gamma = 0$ is 
the {\sl Fujita equation}, 
which has been extensively studied in various directions. 
The equation \eqref{HH} with $\gamma<0$ is known as a 
{\sl Hardy parabolic equation}, while that with $\gamma>0$ 
is known as a {\sl H\'enon parabolic equation}. 
The corresponding stationary problem to \eqref{HH}, that is, 
\[
    -\Delta U=|x|^{\gamma}|U|^{\alpha-1}U,
\]
was proposed by H\'enon as a model to study the rotating 
stellar systems (see \cite{H-1973}), 
and has also been extensively studied in the mathematical context, 
especially in the fields of nonlinear analysis and variational methods 
(see \cite{GhoMor2013} for example). 

% Unconditional uniqueness and non-uniqueness
In this paper we study the problem on 
unconditional uniqueness and non-uniqueness for \eqref{HH} in weighted Lorentz spaces $L^{q,r}_s(\mathbb R^d)$.
Here, {\sl unconditional uniqueness} means uniqueness of the solution to \eqref{HH} 
for any initial data $u_0 \in L^{q,r}_s(\mathbb R^d)$
in the sense of the integral form 
\begin{equation}\label{integral-equation}
u(t)
= e^{t\Delta}u_0 + \int_{0}^t e^{(t-\tau)\Delta}(|\cdot|^{\gamma} |u(\tau)|^{\alpha-1} u(\tau))\, d\tau
\end{equation}
in $L^\infty(0,T ; L^{q,r}_s(\mathbb R^d))$ or $C([0,T] ; L^{q,r}_s(\mathbb R^d))$,
where $T>0$ and $\{e^{t\Delta}\}_{t>0}$ is the heat semigroup.
We say that {\sl non-uniqueness} holds for \eqref{HH}
if unconditional uniqueness fails. 
In contrast,
we say that {\sl conditional uniqueness} holds if 
uniqueness of the solution to \eqref{HH} holds in the entire space with some {\sl auxiliary function spaces}.
In addition, we also study {\sl uniqueness criterion} which is a necessary and sufficient condition on the Duhamel term (i.e. the second term in the right-hand side of \eqref{integral-equation}) for uniqueness to hold. 

% Purpose, Related work and positioning of our work
Let us here state previous works on uniqueness for \eqref{HH}. 
For \eqref{HH} with $\gamma\le 0$, 
the problem on uniqueness 
has been well studied (see \cite{Bal1977,Bar1983,BenTayWei2017,BreCaz1996,Chi2019,CIT2021,HarWei1982,Gig1986,MatosTerrane,NS1985,Tak2021,Tay2020,Ter2002,Wei1980,Wei1981} for example). 
In the study of unconditional uniqueness for \eqref{HH} in Lebesgue spaces $L^q(\mathbb R^d)$ or Lorentz spaces $L^{q,r}(\mathbb R^d)$, the following two critical exponents are known to be important. 
The first one is the so-called scale-critical exponent $q_c$ given by
\begin{equation}\label{q_c}
q_c = q_c (d,\gamma,\alpha) :=  \frac{d(\alpha-1)}{2+\gamma},
\end{equation}
and we say that the problem \eqref{HH} is {\sl scale-critical} if $q=q_c$, {\sl scale-subcritical} if $q>q_c$, and {\sl scale-supercritical} if $q<q_c$. 
The second one is the critical exponent $Q_c$ given by
\begin{equation}\label{Q_c}
Q_c =Q_c(d,\gamma,\alpha) := \frac{d\alpha}{d+\gamma},
\end{equation}
which is related to well-definedness of 
the Duhamel term in \eqref{integral-equation} in $L^{q,r}(\mathbb R^d)$. 
In fact, the nonlinear term $|x|^\gamma |u|^{\alpha-1}u \in L^1_{\mathrm{loc}}(\mathbb R^d)$ for any $u \in L^{q,r}(\mathbb R^d)$ 
if and only if ``$q>Q_c$" or ``$q=Q_c$ and $r \le \alpha$".
In the case $\gamma=0$, 
unconditional uniqueness for \eqref{HH} in $C([0,T] ; L^q(\mathbb R^d))$  was proved in the double subcritical case $q > \max\{q_c, Q_c\}$ by 
Weissler \cite{Wei1980} and in the single critical cases $q = Q_c > q_c$ and $q = q_c>Q_c$ by
Brezis and Cazenave \cite{BreCaz1996}. 
In the double critical case $q = q_c = Q_c$, non-uniqueness was proved for {\sl some} initial data $u_0 \in L^q(\mathbb R^d)$ by Terraneo \cite{Ter2002},
and then, 
for {\sl any} initial data $u_0 \in L^q(\mathbb R^d)$ by Matos and Terraneo \cite{MatosTerrane}. In \cite{Ter2002}, uniqueness criterion was also obtained in the double critical case. 
Recently, Takahashi \cite{Tak2021} proved the existence of an uncountably infinite number of solutions to \eqref{HH} with moving singularities for some initial data in the double critical case.
In the scale-supercritical case $q<q_c$, non-uniqueness for \eqref{HH} was proved for initial data $u_0=0$ by
Haraux and Weissler \cite{HarWei1982}. 
Uniqueness and non-uniqueness have also been studied for heat equations with exponential nonlinearities (see \cite{IKNW2021,IRT2022} and references therein).
In the Hardy case $-\min \{2,d\} < \gamma < 0$,
similar results were obtained by \cite{BenTayWei2017,Tay2020}, where
the Lorentz spaces $L^{q,r}(\mathbb R^d)$ is used to study unconditional uniqueness in the critical case $q=Q_c$ in \cite{Tay2020}.
The above previous works are summarized in Figure 1.
In contrast, the H\'enon case $\gamma >0$ has not been well studied. 
This is due to the difficulty of treating the increasing potential $|x|^\gamma$ in the nonlinear term at infinity. 
To overcome this difficulty, the weighted spaces are effective, and recently, conditional uniqueness was obtained in $L^{q}_s(\mathbb R^d)=L^{q,q}_s(\mathbb R^d)$ in \cite{CIT2022}; 
however, unconditional uniqueness and non-uniqueness are completely open.
The main purpose of this paper is 
to prove unconditional uniqueness, non-uniqueness and uniqueness criterion for \eqref{HH} with all $\gamma > -\min \{2,d\}$, including the H\'enon case, 
in $L^{q,r}_s(\mathbb R^d)$. 

% Figure 1
\begin{figure}[t]
\begin{center}\label{figure1}
\includegraphics[width=130mm]{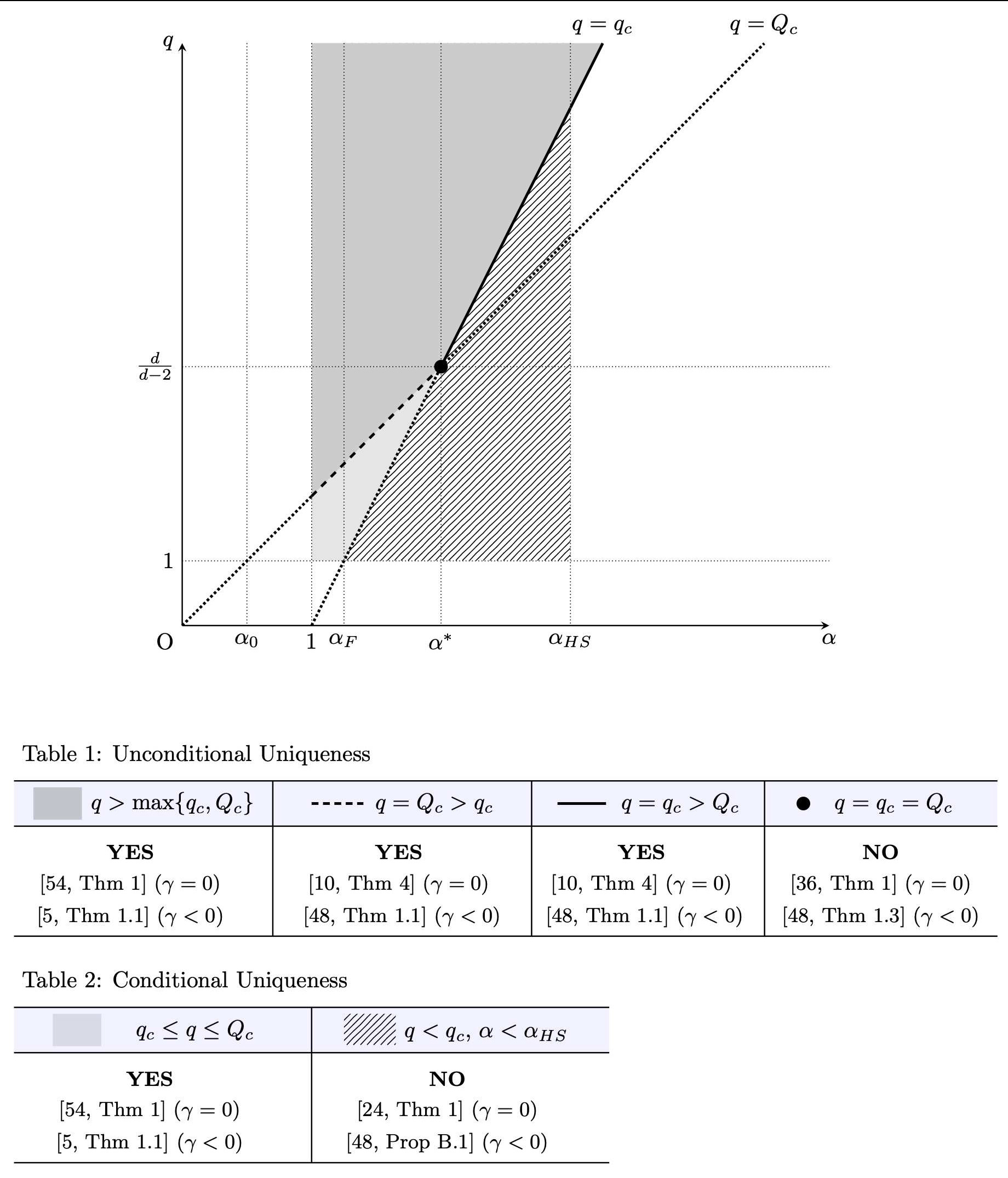}
\caption{The figure shows the domain of $(\alpha,q)$ for $d\ge 3$ and $\gamma \le 0$, where 
$\alpha_0 := 1 + \frac{\gamma}{d}$, $\alpha_F := 1 + \frac{2+\gamma}{d}$ is the Fujita exponent, $\alpha^*:= \frac{d+\gamma}{d-2}$ is the Serrin exponent and $\alpha_{HS} := \frac{d+2+2\gamma}{d-2}$ is the Hardy-Sobolev exponent.
Table 1 and Table~2 summarize the previous results on uniqueness for \eqref{HH} with $\gamma \le 0$. 
}
\end{center}
\end{figure}

%%% Subsection 1.2 %%%
\subsection{Statement of the results}\label{sub:1.2}
% Notion of solution, definitions, notation
To describe our results, let us give some definitions and notation. 
For $T \in (0,\infty]$ and a quasi-normed space $X$, 
we denote by $L^\infty(0,T ;X)$ the space of functions $u : (0,T) \to X$ such that 
\[
\|u\|_{L^\infty(0,T ;X)} :=
\underset{t \in (0,T)}{\mathrm{ess\, sup}}\, \|u(t)\|_{X} < \infty,
\]
and by $C([0,T] ; X)$ the space of continuous functions $u : [0,T] \to X$ with respect to the quasi-norm of $X$.
The space $\mathcal L^{q,r}_s (\mathbb R^d)$ is defined as the completion of $L^{q,r}_s (\mathbb R^d) \cap L^\infty_0(\mathbb R^d)$ with respect to $\|\cdot\|_{L^{q,r}_s}$,
where $L^\infty_0(\mathbb R^d)$ denotes the set of all functions in $L^\infty(\mathbb R^d)$ with compact support in $\mathbb R^d$ (see Definition \ref{def:WLS}). 

% Definition 1.1
\begin{defi}
Let $T>0$ and $X = L^{q,r}_s (\mathbb R^d)$ or $\mathcal L^{q,r}_s (\mathbb R^d)$. 
We say that a function $u=u(t,x)$ on $(0,T) \times \mathbb R^d$ is a {\sl mild solution} to \eqref{HH} with initial data $u_0 \in X$ in $C([0,T] ; X)$ ($L^\infty(0,T ; X)$ resp.) 
if $u$ belongs to $C([0,T] ; X)$ ($L^\infty(0,T ; X)$ resp.) and satisfies the integral equation \eqref{integral-equation} for almost everywhere $(t,x) \in (0,T)\times \mathbb R^d$.
\end{defi}

Note that the Duhamel term in \eqref{integral-equation} converges in $L^{q,r}_s(\mathbb R^d)$ under conditions on functions $u=u(t,x)$ and parameters $q,r,s$ given in Lemma \ref{lem:sec4_nonlinear1} or Lemma \ref{lem:sec4_nonlinear2}. \\

We define two critical cases in the framework of $L^{q,r}_s(\mathbb R^d)$ in a similar manner to $q_c$ and $Q_c$, respectively.
The equation \eqref{HH} is invariant under the following scale transformation:
\[
u_\lambda(t,x) := \lambda^\frac{2+\gamma}{\alpha-1}u(\lambda^2 t, \lambda x),\quad \lambda>0.
\]
More precisely, if $u$ is a solution to \eqref{HH}, then so is $u_\lambda$
with the rescaled initial data $\lambda^\frac{2+\gamma}{\alpha-1}u_0(\lambda x)$. 
Moreover, we calculate 
\[
\|u_\lambda(0)\|_{L^{q,r}_s} = \lambda^{-s + \frac{2+\gamma}{\alpha-1}- \frac{d}{q}}
\|u_0\|_{L^{q,r}_s}= \lambda^{-d (\frac{s}{d} + \frac{1}{q}- \frac{1}{q_c})}
\|u_0\|_{L^{q,r}_s},\quad \lambda>0.
\]
Hence, if $q$ and $s$ satisfy 
\[
\frac{s}{d} + \frac{1}{q} = \frac{1}{q_c},
\]
then
$
\|u_\lambda(0)\|_{L^{q,r}_{s}} 
=
\|u_0\|_{L^{q,r}_{s}}$ for any $\lambda>0$, 
i.e., the norm $\|u_\lambda(0)\|_{L^{q,r}_{s}} $ is invariant with respect to $\lambda$. 
Therefore, 
we say that the problem \eqref{HH} is {\sl scale-critical} if $\frac{s}{d} + \frac{1}{q} = \frac{1}{q_c}$, 
{\sl scale-subcritical} if $\frac{s}{d} + \frac{1}{q} < \frac{1}{q_c}$, and {\sl scale-supercritical} if $\frac{s}{d} + \frac{1}{q} > \frac{1}{q_c}$.
Another critical case is when the following holds:
\[
\frac{s}{d} + \frac{1}{q} = \frac{1}{Q_c}. 
\]
This is related to local integrability of the nonlinear term $|x|^\gamma |u|^{\alpha-1}u$. 
In fact, $|x|^\gamma |u|^{\alpha-1}u \in L^1_{\mathrm{loc}}(\mathbb R^d)$ for any $u \in L^{q,r}_s(\mathbb R^d)$ 
if and only if 
\begin{equation}\label{integral:S_c}
\frac{s}{d} + \frac{1}{q} < \frac{1}{Q_c} \quad \text{or} 
\quad \frac{s}{d} + \frac{1}{q} = \frac{1}{Q_c} \text{ and } r\le \alpha. 
\end{equation}
Then, it is ensured for the Duhamel term in \eqref{integral-equation} to be well-defined in $L^{q,r}_s(\mathbb R^d)$. 

In terms of the two critical cases, 
we divide the problem into the following four cases: {\sl Double subcritical case} ($\frac{s}{d} + \frac{1}{q} < \min\{\frac{1}{q_c}, \frac{1}{Q_c}\}$), 
{\sl single critical case I} ($\frac{s}{d} + \frac{1}{q} = \frac{1}{Q_c} < \frac{1}{q_c}$), {\sl single critical case II} ($\frac{s}{d} + \frac{1}{q} = \frac{1}{q_c} <\frac{1}{Q_c}$), and {\sl double critical case} ($\frac{s}{d} + \frac{1}{q} = \frac{1}{q_c} = \frac{1}{Q_c}$).
Moreover, we define the exponent
$\alpha^*$ by 
\[
\alpha^*
=\alpha^* (d,\gamma):=
\begin{dcases}
\frac{d+\gamma}{d-2}\quad &\text{if }d\ge3,\\
\infty&\text{if }d=1,2,\\
\end{dcases}
\]
which is often referred to the Serrin exponent
(see 
\cite{Ser1964,Ser1965} and also \cite{GidSpr1981}).
The exponents $\alpha^*$, $q_c$ and $Q_c$ are related as follows:
\[
\alpha \lesseqgtr \alpha^*
\quad \text{if and only if }\quad q_c \lesseqgtr Q_c.
\]

% Our results on unconditional uniqueness
In our results on unconditional uniqueness below, 
we assume that 
\begin{empheq}[left={\empheqlbrace}]{alignat=2} \label{assum:main}
\begin{split}
&d\in \mathbb N, 
\quad \gamma > -\min\{2,d\}, \quad \displaystyle \alpha >  \max\left\{1,1 + \frac{\gamma}{d}\right\},\\
& \alpha \le q \le \infty,\quad \frac{\gamma}{\alpha-1} \le s < d,\quad 0<r \le \infty.
\end{split}
\end{empheq}
Our results on unconditional uniqueness are the following:

% Theorem 1.2
\begin{thm}[Scale-subcritical case]\label{thm:unconditional1}
Let $T>0$, and let $d,\gamma,\alpha,q,r,s$ be as in \eqref{assum:main}. 
Assume either $(1)$ or $(2)$:
\begin{enumerate}[\rm (1)]
\item {\rm (Double subcritical case)} 
$r \le \alpha$ if $q=\alpha$, and 
$0 < \frac{s}{d} + \frac{1}{q} < \min\{\frac{1}{q_c}, \frac{1}{Q_c}\}$.

\item {\rm (Single critical case I)}
$\alpha < \alpha^*$, $q\not =\infty$, 
$r\le \alpha$ and $\frac{s}{d} + \frac{1}{q} = \frac{1}{Q_c} < \frac{1}{q_c}$. 

\end{enumerate}
Then unconditional uniqueness holds for \eqref{HH} in $L^\infty(0,T ; L^{q,r}_s(\mathbb R^d))$. 
\end{thm}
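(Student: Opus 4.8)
The plan is to carry out a weakly singular Gronwall argument \emph{directly} in the norm of $L^\infty(0,T;L^{q,r}_s(\mathbb R^d))$, without invoking any auxiliary space; this is possible precisely because in the scale-subcritical regime the time singularity generated by the smoothing estimate for the heat semigroup is integrable. Let $u,v\in L^\infty(0,T;L^{q,r}_s)$ be two mild solutions of \eqref{HH} with the same data $u_0$, set $w:=u-v$, and subtract the integral equations \eqref{integral-equation} to get
\[
w(t)=\int_0^t e^{(t-\tau)\Delta}\Bigl(|\cdot|^\gamma\bigl(|u(\tau)|^{\alpha-1}u(\tau)-|v(\tau)|^{\alpha-1}v(\tau)\bigr)\Bigr)\,d\tau,\qquad t\in(0,T).
\]
Every term is meaningful here because \eqref{assum:main} together with the hypothesis on $\tfrac sd+\tfrac1q$ in (1) or (2) forces \eqref{integral:S_c}, so the nonlinearity lies in $L^1_{\mathrm{loc}}(\mathbb R^d)$; this is one place where the restriction $r\le\alpha$ in the critical case (2), and in the endpoint $q=\alpha$ of (1), is used.

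Two estimates drive the argument. First, a nonlinear estimate: the pointwise bound $\bigl||u|^{\alpha-1}u-|v|^{\alpha-1}v\bigr|\le C(|u|^{\alpha-1}+|v|^{\alpha-1})|w|$, the identity $|x|^{\alpha s}|u|^{\alpha-1}|w|=(|x|^s|u|)^{\alpha-1}(|x|^s|w|)$, and Hölder's inequality in weighted Lorentz spaces give
\[
\bigl\||\cdot|^\gamma\bigl(|u|^{\alpha-1}u-|v|^{\alpha-1}v\bigr)(\tau)\bigr\|_{L^{q/\alpha,\,r/\alpha}_{\alpha s-\gamma}}\le C\bigl(\|u(\tau)\|_{L^{q,r}_s}^{\alpha-1}+\|v(\tau)\|_{L^{q,r}_s}^{\alpha-1}\bigr)\,\|w(\tau)\|_{L^{q,r}_s}.
\]
Second, a linear smoothing estimate for the heat semigroup, $\|e^{t\Delta}f\|_{L^{q,r}_s}\le C\,t^{-\sigma}\|f\|_{L^{q/\alpha,\,r/\alpha}_{\alpha s-\gamma}}$, with
\[
\sigma=\frac{\alpha-1}{2}\Bigl(\frac dq+s\Bigr)-\frac\gamma2,
\]
valid under \eqref{assum:main}; here $r\le\alpha$ (hence $r/\alpha\le1$) is needed at the $L^1$-type endpoint of the intermediate space, which occurs at $q=\alpha$ in (1) and throughout the single critical case (2), and $q\neq\infty$ must be excluded in (2) for that endpoint estimate. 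A short computation shows $\sigma\ge0$ (using $s\ge\gamma/(\alpha-1)$ from \eqref{assum:main}) and that $\sigma<1$ amounts exactly to the scale-subcriticality $\tfrac sd+\tfrac1q<\tfrac1{q_c}$, which holds in both (1) and (2), since $\tfrac1{Q_c}<\tfrac1{q_c}$ in (2).

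Inserting the nonlinear estimate into the smoothing estimate and integrating in time (a Minkowski-type inequality in $L^{q,r}_s$; when $r<1$, replace $L^{q,r}_s$ throughout by the Banach space $L^{q,1}_s\supset L^{q,r}_s$, which still contains $u$ and $v$) yields, for all $t\in(0,T)$,
\[
\|w(t)\|_{L^{q,r}_s}\le C\Bigl(\|u\|_{L^\infty(0,T;L^{q,r}_s)}^{\alpha-1}+\|v\|_{L^\infty(0,T;L^{q,r}_s)}^{\alpha-1}\Bigr)\int_0^t(t-\tau)^{-\sigma}\|w(\tau)\|_{L^{q,r}_s}\,d\tau.
\]
Since $t\mapsto\|w(t)\|_{L^{q,r}_s}$ is bounded on $(0,T)$ and $0\le\sigma<1$, the weakly singular Gronwall (Henry-type) inequality forces $\|w(t)\|_{L^{q,r}_s}\equiv0$, i.e.\ $u\equiv v$; alternatively one obtains $w\equiv0$ first on a short interval $[0,T_0]$ and then iterates, the constant being uniform in time.

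I expect the main obstacle to be the linear smoothing estimate: establishing the $|x|^\gamma$-weighted heat-semigroup bound in the scale $L^{q,r}_s(\mathbb R^d)$ over the sharp range of exponents, and in particular pinning down the role of the constraint $r\le\alpha$ and the exclusion of $q=\infty$ at the $L^1$-type endpoint arising in the single critical case (2). Given that estimate with $\sigma<1$, everything else — the pointwise/Hölder step and the Gronwall conclusion — is routine.
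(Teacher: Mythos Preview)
Your proposal is correct and follows essentially the same route as the paper: apply the weighted heat-semigroup smoothing estimate (Proposition~\ref{prop:linear-main}) from $L^{q/\alpha,\cdot}_{\alpha s-\gamma}$ to $L^{q,\cdot}_s$ together with H\"older's inequality for the nonlinearity, obtain a weakly singular kernel with exponent $\sigma<1$ (equivalently the paper's $\delta=1-\sigma>0$), and close by a short-time contraction iterated across $[0,T]$. The only difference is cosmetic: the paper works directly in the normable spaces $L^{q,\infty}_s$ (when $q>\alpha$) or $L^{q,\alpha}_s$ (at the endpoints $q=\alpha$ or $\tfrac{s}{d}+\tfrac{1}{q}=\tfrac{1}{Q_c}$), which sidesteps the quasi-Banach Minkowski issue you handle by passing to $L^{q,1}_s$ and yields uniqueness in $L^{q,r}_s$ for all admissible $r$ by inclusion.
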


% Theorem 1.3
\begin{thm}[Scale-critical case]\label{thm:unconditional2}
Let $T>0$, and let $d,\gamma,\alpha,q,r,s$ be as in \eqref{assum:main}.  
Assume $d\ge 3$, $q\not =\infty$, and either $(1)$ or $(2)$:

\begin{enumerate}[\rm (1)]
\item {\rm (Single critical case II)} $\alpha > \alpha^*$ and $\frac{s}{d} + \frac{1}{q} = \frac{1}{q_c} < \frac{1}{Q_c}$  (replace $L^{q,\infty}_{s}(\mathbb R^d)$ by $\mathcal L^{q,\infty}_{s}(\mathbb R^d)$ if $r=\infty$).

\item {\rm (Double critical case)} $\alpha = \alpha^*$, $r\le \alpha^* -1$ and $\frac{s}{d} + \frac{1}{q} = \frac{1}{q_c} = \frac{1}{Q_c}$.
\end{enumerate}
Then unconditional uniqueness holds for \eqref{HH} in $C([0,T] ; L^{q,r}_s(\mathbb R^d))$.
\end{thm}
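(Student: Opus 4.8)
The plan is to run the Brezis--Cazenave argument for critical unconditional uniqueness in the weighted Lorentz setting. Let $u_1,u_2$ be mild solutions of \eqref{HH} in $C([0,T];X)$ sharing the datum $u_0$, where $X=L^{q,r}_s(\mathbb R^d)$, read as $X=\mathcal L^{q,\infty}_s(\mathbb R^d)$ if $r=\infty$ in case~(1). A standard continuation argument reduces the claim to short time: with $t_*:=\sup\{t\in[0,T]: u_1\equiv u_2 \text{ on } [0,t]\}$, if $t_*<T$ one restarts the integral equation at $t_*$ with the common datum $u_1(t_*)=u_2(t_*)\in X$ and contradicts short-time uniqueness; so it suffices to prove $u_1\equiv u_2$ on $[0,T_0]$ for some $T_0>0$. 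I would fix a strictly scale-subcritical auxiliary triple $(k,\rho,\sigma)$, i.e. $\tfrac{\sigma}{d}+\tfrac1k<\tfrac1{q_c}$, close enough to the scaling line that, with $\beta:=\tfrac d2\big(\tfrac1{q_c}-\tfrac{\sigma}{d}-\tfrac1k\big)\in(0,\infty)$, one has $\alpha\beta<1$ and the exponent $\nu$ of the smoothing $e^{t\Delta}\colon Y'\to Y$ satisfies $\nu<1$; here $Y:=L^{k,\rho}_\sigma$ and $Y'$ is the weighted Lorentz space carrying $|x|^\gamma|v|^{\alpha-1}v$ for $v\in Y$, identified via the Hölder inequality in weighted Lorentz spaces. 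The solution space will be the Kato space
\[
\mathcal K^{0}_{T_0}:=\Big\{\, v:(0,T_0)\to Y \ \Big|\ \sup_{0<t<T_0}t^{\beta}\|v(t)\|_{Y}<\infty,\ \lim_{t\to0}t^{\beta}\|v(t)\|_{Y}=0 \,\Big\},
\]
with seminorm $\|v\|_{\mathcal K_{T_0}}:=\sup_{0<t<T_0}t^{\beta}\|v(t)\|_{Y}$.

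The argument rests on two pillars. The first is uniqueness inside $\mathcal K^{0}_{T_0}$: from the integral equation for $w:=u_1-u_2$, the bound $\big||u_1|^{\alpha-1}u_1-|u_2|^{\alpha-1}u_2\big|\le C(|u_1|^{\alpha-1}+|u_2|^{\alpha-1})|w|$, the Hölder inequality in $Y$, and the smoothing $e^{t\Delta}\colon Y'\to Y$ — whose time scaling is consistent precisely because the datum space is scale-critical, forcing $1-\nu-\alpha\beta=-\beta$ — one derives $\|w\|_{\mathcal K_{T_0}}\le C\big(\|u_1\|_{\mathcal K_{T_0}}^{\alpha-1}+\|u_2\|_{\mathcal K_{T_0}}^{\alpha-1}\big)\|w\|_{\mathcal K_{T_0}}$. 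Since $\|v\|_{\mathcal K_{T_0}}\to0$ as $T_0\to0$ for every $v\in\mathcal K^0_{T_0}$ (by the very definition of $\mathcal K^0$), the constant is $<1$ for $T_0$ small, whence $w\equiv0$ on $(0,T_0)$. The second pillar is the regularization lemma: every mild solution $u\in C([0,T];X)$ lies in $\mathcal K^{0}_{T_0}$ for $T_0$ small. Granting both, $u_1,u_2\in\mathcal K^0_{T_0}$, the first pillar yields $u_1\equiv u_2$ on $[0,T_0]$, and continuation finishes the proof.

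The hard part is the regularization lemma, and this is where the hypotheses are spent. One first obtains the a priori bound $\sup_{0<t<T_0}t^{\beta}\|u(t)\|_{Y}<\infty$ by a bootstrap in the endpoint $t_0$, driven by the scale-invariant Duhamel estimate $\|u\|_{\mathcal K_{t_0}}\lesssim\|e^{\cdot\Delta}u_0\|_{\mathcal K_{t_0}}+\|u\|_{\mathcal K_{t_0}}^{\alpha}$ together with refined smoothing estimates in weighted Lorentz spaces needed to absorb the borderline Duhamel exponent at the critical scaling; the super-homogeneity $\alpha>1$ then lets a continuity/absorption argument for $t_0\mapsto\|u\|_{\mathcal K_{t_0}}$ close the bootstrap and, crucially, produce the vanishing $\lim_{t\to0}t^{\beta}\|u(t)\|_Y=0$. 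The decay of the linear part $t^{\beta}\|e^{t\Delta}u_0\|_Y$ requires approximating $u_0$ in $X$ by functions in $C^\infty_0(\mathbb R^d)$ — which is exactly why $L^{q,\infty}_s$ must be replaced by $\mathcal L^{q,\infty}_s$ in case~(1), and why $q\neq\infty$ is assumed — so that $\limsup_{t\to0}t^{\beta}\|e^{t\Delta}u_0\|_Y$ is controlled by an arbitrarily small remainder; the continuity of $u$ into $X$, via $\|u(t)-e^{t\Delta}u_0\|_X\to0$, transfers this to the full solution. In the double-critical case $\alpha=\alpha^*$, where $q_c=Q_c$ so that the scaling and local-integrability thresholds coincide, the $\alpha$-fold Hölder estimate in $Y$ and the Duhamel estimate close only when the Lorentz third index satisfies $r\le\alpha^{*}-1$, which is the role of that hypothesis, while $d\ge3$ is needed for $\alpha^*$ to be finite. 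I expect the step that upgrades the a priori bound to the vanishing at $t=0$ at the critical exponent — i.e. excluding the ``large'' branch of $m\le\eta(t_0)+Cm^{\alpha}$ with $\eta(t_0)\to0$ — to be the most delicate point of the whole argument.
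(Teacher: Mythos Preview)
Your proposal follows a different route than the paper, and the route you sketch has a genuine gap at the regularization lemma.

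\textbf{The gap.} Your second pillar asserts that any mild solution $u\in C([0,T];X)$ automatically lies in $\mathcal K^{0}_{T_0}$, and you propose to prove it via the a~priori inequality $\|u\|_{\mathcal K_{t_0}}\lesssim\|e^{\cdot\Delta}u_0\|_{\mathcal K_{t_0}}+\|u\|_{\mathcal K_{t_0}}^{\alpha}$ together with a continuity argument in $t_0$. But that inequality is only meaningful once $\|u\|_{\mathcal K_{t_0}}<\infty$, and nothing in the hypothesis $u\in C([0,T];X)$ gives you $u(t)\in Y$ for a single $t>0$. A continuity-in-$t_0$ argument cannot start: the function $t_0\mapsto\|u\|_{\mathcal K_{t_0}}$ may be identically $+\infty$ on $(0,T]$. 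This is not a technicality: Theorem~\ref{thm:nonuniqueness} produces, for $r>\alpha^*-1$, a mild solution $v\in C([0,T];L^{q,r}_s)$ with $v(t)\notin L^{\tilde q,\infty}_s$ for any $\tilde q>q$ and every $t>0$, so the regularization lemma as you state it is \emph{false} in that regime. Your only explanation of how $r\le\alpha^*-1$ would rescue it is ``the $\alpha$-fold H\"older estimate in $Y$'', but the H\"older step in $Y$ does not see the Lorentz index $r$ of $X$ at all; you have not identified a mechanism by which $r\le\alpha^*-1$ yields $u(t)\in Y$.

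\textbf{What the paper actually does.} The paper never proves that an arbitrary solution regularizes. Instead it estimates $u_1-u_2$ directly in $L^{q,\infty}_s$ and splits the nonlinear coefficient as $|u_i|^{\alpha-1}\lesssim |u_i-e^{\tau\Delta}u_0|^{\alpha-1}+|e^{\tau\Delta}u_0|^{\alpha-1}$ (Lemma~\ref{lem:sec4_nonlinear2}). The piece with $e^{\tau\Delta}u_0$ is handled exactly by your Kato-type smoothing, since the \emph{linear} evolution certainly regularizes (Lemma~\ref{lem:density}, which is where density in $\mathcal L^{q,\infty}_s$ and $q\neq\infty$ are used). The piece with $u_i-e^{\tau\Delta}u_0$ is small in $L^{q,r}_s$ merely by continuity at $t=0$; the critical time exponent in the resulting Duhamel integral is absorbed by the weighted Meyer inequality (Proposition~\ref{l:wMeyer.inq}). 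In the double critical case this Meyer step hits the endpoint $\frac{\sigma}{d}+\frac{\alpha}{q}=1$, forcing the input Lorentz index to be $\le 1$, which after H\"older translates into $u_i-e^{\tau\Delta}u_0\in L^{q,\alpha^*-1}_s$ and hence the hypothesis $r\le\alpha^*-1$. That is the true origin of the threshold, not a H\"older constraint in your auxiliary space $Y$.

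In short: the paper replaces your ``regularize the full solution'' step by ``regularize only the free evolution and use the Meyer inequality for the rest''. If you want to salvage your strategy, you would need an honest proof that $r\le\alpha^*-1$ forces $u\in\mathcal K^0$; the paper shows this is unnecessary.
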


% Remark 1.4
\begin{rem}
In Theorem \ref{thm:unconditional1} {\rm (1)},
the condition ``$r \le \alpha$ if $q=\alpha$" comes from the restriction on parameters in linear estimates. 
More precisely, the condition is due to the restriction $r_1=1$ for linear estimates with $q_1=1$ in Proposition \ref{prop:linear-main} (see \eqref{linear-condi3} and also Lemma \ref{lem:sec4_nonlinear1} {\rm (ii)}).
\end{rem}

% Single critical case I and Double critical case
Next, we consider the following two cases where the unconditional uniqueness is not obtained in the above theorems: $r>\alpha$ in the single critical case I; 
$r>\alpha^*-1$ in the double critical case. 

In the single critical case I,
the condition $r \le \alpha$ naturally appears from the viewpoint of well-definedness of mild solutions to \eqref{HH} as seen in \eqref{integral:S_c}. 
On the other hand, when $r>\alpha$, we can define mild solutions to \eqref{HH} with the auxiliary condition and we know that conditional uniqueness holds (see \cite[Theorem 1.13]{CIT2022}). 
We are interested in the questions whether the conditional uniqueness can be improved.
In fact, we can give the following sufficient condition for uniqueness to hold which improves the conditional uniqueness \cite[Theorem 1.13]{CIT2022}. 

% Proposition 1.5
\begin{prop}\label{prop:uniqueness-sufficient}
Let $T>0$, and let $d,\gamma,\alpha,q,r,s$ be as in \eqref{assum:main}. 
Assume that $\alpha < \alpha^*$, $q\not =\infty$, $\alpha < r \le \infty$, and $\frac{s}{d} + \frac{1}{q} = \frac{1}{Q_c} < \frac{1}{q_c}$.
Let $u_0\in L^{q,r}_{s}(\mathbb R^d)$. Then,
if $u_1, u_2 \in L^\infty(0,T ; L^{q, r}_{s}(\mathbb R^d))$ 
are mild solutions to \eqref{HH} with $u_1(0)=u_2(0)=u_0$ such that 
\[
u_i(t) - e^{t\Delta} u_0 \in L^\infty(0,T ; L^{q, r'(\alpha-1)}_{s}(\mathbb R^d))
\quad \text{for $i=1,2$},
\]
then $u_1 = u_2$ on $[0,T]$. Here, $r'$ is the H\"older conjugate of $r$, i.e.,
$1=\frac{1}{r}+\frac{1}{r'}$.
\end{prop}

In the double critical case, we prove the result on non-uniqueness for \eqref{HH} if $\alpha^*-1< r \le \infty$.
More precisely, we have the following:

% Theorem 1.6 
\begin{thm}[Double critical case]\label{thm:nonuniqueness}
Let $d\ge3$, $\gamma>-2$, $\alpha = \alpha^*$, 
$\alpha^* \le q < \infty$, 
$\alpha^*-1< r \le \infty$, and $\frac{s}{d} + \frac{1}{q} = \frac{1}{q_c} = \frac{1}{Q_c}$. 
Then, for any initial data $u_0 \in L^{q,r}_{s}(\mathbb{R}^d)$, there exists $T=T(u_0)>0$ such that 
the problem \eqref{HH} has at least two different solutions in $C([0,T]; L^{q,r}_{s}(\mathbb{R}^d))$
(replace $L^{q,r}_{s}(\mathbb R^d)$ by $\mathcal L^{q,\infty}_{s}(\mathbb R^d)$ if $r=\infty$). 
\end{thm}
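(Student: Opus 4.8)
The plan is to adapt the singular-stationary-solution mechanism of Terraneo \cite{Ter2002} and Matos--Terraneo \cite{MatosTerrane} to the Hardy--H\'enon equation in the weighted Lorentz framework. Given $u_0\in L^{q,r}_s(\R^d)$, I would produce two mild solutions in $C([0,T];L^{q,r}_s(\R^d))$ (with $\mathcal L^{q,\infty}_s$ in place of $L^{q,\infty}_s$ when $r=\infty$): a ``regular'' solution $u_1$ obtained by the standard contraction scheme, and a ``singular'' solution $u_2$ that instantaneously develops a point singularity modelled on a singular stationary solution of \eqref{stationary}. They will then be distinguished by an auxiliary Lorentz-index condition that $u_1$ satisfies but $u_2$ violates, so that $u_1\neq u_2$.

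First I would analyse \eqref{stationary} with $\alpha=\alpha^*$. Since then $\frac{2+\gamma}{\alpha-1}=d-2$, the scale-invariant profile $|x|^{-(d-2)}$ is harmonic, so there is \emph{no} power-type singular stationary solution; instead one constructs, by ODE shooting / matched asymptotics (or a fixed point near the expected profile), a positive radial solution $U_s$ of $-\Delta U_s=|x|^\gamma U_s^{\alpha^*}$ on a punctured ball with
\[
U_s(x)\sim A\,|x|^{-(d-2)}\Big(\log\tfrac1{|x|}\Big)^{-\beta},\qquad \beta:=\frac{d-2}{2+\gamma}=\frac{1}{\alpha^*-1}\quad(|x|\to0),
\]
the logarithmic exponent being forced by the resonance $\beta(\alpha^*-1)=1$. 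The crucial point is that, after fixing a cut-off $\chi\in C^\infty_0(\R^d)$ with $\chi\equiv1$ near $0$, one has $\chi U_s\in L^{q,r}_s(\R^d)$ (resp.\ $\mathcal L^{q,\infty}_s$) \emph{precisely when} $r>\alpha^*-1$: near the origin $|x|^s(\chi U_s)$ rearranges like $\tau^{-1/q}(\log\frac1\tau)^{-\beta}$, whose $L^{q,r}$ quasi-norm is finite iff $\beta r>1$, and $\chi U_s\notin L^{q,\widetilde r}_s$ for every $\widetilde r\le\alpha^*-1$, consistently with the unconditional uniqueness of Theorem \ref{thm:unconditional2}(2) in that range.

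Next I would set $u_2=\chi U_s+w$. Using $-\Delta(\chi U_s)=\chi|x|^\gamma U_s^{\alpha^*}+h$ with $h:=-[\Delta,\chi]U_s$ smooth and compactly supported away from $0$, the substitution into \eqref{HH} shows that $w$ must solve
\[
\partial_t w-\Delta w=|x|^\gamma\Big(|\chi U_s+w|^{\alpha-1}(\chi U_s+w)-\chi\,U_s^{\alpha}\Big)-h,\qquad w(0)=u_0-\chi U_s\in L^{q,r}_s(\R^d).
\]
I would solve the corresponding Duhamel equation by a contraction argument in $C([0,T];L^{q,r}_s)\cap Y$, for a suitable auxiliary space $Y$ and $T=T(u_0)$ small, using the weighted Lorentz-space linear estimates of Proposition \ref{prop:linear-main}, and arrange that $w(t)-e^{t\Delta}(u_0-\chi U_s)\to0$ in $L^{q,r}_s$ and in $Y$ as $t\to0^+$; this gives $u_2\in C([0,T];L^{q,r}_s)$ with $u_2(0)=u_0$. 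Running the same scheme with $U_s\equiv0$ produces the regular solution $u_1$, which by construction satisfies the auxiliary condition, say $u_1(t)-e^{t\Delta}u_0\in L^\infty(0,T;L^{q,\widetilde r}_s)$ with some $\widetilde r\le\alpha^*-1$ (cf.\ Proposition \ref{prop:uniqueness-sufficient} and \cite{CIT2022}).

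Finally, to see $u_1\neq u_2$, I would decompose
\[
u_2(t)-e^{t\Delta}u_0=\big(\chi U_s-e^{t\Delta}\chi U_s\big)+\big(w(t)-e^{t\Delta}(u_0-\chi U_s)\big).
\]
The second bracket lies in the auxiliary space by parabolic smoothing, and $e^{t\Delta}\chi U_s$ is smooth and rapidly decaying, hence also lies in it; so $u_2(t)-e^{t\Delta}u_0$ belongs to the auxiliary space iff $\chi U_s$ does, which is false since $\widetilde r\le\alpha^*-1$. Thus $u_2$ fails the condition that $u_1$ satisfies, and $u_1\neq u_2$. The main obstacle I anticipate is concentrated near the singularity: one must first construct $U_s$ with the sharp logarithmic asymptotics and the exact membership threshold $r=\alpha^*-1$; and — the genuinely delicate point — the contraction for $w$ involves the linearised zeroth-order term $|x|^\gamma U_s^{\alpha-1}w$, whose coefficient behaves like $|x|^{-2}\big(\log\frac1{|x|}\big)^{-1}$ near $0$, lying just below the Hardy threshold $|x|^{-2}$ and hence only barely absorbable by the heat smoothing. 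It is exactly this borderline that makes $r=\alpha^*-1$ sharp, and carrying the estimates out in the weighted Lorentz scale, rather than in $L^q$ as in \cite{MatosTerrane}, is the core of the argument.
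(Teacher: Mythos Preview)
Your proposal is correct and follows essentially the same route as the paper: construct the regular solution by the Kato-type fixed point (Proposition~\ref{prop:regular1}), build a compactly supported $V_0$ from a singular radial stationary solution with the sharp logarithmic asymptotics $U_s(x)\sim C|x|^{-(d-2)}(\log\tfrac1{|x|})^{-1/(\alpha^*-1)}$ (Theorem~\ref{thm:sss_sharp}, Proposition~\ref{thm:singular-sol}), write the singular solution as $v=V_0+w$, solve the perturbed integral equation for $w$ by contraction, and distinguish $u$ from $v$ via the membership $V_0\notin L^{q,\alpha^*-1}_s$ of the Duhamel part.

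One point worth sharpening: the ``main obstacle'' you identify---the linearised term $|x|^\gamma V_0^{\alpha^*-1}w$ with coefficient $\sim |x|^{-2}(\log\tfrac1{|x|})^{-1}$---is \emph{not} handled in the paper by directly exploiting the logarithmic gain below the Hardy threshold. Instead the paper uses a density decomposition (see \eqref{decompseV}): since $V_0\in \mathcal L^{q,\infty}_s$, one writes $V_0=h+\overline V_0$ with $h\in C^\infty_0(\R^d)$ and $\|\overline V_0\|_{L^{q,\infty}_s}<\varepsilon$. The $h$-piece is trivial (smooth, compactly supported), while the $\overline V_0$-piece is estimated via the weighted Meyer inequality (Proposition~\ref{l:wMeyer.inq}) and is small because $\varepsilon$ is; this is what closes the contraction (Lemma~\ref{lem:perturbed}). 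Also, your reference to Proposition~\ref{prop:uniqueness-sufficient} for the regular solution's auxiliary property is slightly off (that proposition concerns the single critical case~I); in the double critical case the regular solution is characterised by the Kato condition \eqref{5.1:auxiliary-condi} on $L^{\tilde q,\infty}_s$, and then Lemma~\ref{lem:nonlinear_Kato}(ii) shows its Duhamel term lies in $L^{q,\tilde r}_s$ for every $\tilde r>0$, which yields the distinction you want.
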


By Theorem \ref{thm:unconditional2} {\rm (2)} and Theorem~\ref{thm:nonuniqueness}, 
we clarify that the exponent $r=\alpha^*-1$ is a threshold of dividing unconditional uniqueness and non-uniqueness for \eqref{HH} in the double critical case. 
The importance of $r=\alpha^*-1$ was pointed out by \cite[Theorem 0.10 and Proposition 5.4]{Ter2002} in the Fujita case $\gamma=0$ 
(see \cite[Theorem 1.4 and Proposition 8.2]{Tay2020} for the Hardy case $\gamma<0$).
The idea of proof of Theorem \ref{thm:nonuniqueness} is based on the method by \cite{Ter2002,MatosTerrane}, i.e., we construct two different solutions which are regular and singular at $x=0$ to \eqref{HH} for any initial data $u_0$. 
The regular solution can be found in a similar way to \cite{CIT2022} and the singular solution can be constructed from the singular stationary solution to 
\[
	\Delta U + |x|^{\gamma} U^{\frac{d+\gamma}{d-2}} = 0\quad 
	\text{in } B\setminus \{0\}, \quad U>0,
\]
where $B := \{x \in \mathbb R^d \,;\, |x| < 1\}$.
The threshold $r=\alpha^*-1$ comes essentially from the logarithmic rate of the singularity at $x=0$ of the singular stationary solution (see \eqref{eq.singular1} and \eqref{eq.singular2} in Subsection \ref{sub:5.2}). 
The existence and behavior near the origin of singular stationary solutions have been studied in \cite{Avi1983,Avi1987,BidGar2001,BidRao1996,DZarxiv,GidSpr1981,GN2022,GueVer1988,Ser1964,Ser1965} for instance. 
See Subsection \ref{sub:5.2} for the details.

In addition, we give the following uniqueness criterion. 

% Theorem 1.7
\begin{thm}\label{thm:uniqueness-criterion}
Let $T>0$, and let $d,\gamma,\alpha,q,r,s$ be as in \eqref{assum:main}. 
Assume that $d\ge3$, $\gamma>-2$, $\alpha = \alpha^*$, 
$\alpha^* \le q < \infty$, 
$\alpha^*-1< r \le \infty$, and $\frac{s}{d} + \frac{1}{q} = \frac{1}{q_c} = \frac{1}{Q_c}$. 
Let $u_0\in L^{q, r}_{s}(\mathbb{R}^d)$. Then, 
if $u_1,u_2 \in C([0,T] ; L^{q, r}_{s}(\mathbb R^d))$ 
are mild solutions to \eqref{HH} with $u_1(0)=u_2(0)=u_0$ such that 
\begin{equation}\label{DC-criterion}
u_i(t) - e^{t\Delta} u_0 \in 
C([0,T] ; L^{q, \alpha^*-1}_{s}(\mathbb R^d))
\quad \text{for $i=1,2$},
\end{equation}
then $u_1 = u_2$ on $[0,T]$ 
(replace $L^{q, r}_{s}(\mathbb R^d)$ by $\mathcal{L}^{q, \infty}_{s}(\mathbb{R}^d)$ if $r=\infty$). 
\end{thm}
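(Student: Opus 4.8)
The plan is to run a fixed-point/contraction argument in a small-time interval, exploiting the auxiliary information that the Duhamel terms lie in the \emph{better} space $C([0,T];L^{q,\alpha^*-1}_s(\mathbb R^d))$. Write $v_i(t):=u_i(t)-e^{t\Delta}u_0$, so that $v_i$ solves the Duhamel equation $v_i(t)=\int_0^t e^{(t-\tau)\Delta}\bigl(|\cdot|^\gamma|u_i(\tau)|^{\alpha-1}u_i(\tau)\bigr)\,d\tau$, and set $w:=v_1-v_2=u_1-u_2$. The goal is to estimate $\|w\|_{L^\infty(0,T;L^{q,\alpha^*-1}_s)}$ (or the appropriate $\mathcal L^{q,\infty}_s$ variant when $r=\infty$) by a factor that is $o(1)$ as $T\to 0$, forcing $w=0$ on a maximal interval and then propagating to all of $[0,T]$ by a standard connectedness/continuation argument using the continuity in $t$.

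First I would record the pointwise difference bound
\[
\bigl||x|^\gamma|u_1|^{\alpha-1}u_1-|x|^\gamma|u_2|^{\alpha-1}u_2\bigr|
\lesssim |x|^\gamma\bigl(|u_1|^{\alpha-1}+|u_2|^{\alpha-1}\bigr)|w|,
\]
and then split $|u_i|^{\alpha-1}=|e^{\tau\Delta}u_0+v_i|^{\alpha-1}\lesssim |e^{\tau\Delta}u_0|^{\alpha-1}+|v_i|^{\alpha-1}$. This produces two kinds of terms: a "linear-part" term with the weight $|e^{\tau\Delta}u_0|^{\alpha-1}$, and a "Duhamel-part" term with $|v_i|^{\alpha-1}$. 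The second step is to apply the smoothing estimates for $e^{t\Delta}$ on weighted Lorentz spaces together with the weighted Hölder inequality — exactly the machinery that must already be developed earlier in the paper to handle the double-critical unconditional-uniqueness result, Theorem \ref{thm:unconditional2}(2). In the double-critical regime $\frac{s}{d}+\frac1q=\frac1{q_c}=\frac1{Q_c}$ with $\alpha=\alpha^*$, the key Hölder splitting is arranged so that $|v_i|^{\alpha-1}\in L^{q/(\alpha-1),(\alpha^*-1)/(\alpha-1)}_{s(\alpha-1)/?}$ pairs with $w\in L^{q,\alpha^*-1}_s$; here the second (fine) exponent bookkeeping is the crux: $\tfrac{\alpha-1}{\alpha^*-1}+\tfrac1{\alpha^*-1}\cdot 1$ should sum correctly in the Lorentz-Hölder inequality, which is precisely where the threshold $r=\alpha^*-1$ enters. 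For the linear-part term one instead uses that $e^{\tau\Delta}u_0$ decays in the relevant mixed norm with an integrable-in-$\tau$ singularity once $T$ is small (a Kato-type estimate), giving the $o(1)$ smallness; for the Duhamel-part term one bootstraps from the a priori bound $\|v_i\|_{L^\infty(0,T;L^{q,\alpha^*-1}_s)}\le M$, which is finite by hypothesis \eqref{DC-criterion}, and again the time integral $\int_0^t(t-\tau)^{-\mu}\,d\tau$ with $\mu<1$ contributes a factor $T^{1-\mu}=o(1)$.

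The third step is the continuation argument: let $T_*:=\sup\{t\in[0,T]: u_1=u_2\text{ on }[0,t]\}$; the small-time estimate shows $T_*>0$, and if $T_*<T$ then, using $u_1(T_*)=u_2(T_*)$ as new initial data together with the continuity of $v_i(\cdot)-v_i(T_*)$ in $L^{q,\alpha^*-1}_s$ at $t=T_*$ (which guarantees the relevant norms are small on $[T_*,T_*+\delta]$), the same contraction yields $u_1=u_2$ slightly past $T_*$, a contradiction. The $r=\infty$ case is handled by replacing $L^{q,\infty}_s$ with the separable space $\mathcal L^{q,\infty}_s$ throughout, which legitimizes the density arguments needed for the continuity-in-time/smallness step.

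\textbf{Main obstacle.} The hard part will be the Lorentz-space Hölder bookkeeping in the double-critical case: one must verify that with $u_i$ only in $L^\infty_t L^{q,r}_s$ (coarse exponent $r$ possibly $=\infty$) but $v_i=u_i-e^{t\Delta}u_0$ in the \emph{finer} class with second exponent $\alpha^*-1$, the product $\bigl(|e^{\tau\Delta}u_0|^{\alpha-1}+|v_i|^{\alpha-1}\bigr)|w|$ lands back in a space on which $e^{(t-\tau)\Delta}$ maps into $L^{q,\alpha^*-1}_s$ with a time weight $(t-\tau)^{-\mu}$, $\mu<1$ — and crucially that the fine exponents close exactly at the threshold $\alpha^*-1$ and would fail for larger $r$, which is consistent with the non-uniqueness Theorem \ref{thm:nonuniqueness}. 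Getting the endpoint Young/O'Neil inequality for weighted Lorentz spaces to cooperate here, especially the $e^{\tau\Delta}u_0$ term whose $L^{q_1,\infty}_{s_1}$ norm must be controlled with the right scaling power of $\tau$, is the delicate point; everything else is a routine adaptation of the proof of Theorem \ref{thm:unconditional2}(2) with the a priori bound supplied by \eqref{DC-criterion} in place of the full $C_tL^{q,r}_s$ continuity.
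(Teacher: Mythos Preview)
Your overall strategy---splitting $|u_i|^{\alpha-1}$ into the free part $|e^{\tau\Delta}u_0|^{\alpha-1}$ and the Duhamel part $|v_i|^{\alpha-1}$, then exploiting hypothesis \eqref{DC-criterion} to control the latter---is exactly the paper's approach (this is Lemma~\ref{lem:sec4_nonlinear2}\,(iii), and the paper states that Theorem~\ref{thm:uniqueness-criterion} ``is similarly proved'' to Theorem~\ref{thm:unconditional2}\,(2)). However, there is a genuine gap in your understanding of the smallness mechanism for the Duhamel-part term.

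You claim that for this term the time integral $\int_0^t(t-\tau)^{-\mu}\,d\tau$ has $\mu<1$ and produces a factor $T^{1-\mu}=o(1)$. This is false in the double-critical case: the problem is scale-invariant, and a direct computation with $\sigma=\alpha s-\gamma$ and $q_1=q/\alpha$ gives
\[
\mu=\frac{d}{2}\Bigl(\frac{\alpha}{q}-\frac{1}{q}\Bigr)+\frac{\sigma-s}{2}
=\frac{\alpha-1}{2}\Bigl(\frac{d}{q}+s\Bigr)-\frac{\gamma}{2}
=\frac{\alpha-1}{2}\cdot\frac{2+\gamma}{\alpha-1}-\frac{\gamma}{2}=1,
\]
so the naive integral diverges. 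The paper handles the terms $I$ and $II$ in \eqref{firstcalcul1} instead via the weighted Meyer inequality (Proposition~\ref{l:wMeyer.inq}), which replaces the divergent time integral by a direct space-time bound but only lands in $L^{q_2,\infty}_{s_2}$. Consequently the contraction must be run in $L^{q,\infty}_s$, not in $L^{q,\alpha^*-1}_s$ as you propose; and the smallness comes not from any power of $T$ but from the prefactor $\|v_i\|_{L^\infty(0,t;L^{q,\alpha^*-1}_s)}\to 0$ as $t\to0$, using $v_i(0)=0$ and the continuity in \eqref{DC-criterion}. The endpoint condition $r_1\le1$ in Proposition~\ref{l:wMeyer.inq} (since $\frac{\sigma}{d}+\frac{\alpha}{q}=1$ here) is what forces the second Lorentz exponent of $v_i$ to be at most $\alpha^*-1$ in the H\"older step, and that is where the threshold enters---not a bookkeeping identity of the form $\frac{\alpha-1}{\alpha^*-1}+\frac{1}{\alpha^*-1}$. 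For the free-part term $III$ your Kato-type description is closer to correct, but again there is no positive power of $t$ (one has $\delta=0$); the smallness comes from $\sup_{0<\tau<t}\tau^\beta\|e^{\tau\Delta}u_0\|_{L^{\tilde q,\infty}_s}\to0$ via the density Lemma~\ref{lem:density}.
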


% Remark 1.8
\begin{rem} 
The exponent $r = \alpha^*-1$ in \eqref{DC-criterion} of Theorem \ref{thm:uniqueness-criterion} is optimal for the same reason as above (see Theorem \ref{thm:singular_sol}).
\end{rem}

% Scale-supercritical case
In the scale-supercritical case, we have the following result on non-uniqueness for \eqref{HH}. 
Here, we define the exponents $\alpha_F$ and $\alpha_{HS}$ by 
\[
\alpha_F = \alpha_F(d,\gamma) := 1+ \frac{2+\gamma}{d}
\quad \text{and}\quad 
\alpha_{HS} = \alpha_{HS}(d,\gamma) := \frac{d+2+2\gamma}{d-2},
\]
which are often referred to as the Fujita exponent (see \cite{Qi1998,Pin1997}) and the critical Hardy-Sobolev exponent (see \cite{Lie1983}). 

% Proposition 1.9
\begin{prop}[Scale-supercritical case]\label{prop:non-uniquness_super}
Let $d\ge 3$, $\gamma>-2$, $\alpha > 1$, $1<q\le \infty$, $1\le r\le \infty$ and $s\in \mathbb R$ be such that
\[
\gamma \le 
\begin{cases}
\sqrt{3}-1 &\text{if }d=3,\\
0 &\text{if }d\ge4,
\end{cases}
\quad 
\alpha_F <
\alpha<\alpha_{HS}
\quad \text{and}\quad 
\frac{1}{q_c} < \frac{s}{d} + \frac{1}{q} < 1.
\]
Then 
the equation \eqref{HH} has a global positive solution in $C([0,\infty); L^{q,r}_s(\mathbb{R}^d))$ with initial data $0$. 
\end{prop}

To visually understand our above results, we give Figure 2 for the case $\gamma<0$ and $\min\{{1\over q_c},{1\over Q_c}\}<\max\{{1\over q_c},{1\over Q_c}\}$.\\

% Figure 2
\begin{figure}[t]
\begin{center}\label{figure2}
\includegraphics[width=130mm]{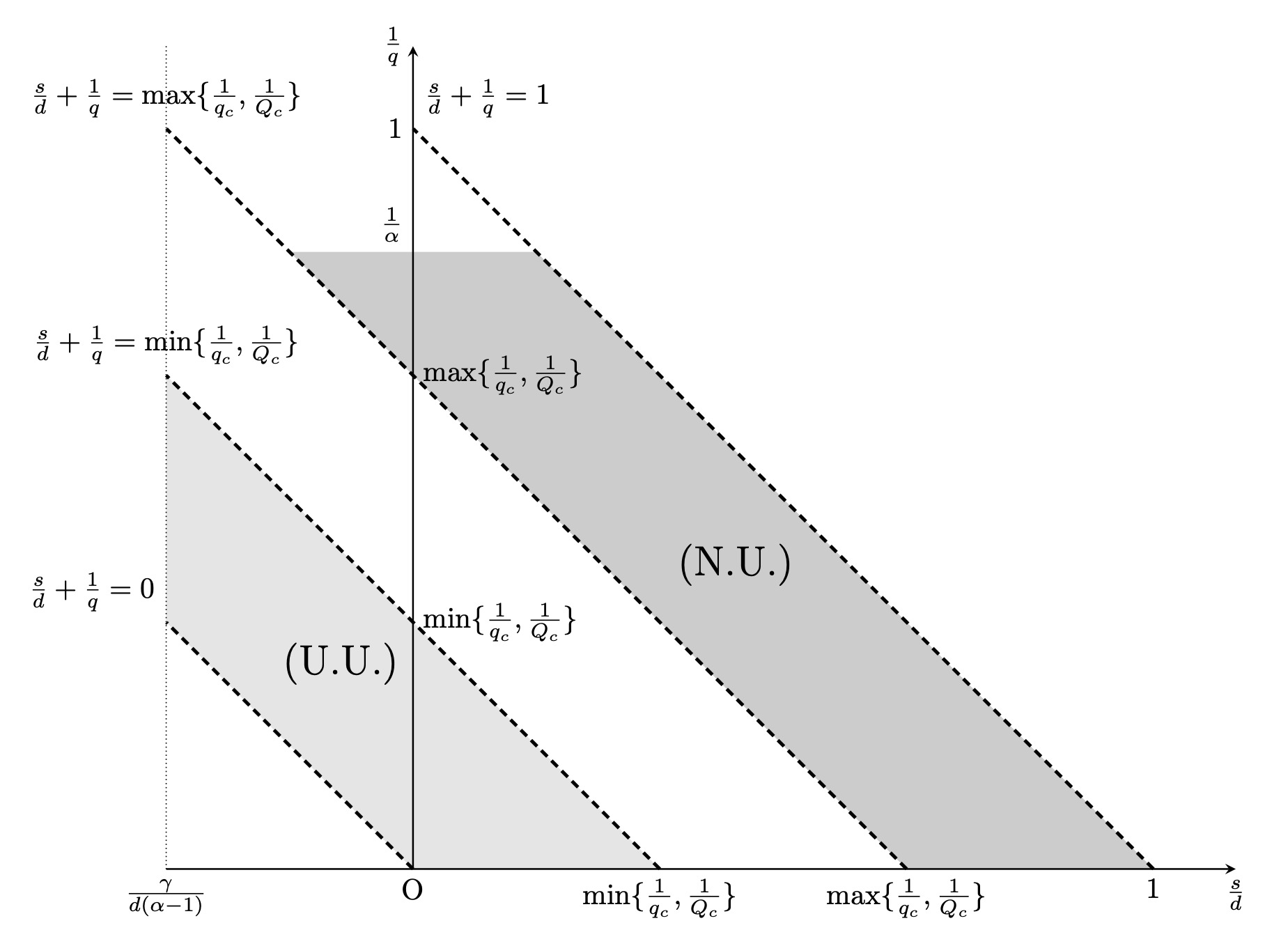}
\caption{
The figure shows the domain of $({s\over d},{1\over q})$ 
in the case $\gamma<0$ and $\min\{{1\over q_c},{1\over Q_c}\}<\max\{{1\over q_c},{1\over Q_c}\}$. 
 (U.U.) and (N.U.) mean unconditional uniqueness  and non-uniqueness, respectively.  
The cases $\gamma=0$ and $\gamma>0$ are deduced by moving the line ${s\over d}={\gamma\over d(\alpha-1)}$ to the right. 
}
\end{center}
\end{figure}

% Figure 3
\begin{figure}[h]
\begin{center}\label{figure3}
\includegraphics[width=130mm]{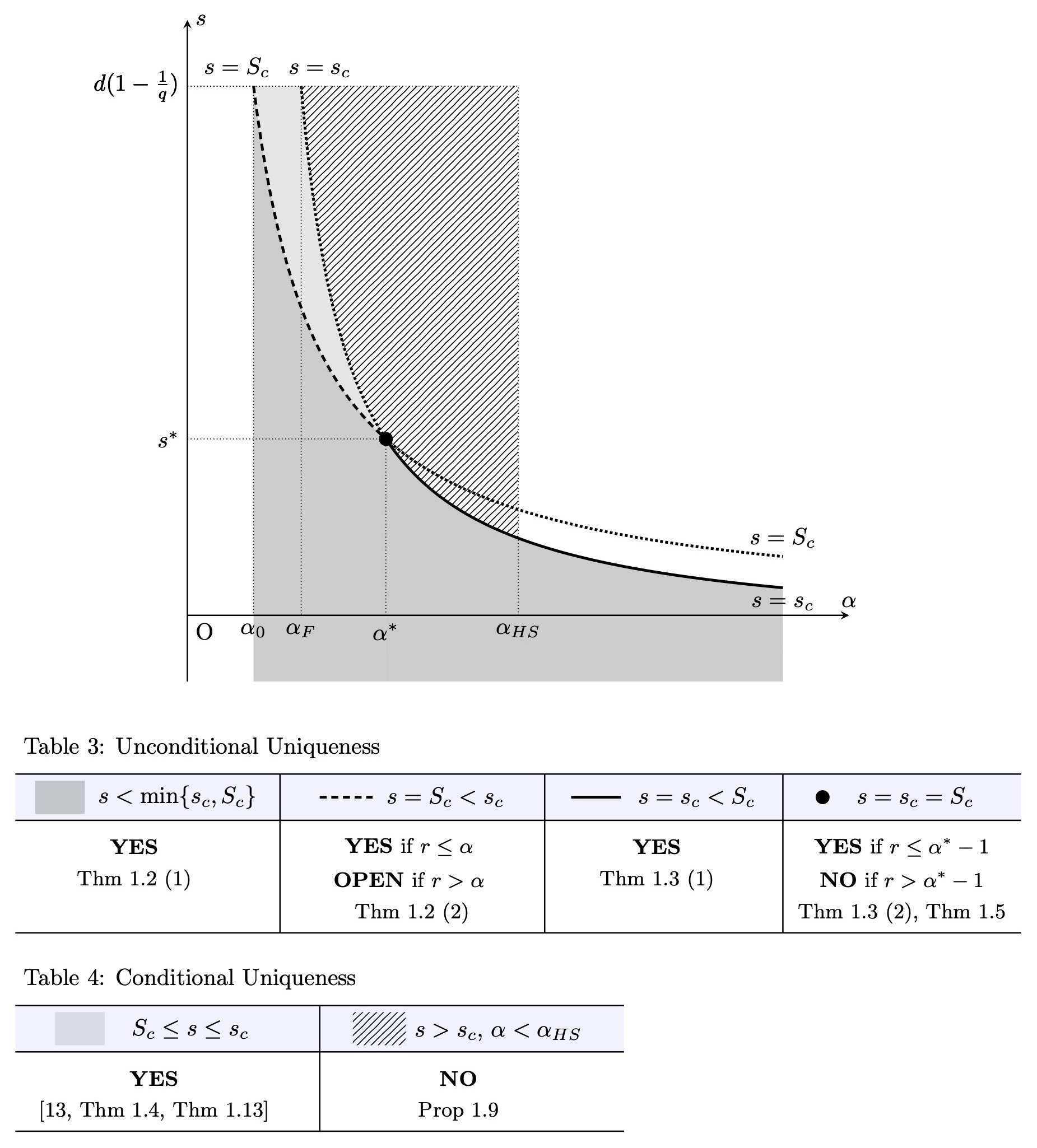}
\caption{The figure shows the domain of $(\alpha,s)$ for $d\ge 3$ and $q >1$. 
Here, $\alpha_0 := \min\{1,1 + \frac{\gamma}{d}\}$, $\alpha_F, \alpha^*, \alpha_{HS}$ are given in Figure 1,
$s_c, S_c$ are given in \eqref{def:s_c-S_c}, and 
$s^* := d-2-\frac{d}{q}$. 
Table 3 and Table~4 summarize our results on uniqueness for \eqref{HH}. 
}
\end{center}
\end{figure} 

% Comparison between our results and previous results
Herein, we compare our results with previous ones. 
Our results generalize the previous works \cite{BreCaz1996,HarWei1982,Wei1980,BenTayWei2017,MatosTerrane,Tay2020,Ter2002}, since $s$ can be taken as $s=0$ if $\gamma \le 0$ in our results. 
More precisely, our results on unconditional uniqueness (Theorem~\ref{thm:unconditional1} and Theorem \ref{thm:unconditional2}  {\rm (1)}) 
include the results in 
\cite[Theorem 4]{Wei1980} and \cite[Theorem 4]{BreCaz1996} ($\gamma=0$ and $s=0$) and 
\cite[Theorem~1.1]{BenTayWei2017} and \cite[Theorem~1.1]{Tay2020} ($\gamma <0$ and $s=0$), and 
our result on non-uniqueness (Theorem \ref{thm:nonuniqueness}) generalizes the previous works \cite[Theorem~1]{MatosTerrane} ($\gamma=0$ and $s=0$) and 
\cite[Theorem 1.3]{Tay2020} ($\gamma <0$ and $s=0$).
Moreover, 
our results on the double critical case (Theorem \ref{thm:unconditional2} {\rm (2)} and Theorem~\ref{thm:nonuniqueness}) also clarify the threshold $r=\alpha^*-1$ of dividing unconditional uniqueness and non-uniqueness. 
Regarding the uniqueness criterion, 
Theorem~\ref{thm:uniqueness-criterion} generalizes the previous works \cite[Theorem 0.10]{Ter2002} ($\gamma=0$ and $s=0$) and 
\cite[Theorem 1.4]{Tay2020} ($\gamma <0$ and $s=0$), and Proposition \ref{prop:uniqueness-sufficient} has not been mentioned in the previous works.
In the scale-supercritical case, Proposition~\ref{prop:non-uniquness_super} 
corresponds to \cite[Theorem 1]{HarWei1982} ($\gamma =0$ and $s=0$) and \cite[Proposition B.1]{Tay2020}  ($\gamma <0$ and $s=0$).

To easily compare our results with the previous work \cite{CIT2022} which includes the H\'enon case $\gamma>0$, 
we can rewrite our results by using the two critical exponents on $s$:
\begin{equation}\label{def:s_c-S_c}
	s_c=s_c(d,\gamma,\alpha, q) 
		:= \frac{2+\gamma}{\alpha-1} - \frac{d}{q}
	\quad \text{and}\quad S_c=S_c(d,\gamma,\alpha,q) := \frac{d+\gamma}{\alpha} - \frac{d}{q}. 
\end{equation}
The exponents $s_c$ and $S_c$ correspond to $q_c$ and $Q_c$ in the case without weights, respectively. 
In fact, we can see that 
\[
\text{$s_c=0$ if and only if $q=q_c$}
\quad \text{and}\quad 
\text{$S_c=0$ if and only if $q=Q_c$}.
\]
Hence, we can also say that the problem \eqref{HH} is {\sl scale-critical} if $s=s_c$, 
{\sl scale-subcritical} if $s<s_c,$ and {\sl scale-supercritical} if $s>s_c$.
Moreover, the four cases can be rewritten as follows: 
{\sl Double subcritical case} ($s< \min \{s_c, S_c \}$), 
{\sl single critical case I} ($s = S_c < s_c$), {\sl single critical case II} ($s = s_c < S_c$), and {\sl double critical case} ($s = s_c = S_c$).
The results in \cite{CIT2022} show local well-posedness, including the conditional uniqueness, for \eqref{HH} if $s\le s_c$ and non-existence of positive mild solution to \eqref{HH} for some initial data $u_0 \ge0$ if $s>s_c$. However, unconditional uniqueness and non-uniqueness are not mentioned in \cite{CIT2022}. 
Our results are summarized in Figure~3.\\

% Organization of the paper
This paper is organized as follows. In Section \ref{sec:2}, 
we summarize the definitions and fundamental lemmas on Lorentz spaces and weighted Lorentz spaces. 
In Section \ref{sec:3}, 
we establish the two kinds of weighted linear estimates.
In Subsection \ref{sub:3.1},
we extend the usual $L^{q_1}$-$L^{q_2}$ estimates to the weighted Lorentz spaces, which are fundamental tools in this paper. 
In Subsection \ref{sub:3.2},
we prove a certain space-time estimate in the weighted Lorentz spaces. We call it the weighted Meyer inequality. 
This inequality corresponds to a certain endpoint case of the weighted Strichartz estimates, and 
it is an important tool in studying the scale-critical case.
In Section~\ref{sec:4}, 
we prove our results on unconditional uniqueness and uniqueness criterion (Theorem~\ref{thm:unconditional1}, Theorem~\ref{thm:unconditional2}, Proposition \ref{prop:uniqueness-sufficient} and Theorem \ref{thm:uniqueness-criterion}), based on the weighted linear estimates.
In Section \ref{sec:5}, we prove our result on non-uniqueness (Theorem \ref{thm:nonuniqueness}). 
In Section \ref{sec:6}, we discuss the non-uniqueness in the scale-supercritical case and prove Proposition~\ref{prop:non-uniquness_super}. 
In Section \ref{sec:8}, we give a remark on the number of solutions in the double critical case, and 
additional results on the critical singular case $\gamma = -\min \{2,d\}$ and the exterior problem on domains not containing the origin.

\subsection*{Notation}
Throughout this paper, we use the notation $C$ for a positive constant which may change from line to line for convenience.
We use the symbols $a \lesssim b$ and $b \gtrsim a$ for $a, b \ge0$ which mean that there exists a constant $C>0$ such that $a \le C b$.
The symbol $a \sim b$ means that $a \lesssim b$ and $b \lesssim a$ happen simultaneously. 
We denote by $\overline{\Omega}$ the closure of a domain $\Omega$ in $\mathbb R^d$. 
For $a \in \mathbb R$ and a sequence $\{a_n\}_{n\in \mathbb N} \subset \mathbb R$, the symbol $a_n \nearrow a$ as $n\to \infty$ means that $a_n \le a_{n+1}$ for any $n \in \mathbb N$ and $a_n \to a$ as $n\to \infty$.
For functions $f$ and $g$, the symbol $f \ast g$ denotes the convolution of $f$ and $g$:
\[
(f \ast g)(x) := \int_{\mathbb R^d} f(x-y) g(y)\, dy,\quad x \in \mathbb R^d.
\]
For quasi-normed spaces $X$ and $Y$,
the notation 
$\|\cdot\|_{X \to Y}$ denotes the operator norm from $X$ to $Y$, i.e.,
\[
\|T\|_{X\to Y}
:=
\sup_{\|f\|_{X} = 1} \|Tf\|_{Y}
\]
for an operator $T$ from $X$ into $Y$, and 
the notation $X \hookrightarrow Y$ denotes that $X$ is continuously embedded in $Y$, i.e., $X$ is a subset of $Y$ and there exists a constant $C>0$ such that 
\[
\|f\|_{Y} \le C\|f\|_{X} \quad \text{for any }f\in X.
\]
For a domain $\Omega$ in $\mathbb R^d$, 
we denote by $C_0^\infty(\Omega)$ the set of all $C^\infty$-functions 
having compact support in $\Omega$, 
by 
$L^0(\Omega)$ the set of all Lebesgue measurable functions on $\Omega$, by $L^\infty_0(\Omega)$ the set of all functions in $L^\infty(\Omega)$ with compact support in $\Omega$,
and 
by $\mathcal S'(\mathbb R^d)$ the space of tempered distributions on $\mathbb R^d$.

%%%%%%%%%%%%
%%% Section 2 %%%
%%%%%%%%%%%%
\section{Weighted Lorentz spaces}\label{sec:2}
We define the distribution function $d_f$ of a function $f$ by
\[
	d_f (\lambda) := \left| \left\{ x \in \Omega \,;\, |f(x)| > \lambda \right\} \right|,
\]
where $|A|$ denotes the Lebesgue measure of a set $A$. 

% Definition 2.1
\begin{defi}
For $0<q,r \le \infty$, the Lorentz space $L^{q,r}(\Omega)$ is defined by 
\[
L^{q,r}(\Omega):=\left\{ f \in L^0(\Omega) \,;\, 
	\|f\|_{L^{q,r}(\Omega)}
	< \infty \right\}
\]
endowed with a quasi-norm 
\[
\|f\|_{L^{q,r}(\Omega)}
:=
\begin{cases}
\displaystyle \left( 
\int_0^\infty ( t^{\frac{1}{q}} f^*(t) )^r\frac{dt}{t}
\right)^{\frac{1}{r}}
\quad & \text{if } r<\infty,\\
\displaystyle \sup_{t>0} t^\frac{1}{q} f^*(t) & \text{if }r=\infty,
\end{cases}
\]
where $f^*$ is the decreasing rearrangement of $f$ given by 
\[
	f^*(t) := \inf \{ \lambda > 0 \,; \, d_f(\lambda) \le t \}.
\]
\end{defi}

We refer to \cite{Gr2008}
for the properties of the distribution function, the decreasing rearrangement and the Lorentz space. 

% Remark 2.2
\begin{rem}
For $0<q,r<\infty$, the quasi-norm of $L^{q,r}(\Omega)$ is equivalent to
\[
	\|f\|_{L^{q,r}(\Omega)} = q^{\frac1{r}} \left( \int_0^{\infty} (d_f (\lambda)^{\frac1{q}} \lambda )^r \frac{d\lambda}{\lambda} \right)^{\frac1{r}}.
\]
For $0<q<\infty$ and $r=\infty$, 
\[
\begin{split}
\|f\|_{L^{q,\infty}(\Omega)} 
& = \sup\left\{ \lambda d_f(\lambda)^\frac{1}{q} \,;\, \lambda>0\right\}  \\
& =\inf \left\{ C>0 \,;\, \lambda d_f(\lambda)^{\frac1{q}} \le C
			\quad\text{for all }\lambda>0 \right\}.
\end{split}
\]
\end{rem}

% Definition 2.3
\begin{defi}\label{def:WLS}
Let $0<q,r \le \infty$ and $s \in \mathbb R$. 

\begin{enumerate}[\rm (i)]

\item
The weighted Lebesgue space $L^{q}_s(\Omega)$ is defined by
\[
L^{q}_s(\Omega)
:= 
\left\{
f \in L^0(\Omega) \, ;\, 
\|f\|_{L^{q}_s} <\infty
\right\}
\]
endowed with a quasi-norm
\[
	\|f\|_{L^{q}_s(\Omega)}:= 
	\begin{cases}
	\displaystyle 
	\left(
	\int_{\Omega} (|x|^s|f(x)|)^q\, dx
	\right)^\frac{1}{q}\quad &\text{if }q<\infty,\\
	\underset{x \in \Omega}{\mathrm{ess\,sup}}\, |x|^s|f(x)|
	& \text{if }q=\infty.
	\end{cases}
\]
The space $\mathcal L^{q}_s (\Omega)$ is defined as the completion of $L^{q}_s (\Omega) \cap L^\infty_0(\Omega)$ with respect to $\|\cdot\|_{L^{q}_s(\Omega)}$.

\item
The weighted Lorentz space $L^{q,r}_s(\Omega)$ is defined by
\[
L^{q,r}_s(\Omega)
:= 
\left\{
f \in L^0(\Omega) \, ;\, 
\|f\|_{L^{q,r}_s(\Omega)} <\infty
\right\}
\]
endowed with a quasi-norm
\[
	\|f\|_{L^{q,r}_s(\Omega)}:= \| |\cdot |^s f\|_{L^{q,r}(\Omega)}.
\]
The space $\mathcal L^{q,r}_s (\Omega)$ is defined as the completion of $L^{q,r}_s (\Omega) \cap L^\infty_0(\Omega)$ with respect to $\|\cdot\|_{L^{q,r}_s(\Omega)}$.
\end{enumerate}
\end{defi}

Only when $\Omega = \mathbb R^d$, we omit $\Omega$ and we write $\|\cdot\|_{L^{q,r}_s} = \|\cdot\|_{L^{q,r}_s(\mathbb R^d)}$ for simplicity.

% Remark 2.4
\begin{rem}
There are several ways to define weighted Lorentz spaces. 
For example, the definitions in \cite{DenapoliDrelichman,Elona_D,Kerman} are different from ours. 
\end{rem}

% Remark 2.5
\begin{rem}\label{rem:w-Lorentz}
We give several properties and remarks on $L^{q,r}_s(\Omega)$. 
Let $0<q,r \le \infty$ and $s\in \mathbb R$. 
\begin{enumerate}[\rm (a)]

\item
$L^{q,q}_s(\Omega)=L^{q}_s(\Omega)$ and $\mathcal L^{q,q}_s(\Omega)=\mathcal L^{q}_s(\Omega)$.

\item
$L^{\infty,r}_s(\Omega) = \{0\}$ for any $r<\infty$. 
Hence, in this paper, we always take $r=\infty$ when $q=\infty$ in $L^{q,r}_s(\Omega)$ even if it is not explicitly stated.

\item $L^{q,r}_s(\Omega)$ is a quasi-Banach space (see Remark \ref{rem:quasi-Banach} below),
and it is normable 
if $1<q<\infty$ and $1\le r \le \infty$.

\item $L^{q,r}_s(\Omega) \cap L_0^\infty(\Omega)$ is dense in $L^{q,r}_s(\Omega)$ if $q<\infty$ and $r < \infty$, which implies that $\mathcal L^{q,r}_s (\Omega) = L^{q,r}_s (\Omega)$ (see Lemma \ref{lem:A-dense} below).
On the other hand, $\mathcal L^{q,r}_s (\Omega) \subsetneq L^{q,r}_s (\Omega)$ if $q=\infty$ or $r=\infty$.

\item $L^{q,r}_s (\Omega)$
has the following embedding:
\[
L^{q,r_1}_s (\Omega) \hookrightarrow L^{q,r_2}_s (\Omega)
\]
for $0<r_1\le r_2 \le \infty$ (see e.g. \cite[Proposition 1.4.10]{Gr2008}).

\item Let $0\in\overline{\Omega}$. Then 
$L^{q,r}_s(\Omega) \subset L^1_{\mathrm{loc}}(\Omega)$ if and only if either of {\rm (f-1)}--{\rm (f-3)} holds:
\begin{enumerate}
\item[\rm (f-1)] $q>1$ and $\frac{s}{d} + \frac{1}{q} < 1$;
\item[\rm (f-2)] $q>1$, $\frac{s}{d} + \frac{1}{q} = 1$ and $r\le 1$;
\item[\rm (f-3)] $q=1$, $\frac{s}{d} + \frac{1}{q} \le 1$ and $r\le 1$.
\end{enumerate}
\item Let $a, b \in \mathbb R$. Then 
\[
|x|^{-a} \left|\log |x|\right|^{-b} \in L^{q,r}_s(\{|x| \le e^{-1}\})
\]
if and only if either of {\rm (g-1)}--{\rm (g-3)} holds:
\begin{enumerate}
\item[\rm (g-1)] $a < s + \frac{d}{q}$;
\item[\rm (g-2)] $a = s + \frac{d}{q}$, $b > \frac{1}{r}$ and $r<\infty$;
\item[\rm (g-3)] $a = s + \frac{d}{q}$, $b \ge 0$ and $r=\infty$.
\end{enumerate}

\item 
Let $a, b \in \mathbb R$. Then 
\[
|x|^{-a} \left|\log |x|\right|^{-b} \in L^{q,r}_s(\{|x| \ge e\})
\]
if and only if either of {\rm (h-1)}--{\rm (h-3)} holds:
\begin{enumerate}
\item[\rm (h-1)] $a > s + \frac{d}{q}$;
\item[\rm (h-2)] $a = s + \frac{d}{q}$, $b > \frac{1}{r}$ and $r<\infty$;
\item[\rm (h-3)] $a = s + \frac{d}{q}$, $b \ge 0$ and $r=\infty$.
\end{enumerate}
\end{enumerate}
For {\rm (g)} and {\rm (h)}, see e.g. \cite[Exercise 1.4.8]{Gr2008} and also the calculations of proof of \cite[Proposition 8.4]{Tay2020}.
\end{rem}

We have the 
H\"older and Young inequalities in Lorentz spaces.

% Lemma 2.6
\begin{lem}[Generalized H\"older's inequality]
	\label{lem:Holder}
Let $0 < q, q_1,q_2 < \infty$ and $0<r,r_1,r_2 \le \infty$.
Then the following assertions hold:
\begin{enumerate}[\rm (i)]
\item If 
\[
\frac{1}{q} = \frac{1}{q_1} + \frac{1}{q_2}\quad \text{and}\quad \frac{1}{r} \le \frac{1}{r_1} + \frac{1}{r_2},
\]
then 
there exists a constant $C>0$ such that 
\[
\|f g\|_{L^{q,r}}
\le C \|f\|_{L^{q_1,r_1}}\|g\|_{L^{q_2,r_2}}
\]
for any $f \in L^{q_1,r_1}(\mathbb R^d)$ and $g \in L^{q_2,r_2}(\mathbb R^d)$.

\item
There exists a constant $C>0$ such that 
\[
\|f g\|_{L^{q,r}}
\le C \|f\|_{L^{q,r}}\|g\|_{L^{\infty}}
\]
for any $f \in L^{q,r}(\mathbb R^d)$ and $g \in L^{\infty}(\mathbb R^d)$.
\end{enumerate}
\end{lem}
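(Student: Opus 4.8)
\medskip
\noindent\textbf{Proof proposal.}
The plan is to deduce both inequalities from two elementary properties of the decreasing rearrangement (see \cite{Gr2008}). The first is the product bound
\[
(fg)^*(t_1+t_2)\le f^*(t_1)\,g^*(t_2),\qquad t_1,t_2>0,
\]
which I would obtain by noting that $\{|fg|>\lambda_1\lambda_2\}\subset\{|f|>\lambda_1\}\cup\{|g|>\lambda_2\}$, hence $d_{fg}(\lambda_1\lambda_2)\le d_f(\lambda_1)+d_g(\lambda_2)$, and then taking infima over $\lambda_1,\lambda_2$ with $d_f(\lambda_1)\le t_1$ and $d_g(\lambda_2)\le t_2$. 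The second is the scaling bound $(fg)^*(t)\le\|g\|_{L^\infty}f^*(t)$ for $g\in L^\infty$, which follows from $d_{fg}(\lambda)\le d_f\!\big(\lambda/\|g\|_{L^\infty}\big)$.

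For assertion (ii) the second bound does all the work: inserting $(fg)^*(t)\le\|g\|_{L^\infty}f^*(t)$ into either the integral ($r<\infty$) or the supremum ($r=\infty$) defining $\|\cdot\|_{L^{q,r}}$ immediately gives $\|fg\|_{L^{q,r}}\le\|g\|_{L^\infty}\|f\|_{L^{q,r}}$, so that $C=1$ works.

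For assertion (i) I would first reduce to the equality case $\frac1r=\frac1{r_1}+\frac1{r_2}$. Indeed, if $\tilde r$ is defined by $\frac1{\tilde r}=\frac1{r_1}+\frac1{r_2}$, then $\tilde r\le r$, so the standard Lorentz embedding $L^{q,\tilde r}(\mathbb R^d)\hookrightarrow L^{q,r}(\mathbb R^d)$ reduces the estimate to one for $\|fg\|_{L^{q,\tilde r}}$; so assume henceforth $\frac1r=\frac1{r_1}+\frac1{r_2}$. Applying the product bound with $t_1=t_2=t/2$, the change of variables $t\mapsto2t$, and the factorization $t^{1/q}=t^{1/q_1}t^{1/q_2}$ coming from $\frac1q=\frac1{q_1}+\frac1{q_2}$, I would obtain, for $r<\infty$,
\[
\|fg\|_{L^{q,r}}^r\le 2^{r/q}\int_0^\infty\big(t^{1/q_1}f^*(t)\big)^r\big(t^{1/q_2}g^*(t)\big)^r\,\frac{dt}{t}.
\]
Since $\frac1r=\frac1{r_1}+\frac1{r_2}$ forces $r_1,r_2\ge r$, Hölder's inequality on $(0,\infty)$ with respect to the measure $\frac{dt}{t}$, with the conjugate exponents $r_1/r$ and $r_2/r$, bounds the integral by $\|f\|_{L^{q_1,r_1}}^r\|g\|_{L^{q_2,r_2}}^r$, yielding the claim with $C=2^{1/q}$. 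The case $r=\infty$ forces $r_1=r_2=\infty$ after the reduction, and the same manipulations with $\sup_{t>0}$ in place of $\int_0^\infty\frac{dt}{t}$ close it.

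The argument is essentially routine and I do not expect a genuine obstacle. The only points that need a little care are the reduction from the inequality $\frac1r\le\frac1{r_1}+\frac1{r_2}$ to the equality case via the Lorentz embedding, and the bookkeeping of quasi-norm constants in the endpoint cases $r=\infty$ or $r_i=\infty$. (Alternatively, one could run the whole proof through the distribution-function formulation of $\|\cdot\|_{L^{q,r}}$ recorded after the definition of the Lorentz space, but the rearrangement route above seems cleanest.)
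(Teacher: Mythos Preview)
Your argument is correct. Note, however, that the paper does not actually prove this lemma: immediately after stating it and the companion Young inequality, the authors simply write that these ``are originally proved by O'Neil \cite{ONe1963} (see also Yap \cite{Yap1969} for Lorentz spaces with second exponents less than one).'' So there is no in-paper proof to compare against; your rearrangement argument is essentially the classical one underlying O'Neil's result, and it goes through as written (the reduction to $\frac1r=\frac1{r_1}+\frac1{r_2}$ via the embedding $L^{q,\tilde r}\hookrightarrow L^{q,r}$ for $\tilde r\le r$, and the endpoint bookkeeping when some $r_i=\infty$, are both routine).
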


% Lemma 2.7
\begin{lem}[Generalized Young's inequality]\label{lem:Young}
Let $1< q,q_1,q_2<\infty$ and $0 < r,r_1,r_2\le \infty$. Then the following assertions hold:
\begin{enumerate}[\rm (i)]
\item
If 
\[
\frac{1}{q} = \frac{1}{q_1} + \frac{1}{q_2} -1 \quad \text{and}\quad \frac{1}{r} \le \frac{1}{r_1} + \frac{1}{r_2},
\]
then there exists a constant $C>0$ such that
\[
\|f * g\|_{L^{q,r}}
\le C \|f\|_{L^{q_1,r_1}}\|g\|_{L^{q_2,r_2}}
\]
for any $f \in L^{q_1,r_1}(\mathbb R^d)$ and $g \in L^{q_2,r_2}(\mathbb R^d)$.

\item
If 
\[
1= \frac{1}{q_1} + \frac{1}{q_2}  \quad \text{and}\quad 1 \le \frac{1}{r_1} + \frac{1}{r_2},
\]
then there exists a constant $C>0$ such that
\[
\|f * g\|_{L^{\infty}}
\le C \|f\|_{L^{q_1,r_1}}\|g\|_{L^{q_2,r_2}}
\]
for any $f \in L^{q_1,r_1}(\mathbb R^d)$ and $g \in L^{q_2,r_2}(\mathbb R^d)$.

\item
There exists a constant $C>0$ such that 
\[
\|f * g\|_{L^{q,r}}
\le C \|f\|_{L^{q,r}}\|g\|_{L^{1}}
\]
for any $f \in L^{q,r}(\mathbb R^d)$ and $g \in L^{1}(\mathbb R^d)$.
\end{enumerate}
\end{lem}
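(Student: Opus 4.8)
The plan is to reduce everything to well-known real-interpolation statements about the Lorentz scale, so that no computation with rearrangements is needed beyond what the cited reference \cite{Gr2008} already provides. First I would recall the endpoint Lebesgue bounds that serve as the skeleton of the argument. For part (i), the relevant endpoints are the classical Young inequalities $\|f*g\|_{L^{\bar q}}\le\|f\|_{L^{q_1}}\|g\|_{L^{\bar q_2}}$ with $\tfrac1{\bar q}=\tfrac1{q_1}+\tfrac1{\bar q_2}-1$, which hold for $\bar q$ ranging over an open interval around $q$ once $q_1,q_2$ are fixed in $(1,\infty)$. The idea is then to fix $f\in L^{q_1,r_1}$ (or $g\in L^{q_2,r_2}$) and view convolution with $f$ as a sublinear operator; because we can vary the Lebesgue exponents on both ends of the target while keeping the source fixed, bilinear real interpolation (the off-diagonal Marcinkiewicz/Stein–Weiss form, e.g.\ \cite[Exercise 1.4.19 or Theorem 1.4.24]{Gr2008}) upgrades the strong-type Lebesgue bounds to the Lorentz bound with the stated loss $\tfrac1r\le\tfrac1{r_1}+\tfrac1{r_2}$ in the third index. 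Concretely: first interpolate in $g$ with $f$ fixed in an $L^{p}$ space to get $L^{q_2,r_2}\to L^{q,r_2}$-type control with a single Lebesgue exponent on $f$; then interpolate in $f$ to replace that Lebesgue exponent by $L^{q_1,r_1}$, and combine the two via Hölder in the Lorentz index (the sum $\tfrac1{r_1}+\tfrac1{r_2}$ appears exactly here).

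For part (ii), the same scheme applies but now the target is $L^\infty$, which is the diagonal endpoint $q=\infty$ of the Young inequality $\|f*g\|_{L^\infty}\le\|f\|_{L^{q_1}}\|g\|_{L^{q_1'}}$ with $\tfrac1{q_1}+\tfrac1{q_1'}=1$. Since $L^\infty$ is not an interpolation endpoint with room to spare on the target side, the cleanest route is the duality/Hölder identity $|(f*g)(x)|\le\|f(x-\cdot)\|_{L^{q_1,r_1}}\|g\|_{L^{q_1',r_2}}$, which is nothing but the translation-invariant case of the generalized Hölder inequality, Lemma \ref{lem:Holder}(i), applied pointwise in $x$ with the pair of exponents $(q_1,q_1')$ and $(r_1,r_2)$ subject to $1\le\tfrac1{r_1}+\tfrac1{r_2}$; taking the supremum over $x$ and using translation invariance of the Lorentz quasi-norm finishes it. Part (iii) is the degenerate case $q_1=q,\ q_2=1$ of part (i): one does not even need interpolation, since $\|f*g\|_{L^{q,r}}\le\int|g(y)|\,\|f(\cdot-y)\|_{L^{q,r}}\,dy=\|f\|_{L^{q,r}}\|g\|_{L^1}$ follows from Minkowski's integral inequality for the $L^{q,r}$ quasi-norm — valid here because $1<q<\infty$ guarantees $L^{q,r}$ is normable (Remark \ref{rem:w-Lorentz}(c)) up to a fixed constant, so the triangle inequality can be applied with a harmless multiplicative constant.

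The main obstacle is the bookkeeping in the bilinear interpolation of part (i): one must check that the loss in the third index is genuinely only the subadditive condition $\tfrac1r\le\tfrac1{r_1}+\tfrac1{r_2}$ and not something worse, and that both $q_1$ and $q_2$ can be perturbed within $(1,\infty)$ while keeping $\tfrac1{q_1}+\tfrac1{q_2}-1=\tfrac1q>0$ — this is where the hypotheses $1<q,q_1,q_2<\infty$ are used, since they keep all three exponents strictly inside the open range where the Lebesgue Young inequality is a bounded bilinear map with room on every side. If one prefers to avoid invoking a bilinear interpolation theorem directly, the alternative is to freeze $g$ in a Lebesgue space, interpolate the linear operator $f\mapsto f*g$ once, then freeze $f$ in a Lebesgue space and interpolate $g\mapsto f*g$ once more, and finally glue the two Lorentz estimates by the Hölder inequality for sequences/integrals in the third variable; this two-step linear version is elementary given \cite{Gr2008} but requires care that the constants obtained are uniform, which again is exactly the openness of the admissible exponent region. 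Everything else is routine, so I would present parts (ii) and (iii) in a few lines and devote the bulk of the write-up to making the interpolation in (i) precise.
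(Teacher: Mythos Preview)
The paper does not give its own proof of this lemma; it simply records the references (O'Neil \cite{ONe1963}, Yap \cite{Yap1969} for the extension to second exponents below $1$, and Lemari\'e-Rieusset \cite{Le2002} and Wang--Wei--Ye \cite{WWYarxiv} for (iii)). So your proposal is being measured against those classical arguments rather than anything in the paper itself. Your treatments of (ii) and (iii) are essentially sound: (ii) is exactly the pointwise application of Lemma~\ref{lem:Holder} with $q=1$, $r=1$, followed by translation invariance; for (iii) the one caveat is that Remark~\ref{rem:w-Lorentz}(c) only asserts normability of $L^{q,r}$ when $1<q<\infty$ \emph{and} $1\le r\le\infty$, so your Minkowski-integral shortcut is not justified for $0<r<1$. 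The clean fix is the one you already set up elsewhere: view $f\mapsto f*g$ as a linear operator bounded on $L^{p}$ with norm $\|g\|_{L^1}$ for every $1\le p\le\infty$, and real-interpolate between two such Lebesgue endpoints to reach $L^{q,r}$ for arbitrary $r$.

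There is, however, a genuine gap in part~(i). The two-step \emph{linear} interpolation you describe as an alternative does not recover the sharp condition $\tfrac1r\le\tfrac1{r_1}+\tfrac1{r_2}$. Tracing it: after interpolating in $g$ with $f\in L^{q_1}$ fixed you obtain $\|f*g\|_{L^{q,r_2}}\le C\|f\|_{L^{q_1}}\|g\|_{L^{q_2,r_2}}$; now varying $q_1$ forces $q$ to vary as well, and real interpolation of the family $L^{q_1^{(i)}}\to L^{q^{(i)},r_2}$ with parameter $(\theta,r_1)$ yields $L^{q_1,r_1}\to (L^{q^{(0)},r_2},L^{q^{(1)},r_2})_{\theta,r_1}=L^{q,r_1}$, since the second Lorentz index is overwritten by the interpolation parameter when the first indices differ. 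You therefore land at $r=\min(r_1,r_2)$, not at the smaller values allowed by $\tfrac1r\le\tfrac1{r_1}+\tfrac1{r_2}$ (take $r_1=r_2=2$, $r=1$ to see the discrepancy). The ``H\"older in the Lorentz index'' you invoke does not materialize from this procedure. To get the sharp statement you need either a genuine \emph{bilinear} real-interpolation theorem of Lions--Peetre type (which is not what \cite[Theorem~1.4.19 or Exercise~1.4.19/1.4.24]{Gr2008} provide---those are the sublinear off-diagonal Marcinkiewicz theorem), or O'Neil's original route via the pointwise rearrangement bound $(f*g)^{**}(t)\le t\,f^{**}(t)\,g^{**}(t)+\int_t^\infty f^{**}(s)g^{**}(s)\,ds$, after which the H\"older step in the second index is honest. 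I would recommend following O'Neil directly for (i), which is also what the paper's citations point to.
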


Lemmas \ref{lem:Holder} and \ref{lem:Young} 
are originally proved by O'Neil \cite{ONe1963} (see also Yap \cite{Yap1969} for Lorentz spaces with second exponents less than one).
Lemma~\ref{lem:Young} (iii) is known in the abstract setting (cf. Lemari\'e-Rieusset \cite[Chapter 4, Proposition 4.1]{Le2002}). It is also recently proved by Wang, Wei and Ye \cite[Lemma 2.2]{WWYarxiv}.\\

We also have the interpolation inequality in Lorentz spaces (see e.g. \cite[(2.4) on page 8]{WWYarxiv}).

\begin{lem}\label{lem:interpolation_Lorentz}
Let $0<q_1<q<q_2\le \infty$, $0<r\le \infty$ and $0<\theta<1$ satisfy
\[
\frac{1}{q} = \frac{\theta}{q_1}+\frac{1-\theta}{q_2}.
\]
Then
\[
\|f\|_{L^{q,r}} \le 
\left(
\frac{(q_2-q_1)q^2}{(q_2-q)(q-q_1) r}
\right)^{\frac{1}{r}}
\|f\|_{L^{q_1,\infty}}^\theta \|f\|_{L^{q_2,\infty}}^{1-\theta}
\]
for any $f \in L^{q_1,\infty}(\mathbb R^d) \cap L^{q_2,\infty}(\mathbb R^d)$.
\end{lem}

%%%%%%%%%%%%
%%% Section 3 %%%
%%%%%%%%%%%%
\section{Linear estimates}\label{sec:3}

In this section, we summarize linear estimates 
for the heat semigroup in the weighted Lorentz spaces.

%%% Subsection 3.1 %%%
\subsection{Smoothing and time decay estimates in weighted spaces}\label{sub:3.1}
Let 
$\{e^{t\Delta}\}_{t>0}$ be the heat semigroup whose element is defined by 
\[
e^{t\Delta} f := G_t \ast f,\quad f\in \mathcal S'(\mathbb R^d)
\]
with the Gaussian kernel 
\[
G_t(x) := (4\pi t)^{-\frac{d}{2}} e^{-\frac{|x|^2}{4t}},\quad t>0,\ x \in \mathbb R^d.
\]
In this subsection, we prove the following:

% Proposition 3.1
\begin{prop}
	\label{prop:linear-main}
Let $d \in \mathbb N$, $1\le q_1\le \infty$, $1<q_2\le \infty$, $0< r_1,r_2 \le \infty$ and $s_1,s_2\in \mathbb R$. 
Then there exists a constant $C>0$ such that 
\begin{equation}\label{linear-weight1}
\|e^{t\Delta}\|_{L^{q_1,r_1}_{s_1} \to L^{q_2,r_2}_{s_2}} = C t ^{-\frac{d}{2} (\frac{1}{q_1} - \frac{1}{q_2}) - \frac{s_1 - s_2}{2}}
\end{equation}
for any $t>0$ if and only if $q_1,q_2,r_1,r_2,s_1,s_2$ satisfy 
\begin{empheq}[left={\empheqlbrace}]{alignat=2}
  & 0 \le \frac{s_2}{d} + \frac{1}{q_2} \le \frac{s_1}{d} + \frac{1}{q_1} \le 1, \label{linear-condi1}\\
  & s_2 \le s_1,\label{linear-condi2}
\end{empheq}
and
\begin{empheq}[left={\empheqlbrace}]{alignat=2}
  & r_1 \le 1\quad \text{if }\frac{s_1}{d} + \frac{1}{q_1}=1 \text{ or }q_1=1,\label{linear-condi3}\\
  & r_2 = \infty\quad \text{if } \frac{s_2}{d} + \frac{1}{q_2}=0,\label{linear-condi4}\\
  & r_1 \le r_2 \quad \text{if }\frac{s_1}{d} + \frac{1}{q_1}=\frac{s_2}{d} + \frac{1}{q_2},\label{linear-condi5}\\
  & r_i=\infty \quad \text{if } q_i=\infty \quad (i=1,2).\label{linear-condi7}
\end{empheq}
\end{prop}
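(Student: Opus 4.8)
The plan is to prove the ``if'' and ``only if'' directions separately, with the ``if'' direction being the substantive one. Throughout, the guiding principle is that the weighted estimate should follow from the unweighted Lorentz-space mapping properties of $e^{t\Delta}$ combined with the generalized Young inequality (Lemma~\ref{lem:Young}), once one understands how the weight $|x|^{s}$ interacts with the Gaussian kernel.

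\textbf{The ``if'' direction.} First I would reduce to $t=1$ by scaling: since $e^{t\Delta}f(x) = (e^{\Delta}f(\sqrt{t}\,\cdot))(x/\sqrt t)$ and $\||\cdot|^{s}g(\sqrt t\,\cdot)\|_{L^{q,r}} = t^{-\frac{d}{2q}-\frac{s}{2}}\||\cdot|^{s}g\|_{L^{q,r}}$, the claimed power of $t$ in \eqref{linear-weight1} is exactly the one dictated by homogeneity, so it suffices to prove $\|e^{\Delta}\|_{L^{q_1,r_1}_{s_1}\to L^{q_2,r_2}_{s_2}}<\infty$. Next, writing $g = |\cdot|^{s_1}f$, the statement becomes: the operator $T g := |\cdot|^{s_2} e^{\Delta}(|\cdot|^{-s_1}g)$ is bounded from $L^{q_1,r_1}$ to $L^{q_2,r_2}$. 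I would split the kernel of $T$ into a near-diagonal part $|x-y|\le \max\{|x|,|y|\}/2$ (equivalently $|x|\sim|y|$) and a far part. On the near part, $|x|^{s_2}|y|^{-s_1}\sim |x|^{s_2-s_1}$ is bounded below in the sense needed only when $s_2\le s_1$ — this is where \eqref{linear-condi2} enters — and one can absorb $|x|^{s_2-s_1}$ together with the rapidly decaying Gaussian and reduce to the unweighted Young inequality with exponents $(q_1,r_1),(q_2,r_2)$; the constraint $\frac1{q_1}\ge\frac1{q_2}$ and the second-exponent conditions \eqref{linear-condi3}, \eqref{linear-condi5} are precisely the hypotheses of Lemma~\ref{lem:Young}. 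On the far part, where $|x-y|\gtrsim \max\{|x|,|y|\}$, the Gaussian kernel is integrable against any polynomial, and one estimates using Hardy-type inequalities and the boundedness of $|x|^{s}$ as a multiplier between Lorentz spaces whose ``summed'' exponent $\frac sd+\frac1q$ lies in $(0,1)$; the endpoint cases $\frac sd+\frac1q\in\{0,1\}$ force $r=\infty$ resp.\ $r\le1$ (conditions \eqref{linear-condi3}, \eqref{linear-condi4}), matching the known mapping properties of the Riesz-potential-type weight. The condition $\frac{s_1}{d}+\frac1{q_1}\le1$ guarantees $|\cdot|^{-s_1}g\in L^1_{\mathrm{loc}}$ so that $e^{\Delta}(|\cdot|^{-s_1}g)$ makes sense, and $\frac{s_2}{d}+\frac1{q_2}\ge0$ is needed for the output weight to be locally finite. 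I would organize this as: (a) an interpolation/duality reduction to a few extreme vertices $(q_1,q_2)\in\{(1,\cdot),(\cdot,\infty)\}$ together with the diagonal $\frac sd+\frac1q$ constant, and (b) direct kernel estimates at those vertices using Lemma~\ref{lem:Holder} and Lemma~\ref{lem:Young}.

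\textbf{The ``only if'' direction.} Here I would exhibit explicit (near-)extremizers. Testing $e^{t\Delta}$ on $f(x)=|x|^{-s_1}\mathbf{1}_{|x|\le1}$-type functions, or more precisely on functions whose profile near $0$ is $|x|^{-a}$ with $a$ just below $d/q_1+s_1$, one reads off that $|x|^{s_2}e^{t\Delta}f$ near the origin behaves like $|x|^{s_2-a}$ times a constant; finiteness of the $L^{q_2,r_2}_{s_2}$ norm then forces $\frac{s_2}d+\frac1{q_2}\le\frac{s_1}d+\frac1{q_1}$ and, in the equality case, the second-exponent ordering \eqref{linear-condi5}. Testing on bump functions away from the origin and examining the spatial decay of $e^{t\Delta}f$ at infinity (which is Gaussian, hence faster than any weight can spoil, unless one wants the \emph{sharp power of $t$}) pins down $\frac{s_1}d+\frac1{q_1}\le1$, because if this fails the local nonintegrability makes $e^{t\Delta}f$ identically $+\infty$ or ill-defined. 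The necessity of $s_2\le s_1$ follows by comparing the behaviour at $0$ and at $\infty$: the homogeneity of the claimed $t$-power already encodes $-\frac{d}{2q_1}-\frac{s_1}2 \le -\frac{d}{2q_2}-\frac{s_2}2$ in one direction, while a separate test (e.g.\ characteristic functions of annuli moving to infinity) gives $s_2\le s_1$. The endpoint conditions \eqref{linear-condi3}, \eqref{linear-condi4}, \eqref{linear-condi7} come from the standard facts that $L^{\infty,r}=\{0\}$ for $r<\infty$, that $L^{q,r}_{s}\not\subset L^1_{\mathrm{loc}}$ when $\frac sd+\frac1q=1$ and $r>1$, and that $|x|^{s}\in L^{q,\infty}$ but not $L^{q,r}$ for $r<\infty$ when $s=-d/q$.

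\textbf{Main obstacle.} I expect the genuinely delicate point to be the far-field kernel estimate in the ``if'' direction when $\frac{s_1}{d}+\frac1{q_1}$ is close to $1$ or $\frac{s_2}{d}+\frac1{q_2}$ is close to $0$, i.e.\ the endpoint behaviour where the weight is barely (non)integrable: there the naive bound $\||x|^{s_2}e^{\Delta}(|x|^{-s_1}\cdot)\|$ loses by a logarithm unless one exploits the Gaussian decay carefully, and the second-exponent bookkeeping (why exactly $r_1\le1$ rather than $r_1<\infty$ at the line $\frac{s_1}{d}+\frac1{q_1}=1$) has to be done by hand. Relatedly, assembling the result by interpolation requires care because Lorentz spaces with second exponent $<1$ are only quasi-normed; I would therefore lean on the explicit O'Neil-type inequalities in Lemma~\ref{lem:Holder} and Lemma~\ref{lem:Young}, which are stated for all $0<r\le\infty$, rather than on abstract interpolation, and treat the second-exponent constraints as direct consequences of those lemmas' hypotheses.
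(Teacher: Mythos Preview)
Your proposal differs from the paper in its basic organization, and the ``if'' direction as you describe it has a genuine gap.

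For sufficiency, the paper does \emph{not} use a near/far kernel decomposition. After the same scaling reduction to $t=1$, it cases on the signs of $s_1,s_2$ and uses the elementary pointwise inequality $|x|^{s}\lesssim |x-y|^{s}+|y|^{s}$ (for $s\ge 0$; the analogue with $|y|^{-s_1}$ when $s_1<0$) to split $|x|^{s_2}(G_1*f)(x)$ into two convolution terms. Each is then estimated by Lemmas~\ref{lem:Holder} and \ref{lem:Young} with intermediate Lorentz exponents $p_i$ chosen so that the weight factor $|\cdot|^{s_2-s_1}\in L^{d/(s_1-s_2),\infty}$ (or $|\cdot|^{-s_1}\in L^{d/s_1,\infty}$, etc.) is absorbed by H\"older. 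The endpoint conditions \eqref{linear-condi3}--\eqref{linear-condi5} are then treated one at a time by pushing one of the $p_i$ to the boundary of its admissible range and invoking the endpoint clauses of Lemma~\ref{lem:Young}. The key feature is that the weight is traded for integrability via H\"older, so one never needs $\tfrac{1}{q_1}\ge\tfrac{1}{q_2}$ by itself.

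The gap in your sketch is exactly here. On your near-diagonal region $|x|\sim|y|$ you write that $|x|^{s_2-s_1}$ can be ``absorbed together with the rapidly decaying Gaussian'' and the problem reduces to ``unweighted Young with exponents $(q_1,r_1),(q_2,r_2)$; the constraint $\tfrac1{q_1}\ge\tfrac1{q_2}$ \ldots are precisely the hypotheses of Lemma~\ref{lem:Young}''. But $\tfrac1{q_1}\ge\tfrac1{q_2}$ is \emph{not} a hypothesis of the proposition: one may have $q_1>q_2$ provided the weights compensate (any choice with $0<\tfrac{s_2}{d}+\tfrac1{q_2}<\tfrac{s_1}{d}+\tfrac1{q_1}<1$, $s_2\le s_1$ and $q_1>q_2$ works). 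Moreover, when $|x|\sim|y|$ are both close to the origin and $s_2<s_1$, the factor $|x|^{s_2-s_1}$ is \emph{large} while $G_1(x-y)$ is bounded below; there is nothing Gaussian to absorb it. So the near-diagonal piece does not reduce to an unweighted Young estimate, and a Hardy-type step is still needed---which is precisely what the paper's H\"older step with $|\cdot|^{s_2-s_1}\in L^{d/(s_1-s_2),\infty}$ supplies.

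For necessity, your outline is broadly right in spirit but conflates two distinct conditions. You write that ``homogeneity of the claimed $t$-power already encodes $-\tfrac{d}{2q_1}-\tfrac{s_1}{2}\le-\tfrac{d}{2q_2}-\tfrac{s_2}{2}$''; that inequality is the \emph{middle} inequality of \eqref{linear-condi1}, not \eqref{linear-condi2}. Scaling (letting $t\to0$ and using $e^{t\Delta}f\to f$) indeed gives the former, but says nothing about $s_2$ versus $s_1$ separately. The paper obtains $s_2\le s_1$ by a \emph{translation} argument: test on $f(\cdot-\tau x_0)$ for a bump $f$ supported in a half-space and send $\tau\to\infty$, tracking how each weight behaves under the shift (monotone convergence is needed when a weight exponent is negative). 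Your ``annuli moving to infinity'' is morally the same idea, but would need to be fleshed out along these lines. For \eqref{linear-condi5} the paper tests on an explicit $f(x)=(1+|x|)^{-d/q_1-s_1}(\log(2+|x|))^{-a/r_1}$ and uses a pointwise lower bound for $e^{\Delta}f$ at infinity; your ``behavior near $0$'' test does not detect this condition, which is governed by the tail.
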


% Remark 3.2
\begin{rem}\label{rem:linear-main}
The estimate \eqref{linear-weight1} can be also obtained for $0<q_2\le 1$. 
More precisely, let $d \in \mathbb N$, $1\le q_1\le \infty$, $0<q_2\le 1$, $0< r_1,r_2 \le \infty$ and $s_1,s_2\in \mathbb R$, and assume 
\eqref{linear-condi1}--\eqref{linear-condi7} with the additional condition 
\begin{equation}\label{linear-condi-add}
r_2\ge1
\quad \text{if }\frac{s_2}{d} + \frac{1}{q_2} = \frac{s_1}{d} + \frac{1}{q_1} = 1.
\end{equation}
Then we have \eqref{linear-weight1} for any $t>0$. 
The additional condition \eqref{linear-condi-add} is required due to use of 
the embedding $L^1(\mathbb R^d) \hookrightarrow L^{1,r_2}(\mathbb R^d)$ for $r_2\ge1$ and 
Young's inequality $\|f * g\|_{L^1} \le \|f\|_{L^1}\|g\|_{L^1}$ in the case $\frac{s_2}{d} + \frac{1}{q_2} = \frac{s_1}{d} + \frac{1}{q_1} = 1$. 
On the other hand, we can also prove the necessity of \eqref{linear-condi1}--\eqref{linear-condi7}, but we do not know if \eqref{linear-condi-add} is necessary.  
The proof is similar to that of Proposition \ref{prop:linear-main}, and we omit it. 
In the proofs of the nonlinear estimates (Lemmas \ref{lem:sec4_nonlinear1}, \ref{lem:sec4_nonlinear2}, \ref{lem:nonlinear_Kato} and \ref{lem:perturbed}), we do not use the case $0<q_2 \le 1$. 
\end{rem}

% Remark 3.3
\begin{rem}
The estimates \eqref{linear-weight1} are known in some particular cases, for example, 
the case $s_2=0$ in Lebesgue spaces in \cite{BenTayWei2017}, the case $s_2 \ge 0$ in Lorentz spaces in \cite{TW}, and 
the case $q_1 \le q_2$ in Lebesgue spaces in 
\cite{OkaTsu2016,Tsu2011} (see also \cite{CIT2022}).
Similar estimates are proved in Herz spaces and weak Herz spaces in \cite{OkaTsu2016,Tsu2011}. 
\end{rem}

% Remark 3.4
\begin{rem}
Proposition \ref{prop:linear-main} gives a precision of \cite[Proposition 3.3]{Tay2020} in the endpoint case \eqref{linear-condi5} with $s_2=0$ and $s_1>0$. This implies that \cite[Remark  3.4, (2)]{Tay2020} does not hold. 
However, this does not change the results in \cite{Tay2020} as this endpoint case is not used in \cite{Tay2020}. 
\end{rem}

To reduce \eqref{linear-weight1} for $e^{t\Delta}$ into that for $e^\Delta$, we give the following lemma.

% Lemma 3.5
\begin{lem}\label{lem:scaling}
Let $d \in \mathbb N$, $1\le q_1,q_2 \le \infty$, $0<r_1,r_2 \le \infty$ and $s_1,s_2\in \mathbb R$. Then 
$e^{\Delta}$ is bounded from $L^{q_1,r_1}_{s_1}(\mathbb R^d)$ into $L^{q_2,r_2}_{s_2}(\mathbb R^d)$ if and only if 
$e^{t\Delta}$ is bounded from $L^{q_1,r_1}_{s_1}(\mathbb R^d)$ into $L^{q_2,r_2}_{s_2}(\mathbb R^d)$ with 
\begin{equation}\label{linear-weight1'}
\|e^{t\Delta}\|_{L^{q_1,r_1}_{s_1} \to L^{q_2,r_2}_{s_2}} =  t ^{-\frac{d}{2} (\frac{1}{q_1} - \frac{1}{q_2}) - \frac{s_1 - s_2}{2}} \|e^{\Delta}\|_{L^{q_1,r_1}_{s_1} \to L^{q_2,r_2}_{s_2}}
\end{equation}
for any $t>0$. 
\end{lem}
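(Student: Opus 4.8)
The plan is to prove Lemma \ref{lem:scaling} by a direct scaling (dilation) argument, reducing the mapping property of $e^{t\Delta}$ to that of $e^{\Delta}$ via the parabolic scaling that is already built into the problem. First I would introduce, for $\lambda>0$, the dilation operator $(D_\lambda f)(x) := f(\lambda x)$, and record its effect on the weighted Lorentz quasi-norm: since the decreasing rearrangement satisfies $(f(\lambda\,\cdot))^*(t) = f^*(\lambda^d t)$ and $|\lambda x|^s = \lambda^s |x|^s$, a change of variables gives $\|D_\lambda f\|_{L^{q,r}_s} = \lambda^{-\frac{d}{q}-s}\|f\|_{L^{q,r}_s}$ for all $0<q,r\le\infty$ and $s\in\mathbb R$ (with the obvious modification when $q=\infty$ or $r=\infty$). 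This identity is the only nontrivial computation and I would state it as the first step.

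Second, I would use the semigroup's scaling covariance: from $G_t(x) = t^{-d/2} G_1(x/\sqrt t)$ one obtains the operator identity $e^{t\Delta} = D_{1/\sqrt t}\, e^{\Delta}\, D_{\sqrt t}$ on $\mathcal S'(\mathbb R^d)$ (equivalently, $(e^{t\Delta} f)(x) = (e^{\Delta}(f(\sqrt t\,\cdot)))(x/\sqrt t)$), which is checked immediately by a change of variables in the convolution integral. Composing the three operators and applying the quasi-norm dilation identity from Step 1 twice yields, for any $f$,
\[
\|e^{t\Delta} f\|_{L^{q_2,r_2}_{s_2}}
= t^{\frac{d}{2}(\frac{1}{q_2}+\frac{s_2}{d})}\,\|e^{\Delta}(D_{\sqrt t} f)\|_{L^{q_2,r_2}_{s_2}}
\le t^{\frac{d}{2}(\frac{1}{q_2}+\frac{s_2}{d})}\,\|e^{\Delta}\|_{L^{q_1,r_1}_{s_1}\to L^{q_2,r_2}_{s_2}}\,\|D_{\sqrt t} f\|_{L^{q_1,r_1}_{s_1}},
\]
and since $\|D_{\sqrt t} f\|_{L^{q_1,r_1}_{s_1}} = t^{-\frac{d}{2}(\frac{1}{q_1}+\frac{s_1}{d})}\|f\|_{L^{q_1,r_1}_{s_1}}$, the exponents combine to exactly $-\frac{d}{2}(\frac{1}{q_1}-\frac{1}{q_2})-\frac{s_1-s_2}{2}$, giving the bound in \eqref{linear-weight1'}. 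Taking the supremum over $\|f\|_{L^{q_1,r_1}_{s_1}}=1$ shows $\|e^{t\Delta}\|_{L^{q_1,r_1}_{s_1}\to L^{q_2,r_2}_{s_2}} \le t^{-\frac{d}{2}(\frac{1}{q_1}-\frac{1}{q_2})-\frac{s_1-s_2}{2}}\|e^{\Delta}\|_{L^{q_1,r_1}_{s_1}\to L^{q_2,r_2}_{s_2}}$, and applying the same argument with $t$ replaced by $1/t$ (i.e. running the dilation identity in the reverse direction) gives the matching lower bound, hence equality. The forward implication (boundedness of $e^{t\Delta}$ implies boundedness of $e^{\Delta}$) is just the special case $t=1$.

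There is no real obstacle here; the one point that requires a little care is the bookkeeping for the degenerate endpoints — when $q_i=\infty$ one must read the quasi-norm as the weighted essential supremum (and recall the convention $r_i=\infty$), and when $r_i=\infty$ the decreasing-rearrangement formula $\sup_t t^{1/q} f^*(t)$ must be used — but in every case the dilation identity $\|D_\lambda f\|_{L^{q,r}_s} = \lambda^{-d/q-s}\|f\|_{L^{q,r}_s}$ holds verbatim, so the argument goes through uniformly. I would therefore present Steps 1 and 2 cleanly, note that the endpoint cases are handled by the same change of variables, and conclude.
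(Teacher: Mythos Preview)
Your proof is correct and follows essentially the same scaling argument as the paper's own proof. The paper writes the identity $(e^{t\Delta} f)(x) = (e^{\Delta}(f(t^{1/2}\,\cdot)))(t^{-1/2}x)$ and its inverse directly in terms of functions rather than introducing the dilation operator $D_\lambda$, but the content is identical; your presentation is slightly more explicit in stating and verifying the dilation identity $\|D_\lambda f\|_{L^{q,r}_s}=\lambda^{-d/q-s}\|f\|_{L^{q,r}_s}$, which the paper uses implicitly.
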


% Proof of Lemma 3.5
\begin{proof} 
It is enough to show \eqref{linear-weight1'} if $e^{\Delta}$ is bounded from $L^{q_1,r_1}_{s_1}(\mathbb R^d)$ into $L^{q_2,r_2}_{s_2}(\mathbb R^d)$, since the converse is trivial. 
The proof is based on the scaling argument. 
Let  $f \in L^{q_1,r_1}_{s_1}(\mathbb R^d)$. 
Since
\[
(e^{t\Delta} f)(x) =
\left(e^{\Delta} (f(t^{\frac12} \cdot))\right)(t^{-\frac12} x),
\]
\[
(e^{\Delta} f)(x) =
\left(e^{t\Delta} (f(t^{-\frac12} \cdot))\right)(t^{\frac12} x),
\]
for $t>0$ and $x\in \mathbb R^d$, we have
\[
\|e^{t\Delta} f\|_{L^{q_2,r_2}_{s_2}}
\le  t ^{-\frac{d}{2} (\frac{1}{q_1} - \frac{1}{q_2}) - \frac{s_1 - s_2}{2}} \|e^{\Delta}\|_{L^{q_1,r_1}_{s_1} \to L^{q_2,r_2}_{s_2}}
\|f\|_{L^{q_1,r_1}_{s_1}},
\]
\[
\|e^{\Delta} f\|_{L^{q_2,r_2}_{s_2}}
\le  t ^{\frac{d}{2} (\frac{1}{q_1} - \frac{1}{q_2}) + \frac{s_1 - s_2}{2}}
\|e^{t\Delta}\|_{L^{q_1,r_1}_{s_1} \to L^{q_2,r_2}_{s_2}}
\|f\|_{L^{q_1,r_1}_{s_1}}.
\]
Hence, \eqref{linear-weight1'} is proved. 
\end{proof}

% Proof of the necessity part
\begin{proof}[Proof of the necessity part of Proposition \ref{prop:linear-main}]

For the condition \eqref{linear-condi7}, 
see Remark \ref{rem:w-Lorentz} (b).

\smallskip

\noindent{\it Step 1: Conditions $\frac{s_1}{d} + \frac{1}{q_1}\le 1$ in \eqref{linear-condi1} and \eqref{linear-condi3}}. 
If either of these fails, then 
$L^{q_1, r_1}_{s_1} (\mathbb R^d)$ is not included in $L^1_{\mathrm{loc}}(\mathbb R^d)$ (see Remark~\ref{rem:w-Lorentz} (f)),
which implies that $e^{t\Delta} : L^{q_1, r_1}_{s_1} (\mathbb R^d) \to L^{q_2,r_2}_{s_2}(\mathbb R^d)$ is not well-defined.

\smallskip

\noindent{\it Step 2: Conditions $\frac{s_2}{d}+\frac{1}{q_2}\ge0$ in \eqref{linear-condi1} and \eqref{linear-condi4}}. 
Suppose either of these fails, i.e., 
\[
\frac{s_2}{d}+\frac{1}{q_2} < 0\quad \text{or}\quad \frac{s_2}{d}+\frac{1}{q_2}=0 \text{ and }
 r_2 < \infty.
\]
We consider the case $\frac{s_2}{d}+\frac{1}{q_2}=0$ and $r_2 < \infty$. By Lemma \ref{lem:A-cha}, if $f \in L^{q_2, r_2}_{s_2}(\mathbb R^d)$, then 
\[
\liminf_{|x|\to 0} |f(x)| \le \liminf_{|x| \to 0} |x|^{s_2+\frac{d}{q_2}} |\log |x||^{\frac{1}{r_2}} |f(x)| = 0.
\]
However, there exists an $f_0 \in L^{q_1,r_1}_{s_1}(\mathbb R^d)$ such that 
\[
\liminf_{|x|\to 0} |e^{\Delta} f_0(x)| \not = 0,
\] 
which implies $e^{\Delta} f_0 \not\in L^{q_2, r_2}_{s_2}(\mathbb R^d)$. 
Hence, it is impossible to obtain \eqref{linear-weight1}. 
The case $\frac{s_2}{d} + \frac{1}{q_2} < 1$ is similarly proved.

\smallskip

\noindent{\it Step 3: Condition $\frac{s_1}{d}+\frac{1}{q_1}\le \frac{s_2}{d}+\frac{1}{q_2}$ in \eqref{linear-condi1}}. 
Suppose that $\frac{s_1}{d}+\frac{1}{q_1}\le \frac{s_2}{d}+\frac{1}{q_2}$ does not hold. 
Let $f \in C^\infty_0(\mathbb R^d)$ with $f\not =0$. Then 
we have 
\[
\|e^{t\Delta} f\|_{L^{q_2,r_2}_{s_2}} \le C 
t ^{\delta}
	\|f \|_{L^{q_1, r_1}_{s_1}} ,\quad t>0,
\]
where
\[
\delta := 
-\frac{d}{2} \left(\frac{1}{q_1} - \frac{1}{q_2}\right) - \frac{s_1 - s_2}{2}>0.
\]
Hence, $e^{t\Delta} f \to 0$ in $\mathcal S'(\mathbb R^d)$ as $t\to 0$. 
Combining this with the continuity $e^{t\Delta} f \to f$ in $\mathcal S'(\mathbb R^d)$ as $t\to 0$, 
we have $f=0$ by uniqueness of the limit. However, this is a contradiction to $f\not =0$. Thus, $\frac{s_1}{d}+\frac{1}{q_1}\le \frac{s_2}{d}+\frac{1}{q_2}$ is necessary.

\smallskip

\noindent{\it Step 4: Condition \eqref{linear-condi2}}. 
The proof is based on the translation argument as in \cite{DenapoliDrelichman,TW}.
In fact, take a non-negative function $f \in C^\infty_0(\mathbb R^d)$ with $\supp f \subset \{x = (x_1, x') \in \mathbb R \times \mathbb R^{d-1} \, ;\, x_1 \ge 0\}$, and let $x_0 =(1,0,\ldots,0) \in \mathbb R^d$ and $\tau>0$. 
Since $(e^{\Delta} f(\cdot - \tau x_0))(x) = (e^{\Delta} f)(x-\tau x_0)$, it follows from \eqref{linear-weight1} that 
\[
\left \||\cdot|^{s_2}(e^{\Delta} f)(\cdot-\tau x_0)\right\|_{L^{q_2,r_2}}
\le 
C \left \||\cdot|^{s_1}f(\cdot-\tau x_0)\right\|_{L^{q_1,r_1}}.
\]
By making the changes of variables, we have 
\begin{equation}\label{eq.inequality}
\tau^{-(s_1-s_2)}\left \|\left|{\cdot\over \tau}+ x_0\right|^{s_2}e^{\Delta} f\right\|_{L^{q_2,r_2}}\leq C \left \|\left|{\cdot\over \tau}+ x_0\right|^{s_1}f\right\|_{L^{q_1,r_1}}.
\end{equation}
The weight $|{\cdot \over \tau}+ x_0|^{s_2}$ has the uniform lower bounds with respect to sufficient large $\tau$:
\[
\left|{x\over \tau}+ x_0\right|^{s_2}
\ge 
\begin{cases}
\displaystyle \left(
1- {|x| \over \tau}
\right)^{s_2}
\ge 2^{-s_2}
\quad
& \text{for }|x|\le 1,\ \tau\ge 2 
\quad \text{if }s_2\ge0,\\
\displaystyle  \left({|x|\over \tau}+ 1\right)^{s_2} \ge (|x|+1)^{s_2}
&\text{for }\tau\ge 1 
\quad \text{if }s_2\le0.
\end{cases}
\]
Hence, once 
\begin{equation}\label{eq.limsup}
\limsup_{\tau \to \infty} \left \|\left|{\cdot\over \tau}+ x_0\right|^{s_1}f\right\|_{L^{q_1,r_1}} < \infty
\end{equation}
is obtained, we deduce $s_2 \le s_1$ from \eqref{eq.inequality}, \eqref{eq.limsup} and positivity of $e^{\Delta}f$. 
Therefore, it is enough to show \eqref{eq.limsup}. 
In the case $s_1 \ge 0$, we have the uniform upper bound
\[
\left|{x\over \tau}+ x_0\right|^{s_1} \le 
\left({|x|\over \tau}+ 1\right)^{s_1} \le (|x|+1)^{s_1}
\quad \text{for }\tau\ge 1, 
\]
which implies \eqref{eq.limsup}. 
In the other case $s_1 < 0$, 
the weight
\[
\left|{x\over \tau}+ x_0 \right|^{s_1} = \left[
\left(\frac{x_1}{\tau} + 1\right)^2 + \frac{|x'|^2}{\tau^2}
\right]^\frac{s_1}{2}
\]
has a singularity only at $x=x^*(\tau)=(-\tau,0,\ldots,0)$, and 
is increasing with respect to $\tau$ for each $x \in \{x_1\ge0\}$. 
Here, 
we note that $|{\cdot\over \tau}+ x_0|^{s_1}f \in L^{q_1,r_1}$ for any $\tau>0$, 
since the singular points $x^*(\tau)$ are not included in $\supp f$ for any $\tau>0$. 
Hence, 
\[
\left|{\cdot\over \tau}+ x_0\right|^{s_1}f
\nearrow f\quad 
\text{a.e.}\, x \in \{x_1\ge 0\} \quad \text{as }\tau \to\infty,
\]
and we can use Lemma \ref{lem:monotone}
to obtain 
\[
\lim_{\tau \to \infty} \left \|\left|{\cdot\over \tau}+ x_0\right|^{s_1}f\right\|_{L^{q_1,r_1}} 
 =\|f\|_{L^{q_1,r_1}}< \infty.
\]
This implies \eqref{eq.limsup}. 
Thus, the necessity of $s_2 \le s_1$ is proved.

\smallskip

\noindent{\it Final step: Condition \eqref{linear-condi5}}. 
Let 
\[
f (x) = (1+|x|)^{-\frac{d}{q_1}-s_1} ( \log (e+|x|))^{-b}
\] 
where 
$b > \frac{1}{r_1}$ if $r_1 < \infty$ and $b=0$ if $r_1=\infty$.
Then $f\in L^{q_1,r_1}_{s_1}(\mathbb R^d)$ (see Remark \eqref{rem:w-Lorentz} (h)). 
Since 
$f$ is a positive, radially symmetric and decreasing function, 
we have 
\begin{equation}\label{3.2:lower}
e^{\Delta}f(x)
\ge
\int_{|y|\le 1}
G_1(y) f(x-y)\, dy
\ge Cf(x)
\end{equation}
for $|x| \ge1$ sufficiently large. 
Now, if 
\eqref{linear-condi5} fails, 
i.e., 
$r_1 > r_2$ and $\frac{s_1}{d} + \frac{1}{q_1}=\frac{s_2}{d} + \frac{1}{q_2}$, 
then 
$f \not \in L^{q_2,r_2}_{s_2}(\mathbb R^d)$ since $b$ can be taken as $\frac{1}{r_1} < b < \frac{1}{r_2}$ if $r_1<\infty$ and $0= b < \frac{1}{r_2}$ if $r_1=\infty$.
Hence, by \eqref{3.2:lower}, we also have $e^{\Delta}f \not \in L^{q_2,r_2}_{s_2}(\mathbb R^d)$, which means $\|e^{\Delta}\|_{L^{q_1,r_1}_{s_1} \to L^{q_2,r_2}_{s_2}} = \infty$. 
Thus, the necessity of \eqref{linear-condi5} is shown by contraposition.
The proof of the necessity part is finished.
\end{proof}

% Proof of the sufficiency part
\begin{proof}[Proof of the sufficiency part of Proposition \ref{prop:linear-main}]
By Lemma \ref{lem:scaling}, it is enough to prove 
\eqref{linear-weight1} with $t=1$:
\begin{equation}\label{eq.goal:t=1}
\|e^{\Delta} f\|_{L^{q_2,r_2}_{s_2}} \le C\|f\|_{L^{q_1,r_1}_{s_1}}.
\end{equation}

We start the proof with the case 
$1 < q_1, q_2<\infty$. 
We first prove \eqref{eq.goal:t=1} 
with the non-endpoint case: 
\begin{equation}\label{para:non-end}
0 < \frac{s_2}{d} + \frac{1}{q_2} < \frac{s_1}{d} + \frac{1}{q_1} < 1
\quad \text{and}\quad  
s_2\le s_1.
\end{equation}
From Lemma \ref{lem:scaling} and the embedding $L^{q_1,r_1}(\mathbb R^d) \hookrightarrow L^{q_1,\infty}(\mathbb R^d)$ for any $0< r_1 \le \infty$, 
it is sufficient to show that 
$e^{\Delta}$ is bounded from $L^{q_1,\infty}_{s_1}(\mathbb R^d)$ into $L^{q_2,r_2}_{s_2}(\mathbb R^d)$. 
We divide the proof into three cases:
\[
s_2 \ge 0, \quad s_2 < 0 \le s_1 \quad \text{and}\quad s_1 < 0. 
\]

In the case $s_2\ge0$, we use the inequality $|x|^{s_2} \le C( |x-y|^{s_2} + |y|^{s_2})$ to obtain 
\[
\begin{split}
||x|^{s_2}e^{\Delta} f (x)|
& =
|x|^{s_2} \left|(G_1 * f) (x)\right|\\
& \le C \left\{
(G_1 * (|\cdot|^{s_2} |f|))(x)
+
((|\cdot|^{s_2} G_1) * |f|))(x)
\right\}.
\end{split}
\]
Then we use Lemma \ref{lem:Holder} (i) and 
Lemma \ref{lem:Young} (i) to estimate
\begin{equation}\label{eq2}
\begin{split}
\|G_1 * (|\cdot|^{s_2} |f|)\|_{L^{q_2,r_2}}
& \le C\|G_1\|_{L^{p_1,r_2}} \||\cdot|^{s_2} |f|\|_{L^{p_2,\infty}}\\
& \le C\|G_1\|_{L^{p_1,r_2}} \||\cdot|^{s_2-s_1}\|_{L^{\frac{d}{s_1-s_2}, \infty}}
\||\cdot|^{s_1}f\|_{L^{q_1,\infty}}\\
& \le C\|f\|_{L^{q_1,\infty}_{s_1}},
\end{split}
\end{equation}
where $p_1$ and $p_2$ satisfy $1<p_1<(\frac{s_2}{d} + \frac{1}{q_2})^{-1}$, $(1-\frac{s_2}{d})^{-1}<p_2 <q_2$, $\frac{1}{q_2} = \frac{1}{p_1} + \frac{1}{p_2} -1$ and $\frac{1}{p_2} = \frac{s_1-s_2}{d} + \frac{1}{q_1}$, and 
\begin{equation}\label{eq3}
\begin{split}
\||(|\cdot|^{s_2} G_1) * |f|\|_{L^{q_2,r_2}}
& \le 
C \||\cdot|^{s_2} G_1\|_{L^{p_3,r_2}} \|f\|_{L^{p_4,\infty}}\\
& \le C \||\cdot|^{s_2} G_1\|_{L^{p_3,r_2}}\||\cdot|^{-s_1}\|_{L^{\frac{d}{s_1},\infty}} 
\||\cdot|^{s_1}f\|_{L^{q_1,\infty}}\\
& \le C\|f\|_{L^{q_1,\infty}_{s_1}},
\end{split}
\end{equation}
where $p_3$ and $p_4$ satisfy $(1-\frac{s_2}{d})^{-1}< p_3 < q_2$, $1<p_4<(\frac{s_2}{d} + \frac{1}{q_2})^{-1}$, $\frac{1}{q_2} = \frac{1}{p_3} + \frac{1}{p_4} -1$ and $\frac{1}{p_4} = \frac{s_1}{d} + \frac{1}{q_1}$. 
Here, we note that such $p_1$, $p_2$, $p_3$ and $p_4$ exist if \eqref{para:non-end} and $s_2\ge0$ hold.
Hence, \eqref{eq.goal:t=1} is proved in this case.

In the case $s_2 < 0 \le s_1$,
we use Lemma \ref{lem:Holder} (i) to obtain 
\begin{equation}\label{eq4}
\| e^{\Delta} f \|_{L^{q_2,r_2}_{s_2}}
\le C
\| |\cdot|^{s_2} \|_{L^{-\frac{d}{s_2},\infty}}
\|e^{\Delta} f \|_{L^{p_5,r_2}},
\end{equation}
where $p_5$ satisfies $(\frac{s_1}{d} + \frac{1}{q_1})^{-1}<p_5<\infty$ and $\frac{1}{q_2} = -\frac{s_2}{d} + \frac{1}{p_5}$, 
and such a $p_5$ exists under the conditions \eqref{para:non-end} and $s_2 < 0 \le s_1$.
Now, noting $p_5$ satisfies $0 <  \frac{1}{p_5} < \frac{s_1}{d} + \frac{1}{q_1} < 1$ and $0\le s_1$, 
we can apply the estimate shown in the previous case with $s_2=0$ to obtain
\[
\|e^{\Delta} f \|_{L^{p_5,r_2}}
\le C
\|f\|_{L^{q_1,\infty}_{s_1}}.
\]
Thus, the case $s_2 < 0 \le s_1$ is also proved.

In the case $s_1< 0$, setting $g := |x|^{s_1}|f|$, and 
using the inequality $|y|^{-s_1} \le C( |x-y|^{-s_1} + |x|^{-s_1})$, we have  
\begin{equation}\label{eq6}
\begin{split}
\left| |x|^{s_2}e^{\Delta} f (x)\right|
& \le 
|x|^{s_2} \int_{\mathbb R^d}
G_1 (x-y) |y|^{-s_1}g(y)\, dy\\
& \le C\left( |x|^{s_2-s_1}
e^{\Delta} g(x)
+
|x|^{s_2}\left((|\cdot|^{-s_1} G_1) * g\right)(x)\right).
\end{split}
\end{equation}
Then we use Lemma \ref{lem:Holder} (i) and Lemma \ref{lem:Young} (i) to estimate
\begin{equation}\label{eq7}
\begin{split}
\| |\cdot|^{s_2-s_1}
e^{\Delta} g
\|_{L^{q_2,r_2}}
& \le C \| |\cdot|^{s_2-s_1} \|_{L^{\frac{d}{s_1-s_2},\infty}}
\| e^{\Delta} g\|_{L^{p_6,r_2}}\\
& \le C \| |\cdot|^{s_2-s_1} \|_{L^{\frac{d}{s_1-s_2},\infty}}
\|G_1\|_{L^{p_7,r_2}} \|g\|_{L^{{q_1, \infty}}}\\
& \le C\|f\|_{L^{q_1,\infty}_{s_1}},
\end{split}
\end{equation}
where $p_6$ and $p_7$ satisfy $q_1<p_6 < -\frac{d}{s_1}$, $1 < p_7 < (\frac{s_2}{d} + \frac{1}{q_2})^{-1}$, $\frac{1}{q_2} = \frac{s_1-s_2}{d} + \frac{1}{p_6}$ and $\frac{1}{p_6} = \frac{1}{p_7} + \frac{1}{q_1}-1$,
and 
\begin{equation}\label{eq8}
\begin{split}
\left\| |\cdot|^{s_2}\left((|\cdot|^{-s_1} G_1) * g\right)
\right\|_{L^{q_2,r_2}}
& \le C \| |\cdot|^{s_2}\|_{L^{-\frac{d}{s_2},\infty}}
\|(|\cdot|^{-s_1} G_1) * g\|_{L^{p_8, r_2}}\\
& \le C \| |\cdot|^{s_2}\|_{L^{-\frac{d}{s_2},\infty}}
\||\cdot|^{-s_1} G_1\|_{L^{p_9, r_2}}
\|g\|_{L^{q_1,\infty}}\\
&\le C\|f\|_{L^{q_1,\infty}_{s_1}},
\end{split}
\end{equation}
where $p_8$ and $p_9$ satisfy $(\frac{s_1}{d} + \frac{1}{q_1})^{-1} < p_8< \infty$, $(\frac{s_1}{d}+1)^{-1} < p_9 < (1-\frac{1}{q_1})^{-1}$, 
$\frac{1}{q_2} = - \frac{s_2}{d} + \frac{1}{p_8}$ and $\frac{1}{p_8} = \frac{1}{p_9} + \frac{1}{q_1}-1$. 
Here, we note that
such $p_6$, $p_7$, $p_8$ and $p_9$ exist if \eqref{para:non-end} and $s_1<0$ hold.
Thus, the case $s_1<0$ is also proved.

Next, we consider the endpoint cases \eqref{linear-condi3}, \eqref{linear-condi4} or \eqref{linear-condi5} with $1<q_1,q_2<\infty$.
Here, we give only sketch of proofs of single endpoint cases. If two or more endpoints overlap, simply combine them.

As to the case \eqref{linear-condi3}, i.e., $\frac{s_1}{d} + \frac{1}{q_1}=1$ and $r_1 \le 1$, 
we note that $s_1\ge0$, and the proof is almost the same as the non-endpoint case \eqref{para:non-end} with $s_1\ge0$. 
In fact, we can take $p_1=(\frac{s_2}{d} + \frac{1}{q_2})^{-1}$, $p_2=(1-\frac{s_2}{d})^{-1}$, $p_3=q_2$ and $p_4=1$, 
and use Lemma \ref{lem:Young} (iii) (instead of Lemma \ref{lem:Young} (i))
in \eqref{eq3}, where $\|f\|_{L^{p_4,\infty}}$ is replaced by $\|f\|_{L^1}$ and the restriction $r_1 \le 1$ appears.

As to the case \eqref{linear-condi4}, i.e., $\frac{s_2}{d} + \frac{1}{q_2}=0$ and $r_2 = \infty$, 
we note that $s_2< 0$, and the proof is similar to the non-endpoint case \eqref{para:non-end} with $s_2 < 0 \le s_1$ or $s_2 \le s_1 <0$. 
For $s_2 < 0 \le s_1$, we use 
Lemma \ref{lem:Holder} (ii) to obtain 
 \[
\| e^{\Delta} f \|_{L^{q_2,\infty}_{s_2}}
\le C
\| |\cdot|^{s_2} \|_{L^{-\frac{d}{s_2},\infty}}
\|e^{\Delta} f \|_{L^{\infty}}
\le C
\|e^{\Delta} f \|_{L^{\infty}}
\]
(this corresponds to taking $p_5 = \infty$ in \eqref{eq4}). 
The estimate $\|e^{\Delta} f \|_{L^{\infty}}
\le C
\|f\|_{L^{q_1,r_1}_{s_1}}$ will be given later (see the proof of the case $q_2=\infty$ below). 
For $s_2 \le s_1 <0$, we also have 
\[
\begin{split}
\| e^{\Delta} f \|_{L^{q_2,\infty}_{s_2}}
& \le 
\| |\cdot|^{s_2-s_1}
e^{\Delta} g
\|_{L^{q_2,\infty}}
+
\left\| |\cdot|^{s_2}\left((|\cdot|^{-s_1} G_1) * g\right)
\right\|_{L^{q_2,\infty}}\\
& \le C \Big(\| |\cdot|^{s_2-s_1} \|_{L^{\frac{d}{s_1-s_2},\infty}}
\| e^{\Delta} g\|_{L^{-\frac{d}{s_1},\infty}}\\
& \qquad \qquad \qquad \qquad \qquad  
+
 \| |\cdot|^{s_2}\|_{L^{-\frac{d}{s_2},\infty}}
\|(|\cdot|^{-s_1} G_1) * g\|_{L^{\infty}}
\Big)
\\
& \le C 
\Big(\| |\cdot|^{s_2-s_1} \|_{L^{\frac{d}{s_1-s_2},\infty}}
\|G_1\|_{L^{p_7,\infty}} \|g\|_{L^{{q_1, \infty}}}\\
& \qquad \qquad \qquad \qquad \qquad +
\| |\cdot|^{s_2}\|_{L^{-\frac{d}{s_2},\infty}}
\||\cdot|^{-s_1} G_1\|_{L^{p_9,1}}
\|g\|_{L^{q_1,\infty}}
\Big)\\
& \le C\|f\|_{L^{q_1,\infty}_{s_1}},
\end{split}
\]
where we take $p_6 = -\frac{d}{s_1}$, $p_7 = [1- (\frac{s_1}{d} + \frac{1}{q_1})]^{-1}$, $p_8 = \infty$ and $p_9 = (1-\frac{1}{q_1})^{-1}$.

As to the case \eqref{linear-condi5}, i.e., $\frac{s_1}{d} + \frac{1}{q_1}=\frac{s_2}{d} + \frac{1}{q_2}$ and $r_1 \le r_2$, 
we can use Lemma \ref{lem:Young} (iii) to make a similar argument to the non-endpoint case. 
In fact, when $s_2\ge0$, this case corresponds to taking $p_1=1$, $p_2=q_2$, $p_3=(1-\frac{s_2}{d})^{-1}$ and $p_4 = (\frac{s_2}{d} + \frac{1}{q_2})^{-1}$ in \eqref{eq2} and \eqref{eq3}. 
In particular, 
in \eqref{eq2}, Lemma \ref{lem:Young} (iii) is used and the restriction $r_1 \le r_2$ is required: 
\[
\begin{split}
\|G_1 * (|\cdot|^{s_2} |f|)\|_{L^{q_2,r_2}}
& \le C\|G_1\|_{L^1} \||\cdot|^{s_2} |f|\|_{L^{q_2,r_2}}\\
& \le C\|G_1\|_{L^1} \||\cdot|^{s_2-s_1}\|_{L^{\frac{d}{s_1-s_2}, \infty}}
\||\cdot|^{s_1}f\|_{L^{q_1,r_2}}\\
& \le C\|f\|_{L^{q_1,r_2}_{s_1}}\le C\|f\|_{L^{q_1,r_1}_{s_1}}.
\end{split}
\]
The case $s_2<0$ is similar, and we may omit it.

In the rest of the proof, we consider the cases $q_1=1$, $q_1=\infty$ or $q_2=\infty$.  
The case $q_1=1$ and $q_2 = \infty$ is just $L^1$-$L^\infty$ estimate. 
The case $q_1=q_2=\infty$ has been already proved (see, e.g., \cite[Lemma 2.1]{CIT2022}).

The case $1<q_1<\infty$ and $q_2=\infty$ is the estimate
\eqref{eq.goal:t=1} with 
\[
0\le s_2\le s_1,\quad 
\frac{s_1}{d} + \frac{1}{q_1} \le 1
\quad \text{and}\quad 
r_1\le 1\text{ if }\frac{s_1}{d} + \frac{1}{q_1} = 1.
\]
Since $s_2\ge0$, this case is proved in a similar way to \eqref{eq2} and \eqref{eq3}. 
In fact, we deduce from Lemma \ref{lem:Young} (ii) and Lemma \ref{lem:Holder} (i) that 
\begin{equation}\label{eq9}
\begin{split}
\|G_1 * (|\cdot|^{s_2} |f|)\|_{L^{\infty}}
& \le C\|G_1\|_{L^{p_{10},1}} \||\cdot|^{s_2} |f|\|_{L^{p_{11},\infty}}\\
& \le C\||\cdot|^{s_2-s_1}\|_{L^{\frac{d}{s_1-s_2}, \infty}}
\||\cdot|^{s_1}f\|_{L^{q_1,\infty}}\\
& \le C\|f\|_{L^{q_1,\infty}_{s_1}},
\end{split}
\end{equation}
where $p_{10}$ and $p_{11}$ satisfy $1\le p_{10}<\frac{d}{s_2}$, $\frac{d}{d-s_2}<p_{11}\le\infty$, $1 = \frac{1}{p_{10}} + \frac{1}{p_{11}}$ and $\frac{1}{p_{11}} = \frac{s_1-s_2}{d} + \frac{1}{q_1}$, 
and 
\[
\begin{split}
\||(|\cdot|^{s_2} G_1) * |f|\|_{L^{\infty}}
& \le 
C \||\cdot|^{s_2} G_1\|_{L^{p_{12},1}} \|f\|_{L^{p_{13},\infty}}\\
& \le C\||\cdot|^{-s_1}\|_{L^{\frac{d}{s_1},\infty}} 
\||\cdot|^{s_1}f\|_{L^{q_1,\infty}}\\
& \le C\|f\|_{L^{q_1,\infty}_{s_1}},
\end{split}
\]
where $p_{12}$ and $p_{13}$ satisfy $\frac{d}{d-s_2} \le p_{12} < \infty$, $1<p_{13}\le \frac{d}{s_2}$,  
$1 = \frac{1}{p_{12}} + \frac{1}{p_{13}}$ and $\frac{1}{p_{12}} = \frac{s_1}{d} + \frac{1}{q_1}$. 
Here, we note that such $p_{10}$, $p_{11}$, $p_{12}$ and $p_{13}$ exist if 
$0\le s_2\le s_1$ and $\frac{s_1}{d} + \frac{1}{q_1}<1$. 
For the case $\frac{s_1}{d} + \frac{1}{q_1}=1$, 
the first term can be estimated in the same way as \eqref{eq2} (where we take $p_8=\frac{d}{s_2}$ and $p_9=\frac{d}{d-s_2}$). 
For the second term, we take $p_{10}=\infty$ and $p_{11}=1$ and we use Lemma \ref{lem:Young} (ii) to obtain
\[
\begin{split}
\||(|\cdot|^{s_2} G_1) * |f|\|_{L^{\infty}}
& \le 
C \||\cdot|^{s_2} G_1\|_{L^{\infty}} \|f\|_{L^{1}}\\
& \le C\||\cdot|^{-s_1}\|_{L^{\frac{d}{s_1},\infty}} 
\||\cdot|^{s_1}f\|_{L^{q_1,1}}\\
& \le C\|f\|_{L^{q_1,1}_{s_1}}.
\end{split}
\]
Thus, the estimate \eqref{eq.goal:t=1} is proved in the case $q_2=\infty$.

The case $q_1=1$ and $1<q_2 <\infty$ is 
the estimate
\eqref{eq.goal:t=1} with 
\[
s_2\le s_1\le 0,\quad 
0\le \frac{s_2}{d} + \frac{1}{q_2} < \frac{s_1}{d} + 1,
\quad 
r_1\le 1 \quad \text{and}\quad 
 r_2 = \infty\text{ if } \frac{s_2}{d} + \frac{1}{q_2}=0.
\]
The proof is similar to \eqref{eq7} and \eqref{eq8}. 
Let $\frac{s_2}{d} + \frac{1}{q_2}>0$. 
As to the first term, it follows
from Lemma \ref{lem:Holder} (i) and Lemma \ref{lem:Young} (ii) that 
\begin{equation}\label{eq12}
\begin{split}
\| |\cdot|^{s_2-s_1}
e^{\Delta} g
\|_{L^{q_2,r_2}}
& \le C \| |\cdot|^{s_2-s_1} \|_{L^{\frac{d}{s_1-s_2},\infty}}
\| e^{\Delta} g\|_{L^{p_{14},r_2}}\\
& \le C \| |\cdot|^{s_2-s_1} \|_{L^{\frac{d}{s_1-s_2},\infty}}
\|G_1\|_{L^{p_{14},r_2}} \|g\|_{L^1}\\
& \le C\|f\|_{L^1_{s_1}},
\end{split}
\end{equation}
where $p_{14}$ satisfies $1<p_{14} < -\frac{d}{s_1}$ and $\frac{1}{q_2} = \frac{s_1-s_2}{d}+ \frac{1}{p_{14}}$. 
The second term can be estimated as 
\[
\begin{split}
\left\| |\cdot|^{s_2}\left((|\cdot|^{-s_1} G_1) * g\right)
\right\|_{L^{q_2,r_2}}
& \le C \| |\cdot|^{s_2}\|_{L^{-\frac{d}{s_2},\infty}}
\|(|\cdot|^{-s_1} G_1) * g\|_{L^{p_{15}, r_2}}\\
& \le C \| |\cdot|^{s_2}\|_{L^{-\frac{d}{s_2},\infty}}
\||\cdot|^{-s_1} G_1\|_{L^{p_{15}, r_2}}
\|g\|_{L^{1}}\\
&\le C\|f\|_{L^1_{s_1}},
\end{split}
\]
where $p_{15}$ satisfies $(\frac{s_1}{d} +1)^{-1} < p_{15}< \infty$ and $\frac{1}{q_2} = - \frac{s_2}{d} + \frac{1}{p_{15}}$. 
Here, we note that
such $p_{14}$ and $p_{15}$ exist if 
$s_2\le s_1\le 0$ and 
$0 < \frac{s_2}{d} + \frac{1}{q_2} < \frac{s_1}{d} + 1$. 
For the case $\frac{s_2}{d} + \frac{1}{q_2}=0$, 
the first term can be estimated in the same way as \eqref{eq12} (where we take $p_{14}=-\frac{d}{s_1}$). 
For the second term, 
we have only to take $p_{15}=\infty$ and $r_2=\infty$ and use Young's inequality $\|f * g\|_{L^\infty}\le \|f\|_{L^1}\|g\|_{L^\infty}$. 
Thus, the estimate \eqref{eq.goal:t=1} is proved in the case $q_1=1$ and $1<q_2 <\infty$.
The proof of Proposition \ref{prop:linear-main} is finished.
\end{proof}

%%% Subsection 3.2 %%%
\subsection{Weighted Meyer inequality}\label{sub:3.2}

In this subsection, we shall prove the following proposition, 
which is a key tool to study unconditional uniqueness and uniqueness criterion in the scale-critical case and 
the construction of a singular solution in the double critical case.

% Proposition 3.6
\begin{prop}\label{l:wMeyer.inq}
Let $T\in (0,\infty]$, and let $d\ge 3$, $1 \le q_1 \le\infty$, $1<q_2<\infty$, $0<r_1 \le \infty$ and $s_1,s_2 \in \mathbb R$ satisfy 
\begin{empheq}[left={\empheqlbrace}]{alignat=2}
  & 0 < \frac{s_2}{d} + \frac{1}{q_2} < \frac{s_1}{d} + \frac{1}{q_1} \le 1, \label{linear-condi1'}\\
  & s_2 \le s_1,\label{linear-condi2'}\\
  & \frac{d}{2} \left(\frac{1}{q_1} - \frac1{q_2} \right)+ \frac{s_1-s_2}{2} = 1,\label{linear-condi3'}
\end{empheq}
and
\begin{empheq}[left={\empheqlbrace}]{alignat=2}
  & r_1 \le 1\quad \text{if }\frac{s_1}{d} + \frac{1}{q_1}=1 \text{ or }q_1=1,\label{linear-condi4'}\\
  & r_1=\infty \quad \text{if } q_1=\infty.\label{linear-condi6'}
\end{empheq}
Then there exists a constant $C>0$ such that 
\begin{equation}\label{eq.wMi}
		\left\|\int_0^{t} e^{(t-\tau)\Delta} f(\tau) \,d\tau \right\|_{L^{q_2,\infty}_{s_2}} 
		\le C \sup_{0 <\tau <t} \|f(\tau)\|_{L^{q_1,r_1}_{s_1}}
\end{equation}
	for any $t\in (0,T)$ and $f \in L^\infty(0,T; L^{q_1,r_1}_{s_1}(\mathbb R^d))$.
\end{prop}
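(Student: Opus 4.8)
The plan is to prove the inequality \eqref{eq.wMi} by combining the pointwise (in $t$) linear estimate of Proposition~\ref{prop:linear-main} with a Hardy--Littlewood--Sobolev type argument in the time variable, exploiting that the scaling relation \eqref{linear-condi3'} makes the resulting time-singularity exactly of order $1/(t-\tau)$, i.e. the borderline non-integrable case that one rescues by passing to the weak-$L^{q_2}$ norm in space. Concretely, I would first fix $t\in(0,T)$ and $f\in L^\infty(0,T;L^{q_1,r_1}_{s_1})$, set $M := \sup_{0<\tau<t}\|f(\tau)\|_{L^{q_1,r_1}_{s_1}}$, and apply \eqref{linear-weight1} together with hypotheses \eqref{linear-condi1'}, \eqref{linear-condi2'}, \eqref{linear-condi4'}, \eqref{linear-condi6'} (which are precisely \eqref{linear-condi1}--\eqref{linear-condi7} in this configuration, noting \eqref{linear-condi4} is vacuous since $\frac{s_2}{d}+\frac1{q_2}>0$ and \eqref{linear-condi5} cannot occur by the strict inequality in \eqref{linear-condi1'}) to get
\[
\left\| e^{(t-\tau)\Delta} f(\tau) \right\|_{L^{q_2,r_2}_{s_2}} \le C (t-\tau)^{-1} M
\]
for every choice of $r_2$ permitted by Proposition~\ref{prop:linear-main}; here the exponent is $-1$ exactly because of \eqref{linear-condi3'}.

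The difficulty is that $\int_0^t (t-\tau)^{-1}\,d\tau = +\infty$, so one cannot simply put the norm inside the integral. The standard device — and I expect this to be the technical heart — is to not estimate the full Duhamel integral in one space, but rather to use two different values of the intermediate exponent $q_2$: split $\int_0^t = \int_0^{t/2} + \int_{t/2}^t$, and on each piece apply the linear estimate landing in a space $L^{\tilde q,\tilde r}_{\tilde s}$ with $\tilde q$ chosen \emph{slightly off} the critical line so that the time exponent becomes $-1+\theta$ resp. $-1-\theta$ with $0<\theta<1$ small, hence integrable near the respective endpoint; then re-embed into $L^{q_2,\infty}_{s_2}$ using Lorentz-space embeddings or a second application of Proposition~\ref{prop:linear-main}. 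A cleaner and more robust route, which I would actually follow, is the real-interpolation / Lorentz-in-time argument of Meyer (as used for the classical Kato--Fujita ``Meyer estimate''): view the linear map $\tau\mapsto e^{(t-\tau)\Delta}$ as acting boundedly $L^{p,1}_\tau \to L^{q,1}_x$-type and $L^{p,\infty}_\tau\to L^{q,\infty}_x$-type on either side of the scaling line, and interpolate to capture the endpoint $L^\infty_\tau \to L^{q_2,\infty}_x$. In practice this means: choose $q_2^-<q_2<q_2^+$ (with correspondingly adjusted $s_2^\pm$ on the segment from $(s_1,q_1)$, keeping $\frac{s}{d}+\frac1q$ fixed is not possible, so one moves along the admissible region) so that $\frac d2(\frac1{q_1}-\frac1{q_2^\pm})+\frac{s_1-s_2^\pm}{2} = 1\mp\theta$, obtain
\[
\left\|\int_0^t e^{(t-\tau)\Delta}f(\tau)\,d\tau\right\|_{L^{q_2^\mp,\infty}_{s_2^\mp}} \le C M \int_0^t (t-\tau)^{-1\pm\theta}\,d\tau \cdot (\text{scaling factor in } t),
\]
both finite, and then recover the target space $L^{q_2,\infty}_{s_2}$ by real interpolation between these two, since weak-$L^{q_2}$ is an interpolation space between $L^{q_2^-}$ and $L^{q_2^+}$.

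After assembling the two pieces, I would check that the final bound is independent of $t$: each piece carries a power of $t$ coming from the endpoints of the time integrals and from the rescaling needed to land back in the homogeneous-in-scaling space $L^{q_2,\infty}_{s_2}$, and by construction \eqref{linear-condi3'} these powers cancel, giving a $t$-independent constant as claimed. Finally I would verify the boundary/degenerate cases allowed by the hypotheses: $q_1=1$ or $\frac{s_1}{d}+\frac1{q_1}=1$ (where \eqref{linear-condi4'} forces $r_1\le1$, which is exactly the restriction needed to apply \eqref{linear-weight1} with $q_1=1$ in Proposition~\ref{prop:linear-main}), and $q_1=\infty$ with $r_1=\infty$. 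The main obstacle, to emphasize, is handling the exactly logarithmically-divergent time integral: the whole point of the estimate is that the gain in spatial integrability is spent to convert an $L^1_t$-borderline singularity into a bounded operator into weak-$L^{q_2}_x$, and making this rigorous requires either the two-exponent splitting or the Lorentz-in-time interpolation described above rather than a direct Minkowski-inequality estimate.
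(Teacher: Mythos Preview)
You correctly identify the obstruction: under \eqref{linear-condi3'} the pointwise-in-time estimate from Proposition~\ref{prop:linear-main} gives exactly $(t-\tau)^{-1}$, which is not integrable, and the remedy must exploit the weak space on the target side. However, both concrete routes you describe have a genuine gap.

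For the fixed split $\int_0^{t/2}+\int_{t/2}^t$ with the two pieces landing in $L^{q_2^-,\infty}_{s_2^-}$ and $L^{q_2^+,\infty}_{s_2^+}$: each piece is indeed finite, but you then have \emph{two different elements in two different spaces}, and neither space embeds into $L^{q_2,\infty}_{s_2}$ on $\mathbb R^d$ (there is no inclusion between Lorentz spaces with distinct first exponents on an infinite-measure space). Real interpolation is a statement about a single operator bounded between two pairs of spaces, not a device for summing $a\in Y_0$ and $b\in Y_1$ into $(Y_0,Y_1)_{\theta,\infty}$; that implication is false in general. ``A second application of Proposition~\ref{prop:linear-main}'' does not help either, since the Duhamel term is not of the form $e^{s\Delta}(\cdot)$.

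For the interpolation route with the \emph{full} integral estimated in both $L^{q_2^\pm,\infty}_{s_2^\pm}$: the claim ``both finite'' is simply wrong. In whichever target space the decay rate is $1+\theta$, the time integral is $\int_0^t(t-\tau)^{-1-\theta}\,d\tau=+\infty$, so you only have one endpoint and nothing to interpolate.

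What the paper does (and what ``Meyer's argument'' actually is) is different and essential: one works directly with the distribution function. After a change of variables it suffices to bound $g:=\int_0^\infty e^{s\Delta}f(s)\,ds$ in $L^{q_2,\infty}_{s_2}$ under $\sup_s\|f(s)\|_{L^{q_1,r_1}_{s_1}}\le 1$. For each level $\lambda>0$, split
\[
g=\int_0^{\tau}+\int_\tau^{\infty}=:h+\ell
\]
at a \emph{$\lambda$-dependent} time $\tau$. Choosing $p_1<q_2<p_0$ (same weight $s_2$), Proposition~\ref{prop:linear-main} gives
\[
\|h\|_{L^{p_1,\infty}_{s_2}}\lesssim \tau^{\,\frac{d}{2}(\frac1{p_1}-\frac1{q_2})},\qquad
\|\ell\|_{L^{p_0,\infty}_{s_2}}\lesssim \tau^{-\frac{d}{2}(\frac1{q_2}-\frac1{p_0})},
\]
both finite because the split isolates the correct endpoint on each piece. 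Chebyshev then yields
\[
d_{|\cdot|^{s_2}g}(\lambda)\le d_{|\cdot|^{s_2}h}(\lambda/2)+d_{|\cdot|^{s_2}\ell}(\lambda/2)
\lesssim \Big(\tfrac{\tau^{\frac{d}{2}(\frac1{p_1}-\frac1{q_2})}}{\lambda}\Big)^{p_1}
+\Big(\tfrac{\tau^{-\frac{d}{2}(\frac1{q_2}-\frac1{p_0})}}{\lambda}\Big)^{p_0},
\]
and the choice $\tau=\lambda^{-2q_2/d}$ balances the two terms to give $d_{|\cdot|^{s_2}g}(\lambda)\lesssim\lambda^{-q_2}$, i.e., \eqref{eq.wMi}. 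The crucial idea you are missing is that the split point must be chosen \emph{level by level}; a split independent of $\lambda$ cannot produce the weak-type bound.
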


The case $s_1=s_2=0$ is known as Meyer's inequality and is proved by Meyer \cite{Mey1997} (see also \cite{Ter2002}). 

% Proof of Proposition 3.6
\begin{proof}
We shall prove only the case $q_1>1$ and $\frac{s_1}{d} + \frac{1}{q_1}<1$, since the proofs of the other cases are similar. 
By the argument in \cite{Mey1997}, it suffices to prove 
that 
\begin{equation}\label{Meyer-goal}
\|g\|_{L^{q_2,\infty}_{s_2}} \le C,
\end{equation}
where we define 
\[
	g(x) := \int_0^{\infty} e^{t\Delta} f(t,x) \,dt
\]
and we may assume that 
\[
	\sup_{t\ge0} \|f(t,\cdot) \|_{L^{q_1,r_1}_{s_1}} \le 1
\]
without loss of generality. 
Let $\lambda\in (0,\infty)$ be arbitrarily fixed. 
For $\tau \in (0,\infty),$ which is to be determined later, we divide $g$ into two parts:
\[
	g(x) = \int_{0}^{\tau} e^{t\Delta} f(t,x) \,dt 
			+\int_{\tau}^{\infty} e^{t\Delta} f(t,x) \,dt 
	=: h(x) + \ell(x). 
\]
Let 
 $p_0$ and $p_1$ be such that 
\[
1 < p_1 < q_2 < p_0 \le \infty
\quad \text{and}\quad 
0 \le \frac{s_2}{d} + \frac{1}{p_i} \le \frac{s_1}{d} + \frac{1}{q_1}\quad \text{for }i=0,1. %< 1
\]
Then, by Proposition \ref{prop:linear-main}, we have 
\[
\begin{aligned}
	\| \ell \|_{L^{p_0,\infty}_{s_2}}
	&\le \int_{\tau}^{\infty} \| e^{t\Delta} f(t) \|_{L^{p_0,\infty}_{s_2}} \,dt \\
	& \le C \int_{\tau}^{\infty} t^{-\frac{d}{2}(\frac{1}{q_1} - \frac1{p_0}) - \frac{s_1-s_2}{2} } 
		\|f(t) \|_{L^{q_1,\infty}_{s_1}} \,dt\\
	&\le  C \tau^{ - \frac{d}{2} (\frac1{q_2}-\frac1{p_0})} 
\end{aligned}
\]
and 
\[
\begin{aligned}
	\|h\|_{L^{p_1,\infty}_{s_2}}
	&  \le  \int_{0}^{\tau} \left\| e^{t\Delta} f(t) \right\|_{L^{p_1,\infty}_{s_2}} \,dt \\
	& \le \int_{0}^{\tau} t^{-\frac{d}{2} (\frac1{q_1} - \frac1{p_1}) - \frac{s_1 -s_2}{2}}  
	\left\| f(t) \right\|_{L^{q_1,\infty}_{s_1}} \,dt \\
	& \le C \tau^{ \frac{d}{2} (\frac1{p_1}-\frac1{q_2})}.
\end{aligned}
\]
Now, the definition of the 
Lorentz norms yields 
\[
	d_{|\cdot|^{s_2} \ell}\left(\frac{\lambda}2\right)
	\le \left(\frac{ \|\ell\|_{L^{p_0,\infty}_{s_2}} } { \lambda/2 } \right)^{p_0}
	\le \left(\frac{ C \tau^{ - \frac{d}{2} (\frac1{q_2}-\frac1{p_0})} } 
				{ \lambda } \right)^{p_0}
\]
and similarly,
\[
	d_{|\cdot|^{s_2} h}\left(\frac{\lambda}2\right)
	\le \left(\frac{ \|h \|_{L^{p_1,\infty}_{s_2}} } { \lambda/2 } \right)^{p_1}
	\le \left(\frac{ C \tau^{ \frac{d}{2} (\frac1{p_1}-\frac1{q_2})} } 
				{ \lambda } \right)^{p_1}. 
\]
Thus, choosing $\tau$ such that  
$\tau = \lambda^{-\frac{2q_2}{d}}$, 
we deduce
\begin{equation}\nonumber
	d_{|\cdot|^{s_2} g}(\lambda) 
	\le d_{|\cdot|^{s_2} h}\left(\frac{\lambda}2\right) 
		+ d_{|\cdot|^{s_2} \ell}\left(\frac{\lambda}2\right)
	\le  
\frac{C} {\lambda^{q_2} }, 
\end{equation}
which implies \eqref{Meyer-goal}. 
Thus, we conclude Proposition \ref{l:wMeyer.inq}.
\end{proof}

%%%%%%%%%%%%
%%% Section 4 %%%
%%%%%%%%%%%%
\section{Unconditional uniqueness and uniqueness criterion}\label{sec:4}
In this section, we prove Theorem \ref{thm:unconditional1}, Theorem \ref{thm:unconditional2}, Proposition \ref{prop:uniqueness-sufficient} and Theorem~\ref{thm:uniqueness-criterion}.

%%% Subsection 4.1 %%%
\subsection{Nonlinear estimates}\label{sub:4.1}

We define the Duhamel term $N(u)$ by 
\[
N(u) (t):= \int_{0}^t e^{(t-\tau)\Delta}(|\cdot|^{\gamma} |u(\tau)|^{\alpha-1} u(\tau))\, d\tau.
\]
Then we have the following nonlinear estimates, which are used to prove unconditional uniqueness in the double subcritical case and in the single critical case I.

% Lemma 4.1
\begin{lem}\label{lem:sec4_nonlinear1}
Let $d,\gamma,\alpha,q,s$ be as in \eqref{assum:main}. 
Let $T \in (0,\infty]$ and $\delta$ be given by 
\begin{equation}\label{def:delta}
\delta := 
\frac{d(\alpha-1)}{2}\left[
\frac{1}{q_c} - \left(
\frac{s}{d}+\frac{1}{q}
\right)
\right].
\end{equation}
Then the following assertions hold:
\begin{enumerate}[\rm (i)]
\item
If $0<\frac{s}{d} + \frac{1}{q} < \min\{\frac{1}{q_c}, \frac{1}{Q_c}\}$ and $q>\alpha$, then 
there exists a constant $C>0$ such that
\[
\begin{split}
& \|N(u_1)(t) - N(u_2)(t)\|_{L^{q,\infty}_{s}}  \\
& \le C t^{\delta}
 \max_{i=1,2} \|u_i\|_{L^\infty(0,t; L^{q,\infty}_{s})}^{\alpha-1}
		 \| u_1 - u_2\|_{L^\infty(0,t; L^{q,\infty}_{s})}
\end{split}
\]
for any $t\in (0,T)$ and $u_1, u_2 \in L^\infty(0,T ; L^{q,\infty}_{s}(\mathbb R^d))$.

\item
If either ``$0<\frac{s}{d} + \frac{1}{q} < \min\{\frac{1}{q_c}, \frac{1}{Q_c}\}$ and $q=\alpha$" or 
``$\frac{s}{d} + \frac{1}{q} = \frac{1}{Q_c} < \frac{1}{q_c}$", then 
there exists a constant $C>0$ such that
\[
\begin{split}
&  \|N(u_1)(t) - N(u_2)(t)\|_{L^{q,\alpha}_{s}} \\
& \le C t^{\delta}
 \max_{i=1,2} \|u_i\|_{L^\infty(0,t; L^{q,\alpha}_{s})}^{\alpha-1}
		 \| u_1 - u_2\|_{L^\infty(0,t; L^{q,\alpha}_{s})}
\end{split}
\]
for any $t\in (0,T)$ and $u_1, u_2 \in L^\infty(0,T ; L^{q,\alpha}_{s}(\mathbb R^d))$, provided that $q\not = \infty$.
\end{enumerate}
\end{lem}

% Remark
\begin{rem}
In (ii), the space of $u_1, u_2$ is restricted to $L^\infty(0,T ; L^{q,\alpha}_{s}(\mathbb R^d))$. Here, note that $L^\infty(0,T ; L^{q,\alpha}_{s}(\mathbb R^d)) \subsetneq L^\infty(0,T ; L^{q,\infty}_{s}(\mathbb R^d))$ (see Remark \ref{rem:w-Lorentz} (e)).
This restriction is due to the condition \eqref{linear-condi3} in Proposition \ref{prop:linear-main}.
\end{rem}

% Proof of Lemma 4.1
\begin{proof}
We define
$\sigma := \alpha s - \gamma$. 
First, we prove the assertion (i). 
Let $T \in (0,\infty]$ and $u_1, u_2 \in L^\infty(0,T ; L^{q,\infty}_{s}(\mathbb R^d))$. 
We assume \eqref{assum:main} and 
$0<\frac{s}{d} + \frac{1}{q} < \min\{\frac{1}{q_c}, \frac{1}{Q_c}\}$. 
Then the parameters $q,s,\sigma$ satisfy 
\[
1\le \frac{q}{\alpha} , q \le \infty,\quad 
0 < \frac{s}{d} + \frac{1}{q} < \frac{\sigma}{d} + \frac{\alpha}{q} < 1,\quad
s \le \sigma
\quad \text{and}\quad
d \left(\frac{\alpha}{q} - \frac{1}{q}\right) +\sigma - s < 2. 
\]
Hence, we use 
Proposition~\ref{prop:linear-main} with $(q_1,r_1,s_1) = (\frac{q}{\alpha}, \infty, \sigma)$ and $(q_2,r_2,s_2) = (q, \infty,s)$, and then,  
Lemma \ref{lem:Holder} with $(q,r)=(\frac{q}{\alpha},\infty)$, $(q_1,r_1) = (\frac{q}{\alpha-1},\infty)$ and 
$(q_2,r_2)=(q,\infty)$ 
 to obtain 
\begin{equation}\label{eq.DSC}
\begin{split}
& \|N(u_1)(t) - N(u_2)(t)\|_{L^{q,\infty}_{s}} \\
& \le C \int_0^t (t-\tau) ^{-\frac{d}{2} (\frac{\alpha}{q} - \frac{1}{q}) - \frac{\sigma - s}{2}}\\
& \qquad\qquad \times 
		\| |\cdot|^{\gamma} (|u_1(\tau)|^{\alpha-1}u_1(\tau) - |u_2(\tau)|^{\alpha-1}u_2(\tau)) \|_{L^{\frac{q}{\alpha},\infty}_{\sigma}} \, d\tau \\
& \le C
\int_0^t (t-\tau) ^{-\frac{d}{2} (\frac{\alpha}{q} - \frac{1}{q}) - \frac{\sigma - s}{2}}\\
&\qquad \qquad\times \||\cdot|^\gamma(|u_1(\tau)|^{\alpha-1} + |u_2(\tau)|^{\alpha-1})|u_1(\tau)-u_2(\tau)\|_{L^{\frac{q}{\alpha},\infty}_{\sigma}}\, d\tau \\
& \le 
C \int_0^t (t-\tau) ^{-\frac{d}{2} (\frac{\alpha}{q} - \frac{1}{q}) - \frac{\sigma - s}{2}} \, d\tau\\
& \qquad\qquad \times \max_{i=1,2} \|u_i\|_{L^\infty(0,t; L^{q,\infty}_{s})}^{\alpha-1}
		 \| u_1 - u_2\|_{L^\infty(0,t; L^{q,\infty}_{s})}\\
& \le C t^{\delta}
 \max_{i=1,2} \|u_i\|_{L^\infty(0,t; L^{q,\infty}_{s})}^{\alpha-1}
		 \| u_1 - u_2\|_{L^\infty(0,t; L^{q,\infty}_{s})}.
\end{split}
\end{equation}
Therefore, the assertion (i) is proved.

The assertion (ii) is also proved in the same way. In fact, when $\frac{s}{d} + \frac{1}{q} = \frac{1}{Q_c} < \frac{1}{q_c}$, 
we use Proposition~\ref{prop:linear-main} with the endpoint case \eqref{linear-condi3} to obtain 
\[
\begin{split}
& \| N(u_1) (t)- N(u_2)(t) \|_{L^{q,\alpha}_{s}}\\
& \le C
\int_0^t (t-\tau) ^{-\frac{d}{2} (\frac{\alpha}{q} - \frac{1}{q}) - \frac{\sigma - s}{2}}\\
& \qquad\qquad \times \||\cdot|^\gamma(|u_1(\tau)|^{\alpha-1} + |u_2(\tau)|^{\alpha-1})|u_1(\tau)-u_2(\tau)\|_{L^{\frac{q}{\alpha},1}_{\sigma}}\, d\tau \\
& \le C
t^\delta \max_{i=1,2} \|u_i\|_{L^\infty(0,t; L^{q,\alpha}_{s})}^{\alpha-1} \| u_1 - u_2\|_{L^\infty(0,t; L^{q,\alpha}_{s})}.
\end{split}
\]
Note that this case corresponds to taking the endpoint $\frac{\sigma}{d} + \frac{\alpha}{q} = 1$ in \eqref{eq.DSC}, which causes the restriction $r\le \alpha$. 
Here, 
the exponent $q=\infty$ is excluded (see Remark \ref{rem:w-Lorentz} (b)).
The proof in the case $0<\frac{s}{d} + \frac{1}{q} < \min\{\frac{1}{q_c}, \frac{1}{Q_c}\}$ and $q=\alpha$ is similar and also uses \eqref{linear-condi3}.
Thus, the proof of Lemma~\ref{lem:sec4_nonlinear1} is finished.
\end{proof}

In addition, we prepare the nonlinear estimates of the following type. These estimates are used to prove uniqueness criterion in the single critical case I, and unconditional uniqueness and uniqueness criterion in the scale-critical case. 

% Lemma 4.2
\begin{lem}\label{lem:sec4_nonlinear2}
Let $d,\gamma,\alpha,q,r,s$ be as in \eqref{assum:main}. Assume that $\tilde q \in (q, \infty)$ satisfies 
\begin{equation}\label{eq:lem_4.2_condi_s1}
\frac{s}{d} + \frac{1}{q} - \frac{2}{d(\alpha-1)} 
< \frac{s}{d} + \frac{1}{\tilde q}
< \min \left\{
\frac{1}{q_c}, \frac{1}{Q_c} - \frac{1}{\alpha-1} \left[
\frac{1}{Q_c} - \left(\frac{s}{d} + \frac{1}{q}\right)
\right]
\right\}.
\end{equation}
Let $T \in (0,\infty]$ and $\beta$ be defined by 
\begin{equation}\label{def:beta}
\beta = \beta (d, q,\tilde q) := \frac{d}{2}\left(\frac{1}{q} - \frac{1}{\tilde q} \right) .
\end{equation}
Then the following assertions hold:
\begin{enumerate}[\rm (i)]
\item
If $\frac{s}{d} + \frac{1}{q} = \frac{1}{Q_c} < \frac{1}{q_c}$ and $r>\alpha$, 
then
there exists a constant $C>0$ such that
\begin{equation}\label{lem_4.2:est1}
\begin{split}
\|N(u_1)(t) - N(u_2)(t)\|_{L^{q,r}_{s}}  & \le C t^{\delta}
\Big(
\max_{i=1,2}\|u_i-e^{\tau\Delta} u_0\|_{L^\infty(0,t; L^{q,r'(\alpha-1)}_{s})} \\
& \hspace{-2cm}
+
\sup_{0<\tau<t}\tau^{ \beta}\|e^{\tau\Delta} u_0\|_{L_{s}^{\tilde{q},\infty}}
\Big)^{\alpha-1}
\| u_1 - u_2\|_{L^\infty(0,t; L^{q,r}_{s})}
\end{split}
\end{equation}
for any $t\in (0,T)$ and $u_1, u_2 \in L^\infty(0,T ; L^{q,r}_{s}(\mathbb R^d))$ satisfying $u_i-e^{\tau\Delta} u_0 \in L^\infty(0,T; L^{q,r'(\alpha-1)}_{s}(\mathbb R^d))$ for $i=1,2$,
where $\delta$ is given by \eqref{def:delta}.

\item
If $\frac{s}{d} + \frac{1}{q} = \frac{1}{q_c} < \frac{1}{Q_c}$, then 
there exists a constant $C>0$ such that
\[
\begin{split}
\|N(u_1)(t) - N(u_2)(t)\|_{L^{q,\infty}_{s}}  & \le C 
\Big(
\max_{i=1,2}\|u_i-e^{\tau\Delta} u_0\|_{L^\infty(0,t; L^{q,\infty}_{s})} \\
&  \hspace{-2cm}
+
\sup_{0<\tau<t}\tau^{ \beta}\|e^{\tau\Delta} u_0\|_{L_{s}^{\tilde{q},\infty}}
\Big)^{\alpha-1}
\| u_1 - u_2\|_{L^\infty(0,t; L^{q,\infty}_{s})}
\end{split}
\]
for any $t\in (0,T)$ and $u_1, u_2 \in L^\infty(0,T ; L^{q,\infty}_{s}(\mathbb R^d))$.

\item
If $\frac{s}{d} + \frac{1}{q} = \frac{1}{q_c} = \frac{1}{Q_c}$, then 
there exists a constant $C>0$ such that
\[
\begin{split}
\|N(u_1)(t) - N(u_2)(t)\|_{L^{q,\infty}_{s}}  & \le C 
\Big(
\max_{i=1,2}\|u_i-e^{\tau\Delta} u_0\|_{L^\infty(0,t; L^{q,\alpha^*-1}_{s})} \\
&  \hspace{-2cm}
+ 
\sup_{0<\tau<t}\tau^{ \beta}\|e^{\tau\Delta} u_0\|_{L_{s}^{\tilde{q},\infty}}
\Big)^{\alpha^*-1}
\| u_1 - u_2\|_{L^\infty(0,t; L^{q,\infty}_{s})}
\end{split}
\]
for any $t\in (0,T)$ and $u_1, u_2 \in L^\infty(0,T ; L^{q,\infty}_{s}(\mathbb R^d))$
satisfying $u_i-e^{\tau\Delta} u_0 \in L^\infty(0,T; L^{q,\alpha^*-1}_{s}(\mathbb R^d))$ for $i=1,2$.
\end{enumerate}
\end{lem}

% Remark
\begin{rem}
In (ii) and (iii), the restriction on the second exponent $r=\infty$ in the left-hand side is due to use of the weighted Meyer inequality \eqref{eq.wMi} in Proposition \ref{l:wMeyer.inq}. 
In (iii), the reason why the space of $u_i-e^{\tau\Delta} u_0$ is restricted to be $L^\infty(0,T; L^{q,\alpha^*-1}_{s}(\mathbb R^d))$ is 
the endpoint condition \eqref{linear-condi4'} in Proposition \ref{l:wMeyer.inq}.
\end{rem}

% Proof of Lemma 4.2
\begin{proof}
Let $T \in (0,\infty]$. For two functions $u_1$ and $u_2$ on $(0,T)\times \mathbb R^d$, 
we estimate
\begin{equation}\label{firstcalcul1} 
\begin{split}
& |N(u_1)(t) - N(u_2)(t)| \\
	& \le  C\int_0^t e^{(t-\tau)\Delta}
		\left[|\cdot|^{\gamma}\left(|u_1(\tau)|^{\alpha-1}+|u_2(\tau)|^{\alpha-1}\right)|u_1(\tau) - u_2(\tau)|\right]
		\, d\tau \\ 
& \le C\int_0^t e^{(t-\tau)\Delta} 
		\left[|\cdot|^{\gamma}|u_1(\tau)-e^{\tau\Delta} u_0|^{\alpha-1}|u_1(\tau) - u_2(\tau)|\right]
		d\tau\\
	&+ C\int_0^t e^{(t-\tau)\Delta} 
		\left[|\cdot|^{\gamma}|u_2(\tau)-e^{\tau\Delta} u_0|^{\alpha-1}|u_1(\tau) - u_2(\tau)|\right]
		d\tau\\ 
	&+ C\int_0^t e^{(t-\tau)\Delta}
		\left[|\cdot|^{\gamma}|e^{\tau\Delta} u_0|^{\alpha-1}|u_1(\tau) - u_2(\tau)|\right]d\tau\\
	& =: I(t) + II(t) + III(t).
\end{split}
\end{equation}
First, we prove the assertion (i). Set $\sigma = \alpha s - \gamma$. 
In a similar way to the proof of  Lemma \ref{lem:sec4_nonlinear1} (ii), we estimate
\begin{equation}\label{5.3-est1}
\begin{split}
\|I(t)\|_{L^{q,r}_{s}}
& \le \int_0^t \|e^{(t-\tau)\Delta} (|\cdot|^\gamma |u_1(\tau)-e^{\tau\Delta} u_0|^{\alpha-1}|u_1(\tau) - u_2(\tau)|)|\|_{L^{q,r}_{s}} \, d\tau\\
& \le C
\int_0^t (t-\tau) ^{-\frac{d}{2} (\frac{\alpha}{q} - \frac{1}{q}) - \frac{\sigma - s}{2}}\, d\tau \\
&\qquad \qquad 
\times \sup_{0 <\tau< t}
 \||\cdot|^\gamma |u_1(\tau)-e^{\tau\Delta} u_0|^{\alpha-1} |u_1(\tau) - u_2(\tau)|\|_{L^{\frac{q}{\alpha},1}_{\sigma}}\\
 & 
 \le C t ^\delta \
\|u_1-e^{\tau\Delta} u_0\|_{L^\infty(0,t; L^{q,r'(\alpha-1)}_{s})}^{\alpha-1} 
\| u_1 - u_2\|_{L^\infty(0,t; L^{q,r}_{s})}
\end{split}
\end{equation}
for any $t \in (0,T)$,
where $1=\frac{1}{r}+\frac{1}{r'}$ and 
$\delta >0$ is given in \eqref{def:delta}.
Similarly, we have
\begin{equation}\label{5.3-est2}
\|II(t)\|_{L^{q,r}_{s}}
\le 
C t ^\delta \
\|u_2(\tau)-e^{\tau\Delta} u_0\|_{L^\infty(0,t; L^{q,r'(\alpha-1)}_{s})}^{\alpha-1} 
\| u_1 - u_2\|_{L^\infty(0,t; L^{q,r}_{s})}
\end{equation}
for any $t \in (0,T)$.
To estimate $III(t)$, 
we take auxiliary parameters $p$, $\tilde q$ and $\sigma$ satisfying
\begin{equation}\label{aux_condi2}
1<p<\infty,\quad 
q < \tilde q < \infty,\quad 
0 < \frac{s}{d} + \frac{1}{q} < \frac{\sigma}{d} + \frac{1}{p} < 1,
\quad s\le \sigma,
\end{equation}
\begin{equation}\label{aux_condi3}
\frac{1}{p} = \frac{\alpha-1}{\tilde q} + \frac{1}{q},
\end{equation}
\begin{equation}\label{aux_condi4}
-\frac{d}{2} \left(\frac{1}{p} - \frac{1}{q}\right) - \frac{\sigma - s}{2}>-1,
\quad -(\alpha-1)\beta >-1. 
\end{equation}
Here, the above $p$, $\tilde q$ and $\sigma$ exist if \eqref{assum:main} and \eqref{eq:lem_4.2_condi_s1} hold. 
We use Proposition~\ref{prop:linear-main} with 
$(q_1,r_1,s_1) = (p, \infty, \sigma)$ and $(q_2,r_2,s_2) = (q,r,s)$ to obtain 
\[
\|III(t)\|_{L^{q,r}_{s}}
 \le C
\int_0^t (t-\tau) ^{-\frac{d}{2} (\frac{1}{p} - \frac{1}{q}) - \frac{\sigma - s}{2}}
 \||\cdot|^\gamma |e^{\tau\Delta} u_0|^{\alpha-1} |u_1(\tau) - u_2(\tau)|\|_{L^{p,\infty}_{\sigma}}\, d\tau ,
\]
where \eqref{aux_condi2} is required.
Moreover, it follows from Lemma \ref{lem:Holder} with $(q,r) = (p,\infty)$, $(q_1,r_1)=(\frac{\tilde q}{\alpha-1}, \infty)$ and $(q_2,r_2)=(q,\infty)$ that 
\[
 \||\cdot|^\gamma |e^{\tau\Delta} u_0|^{\alpha-1} |u_1(\tau) - u_2(\tau)|\|_{L^{p,\infty}_{ \sigma}}
 \le C \|e^{\tau\Delta} u_0\|_{L^{\tilde{q},\infty}_{s}}^{\alpha-1}\|u_1(\tau) - u_2(\tau)\|_{L_{s}^{{q},\infty}},
\]
where \eqref{aux_condi3} is required.
Combining the above two estimates, and using the equality 
\[
-\frac{d}{2} \left(\frac{1}{p} - \frac{1}{q}\right) - \frac{\sigma - s}{2} -(\alpha-1)\beta + 1
= 
\delta
\]
which is a combination of \eqref{def:delta}, \eqref{def:beta} and \eqref{aux_condi3},
we have
\begin{equation}\label{5.3-est3}
\begin{split}
& \|III(t)\|_{L^{q,r}_{s}}\\
& \le 
Ct^{-\frac{d}{2} (\frac{1}{p} - \frac{1}{q}) - \frac{\sigma - s}{2} -(\alpha-1)\beta + 1}
	\left(\int_0^1 (1-\tau)^{-\frac{d}{2} (\frac{1}{p} - \frac{1}{q}) - \frac{\sigma - s}{2}}
	\tau^{-(\alpha-1)\beta}d\tau\right)\\ 
&\qquad 
	\times \left(\sup_{0<\tau<t}\tau^{ \beta}
	\|e^{\tau\Delta} u_0\|_{L_{\tilde s}^{\tilde{q},\infty}}^{\alpha-1}\right)
	\| u_1 - u_2\|_{L^\infty(0,t; L^{q,r}_{s})}\\
& \le
C t^{\delta} \left(\sup_{0<\tau<t}\tau^{ \beta}\|e^{\tau\Delta} u_0\|_{L_{s}^{\tilde{q},\infty}}^{\alpha-1}\right)
\| u_1 - u_2\|_{L^\infty(0,t; L^{q,r}_{s})},
\end{split}
\end{equation}
where \eqref{aux_condi4} is required.
Hence, summarizing \eqref{firstcalcul1}--\eqref{5.3-est2} and \eqref{5.3-est3}, 
we obtain \eqref{lem_4.2:est1}. 
Therefore, the assertion (i) is proved.

Next, we prove the assertion (ii).
Let $T \in (0,\infty]$ and $u_1, u_2 \in L^\infty(0,T ; L^{q,\infty}_{s}(\mathbb R^d))$.
In this case, the parameters $q,s,\sigma$ satisfy 
\begin{equation}\label{4-1_ii_condi}
0 < \frac{s}{d} + \frac{1}{q} < \frac{\sigma}{d} + \frac{\alpha}{q} < 1, 
\quad s \le \sigma
\quad \text{and}\quad 
\quad \frac{d}{2} \left(\frac{\alpha}{q} - \frac1{q} \right)+ \frac{\sigma-s}{2} = 1.
\end{equation}
We use 
Proposition \ref{l:wMeyer.inq} with the non-endpoint case as $(q_1,r_1,s_1) = (\frac{q}{\alpha}, \infty, \sigma)$ and $(q_2,s_2) = (q,s)$ to obtain 
\[
\begin{split}
\|I(t)\|_{L^{q,\infty}_{s}}
& \le C \sup_{0<\tau <t}
		\| |\cdot|^\gamma |u_1(\tau)-e^{\tau\Delta} u_0|^{\alpha-1}|u_1(\tau) - u_2(\tau)| \|_{L^{\frac{q}{\alpha},\infty}_{\sigma}} \, d\tau \\
& \le C
\|u_1-e^{\tau\Delta} u_0\|_{L^\infty(0,t; L^{q,\infty}_{s})}^{\alpha-1}
		 \| u_1 - u_2\|_{L^\infty(0,t; L^{q,\infty}_{s})}.
\end{split}
\]
Similarly, we have
\[
\|II(t)\|_{L^{q,\infty}_{s}}
\le C
\|u_2-e^{\tau\Delta} u_0\|_{L^\infty(0,t; L^{q,\infty}_{s})}^{\alpha-1}
		 \| u_1 - u_2\|_{L^\infty(0,t; L^{q,\infty}_{s})}.
\]
For the term $III(t)$, 
we can proceed as \eqref{5.3-est3} to obtain 
\[
\|III(t)\|_{L^{q,\infty}_{s}}
\le
C\left(\sup_{0<\tau<t}\tau^{ \beta}\|e^{\tau\Delta} u_0\|_{L_{s}^{\tilde{q},\infty}}^{\alpha-1}\right)
\| u_1 - u_2\|_{L^\infty(0,t; L^{q,\infty}_{s})}
\]
under the conditions \eqref{aux_condi2}--\eqref{aux_condi4}.
Therefore, the assertion (ii) is proved. 

For the assertion (iii), the proof can be done in the same way as the above (ii), but it corresponds to 
the endpoint case $\frac{\sigma}{d} + \frac{\alpha^*}{q} = 1$ in \eqref{4-1_ii_condi}.
For this, we use Proposition~\ref{l:wMeyer.inq} with the endpoint case \eqref{linear-condi4'}, which requires the stronger restriction on $r$, 
to obtain 
\[
\begin{split}
& \|I(t)\|_{L^{q,\infty}_{s}} + \|II(t)\|_{L^{q,\infty}_{s}}  \\
& \le C 
\max_{i=1,2}\|u_i-e^{\tau\Delta} u_0\|_{L^\infty(0,t; L^{q,\alpha^*-1}_{s})}^{\alpha^*-1}
\| u_1 - u_2\|_{L^\infty(0,t; L^{q,\infty}_{s})},
\end{split}
\]
where the condition $r = \alpha^*-1$ is required 
in the first norm of the right-hand side. 
The estimate for $III(t)$ is the same as in (ii).
Thus, (iii) is also proved.
\end{proof}

%%% Subsection 4.2 %%%
\subsection{Proofs of Theorems \ref{thm:unconditional1}, \ref{thm:unconditional2}, \ref{thm:uniqueness-criterion} and Proposition \ref{prop:uniqueness-sufficient}}

To begin with, we prepare the following lemma. 

% Lemma 4.3
\begin{lem}
\label{lem:density}
Let $d\in \mathbb N$, $1\le q, \tilde q\le \infty$,
$0<r\le \infty$ and $s\in \mathbb R$, and let $\beta$ be given by \eqref{def:beta}. 
Then, given a compact set $\mathcal{K}$ of $L^{q,r}_{s}(\mathbb R^d)$, 
there exists a function $\mu : (0,1)\to (0,\infty)$ such that 
\[
\lim_{t\to 0}\mu(t)=0
\]
and 
\[
t^{\beta}\|e^{t\Delta}f\|_{L_{s}^{\tilde q,\infty}}\leq \mu(t)
\] 
for any $t\in (0,1)$ and any $f\in \mathcal{K}$ (replace $L^{q, r}_{s}(\mathbb R^d)$ by $\mathcal{L}^{q, r}_{s}(\mathbb{R}^d)$ if $q=\infty$ or $r=\infty$). 
\end{lem}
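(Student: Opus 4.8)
The plan is to combine the uniform-in-time linear bound of Proposition \ref{prop:linear-main} with a density–compactness argument. First I would record the basic estimate. The hypotheses on $(q,\tilde q,r,s)$ are precisely the relevant instances of \eqref{linear-condi1}--\eqref{linear-condi7} for the pair $(q_1,r_1,s_1)=(q,r,s)$, $(q_2,r_2,s_2)=(\tilde q,\infty,s)$: indeed $\tfrac s d+\tfrac1{\tilde q}<\tfrac s d+\tfrac1q\le1$ makes \eqref{linear-condi5} and the $\tfrac s d+\tfrac1q=1$ part of \eqref{linear-condi3} consistent with the stated assumption, $s_2=s_1$ gives \eqref{linear-condi2}, and $r_2=\infty$ takes care of \eqref{linear-condi4} (including the scaling-endpoint $\tfrac s d+\tfrac1{\tilde q}=0$). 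Since $-\tfrac d2(\tfrac1q-\tfrac1{\tilde q})-\tfrac{s-s}2=-\beta$, Proposition \ref{prop:linear-main} yields a constant $C_0>0$ with
\[
t^{\beta}\|e^{t\Delta}f\|_{L^{\tilde q,\infty}_{s}}\le C_0\|f\|_{L^{q,r}_{s}}\qquad\text{for all }t>0,\ f\in L^{q,r}_{s}(\mathbb R^d).
\]
This already gives \emph{boundedness}; the issue is to upgrade it to the \emph{decay} $\mu(t)\to0$ on the compact set $\mathcal K$.

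Second, I would establish the decay on test functions. For $\phi\in C^\infty_0(\mathbb R^d)$ one has $\phi\in L^{\tilde q,\infty}_{s}(\mathbb R^d)$: away from the origin $\phi$ is bounded with compact support, and near the origin $|x|^s\phi(x)\approx\phi(0)|x|^s$ lies in $L^{\tilde q,\infty}_{\mathrm{loc}}$ because $\tfrac s d+\tfrac1{\tilde q}\ge0$ (at the endpoint this uses $|x|^{-d/\tilde q}\in L^{\tilde q,\infty}(\mathbb R^d)$). Applying Proposition \ref{prop:linear-main} with $(q_1,r_1,s_1)=(q_2,r_2,s_2)=(\tilde q,\infty,s)$ — whose hypotheses hold since $0\le\tfrac s d+\tfrac1{\tilde q}<1$, $\tilde q>q\ge1$, and $r_2=\infty$ — gives $\|e^{t\Delta}\phi\|_{L^{\tilde q,\infty}_{s}}\le C_1\|\phi\|_{L^{\tilde q,\infty}_{s}}$ uniformly in $t>0$. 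Because $\beta>0$ \emph{strictly} (here the strict inequality $\tfrac1{\tilde q}<\tfrac1q$ enters), it follows that $t^{\beta}\|e^{t\Delta}\phi\|_{L^{\tilde q,\infty}_{s}}\le C_1 t^{\beta}\|\phi\|_{L^{\tilde q,\infty}_{s}}\to0$ as $t\to0$.

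Third comes the compactness step. In every case $C^\infty_0(\mathbb R^d)$ is dense in the ambient space $X$ of $\mathcal K$ (it is $L^{q,r}_{s}$ when $q,r<\infty$ by Remark \ref{rem:w-Lorentz}(d), and $\mathcal L^{q,r}_{s}$ by definition when $q=\infty$ or $r=\infty$). Fix $\varepsilon>0$; by compactness and density choose $\phi_1,\dots,\phi_N\in C^\infty_0(\mathbb R^d)$ with $\mathcal K\subset\bigcup_{j=1}^N B_X(\phi_j,\varepsilon)$. Using the quasi-triangle inequality of $L^{\tilde q,\infty}_{s}$ (with constant $K$) and the first two steps, for $f\in\mathcal K$ and a suitable $j$,
\[
t^{\beta}\|e^{t\Delta}f\|_{L^{\tilde q,\infty}_{s}}\le K\Big(C_0\varepsilon+\max_{1\le j\le N}t^{\beta}\|e^{t\Delta}\phi_j\|_{L^{\tilde q,\infty}_{s}}\Big),
\]
and the final (finite) maximum tends to $0$ as $t\to0$. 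Hence $\nu(t):=\sup_{f\in\mathcal K}t^{\beta}\|e^{t\Delta}f\|_{L^{\tilde q,\infty}_{s}}$ is finite for each $t\in(0,1)$ (by the first step, since $\mathcal K$ is bounded) and satisfies $\limsup_{t\to0}\nu(t)\le KC_0\varepsilon$ for every $\varepsilon>0$, so $\nu(t)\to0$; setting $\mu(t):=\nu(t)+t$ produces a function $(0,1)\to(0,\infty)$ with the asserted properties. The argument is the standard "equicontinuity from compactness plus density" scheme, so there is no serious obstacle; the only points needing care are the strictness of $\beta>0$, the verification that $C^\infty_0\subset L^{\tilde q,\infty}_{s}$ and that $e^{t\Delta}$ is uniformly bounded on $L^{\tilde q,\infty}_{s}$ (especially at the endpoint $\tfrac s d+\tfrac1{\tilde q}=0$), and bookkeeping of the quasi-norm constant.
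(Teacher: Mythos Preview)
Your proof is correct and follows essentially the same approach as the paper, which does not give an explicit argument but refers to \cite[Lemma~8, p.~283]{BreCaz1996} and \cite{MatosTerrane} for the standard density--compactness scheme you carry out. The only minor point not explicitly addressed is the edge case $q=1$ with $r>1$, where condition~\eqref{linear-condi3} of Proposition~\ref{prop:linear-main} would demand $r_1\le1$; however, this case is never used in the applications of the lemma (one always has $q\ge\alpha>1$ there), and the same gap is present in the lemma's statement itself.
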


The proof of this lemma can be done as in \cite[Lemma 8, page 283]{BreCaz1996} (see also \cite{MatosTerrane}) and uses the density of $L^{q, r}_{s}(\mathbb R^d) \cap L^\infty_0(\mathbb R^d)$ in $L^{q, r}_{s}(\mathbb R^d)$ or $\mathcal{L}^{q, r}_{s}(\mathbb{R}^d)$ (see Remark \ref{rem:w-Lorentz} (d) and Lemma \ref{lem:A-dense} in Appendix \ref{app:A}).\\

We are now in a position to prove the theorems. 

% Proof of Theorem 1.2
\begin{proof}[Proof of Theorem \ref{thm:unconditional1}]
We give the proof only for the case (2), since the proof of the case (1) is similar. 
Let $T>0$ and $u_1,u_2 \in L^\infty(0,T; L^{q,\alpha}_{s}(\mathbb R^d))$ be mild solutions to \eqref{HH} with initial data $u_1(0) = u_2(0)$. 
By Lemma \ref{lem:sec4_nonlinear1} (ii), we have 
\[
 \|u_1(t) - u_2(t)\|_{L^{q,\alpha}_{s}}  \le C_0 t^{\delta}
 \max_{i=1,2} \|u_i\|_{L^\infty(0,t; L^{q,\alpha}_{s})}^{\alpha-1}
		 \| u_1 - u_2\|_{L^\infty(0,t; L^{q,\alpha}_s)}
\]
for any $t \in (0,T)$, where $\delta>0$ is given in \eqref{def:delta}.
If we choose $t_0 \in (0,T)$ such that 
\[
C_0 t_0^{\delta}
 \max_{i=1,2} \|u_i\|_{L^\infty(0,T; L^{q,\alpha}_{s})}^{\alpha-1} < 1,
\]
then we can derive that $u_1=u_2$ on $[0,t_0]$.
We can repeat this argument until we reach $t=T$, and hence, we arrive at $u_1=u_2$ on $[0,T]$.
Thus, we conclude Theorem~\ref{thm:unconditional1}. 
\end{proof}

The proof of Proposition \ref{prop:uniqueness-sufficient} is similar to that of Theorem~\ref{thm:unconditional1}, and we have only to use  Lemma~\ref{lem:sec4_nonlinear2} (i) instead of Lemma \ref{lem:sec4_nonlinear1} (ii).

% Proof of Theorem 1.3
\begin{proof}[Proof of Theorem \ref{thm:unconditional2}]
We give the proof only for the case (2), since the proof of the case (1) is similar. 
Let $T>0$ and $u_1, u_2 \in C([0,T]; L^{q,\alpha^*-1}_{s}(\mathbb R^d))$ be mild solutions to \eqref{HH} with initial data $u_1(0) = u_2(0)=u_0 \in L^{q,\alpha^*-1}_{s}(\mathbb R^d)$.
By Lemma \ref{lem:sec4_nonlinear2} (iii), we have
\begin{equation}\label{4-2_1}
\begin{split}
\|u_1(t) - u_2(t)\|_{L^{q,\infty}_{s}}  & \le C 
\Big(
\max_{i=1,2}\|u_i-e^{\tau\Delta} u_0\|_{L^\infty(0,t; L^{q,\alpha^*-1}_{s})} \\
&
+ 
\sup_{0<\tau<t}\tau^{ \beta}\|e^{\tau\Delta} u_0\|_{L_{s}^{\tilde{q},\infty}}
\Big)^{\alpha-1}
\| u_1 - u_2\|_{L^\infty(0,t; L^{q,\infty}_{s})}
\end{split}
\end{equation}
for any $t\in (0,T)$, where $\tilde q \in (q,\infty)$.
Since $u_0 \in L^{q,\alpha^*-1}_{s}(\mathbb R^d)$, we see that 
\[
\begin{split}
& \|u_i-e^{\tau\Delta} u_0\|_{L^\infty(0,t; L^{q,\alpha^*-1}_{s})}\\
& \le 
\|u_i-u_0\|_{L^\infty(0,t; L^{q,\alpha^*-1}_{s})}
+
\|u_0-e^{\tau\Delta} u_0\|_{L^\infty(0,t; L^{q,\alpha^*-1}_{s})}
\end{split}
\]
for $i=1,2$. 
Since $u_1, u_2, e^{\tau\Delta} u_0 \in C([0,T]; L^{q,\alpha^*-1}_{s}(\mathbb R^d))$, the right-hand side converges to zero as $t\to 0$, and hence, 
\begin{equation}\label{4-2_2}
\lim_{t\to0}
\max_{i=1,2}\|u_i-e^{\tau\Delta} u_0\|_{L^\infty(0,t; L^{q,\alpha^*-1}_{s})}=0.
\end{equation}
On the other hand, we deduce from Lemma \ref{lem:density} that 
\begin{equation}\label{4-2_3}
\lim_{t\to0}\sup_{0<\tau<t}\tau^{ \beta}\|e^{\tau\Delta} u_0\|_{L_{s}^{\tilde{q},\infty}}=0.
\end{equation}
Hence, by \eqref{4-2_1}, \eqref{4-2_2} and \eqref{4-2_3}, there exists $t_0 \in (0,T]$ such that $u_1=u_2$ on $[0,t_0]$.
The extension of uniqueness to the whole interval $[0,T]$ can be done by the continuity argument as in \cite[Proof of Theorem 1.4]{Tay2020}. 
Thus, we conclude Theorem \ref{thm:unconditional2}.
\end{proof}

Theorem \ref{thm:uniqueness-criterion} is similarly proved to Theorem \ref{thm:unconditional2}, and so we omit the proof.

%%%%%%%%%%%%
%%% Section 5 %%%
%%%%%%%%%%%%
\section{Non-uniqueness}\label{sec:5}

In this section, 
we prove Theorem \ref{thm:nonuniqueness}, i.e.,
non-uniqueness for \eqref{HH} in the double critical case $\frac{s}{d} + \frac{1}{q} = \frac{1}{q_c} = \frac{1}{Q_c}$ (i.e. $\alpha=\alpha^*$).
For this purpose, we shall show the existence of two kind of mild solutions (regular and singular) 
to \eqref{HH} for arbitrary initial data $u_0 \in L^{q,r}_{s}(\mathbb R^d)$. 
For convenience, we define 
\[
q^*(\gamma):=\frac{d(\alpha^*-1)}{2} = \frac{d(2+\gamma)}{2(d-2)}.  
\]
Then we note that 
\[
q_c=Q_c = \frac{d}{d-2}=q^*(0).
\]

%%% Subsection 5.1 %%%
\subsection{Existence of the regular solution}\label{sub:5.1}

In this subsection, we prove the local in time existence of a mild solution $u$ to \eqref{HH} in $C([0,T] ; L^{q,r}_{s}(\mathbb R^d))$ 
with the auxiliary condition 
\begin{equation}\label{5.1:auxiliary-condi}
\|u\|_{\mathcal K^{\tilde q}(T)} := 
\sup_{0<t<T} t^{\beta} \|u(t)\|_{L^{\tilde q,\infty}_{s}} < \infty
\end{equation}
for $\tilde q > q$, 
where $\beta$ is given in \eqref{def:beta}.
The goal of this subsection is to prove the following: 

% Proposition 5.1
\begin{prop}\label{prop:regular1}
Let $d\ge3$, $\gamma>-2$, $\alpha = \alpha^*$, 
$\alpha^* \le q < \infty$, 
$0 < r \le \infty$, and $\frac{s}{d} + \frac{1}{q} = \frac{1}{q_c} = \frac{1}{Q_c}$. 
Assume that $\tilde q$ satisfies 
\begin{equation}\label{ass:regular1}
\max\left\{
0, \frac{1}{q}-\frac{1}{q^*(0)}, \frac{1}{q}-\frac{2}{d\alpha^*}
\right\}
< \frac{1}{\tilde q} < \frac{1}{q}.
\end{equation}
Then, 
for any $u_0 \in L^{q,r}_{s}(\mathbb R^d)$, there exist a time $T=T(u_0)>0$ 
and a unique mild solution $u \in C([0,T] ; L^{q,r}_{s}(\mathbb R^d))$ to \eqref{HH} with $u(0)=u_0$ satisfying \eqref{5.1:auxiliary-condi}
(replace $L^{q,r}_{s}(\mathbb R^d)$ by $\mathcal L^{q,\infty}_{s}(\mathbb R^d)$ if $r=\infty$). 
\end{prop}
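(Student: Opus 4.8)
The plan is to run a standard contraction-mapping argument for the integral equation \eqref{integral-equation} in a complete metric space built from the two norms appearing in the statement: the ambient norm $C([0,T];L^{q,r}_s)$ and the auxiliary Kato-type norm $\|\cdot\|_{\mathcal K^{\tilde q}(T)}$ in \eqref{5.1:auxiliary-condi}. First I would fix $\tilde q$ as in \eqref{ass:regular1} and define, for $K>0$ and $T>0$,
\[
X_{T,K} := \left\{ u \in C([0,T];L^{q,r}_s(\R^d)) \ :\ \|u\|_{\mathcal K^{\tilde q}(T)} \le K,\ \sup_{0<t<T}\|u(t)\|_{L^{q,r}_s} \le 2\|u_0\|_{L^{q,r}_s} \right\},
\]
equipped with the distance $d(u,v) = \sup_{0<t<T}\|u(t)-v(t)\|_{L^{q,r}_s} + \|u-v\|_{\mathcal K^{\tilde q}(T)}$, and show that the map $\Phi(u)(t) := e^{t\Delta}u_0 + N(u)(t)$ is a contraction on $X_{T,K}$ for suitable $K$ and small $T$. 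The linear part is handled by Proposition \ref{prop:linear-main}: the hypotheses on $(q,r,s)$ and \eqref{ass:regular1} are exactly what is needed so that $e^{t\Delta}$ maps $L^{q,r}_s$ into $L^{\tilde q,\infty}_s$ with the decay $t^{-\beta}$, $\beta$ as in \eqref{def:beta}, while $t^\beta\|e^{t\Delta}u_0\|_{L^{\tilde q,\infty}_s}\to 0$ as $t\to 0$ follows from Lemma \ref{lem:density} applied to the compact set $\{u_0\}$ (or, for the density issue when $r=\infty$, to $\mathcal L^{q,\infty}_s$); this is what lets us take $K$ arbitrarily small after choosing $T$ small. Continuity in time $t\mapsto e^{t\Delta}u_0 \in L^{q,r}_s$ is the strong continuity of the heat semigroup, valid precisely on $L^{q,r}_s$ when $q,r<\infty$ and on the closure $\mathcal L^{q,\infty}_s$ when $r=\infty$.

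The nonlinear estimates are the heart of the matter, and here I would reuse the scheme already carried out in the proof of Lemma \ref{lem:sec4_nonlinear2}: write $\sigma = \alpha^* s - \gamma$, apply the generalized Hölder inequality (Lemma \ref{lem:Holder}) to bound $\||\cdot|^\gamma|u|^{\alpha^*-1}|u-v|\|$ in an auxiliary weighted Lorentz norm by a product of $\|u\|_{L^{\tilde q,\infty}_s}^{\alpha^*-1}$ (and $\|v\|_{L^{\tilde q,\infty}_s}^{\alpha^*-1}$) with $\|u-v\|_{L^{q,r}_s}$, and then apply the linear smoothing estimate of Proposition \ref{prop:linear-main} — or, in the $\mathcal K^{\tilde q}$-component, the weighted Meyer inequality (Proposition \ref{l:wMeyer.inq}) — to the time integral. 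Because we are in the double critical case $\frac sd + \frac1q = \frac{1}{q_c} = \frac{1}{Q_c}$, the scaling exponent $\delta$ of \eqref{def:delta} vanishes, so the gain of smallness must come entirely from the factor $\sup_{0<\tau<t}\tau^\beta\|u(\tau)\|_{L^{\tilde q,\infty}_s} \le K$ (or from the time-integral of the kernel in the estimate for the $C([0,T];L^{q,r}_s)$-component, which produces a positive power of $T$); one must check the two integrability conditions in \eqref{aux_condi4}, i.e. that the temporal kernel $(t-\tau)^{-a}\tau^{-b}$ is integrable on $(0,t)$, which is where the constraints \eqref{ass:regular1} on $\tilde q$ are used. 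The upshot is bounds of the shape
\[
\|N(u)\|_{\mathcal K^{\tilde q}(T)} + \sup_{0<t<T}\|N(u)(t)\|_{L^{q,r}_s} \le C\big(K + \sup_{0<\tau<T}\tau^\beta\|e^{\tau\Delta}u_0\|_{L^{\tilde q,\infty}_s}\big)^{\alpha^*-1}\big(K + \|u_0\|_{L^{q,r}_s}\big),
\]
and similarly for differences $N(u)-N(v)$ with $K+\|u_0\|_{L^{q,r}_s}$ replaced by $d(u,v)$; choosing $K$ small (possible once $\sup_{0<\tau<T}\tau^\beta\|e^{\tau\Delta}u_0\|_{L^{\tilde q,\infty}_s}$ is small, i.e. $T$ small) and then $T$ possibly smaller makes $\Phi$ a contraction and the self-map property hold.

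Finally, the fixed point $u\in X_{T,K}$ is the desired mild solution; uniqueness within the class of solutions satisfying \eqref{5.1:auxiliary-condi} follows from the same contraction (or a Gronwall-type argument on a short interval, then propagation), and continuity of $u$ up to $t=0$ with $u(0)=u_0$ in $L^{q,r}_s$ — not merely membership in $C((0,T];L^{q,r}_s)$ — is obtained by writing $u(t)-u_0 = (e^{t\Delta}u_0-u_0) + N(u)(t)$ and noting that the first term tends to $0$ in $L^{q,r}_s$ by strong continuity while $\|N(u)(t)\|_{L^{q,r}_s}\to 0$ from the nonlinear estimate (a positive power of $t$, or the vanishing of the auxiliary norm). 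I expect the main obstacle to be the bookkeeping in the critical case: since $\delta=0$ there is no room to spare in the exponents, so one must verify carefully that the auxiliary exponent $\tilde q$ can be chosen in the nonempty range \eqref{ass:regular1} so that all of \eqref{aux_condi2}–\eqref{aux_condi4} (equivalently, the hypotheses of Propositions \ref{prop:linear-main} and \ref{l:wMeyer.inq}) are simultaneously satisfiable, and that the borderline Lorentz second-index conditions (the $r\le 1$ / $r_1\le r_2$ type restrictions, and the replacement of $L^{q,\infty}_s$ by $\mathcal L^{q,\infty}_s$ when $r=\infty$) are respected throughout.
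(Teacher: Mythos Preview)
Your approach is essentially the same as the paper's: a standard Kato-type fixed point argument using the auxiliary norm $\|\cdot\|_{\mathcal K^{\tilde q}(T)}$, with the nonlinear estimates (stated in the paper as Lemma~\ref{lem:nonlinear_Kato}) obtained from Proposition~\ref{prop:linear-main} and the generalized H\"older inequality. Two small corrections are worth flagging. First, in the double critical case no positive power of $T$ emerges from the time integral in either the $\mathcal K^{\tilde q}$ or the $L^{q,r}_s$ estimate: the kernel $\int_0^t (t-\tau)^{-a}\tau^{-b}\,d\tau$ scales exactly as $t^0$ here, so the smallness comes \emph{only} from the factor $K^{\alpha^*-1}$ (this is why the paper's Lemma~\ref{lem:nonlinear_Kato} bounds both output norms purely by powers of $\|u_i\|_{\mathcal K^{\tilde q}(T)}$). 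Second, the weighted Meyer inequality is not the right tool for the $\mathcal K^{\tilde q}$-component: its scaling condition \eqref{linear-condi3'} forces the time integral to have homogeneity zero, which is incompatible with the $t^{-\beta}$ decay needed there when $\tilde q>q$; the paper uses the direct smoothing estimate of Proposition~\ref{prop:linear-main} for both components instead.
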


The proof is based on the standard fixed point argument as in \cite[Subsection 3.1]{CIT2022}.
We prepare the following estimates on the Duhamel term $N(u)$.

% Lemma 5.2
\begin{lem}\label{lem:nonlinear_Kato}
Let $T>0$, and let $d\ge3$, $\gamma>-2$, $\alpha = \alpha^*$, 
$\alpha^* \le q < \infty$, 
$0 < r \le \infty$ and $\frac{s}{d} + \frac{1}{q} = \frac{1}{q_c} = \frac{1}{Q_c}$. 
\begin{enumerate}[\rm (i)]
\item Assume that $\tilde q$ satisfies \eqref{ass:regular1}. Then there exists a constant $C>0$ such that
\[
\|N(u_1) - N(u_2)\|_{\mathcal K^{\tilde q}(T)} 
\le C
\max_{i=1,2}
\|u_i\|_{\mathcal K^{\tilde q}(T)}^{\alpha-1}
\|u_1-u_2\|_{\mathcal K^{\tilde q}(T)}
\]
for any $t\in (0,T)$ and any functions $u_1,u_2$ satisfying 
\begin{equation}\label{eq:condi_u}
\|u_i\|_{\mathcal K^{\tilde q}(T)}<\infty,
\quad i=1,2.
\end{equation}

\item Assume that 
\begin{equation}\label{ass:regular2}
\max\left\{
0, \frac{1}{q}-\frac{2}{\alpha^*}
\right\}
< \frac{1}{\tilde q} < \frac{1}{q}.
\end{equation}
Then there exists a constant $C>0$ such that
\begin{equation}\label{eq.lem5.2_2}
\|N(u_1) - N(u_2)\|_{L^\infty(0,T ; L^{q,r}_s)} 
\le C
\max_{i=1,2}
\|u_i\|_{\mathcal K^{\tilde q}(T)}^{\alpha-1}
\|u_1-u_2\|_{\mathcal K^{\tilde q}(T)}
\end{equation}
for any $t\in (0,T)$ and any functions $u_1,u_2$ satisfying \eqref{eq:condi_u}.
\end{enumerate}
\end{lem}

% Remark 5.3
\begin{rem}
Note that \eqref{ass:regular1} implies \eqref{ass:regular2}. 
\end{rem}

% Proof of Lemma 5.2
\begin{proof}
We first prove the assertion (i). We set $\sigma := \alpha s - \gamma$ and take 
\begin{equation}\label{eq.cond_6-1}
1 < \tilde q, \frac{\tilde q}{\alpha} < \infty,
\quad 0< \frac{s}{d} + \frac{1}{\tilde q}
< \frac{\sigma}{d} + \frac{\alpha}{\tilde q} < 1,
\quad s \le \sigma,
\end{equation}
\begin{equation}\label{eq.cond_6-2}
-\frac{d}{2} \left(
\frac{\alpha}{\tilde q} - \frac{1}{\tilde q}
\right)
- \frac{\sigma -s}{2} >-1,\quad -\beta \alpha >-1. 
\end{equation}
Here, there exists a $\tilde q$ as above if \eqref{ass:regular1} holds.
In a similar way to \eqref{eq.DSC}, we estimate
\[
\begin{split}
& \|N(u_1)(t) - N(u_2)(t)\|_{L^{\tilde q,\infty}_s}\\
& \le C
\left(\int_0^t 
(t-\tau)^{-\frac{d}{2} \left(
\frac{\alpha}{\tilde q} - \frac{1}{\tilde q}
\right)
- \frac{\sigma -s}{2}}
\tau^{-\beta \alpha}\, d\tau
\right)
\max_{i=1,2}
\|u_i\|_{\mathcal K^{\tilde q}(T)}^{\alpha-1}
\|u_1-u_2\|_{\mathcal K^{\tilde q}(T)}\\
& \le C t^{-\beta} \max_{i=1,2}
\|u_i\|_{\mathcal K^{\tilde q}(T)}^{\alpha-1}
\|u_1-u_2\|_{\mathcal K^{\tilde q}(T)},
\end{split}
\]
where \eqref{eq.cond_6-1} and \eqref{eq.cond_6-2} are required in the first and second steps, respectively. 

Next, we prove the assertion (ii). 
By Lemma \ref{lem:interpolation_Lorentz}, we have 
\begin{equation}\label{eq.interpolation}
\begin{split}
& \|N(u_1)(t) - N(u_2)(t)\|_{L^{q,r}_s}\\
& \le C \|N(u_1)(t) - N(u_2)(t)\|_{L^{q_1,\infty}_s}^\theta
\|N(u_1)(t) - N(u_2)(t)\|_{L^{q_2,\infty}_s}^{1-\theta},
\end{split}
\end{equation}
where 
$1 < q_1<q<q_2\le \infty$, $0<\theta<1$ and 
$\frac{1}{q} = \frac{\theta}{q_1}+\frac{1-\theta}{q_2}$. 
For $j=1,2$, 
we take 
\begin{equation}\label{eq.cond_6-3}
1 < q_j, \frac{\tilde q}{\alpha} < \infty,
\quad 0< \frac{s}{d} + \frac{1}{q_j}
< \frac{\sigma}{d} + \frac{\alpha}{\tilde q} < 1,
\quad s \le \sigma,
\end{equation}
\begin{equation}\label{eq.cond_6-4}
-\frac{d}{2} \left(
\frac{\alpha}{\tilde q} - \frac{1}{q_j}
\right)
- \frac{\sigma -s}{2} >-1,\quad -\beta \alpha >-1,
\end{equation}
and 
we estimate
\[
\begin{split}
& \|N(u_1)(t) - N(u_2)(t)\|_{L^{q_j,\infty}_s}\\
& \le C
\left(\int_0^t 
(t-\tau)^{-\frac{d}{2} \left(
\frac{\alpha}{\tilde q} - \frac{1}{q_j}
\right)
- \frac{\sigma -s}{2}}
\tau^{-\beta \alpha}\, d\tau
\right)
\max_{i=1,2}
\|u_i\|_{\mathcal K^{\tilde q}(T)}^{\alpha-1}
\|u_1-u_2\|_{\mathcal K^{\tilde q}(T)}\\
& \le C \max_{i=1,2}
\|u_i\|_{\mathcal K^{\tilde q}(T)}^{\alpha-1}
\|u_1-u_2\|_{\mathcal K^{\tilde q}(T)},
\end{split}
\]
where \eqref{eq.cond_6-3} and \eqref{eq.cond_6-4} are required in the first and second steps, respectively. 
Here, there exists a $\tilde q$ as above if 
\[
\max\left\{
0, \frac{1}{q_j}-\frac{2}{\alpha^*}
\right\}
< \frac{1}{\tilde q} < \frac{1}{q_j}
\]
hold for $j=1,2$.
Therefore, we can obtain the required inequality \eqref{eq.lem5.2_2} for any $q,\tilde q$ satisfying \eqref{ass:regular2} 
if we take $q_1,q_2$ sufficiently close to $q$ so that $1 < q_1<q<q_2\le \infty$ and 
\[
\max\left\{
0, \frac{1}{q_1}-\frac{2}{\alpha^*}
\right\}
< \frac{1}{\tilde q} < \frac{1}{q_2}
\]
and we use \eqref{eq.interpolation} and perform the above argument.
Thus, the proof is finished. 
\end{proof}

% Proof of Proposition 5.1
\begin{proof}[Proof of Proposition \ref{prop:regular1}]
We give only a sketch of proof, as the proof is almost the same as in \cite[Subsection 3.1]{CIT2022}.
Let $u_0 \in L^{q,r}_{s}(\mathbb R^d)$, and let $\rho$ and $M$ be positive constants such that 
\[
\rho + C_0M^{\alpha^*} \le M
\quad \text{and}
\quad 
C_1 M^{\alpha^*-1} <\frac12,
\]
where
$C_0$ and $C_1$ are positive constants given in \eqref{5-1_1} and \eqref{5-1_2} below.
In addition, we take $T>0$ as 
\[
\|e^{t\Delta}u_0\|_{\mathcal K^{\tilde q}(T)} \le \rho.
\]
Now, we define a nonempty complete metric space $X_M$ by 
\[
X_{M} := \{
u \in \mathcal K^{\tilde q}(T)\,  ; \|u\|_{\mathcal K^{\tilde q}(T)} \le M \}
\]
with a metric $d(u_1,u_2):= \|u_1 -u_2\|_{\mathcal K^{\tilde q}(T)}$. Define a mapping $\Phi$ by 
\[
\Phi(u)(t)
:= e^{t\Delta}u_0 + N(u)(t)
\]
for $u \in X_M$. Then it follows from Lemma \ref{lem:nonlinear_Kato} (i) that
\begin{equation}\label{5-1_1}
\begin{split}
\|\Phi(u)\|_{\mathcal K^{\tilde q}(T)} 
& \le 
\|e^{t\Delta}u_0\|_{\mathcal K^{\tilde q}(T)} + \|N(u)\|_{\mathcal K^{\tilde q}(T)} \\
& \le 
\|e^{t\Delta}u_0\|_{\mathcal K^{\tilde q}(T)} + C_0\|u\|_{\mathcal K^{\tilde q}(T)}^{\alpha^*} \\
& \le \rho + C_0M^{\alpha^*} \le M,
\end{split}
\end{equation}
and
\begin{equation}\label{5-1_2}
\begin{split}
d(u_1,u_2)
& = \|N(u_1-u_2)\|_{\mathcal K^{\tilde q}(T)} \\
& \le 
C_1
\max_{i=1,2}
\|u_i\|_{\mathcal K^{\tilde q}(T)}^{\alpha^*-1}
\|u_1-u_2\|_{\mathcal K^{\tilde q}(T)}\\
& \le C_1 M^{\alpha^*-1} d(u_1,u_2) \le \frac12 d(u_1,u_2).
\end{split}
\end{equation}
for $u, u_1, u_2 \in X_M$. 
Hence, $\Phi$ is a contraction mapping from $X_M$ into itself. Thus, Banach's fixed point theorem ensures the existence of a unique fixed point $u \in X_M$ of $\Phi$. 
Finally, $u \in C([0,T] ; L^{q,r}_{s}(\mathbb R^d))$ follows from Lemma \ref{lem:nonlinear_Kato} (ii), Lemma \ref{lem:A_conti} and the well-known argument as in \cite{OkaTsu2016,Tsu2011} for instance.
The proof of Proposition \ref{prop:regular1} is finished.
\end{proof}

%%% Subsection 5.2 %%%
\subsection{Existence of singular solution}\label{sub:5.2}
The mild solution $u$ obtained in Subsection~\ref{sub:5.1} is a bounded solution (see \cite[Remark~1.1 and Proposition 3.2]{BenTayWei2017} and also \cite[the remark after Definition 2.1]{Wan1993}).
In this subsection, we find a singular mild solution $v$ to \eqref{HH} for any initial data $u_0 \in L^{q,r}_s(\mathbb R^d)$. 
Here, the singular mild solution means that $v(t) \not\in L^{\tilde q,\infty}_s(\mathbb R^d)$ for any $t \in [0,T]$ and for any $\tilde q$ satisfying \eqref{ass:regular1} 
(in particular, this solution has a singularity at $x=0$). 
The goal of this subsection is to prove the following: 

% Theorem 5.4
\begin{thm}\label{thm:singular_sol}
Let $d\ge3$, $\gamma>-2$, $\alpha = \alpha^*$, 
$\alpha^*\le q < \infty$, 
$\alpha^*-1< r \le \infty$, and $\frac{s}{d} + \frac{1}{q} = \frac{1}{q_c} = \frac{1}{Q_c}$. 
Then, for any $u_0 \in L^{q,r}_{s}(\mathbb R^d)$, 
there exist $T=T(u_0)>0$ and a mild solution $v \in C([0,T] ; L^{q, r}_{s}(\mathbb R^d))$ to \eqref{HH} with $v(0)=u_0$ such that 
$v \not\in L^{\tilde q,\infty}_s(\mathbb R^d)$ for any $\tilde q$ satisfying \eqref{ass:regular1} and 
\begin{equation}\label{aim-v}
v(t) - e^{t\Delta} u_0 \in 
L^{q, r}_{s}(\mathbb R^d) \setminus L^{q, \alpha^*-1}_{s}(\mathbb R^d) \text{ for any $r>\alpha^*-1$}
\end{equation}
for any $t\in (0,T]$ 
(replace $L^{q,r}_{s}(\mathbb R^d)$ by $\mathcal L^{q,\infty}_{s}(\mathbb R^d)$ if $r=\infty$). 
\end{thm}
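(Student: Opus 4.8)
The plan is to build the singular solution $v$ as a perturbation of an explicit \emph{singular stationary solution} $\varphi_s$ of the associated elliptic equation $-\Delta \varphi = |x|^\gamma \varphi^{\alpha^*}$. Since $\alpha = \alpha^*$ is the Serrin exponent, one expects a solution of the form $\varphi_s(x) = c\,|x|^{-\frac{2+\gamma}{\alpha^*-1}} = c\,|x|^{-(d-2)}$ for a suitable positive constant $c = c(d,\gamma)$; a direct computation determines $c$ and shows $\varphi_s$ solves \eqref{stationary} in $\mathbb R^d\setminus\{0\}$. The key feature, already flagged in the introduction, is that $\varphi_s$ sits exactly at the scale-critical/Duhamel-critical threshold, so that $|\cdot|^s\varphi_s$ belongs to the Lorentz space $L^{q,\infty}_s$ but, because of a logarithmic divergence in the relevant integral, $\varphi_s \notin L^{\tilde q,\infty}_s(\mathbb R^d)$ for any $\tilde q$ satisfying \eqref{ass:regular1} and $\varphi_s \notin L^{q,\alpha^*-1}_s(\mathbb R^d)$ while $\varphi_s \in L^{q,r}_s(\mathbb R^d)$ for every $r>\alpha^*-1$. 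This last dichotomy is precisely what produces the threshold $r=\alpha^*-1$ in \eqref{aim-v}, and it should be isolated as a separate lemma (this is \texttt{Theorem \ref{thm:sss\_sharp}} referenced in the introduction): the decreasing rearrangement of $|x|^{-(d-2)}$ weighted by $|x|^s$ behaves like $t^{-1/q}$ up to the logarithmic correction, and $\big(\int (t^{1/q}f^*(t))^r\,dt/t\big)^{1/r}$ converges iff $r>\alpha^*-1$.

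Next I would set $v = \varphi_s + w$ and seek $w$ solving the perturbed integral equation
\[
w(t) = e^{t\Delta}(u_0 - \varphi_s) + \int_0^t e^{(t-\tau)\Delta}\Big(|\cdot|^\gamma\big(|\varphi_s + w(\tau)|^{\alpha-1}(\varphi_s+w(\tau)) - \varphi_s^{\alpha}\big)\Big)\,d\tau,
\]
using that $e^{t\Delta}\varphi_s = \varphi_s$ (heat semigroup fixes harmonic-type self-similar stationary profiles — one checks $\partial_t(e^{t\Delta}\varphi_s) = \Delta e^{t\Delta}\varphi_s = -e^{t\Delta}(|\cdot|^\gamma\varphi_s^\alpha)$ is consistent only in the stationary sense; more carefully, one rewrites the equation so that the nonlinearity is the \emph{difference} $F(\varphi_s+w)-F(\varphi_s)$ with $F(u)=|x|^\gamma|u|^{\alpha-1}u$, which near the singularity is controlled by $\varphi_s^{\alpha-1}|w|$). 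The linearization $F(\varphi_s+w)-F(\varphi_s) \approx \alpha\,|x|^\gamma\varphi_s^{\alpha-1} w = \alpha c^{\alpha-1}|x|^{-2} w$ turns the integral operator into (a perturbation of) the Duhamel operator for the Schrödinger-type operator $-\Delta - \kappa|x|^{-2}$ with a Hardy potential, which is why the framework of weighted Lorentz spaces and the weighted linear estimates of Section \ref{sec:3} — in particular the weighted Meyer inequality, Proposition \ref{l:wMeyer.inq} — are exactly the right tools. I would run a contraction in the space
\[
X_T := \Big\{\, w \in C([0,T]; L^{q,r}_s) \;:\; \sup_{0<t<T} t^{\beta}\|w(t)\|_{L^{\tilde q,\infty}_s} < \infty,\ \|w\|_{X_T}\le M \,\Big\}
\]
for $\tilde q$ as in \eqref{ass:regular1}, with $\beta$ from \eqref{def:beta}; the needed multilinear bounds are essentially Lemmas \ref{lem:sec4_nonlinear2}, \ref{lem:nonlinear_Kato} and the still-to-be-stated \texttt{Lemma \ref{lem:perturbed}} for the cross terms involving $\varphi_s$. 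The smallness making the contraction work comes from $\|e^{t\Delta}(u_0-\varphi_s)\|$ being small in the auxiliary norm as $T\to 0$ — here one must be careful because $u_0-\varphi_s$ need not lie in the closure of test functions, so one splits $u_0-\varphi_s = (\text{smooth, small in }L^{q,r}_s) + (\text{remainder with small }\mathcal K^{\tilde q}\text{-norm})$ via Lemma \ref{lem:density}.

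Finally I would verify the qualitative claims: $w\in C([0,T];L^{q,r}_s)$ and $w\in L^{\tilde q,\infty}_s$ for positive times (so $w$ is ``regular'' in the sense of Subsection \ref{sub:5.1}), hence $v = \varphi_s + w \notin L^{\tilde q,\infty}_s$ because $\varphi_s\notin L^{\tilde q,\infty}_s$ and $w$ is; and $v(t) - e^{t\Delta}u_0 = \varphi_s - e^{t\Delta}\varphi_s + w(t) = w(t)$ (using $e^{t\Delta}\varphi_s=\varphi_s$), wait — more precisely $v(t)-e^{t\Delta}u_0 = \varphi_s + w(t) - e^{t\Delta}u_0$, and since $e^{t\Delta}u_0 - e^{t\Delta}\varphi_s = e^{t\Delta}(u_0-\varphi_s) \in L^{q,r}_s$ with the good $\mathcal K^{\tilde q}$-decay, the singular part of $v(t)-e^{t\Delta}u_0$ coincides with that of $\varphi_s$, giving \eqref{aim-v} from the sharp membership lemma. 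The main obstacle I anticipate is the contraction estimate for the terms linear in $w$ but carrying the full singular weight $\varphi_s^{\alpha-1}\sim|x|^{-2}$: this is a genuinely critical (non-decaying-in-$\tau$) contribution, so one cannot gain smallness from a positive power of $T$ there, and must instead extract it from the Meyer-type endpoint estimate together with the smallness of the $\mathcal K^{\tilde q}$-norm of $w$ itself — this is exactly where the hypothesis $\alpha=\alpha^*$ and the precise endpoint version \eqref{linear-condi4'} of Proposition \ref{l:wMeyer.inq} are indispensable, and where the restriction $r>\alpha^*-1$ must be used to even make sense of the fixed-point space.
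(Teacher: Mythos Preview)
Your central ansatz fails. A pure power $\varphi_s(x)=c|x|^{-(d-2)}$ does \emph{not} solve $-\Delta\varphi=|x|^{\gamma}\varphi^{\alpha^*}$: matching exponents forces the decay rate $a=\frac{2+\gamma}{\alpha^*-1}=d-2$, but then $-\Delta(c|x|^{-a})=ca(d-2-a)|x|^{-a-2}=0$, while the right-hand side is strictly positive. This degeneracy is precisely the hallmark of the Serrin exponent $\alpha=\alpha^*$. The actual singular stationary solution (on the punctured ball, see Theorem~\ref{thm:sss_sharp}) carries a logarithmic correction,
\[
U_0(x)\sim |x|^{-(d-2)}\,|\log|x||^{-\frac{1}{\alpha^*-1}}\quad\text{as }x\to0,
\]
and it is exactly this logarithm that produces the Lorentz threshold: the rearrangement of $|x|^{s}U_0$ behaves like $t^{-1/q}(\log t)^{-1/(\alpha^*-1)}$ near $t=0$, so $U_0\in L^{q,r}_s$ iff $r>\alpha^*-1$. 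By contrast, your $\varphi_s$ gives a rearrangement $\sim t^{-1/q}$ with no log, hence $\varphi_s\in L^{q,\infty}_s$ only and $\varphi_s\notin L^{q,r}_s$ for every $r<\infty$; your claimed dichotomy \eqref{aim-v} therefore cannot come from $\varphi_s$.

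Two further breakdowns follow from this. First, since $U_0$ is only a solution on $B\setminus\{0\}$, the paper cuts it off to a compactly supported $V_0$ and absorbs the resulting defect $R=\Delta V_0+|x|^{\gamma}V_0^{\alpha^*}\in C^1_c$ as an inhomogeneous forcing term in the equation for $w=v-V_0$ (Proposition~\ref{thm:singular-sol} and Lemma~\ref{lem:pertubed}); you need an analogous device, because no global singular stationary solution with the right integrability is available. Second, your identity $e^{t\Delta}\varphi_s=\varphi_s$ is false: $|x|^{2-d}$ is harmonic only on $\mathbb{R}^d\setminus\{0\}$ (it is the Newtonian potential, with $-\Delta|x|^{2-d}=c_d\delta_0$), so the heat semigroup smooths it instantly and $e^{t\Delta}\varphi_s\neq\varphi_s$ for $t>0$. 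The paper instead decomposes $v(t)-e^{t\Delta}u_0=(w(t)-e^{t\Delta}w_0)-e^{t\Delta}V_0+V_0$, shows the first two pieces lie in $L^{q,\tilde r}_s$ for all $\tilde r>0$, and reads off \eqref{aim-v} from the sharp membership of $V_0$. Your contraction scheme in $X_T$ is otherwise the right shape, but it must be run around the log-corrected, compactly supported profile $V_0$, not around $c|x|^{-(d-2)}$.
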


The proof is based on the argument in \cite{MatosTerrane,Ter2002}. 
In order to construct the singular solution $v$, we use a positive, radially symmetric and singular stationary solution of 
\begin{equation}\label{Henon-ball}
		\Delta U + |x|^{\gamma} U^{\frac{d+\gamma}{d-2}} = 0\quad 
			\text{in } B\setminus \{0\}, \quad U>0,
\end{equation}
where $d\ge3$, $\gamma >-2$ and $B := \{x \in \mathbb R^d \,;\, |x| < 1\}$.
We have the results on the existence of the singular stationary solution and the sharp bound of its behavior at $x=0$.

% Theorem 5.5
\begin{thm}\label{thm:sss_sharp}
Let $d\ge3$ and $\gamma >-2$. The the following assertions hold:
\begin{enumerate}[\rm (i)]
\item
The equation \eqref{Henon-ball} has a positive, radial, and singular solution at $x=0$, where the singular solution means that it diverges at $x=0$.  

\item
Let $U \in C^2 (B\setminus \{0\})$ be a positive radial solution to \eqref{Henon-ball}. 
Then, $U$ has either a removable singularity at $|x|=0$ or a singularity at $|x|=0$ as 
\begin{equation}\label{B-behavior}
\lim_{x\to 0}
|x|^{d-2} |\log |x||^{\frac{d-2}{\gamma+2}}U(x) = 
\left(
\frac{(d-2)^2}{2+\gamma}
\right)^{\frac{d-2}{2+\gamma}}.
\end{equation}
\end{enumerate}
\end{thm}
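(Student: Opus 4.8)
The plan is to reduce the PDE \eqref{Henon-ball} to an autonomous ODE via the Emden--Fowler change of variables, and then analyze the resulting dynamical system. For a positive radial solution $U=U(\rho)$, $\rho=|x|$, equation \eqref{Henon-ball} becomes $U'' + \frac{d-1}{\rho} U' + \rho^\gamma U^{\frac{d+\gamma}{d-2}} = 0$. Setting $\rho = e^{-t}$ (so $t\to+\infty$ as $\rho\to 0$) and $U(\rho) = \rho^{-(d-2)} |\log\rho|^{-\frac{d-2}{\gamma+2}}\, w(t) \cdot c$ for a suitable normalization constant $c$ — or, more cleanly, first writing $U(\rho)=\rho^{-(d-2)} v(t)$ to get a non-autonomous equation with a $t^{?}$-type coefficient, and then a secondary logarithmic rescaling — one transforms \eqref{Henon-ball} into a perturbed autonomous second-order ODE. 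The autonomous limiting equation has the explicit constant solution $w_\infty := \left(\frac{(d-2)^2}{2+\gamma}\right)^{\frac{d-2}{2+\gamma}}$, which is exactly the right-hand side of \eqref{B-behavior}; indeed a direct substitution of the pure power ansatz $U(\rho)=A\,\rho^{-(d-2)}|\log\rho|^{-\frac{d-2}{\gamma+2}}$ into \eqref{Henon-ball}, balancing the leading $\rho^{-d}|\log\rho|^{-\frac{d-2}{\gamma+2}-1}$ terms, pins down $A=w_\infty$. This computation simultaneously motivates the scaling and identifies the constant in part~(ii).

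For part~(i), the plan is to produce a \emph{singular} positive solution on $B\setminus\{0\}$. One natural route is variational/sub--supersolution: the function $\underline{U}(\rho) := w_\infty\,\rho^{-(d-2)}(|\log\rho| + K)^{-\frac{d-2}{\gamma+2}}$ is, for $K$ large enough, a supersolution (or subsolution, depending on sign bookkeeping) of \eqref{Henon-ball} near the origin because of the favorable sign of the lower-order logarithmic correction; pairing it with a regular barrier and invoking the standard monotone iteration on annuli $\{\varepsilon<|x|<1\}$ followed by a limit $\varepsilon\to 0$, together with the a~priori bound from part~(ii) to prevent the limit from collapsing to a bounded solution, yields $U$. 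Alternatively, and perhaps more in the spirit of \cite{MatosTerrane,Ter2002}, one shoots in the ODE: for each initial slope there is a local solution, and a continuity/connectedness argument on the shooting parameter separates the solutions with removable singularity from those blowing up at $\rho=0$, the borderline giving the singular one. Either way, existence is comparatively soft once the correct ansatz is in hand.

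The substance of the theorem is part~(ii): the \emph{sharp} asymptotic \eqref{B-behavior}. Here I would work with the transformed ODE $\ddot w = F(w,\dot w,t)$ where $F(w,\dot w,t)\to F_\infty(w,\dot w)$ as $t\to\infty$, and $F_\infty$ has $w\equiv w_\infty$ as its unique positive equilibrium. The strategy: (a) first show any positive radial $U$ either stays bounded (removable singularity, handled by standard elliptic regularity / Serrin-type removability once $U=O(\rho^{-(d-2)+\epsilon})$) or satisfies crude two-sided bounds $c_1 \le \rho^{d-2}|\log\rho|^{\frac{d-2}{\gamma+2}} U(\rho) \le c_2$ near $0$ — this is the Emden--Fowler a~priori estimate, obtained by testing against the supersolution/subsolution barriers; (b) upgrade the crude bounds to convergence: linearize the autonomous system $F_\infty$ at $w_\infty$, check that the linearization is hyperbolic (no center directions), so that the equilibrium is either a sink along a stable manifold or a saddle, and then use the asymptotic autonomy (the perturbation $F-F_\infty$ is integrable in $t$, being $O(1/t)$ or $O(e^{-ct})$ in the relevant variable) together with a Lyapunov function or a center-manifold/normally-hyperbolic argument to conclude $w(t)\to w_\infty$. \textbf{The main obstacle I expect is step (b), specifically controlling the logarithmic corrections:} the natural time variable $t=-\log\rho$ makes the equation only \emph{asymptotically} autonomous with a slowly-decaying ($\sim 1/t$) perturbation, which is the borderline case where a naive hyperbolicity argument can fail and one genuinely needs a two-step rescaling (first $\rho^{-(d-2)}$, then $(\log(1/\rho))^{-(d-2)/(\gamma+2)}$) to extract the logarithm and render the remaining perturbation integrable; getting the normalization constant $w_\infty$ to come out exactly as in \eqref{B-behavior}, rather than merely showing $\rho^{d-2}|\log\rho|^{\frac{d-2}{\gamma+2}}U(\rho)$ is bounded above and below, is where the care is needed. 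I would organize this via the explicit barrier functions $w_\infty\rho^{-(d-2)}(|\log\rho|\pm K)^{-\frac{d-2}{\gamma+2}}$ and a squeeze argument driving $K\to 0$, which localizes the asymptotic constant precisely at $w_\infty$.
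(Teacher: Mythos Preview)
Your outline correctly identifies the Emden--Fowler substitution $U(r)=r^{-(d-2)}\mathfrak{u}(t)$, $t=-\log r$, but the resulting ODE
\[
\mathfrak{u}_{tt}+(d-2)\mathfrak{u}_t+\mathfrak{u}^{\frac{d+\gamma}{d-2}}=0
\]
is already \emph{autonomous}, with sole equilibrium $\mathfrak{u}=0$, whose linearization has eigenvalues $0$ and $-(d-2)$. So the premise of your step~(b)---that a secondary logarithmic rescaling yields a hyperbolic equilibrium with an integrable perturbation---fails: the origin has a genuine center direction, and after the further rescaling $\mathfrak{u}(t)=t^{-(d-2)/(2+\gamma)}w(\log t)$ the highest-order term $w_{ss}$ carries the coefficient $e^{-s}$, so the problem becomes \emph{singularly} perturbed rather than asymptotically autonomous with an integrable remainder. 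A hyperbolicity/Lyapunov argument as you describe does not close; one would need Fenichel-type slow-manifold machinery, which you do not invoke. The barrier-and-squeeze idea at the end is more promising, but your step~(a) (crude two-sided bounds ``by testing against the barriers'') is circular: applying a comparison principle on a punctured ball requires ordering near the singular point, which is precisely the unknown behavior.

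The paper proceeds quite differently, following Guedda--V\'eron. It first imports the a~priori upper bound $U(r)\le Cr^{-(d-2)}|\log r|^{-\frac{d-2}{\gamma+2}}$ from Bidaut-V\'eron--Garc\'ia-Huidobro, which guarantees $\mathfrak{u}^{\frac{d+\gamma}{d-2}}\in L^1((0,\infty))$ and supplies the missing input for step~(a). It then shows that the logarithmic derivative $\mathfrak{w}:=\mathfrak{u}_t/\mathfrak{u}$, viewed as a function of $\rho:=\mathfrak{u}$, is eventually monotone (any critical point has $\mathfrak{w}_{\rho\rho}>0$), hence has a limit $m\in[-(d-2),0]$; intermediate values of $m$ are ruled out by an elementary contradiction via exponential bounds derived from the integrated equation. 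The case $m=-(d-2)$ gives $U=O(r^{-\varepsilon})$ and hence removability by Serrin. For $m=0$, rather than rescaling again, the paper sets $\psi(t)=\int_t^\infty\mathfrak{u}^{\frac{d+\gamma}{d-2}}\,d\tau$ and reads off $\psi^{\frac{d+\gamma}{d-2}}/\psi_t\to-(d-2)^{\frac{d+\gamma}{d-2}}$ directly from the ODE; integrating this first-order relation yields the exact constant in \eqref{B-behavior} with no dynamical-systems apparatus. For part~(i) the paper simply cites Aviles (1983).
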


% Remark 5.6
\begin{rem}
The constant \eqref{B-behavior} appears in \cite[Theorem 2.1]{BidRao1996} for $-2<\gamma<2$, and it gives the precise value to that in \cite[Theorem A]{Avi1987} and hence to that in \cite[Remark~6.2]{Tay2020}.
\end{rem}

The proofs of (i) and (ii) can be found in \cite[Example 1]{Avi1983} and \cite[Theorem 1.1 (ii)]{DZarxiv}, respectively. 
For completeness, we give the proof of (ii) in Appendix \ref{app:B}. 
Therefore, we denote by $U_0$ the singular stationary solution with 
\begin{equation}\label{eq.singular1}
U_0(x)  \sim |x|^{-(d-2)} |\log |x||^{-\frac{d-2}{\gamma+2}}  =|x|^{-(d-2)} |\log |x||^{-\frac{1}{\alpha^* -1}}
\end{equation}
near $x=0$. Then we note from Remark \ref{rem:w-Lorentz} {\rm (g)} that 
\begin{equation}\label{eq.singular2}
U_0 \in L^{q,r}_{s}(\mathbb R^d) \setminus L^{q,\alpha^*-1}_{s}(B)
\end{equation}
for any $r > \alpha^*-1$.\\

We extend $U_0$ to a function $V_0$ on $\mathbb R^d$ as follows.

 % Proposition 5.7
\begin{prop}
	\label{thm:singular-sol}
Let $d,\gamma,\alpha,q,s$ 
be as in Theorem \ref{thm:singular_sol}. 
Then there exists a function $V_0 \ge0$ on $\mathbb{R}^d\setminus \{0\}$ with compact support 
such that 
\[
	V_0(x) \sim 
	|x|^{-(d-2)} |\log |x||^{-\frac{1}{\alpha^* -1}}
\]
in a neighborhood of $x=0$, and 
\begin{equation}\label{phi-pro}
	R:= \Delta V_0 + |x|^{\gamma} V_0^{\alpha^*} \text{ is of $C^1$ with compact support}.
\end{equation}
Moreover, 
\begin{empheq}[left={\empheqlbrace}]{alignat=2}
  &V_0 \in L^{q,r}_{s}(\mathbb R^d) \setminus L^{q,\alpha^*-1}_{s}(\mathbb R^d)
\quad 
\text{for any $r > \alpha^* -1$}, \label{V_0:1}\\
  & V_0 \not \in 
L^{\tilde q,\infty}_{s}(\mathbb R^d) 
\quad \text{for any $\tilde q>q$}. \label{V_0:3}
\end{empheq} 
\end{prop}

The proof of Proposition \ref{thm:singular-sol} is the same as in \cite[Theorem 0.7]{Ter2002} (see also \cite[Proposition 6.1]{Tay2020}). \\

To prove Theorem \ref{thm:singular_sol}, 
we find a singular mild solution $v$ to \eqref{HH} of the form
\begin{equation}\label{form_v}
v(t) = w(t) + V_0.
\end{equation}
Here, $w=w(t)$ is a (regular) solution to the perturbed problem
\begin{equation}\label{perturbed-pro}
\begin{cases}
\displaystyle w(t) = e^{t\Delta}w_0 + \mathcal N(w)(t) + \int_0^t e^{(t-\tau)\Delta} 
R\, d\tau,\\
w(0) =w_0 := u_0 - V_0,
\end{cases}
\end{equation}
where
\[
\mathcal N(w)(t):= 
\int_0^t e^{(t-\tau)\Delta} 
\left(
|x|^\gamma |w(\tau) + V_0|^{\alpha^*-1} (w(\tau) + V_0)
-
|x|^\gamma V_0^{\alpha^*}
\right)\, d\tau.
\]
More precisely, we have the following:

% Lemma 5.8
\begin{lem}
\label{lem:pertubed}
Let $d\ge3$, $\gamma>-2$, $\alpha = \alpha^*$, 
$\alpha^* \le q < \infty$, 
$0 < r \le \infty$, and $\frac{s}{d} + \frac{1}{q} = \frac{1}{q_c} = \frac{1}{Q_c}$.
Then, for any $w_0 \in L^{q,r}_{s}(\mathbb R^d)$, there exist $T>0$ and a unique solution $w \in C([0,T]; L^{q,r}_{s}(\mathbb R^d))$ 
to \eqref{perturbed-pro} with $w(0)=w_0$ such that 
it satisfies 
\eqref{5.1:auxiliary-condi} for any $\tilde q$ satisfying \eqref{ass:regular1}
(replace $L^{q,r}_{s}(\mathbb R^d)$ by $\mathcal L^{q,\infty}_{s}(\mathbb R^d)$ if $r=\infty$).
\end{lem}

The proof of this lemma is based on the fixed point argument as in \cite{MatosTerrane}.
Hence, we need to show some estimates for the term $\mathcal N(w)$. 
To prove the estimates, we use the following decomposition of $V_0$. 
By the property \eqref{V_0:1} of $V_0$ and Lemma \ref{lem:A-dense} (i), 
for any $\varepsilon>0$, there exist functions $h\in L^{q,\infty}_{s}(\mathbb R^d) \cap L^\infty_0(\mathbb R^d)$
and $\overline{V}_0 \in L^{q,\infty}_{s}(\mathbb R^d)$
such that
\begin{equation}
\label{decompseV}
V_0=h+\overline{V}_0,\quad  \|\overline{V}_0\|_{L^{q,\infty}_{s}}<\varepsilon.
\end{equation}
Then we have the following estimates for $\mathcal N(w)$. 

% Lemma 5.9
\begin{lem}\label{lem:perturbed}
Let $d,\gamma,\alpha,q,r,s$ 
be as in Lemma \ref{lem:pertubed}, $\gamma_+ := \max\{0,\gamma\}$ and $\gamma_-:= -\min\{0,\gamma\}$. 
Assume $\tilde q$ satisfies \eqref{ass:regular1}. 
Then there exists a constant $C>0$ such that 
\begin{equation}\label{pertubed_nonlinear1}
\begin{split}
& \|\mathcal N(w_1) - \mathcal N(w_2)\|_{\mathcal K^{\tilde q}(t)}\\
&\le C
\left(
\max_{i=1,2}
\|w_i\|_{\mathcal K^{\tilde q}(t)}^{\alpha^*-1} + \|\overline{V}_0\|_{L^{q,\infty}_{s}}^{\alpha^*-1}
+ t^{1-\frac{\gamma_-}{2}}
 \||\cdot|^{\gamma_+}|h|^{\alpha^*-1}\|_{L^\infty}
\right)\\
& \qquad \times 
\|w_1-w_2\|_{\mathcal K^{\tilde q}(t)}
\end{split}
\end{equation}
and 
\begin{equation}\label{pertubed_nonlinear2}
\begin{split}
& \|\mathcal N(w_1) - \mathcal N(w_2)\|_{L^\infty(0,t; L^{q,r}_s)}\\
&\le C
\left(
\max_{i=1,2}
\|w_i\|_{\mathcal K^{\tilde q}(t)}^{\alpha^*-1} + \|\overline{V}_0\|_{L^{q,\infty}_{s}}^{\alpha^*-1}
+
 \|h\|_{L^{\tilde q,\infty}_s}^{\alpha^*-1}
\right)
\|w_1-w_2\|_{\mathcal K^{\tilde q}(t)}
\end{split}
\end{equation}
for any two functions $w_1,w_2$ satisfying \eqref{5.1:auxiliary-condi} 
and for any $t>0$. 
\end{lem}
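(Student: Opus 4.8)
The plan is to estimate the difference $\mathcal N(w_1) - \mathcal N(w_2)$ in the two relevant norms by expanding the nonlinearity and using the linear estimates from Section \ref{sec:3}. First I would write
\[
|\mathcal N(w_1)(t) - \mathcal N(w_2)(t)|
\le C\int_0^t e^{(t-\tau)\Delta}\Big[|\cdot|^\gamma\big(|w_1(\tau)+V_0|^{\alpha^*-1}+|w_2(\tau)+V_0|^{\alpha^*-1}\big)|w_1(\tau)-w_2(\tau)|\Big]\,d\tau,
\]
and then, using $|w_i+V_0|^{\alpha^*-1}\lesssim |w_i|^{\alpha^*-1}+V_0^{\alpha^*-1}$ together with the decomposition \eqref{decompseV}, split the weight factor into three contributions: one controlled by $\|w_i\|_{\mathcal K^{\tilde q}(t)}^{\alpha^*-1}$, one by $\|\overline V_0\|_{L^{q,\infty}_s}^{\alpha^*-1}$, and one involving $h\in C^\infty_0(\mathbb R^d)$. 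For the $w_i$-terms and the $\overline V_0$-term the computation is exactly parallel to the proofs of Lemma \ref{lem:nonlinear_Kato} and Lemma \ref{lem:sec4_nonlinear1}: set $\sigma:=\alpha^*s-\gamma$, apply Proposition \ref{prop:linear-main} with $(q_1,r_1,s_1)=(\tilde q/\alpha^*,\infty,\sigma)$, $(q_2,r_2,s_2)=(\tilde q,\infty,s)$ (resp. with second exponent $q$ for the $L^\infty_t L^{q,r}_s$ bound), followed by generalized Hölder (Lemma \ref{lem:Holder}); the admissibility of the auxiliary exponents and the integrability of the resulting time-singular kernel $(t-\tau)^{-\frac{d}{2}(\frac{\alpha^*}{\tilde q}-\frac1{\tilde q})-\frac{\sigma-s}{2}}\tau^{-(\alpha^*-1)\beta}$ follow precisely from the range of $\tilde q$ imposed by \eqref{ass:regular1}.

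The genuinely new piece is the cross term containing $h$, i.e. the contribution of $|\cdot|^\gamma |h|^{\alpha^*-1}|w_1-w_2|$ (the pure $V_0^{\alpha^*}-V_0^{\alpha^*}$ part cancels, and the mixed powers of $h$ and $w_i$ or $\overline V_0$ are absorbed into the previous two bullets by the binomial expansion). Since $h$ is smooth and compactly supported, $|\cdot|^{\gamma_+}|h|^{\alpha^*-1}\in L^\infty$, and the only singularity in the weight is the factor $|x|^{-\gamma_-}$ when $\gamma<0$. For the $\mathcal K^{\tilde q}(t)$ estimate I would therefore bound
\[
\big\||\cdot|^\gamma|h|^{\alpha^*-1}(w_1-w_2)(\tau)\big\|_{L^{\tilde q/\alpha^*,\infty}_{\sigma}}
\le C\,\big\||\cdot|^{\gamma_+}|h|^{\alpha^*-1}\big\|_{L^\infty}\,\big\||\cdot|^{-\gamma_-}\big\|_{L^{d/\gamma_-,\infty}}\,\|(w_1-w_2)(\tau)\|_{L^{\tilde q,\infty}_s},
\]
via Lemma \ref{lem:Holder}, where the factor $\||\cdot|^{-\gamma_-}\|_{L^{d/\gamma_-,\infty}}$ is merely a finite constant; applying Proposition \ref{prop:linear-main} to the Duhamel operator then produces a time kernel whose integral over $(0,t)$ behaves like $t^{1-\frac{\gamma_-}{2}}$ after pulling out the $\tau^{-\beta}$ weight that defines $\mathcal K^{\tilde q}(t)$ — this is the origin of the factor $t^{1-\gamma_-/2}$ in \eqref{pertubed_nonlinear1}. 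For the $L^\infty(0,t;L^{q,r}_s)$ estimate \eqref{pertubed_nonlinear2} one instead keeps $h$ in $L^{\tilde q,\infty}_s$ and runs the same Hölder/linear-estimate scheme as for the $\overline V_0$-term, which yields the coefficient $\|h\|_{L^{\tilde q,\infty}_s}^{\alpha^*-1}$ with no extra power of $t$.

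The main obstacle I anticipate is bookkeeping rather than a deep difficulty: one must verify that for every one of the several terms produced by expanding $|w_i+h+\overline V_0|^{\alpha^*-1}$ there is an admissible choice of auxiliary Lebesgue/Lorentz exponents making both Proposition \ref{prop:linear-main} (with its endpoint constraints \eqref{linear-condi3}--\eqref{linear-condi5}) and Lemma \ref{lem:Holder} applicable, and that the exponent $\tilde q$ can be chosen simultaneously in the window \eqref{ass:regular1}; the constraint $r>\alpha^*-1$ never enters the nonlinear estimate itself (it is only used later, through \eqref{V_0:1}, to guarantee $w_0=u_0-V_0\in L^{q,r}_s$). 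Once all terms are matched to the three prototype estimates above, collecting them gives \eqref{pertubed_nonlinear1} and \eqref{pertubed_nonlinear2}, completing the proof.
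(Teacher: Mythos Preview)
Your decomposition into $w_i$-, $\overline V_0$- and $h$-contributions is correct and matches the paper, but there is a genuine gap in your treatment of the $\overline V_0$-term for the $\mathcal K^{\tilde q}(t)$ estimate \eqref{pertubed_nonlinear1}. You claim this term is ``exactly parallel'' to the $w_i$-terms via Proposition~\ref{prop:linear-main}. It is not: since $\overline V_0\in L^{q,\infty}_s$ (not $L^{\tilde q,\infty}_s$), H\"older places the product $|\cdot|^\gamma|\overline V_0|^{\alpha^*-1}|w_1-w_2|$ in $L^{p,\infty}_\sigma$ with $\tfrac1p=\tfrac{\alpha^*-1}{q}+\tfrac1{\tilde q}$ and $\sigma=\alpha^*s-\gamma$. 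Applying Proposition~\ref{prop:linear-main} from $L^{p,\infty}_\sigma$ to $L^{\tilde q,\infty}_s$ then yields the time exponent
\[
\frac{d}{2}\Big(\frac1p-\frac1{\tilde q}\Big)+\frac{\sigma-s}{2}
=\frac{\alpha^*-1}{2}\Big(\frac{d}{q}+s\Big)-\frac{\gamma}{2}
=\frac{2+\gamma}{2}-\frac{\gamma}{2}=1,
\]
using the double-critical relation $\tfrac{s}{d}+\tfrac1q=\tfrac1{q_c}$. The resulting integral $\int_0^t(t-\tau)^{-1}\tau^{-\beta}\,d\tau$ diverges, so Proposition~\ref{prop:linear-main} alone cannot close this term. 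The paper's fix is precisely to invoke the weighted Meyer inequality (Proposition~\ref{l:wMeyer.inq}), which is designed for the endpoint \eqref{linear-condi3'} where that exponent equals $1$; this gives $\|IV(t)\|_{L^{\tilde q,\infty}_s}\le C t^{-\beta}\|\overline V_0\|_{L^{q,\infty}_s}^{\alpha^*-1}\|w_1-w_2\|_{\mathcal K^{\tilde q}(t)}$ directly, without a divergent time integral.

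Your $h$-term computation also has mismatched exponents: placing the product in $L^{\tilde q/\alpha^*,\infty}_\sigma$ and then extracting $\||\cdot|^{-\gamma_-}\|_{L^{d/\gamma_-,\infty}}$ via H\"older would require $\tfrac{\alpha^*-1}{\tilde q}=\tfrac{\gamma_-}{d}$, which is false in general, and with those inner exponents the time integral does not produce $t^{1-\gamma_-/2}$. The paper instead takes $(q_1,r_1,s_1)=(\tilde q,\infty,s+\gamma_-)$ and $(q_2,r_2,s_2)=(\tilde q,\infty,s)$ in Proposition~\ref{prop:linear-main}, so the time kernel is simply $(t-\tau)^{-\gamma_-/2}$; writing $|\cdot|^{\gamma+\gamma_-}=|\cdot|^{\gamma_+}$ inside the norm then lets one pull out $\||\cdot|^{\gamma_+}|h|^{\alpha^*-1}\|_{L^\infty}$ and integrate to obtain the stated factor $t^{1-\gamma_-/2}$. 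For \eqref{pertubed_nonlinear2} the $h$-term is handled as in Lemma~\ref{lem:nonlinear_Kato}~(ii), which is why $\|h\|_{L^{\tilde q,\infty}_s}^{\alpha^*-1}$ appears there instead.
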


% Proof of Lemma 5.9
\begin{proof}
We write
\[
\begin{split}
\mathcal{N}(w_1)(t)-\mathcal{N}(w_2)(t)
& = 
\int_0^t
e^{(t-\tau)\Delta}\big[|\cdot|^{\gamma}\left|w_1(\tau)+V_0\right|^{\alpha^*-1}\left(w_1(\tau)+V_0\right)\\
& 
\qquad -|\cdot|^{\gamma}\left| w_2(\tau)+V_0\right|^{\alpha^*-1}\left(w_2(\tau)+V_0\right)\big]d\tau.
\end{split}
\]
By the decomposition \eqref{decompseV} together with the inequality
\[
\begin{split}
&\left||x+y|^{\alpha^*-1}(x+y)  - |x'+y|^{\alpha^*-1}(x'+y)\right|\\
& \qquad \qquad \qquad \le  C|x-x'|\left(|x|^{\alpha^*-1}+|x'|^{\alpha^*-1}+|y|^{\alpha^*-1}\right)
\end{split}
\] 
for $x,x',y \in \mathbb{R}$, we have
\begin{equation} \label{eq0928}
\begin{split}
|\mathcal{N}(w_1)(t)-\mathcal{N}(w_2)(t)|
& \leq  C\int_0^te^{(t-\tau)\Delta}[|\cdot|^{\gamma}|w_1(\tau)|^{\alpha^*-1}|w_1(\tau)-w_2(\tau)|]\, d\tau\\
& +C\int_0^te^{(t-\tau)\Delta}[|\cdot|^{\gamma}|w_2(\tau)|^{\alpha^*-1}|w_1(\tau)-w_2(\tau)|]\, d\tau\\
& +C\int_0^te^{(t-\tau)\Delta}[|\cdot|^{\gamma}|h|^{\alpha^*-1}|w_1(\tau)-w_2(\tau)|]\, d\tau\\
& +C\int_0^te^{(t-\tau)\Delta}[|\cdot|^{\gamma}|\overline{V}_0|^{\alpha^*-1}|w_1(\tau)-w_2(\tau)|]\, d\tau\\
& =: I(t) + II(t) + III(t) + IV(t).
\end{split}
\end{equation}

First, we prove the estimate \eqref{pertubed_nonlinear1}. 
In the same way as in the proof of Lemma~\ref{lem:sec4_nonlinear2}, 
the norms of the terms $I(t)$ and $II(t)$ can be estimated as 
\begin{equation}\label{eq1-1}
\|I\|_{\mathcal K^{\tilde q}(t)}
+
\|II\|_{\mathcal K^{\tilde q}(t)}
\le C
 \max_{i=1,2}
\|w_i\|_{\mathcal K^{\tilde q}(t)}^{\alpha^*-1}
\|w_1-w_2\|_{\mathcal K^{\tilde q}(t)}.
\end{equation}

As to the term $III(t)$, 
we use Proposition \ref{prop:linear-main} with 
$(q_1,r_1,s_1) = (\tilde q, \infty, s + \gamma_-)$ 
and $(q_2,r_2,s_2) = (\tilde q,\infty,s)$ to obtain 
\begin{equation}\label{eq1-2}
\begin{split}
& \|III(t)\|_{L^{\tilde q,\infty}_{s}}\\
& \le C
\int_0^t 
(t-\tau)^{- \frac{\gamma_-}{2}}
\|
|\cdot|^{\gamma}|h|^{\alpha^*-1}|w_1(\tau)-w_2(\tau)|
\|_{L^{\tilde q}_{s + \gamma_-}}\, d\tau\\
& = C
\int_0^t 
(t-\tau)^{- \frac{\gamma_-}{2}}
\|
|\cdot|^{\gamma_+}|h|^{\alpha^*-1}|w_1(\tau)-w_2(\tau)|
\|_{L^{\tilde q,\infty}_{s}}\, d\tau\\
& \le C \||\cdot|^{\gamma_+}|h|^{\alpha^*-1}\|_{L^\infty}
\int_0^t 
(t-\tau)^{- \frac{\gamma_-}{2}}
\|w_1(\tau)-w_2(\tau)
\|_{L^{\tilde q,\infty}_{s}}\, d\tau\\
& \le C \||\cdot|^{\gamma_+}|h|^{\alpha^*-1}\|_{L^\infty}
\left(\int_0^t 
(t-\tau)^{- \frac{\gamma_-}{2}}
\tau^{-\beta}\, d\tau 
\right)
\|w_1-w_2\|_{\mathcal K^{\tilde q}(t)}\\
& \le C t^{1-\frac{\gamma_-}{2}-\beta} 
 \||\cdot|^{\gamma_+}|h|^{\alpha^*-1}\|_{L^\infty}
\|w_1-w_2\|_{\mathcal K^{\tilde q}(t)},
\end{split}
\end{equation}
where we required that 
\[
0< \frac{s}{d} + \frac{1}{\tilde q} \le \frac{s+\gamma_-}{d} + \frac{1}{\tilde q} <1\quad \text{and}\quad s\le s+\gamma_-. 
\]
Here, thanks to \eqref{ass:regular1} and $\gamma_-\in[0,2)$, 
the above conditions are satisfied. 

As to the term $IV(t)$, thanks to \eqref{ass:regular1}, we can take $\sigma := \alpha^* s - \gamma$ and 
\[
0< \frac{s}{d} + \frac{1}{\tilde q} \le \frac{\sigma}{d} + \frac{1}{p} < 1,
\quad s\le \sigma,\quad \frac{1}{p} = \frac{\alpha^*-1}{q}+\frac{1}{\tilde q}. 
\]
Then we 
use Proposition \ref{l:wMeyer.inq} with 
$(q_1,r_1,s_1) = (p, \infty, \sigma)$ 
and $(q_2,r_2,s_2) = (\tilde q,\infty,s)$ to obtain 
\begin{equation}\label{eq1-3}
\begin{split}
\|IV(t)\|
_{L^{\tilde q,\infty}_{s}}
& \le 
C \sup_{0 <\tau <t} \||\cdot|^{\gamma}|\overline{V}_0|^{\alpha^*-1}|w_1(\tau)-w_2(\tau)|\|_{L^{p,\infty}_{\sigma}}\\
& \le 
C \sup_{0 <\tau <t} \|(|\cdot|^{s}|\overline{V}_0|)^{\alpha^*-1}|\cdot|^{s}||w_1(\tau)-w_2(\tau)|\|_{L^{p,\infty}}\\
& \le C t^{-\beta}
\|\overline{V}_0\|_{L^{q,\infty}_{s}}^{\alpha^*-1}
\|w_1-w_2\|_{\mathcal K^{\tilde q}(t)}.
\end{split}
\end{equation}
By combining \eqref{eq0928}, \eqref{eq1-1}, \eqref{eq1-2} and \eqref{eq1-3}, we obtain 
\eqref{pertubed_nonlinear1}. 

For the estimate \eqref{pertubed_nonlinear2}, 
it is enough to use the interpolation argument with 
Lemma~\ref{lem:interpolation_Lorentz} such as the proof of Lemma~\ref{lem:nonlinear_Kato} (ii) 
to deal with all $r\in (0,\infty]$.
The terms 
$I(t)$, $II(t)$ and $IV(t)$ can be estimated in a similar way to \eqref{pertubed_nonlinear1}. 
The estimate for $III(t)$ is proved in a similar way to Lemma~\ref{lem:nonlinear_Kato} (ii). 
The proof of Lemma \ref{lem:perturbed} is finished. 
\end{proof}

\begin{proof}[Proof of Lemma \ref{lem:pertubed}]
Let $w_0 \in L^{q,r}_{s}(\mathbb R^d)$, and let $\rho, \delta, \varepsilon$ and $M$ be positive constants such that 
\[
\rho + C_1 (M^{\alpha^*-1} + \varepsilon^{\alpha^*-1} + \delta) M \le M
\quad \text{and}
\quad 
C_2
\left(
M^{\alpha^*-1} + \varepsilon^{\alpha^*-1}
+ \delta
\right)<\frac12,
\]
where
$C_1$ and $C_2$ are positive constants given in \eqref{5-2_1} and \eqref{5-2_2} below.
In addition, we take $T>0$ as 
\[
\|e^{t\Delta}w_0\|_{\mathcal K^{\tilde q}(T)} + C_0 T \|R\|_{L^{q,\infty}_{s}} \le \rho
\]
and 
\[
T^{1-\frac{\gamma_-}{2}}
 \||\cdot|^{\gamma_+}|h|^{\alpha^*-1}\|_{L^\infty} \le \delta,
\]
where $C_0$ is a positive constant given in \eqref{5-2_1} below.
Now, we define a nonempty complete metric space $X_M$ by 
\[
X_{M} := \{
w \in \mathcal K^{\tilde q}(T)\,  ; \|w\|_{\mathcal K^{\tilde q}(T)} \le M
 \}
\]
with a metric $d(u_1,u_2):= \|u_1 -u_2\|_{\mathcal K^{\tilde q}(T)}$. Define a mapping $\Phi$ by 
\[
\Phi(w)(t)
:= e^{t\Delta}w_0 +\mathcal N(w)(t) + \int_0^t e^{(t-\tau)\Delta} 
R\, d\tau
\]
for $w \in X_M$.
By \eqref{pertubed_nonlinear1} in Lemma \ref{lem:perturbed} and \eqref{decompseV}, it follows that 
\begin{equation}\label{5-2_1}
\begin{split}
& \|\Phi(w)\|_{\mathcal K^{\tilde q}(T)}\\
& \le \|e^{t\Delta}w_0\|_{\mathcal K^{\tilde q}(T)} +
C_0 T \|R\|_{L^{q,\infty}_{s}}\\
& + C_1
\left(
\|w\|_{\mathcal K^{\tilde q}(T)}^{\alpha^*-1} + \|\overline{V}_0\|_{L^{q,\infty}_{s}}^{\alpha^*-1}
+ T^{1-\frac{\gamma_-}{2}}
 \||\cdot|^{\gamma_+}|h|^{\alpha^*-1}\|_{L^\infty}
\right)
\|w\|_{\mathcal K^{\tilde q}(T)}\\
& \le \rho + C_1 (M^{\alpha^*-1} + \varepsilon^{\alpha^*-1} + \delta) M \le M
\end{split}
\end{equation}
and
\begin{equation}\label{5-2_2}
\begin{split}
& d(\Phi(w_1), \Phi(w_2)) \le \|\mathcal N(w_1)- \mathcal N(w_2)\|_{\mathcal K^{\tilde q}(T)}\\
& \le  C_2
\left(
\max_{i=1,2}
\|w_i\|_{\mathcal K^{\tilde q}(T)}^{\alpha^*-1} + \|\overline{V}_0\|_{L^{q,\infty}_{s}}^{\alpha^*-1}
+ T^{1-\frac{\gamma_-}{2}}
 \||\cdot|^{\gamma_+}|h|^{\alpha^*-1}\|_{L^\infty}
\right)\\
& \qquad \qquad \times 
\|w_1-w_2\|_{\mathcal K^{\tilde q}(T)}\\
& \le 
C_2
\left(
M^{\alpha^*-1} + \varepsilon^{\alpha^*-1}
+ \delta
\right)
d(w_1,w_2) \le \frac12d(w_1,w_2)
\end{split}
\end{equation}
for $w,w_1,w_2 \in X_M$.
Hence, $\Phi$ is a contraction mapping from $X_M$ into itself. Thus, Banach's fixed point theorem ensures the existence of a unique fixed point $w \in X_M$ of $\Phi$. 
Finally, $w \in C([0,T] ; L^{q,r}_{s}(\mathbb R^d))$ follows from \eqref{pertubed_nonlinear2} in Lemma \ref{lem:perturbed}, Lemma \ref{lem:A_conti} and the well-known argument as in \cite{OkaTsu2016,Tsu2011} for instance.
The proof of Lemma \ref{lem:pertubed} is finished.
\end{proof}

\begin{proof}[Proof of Theorem \ref{thm:singular_sol}]
The existence part of Theorem \ref{thm:singular_sol} immediately follows from a combination of Lemma \ref{lem:pertubed} with Proposition \ref{thm:singular-sol} and \eqref{form_v}. 
The remaining part, i.e., the properties \eqref{aim-v} of $v$, can be proved 
in a similar way to the proof of \cite[Proposition~8.2]{Tay2020}.
In fact, we decompose the Duhamel term $v(t) - e^{t\Delta}u_0$ into the following three terms:
\[
v(t) - e^{t\Delta}u_0
= 
(w(t) - e^{t\Delta}w_0) - e^{t\Delta}V_0 + V_0.
\]
The first term $w(t) - e^{t\Delta}w_0$ can be rewritten as 
\[
w(t) - e^{t\Delta}w_0 = \mathcal N(w)(t) + \int_0^t e^{(t-\tau)\Delta} R\, d\tau.
\]
We see from Lemma \ref{lem:perturbed} and the property \eqref{phi-pro} of $R$ that
both terms in the right-hand side belong to $L_{s}^{q,\tilde r}(\mathbb R^d)$ for any $\tilde r>0$ and any $t \in (0,T]$, 
and hence, $w(t) - e^{t\Delta}w_0$ also belongs to $L_{s}^{q,\tilde r}(\mathbb R^d)$. % for any $\tilde r>0$ and any $t \in (0,T]$.
As to the second term $e^{t\Delta} V_0$, we estimate  
\[
\| e^{t\Delta} V_0\|_{L_{s}^{q,\tilde r}}
\le 
\begin{cases}
\displaystyle C t^{-\frac{d}{2}(1- \frac{1}{q}) + \frac{s}{2}}
\|V_0\|_{L^1} = 
C t^{-1}
\|V_0\|_{L^1} \quad &\text{if }s\le 0,\\
\displaystyle  C t^{-\frac{d}{2}(1- \frac{1}{q})}
\|V_0\|_{L^1_{s}} = 
C_{V_0} t^{-\frac{d}{2}(1- \frac{1}{q})}
\|V_0\|_{L^1} \quad &\text{if }s > 0,
\end{cases}
\]
where we used Propositions~\ref{prop:linear-main} and $V_0 \in L^1(\mathbb R^d)$ with compact support in Proposition \ref{thm:singular-sol}.
Hence, $e^{t\Delta}V_0 \in L_{s}^{q,\tilde r}(\mathbb{R}^d)$ is also shown for any $\tilde r>0$ and $t \in (0,T]$. 
In contrast, the third term $V_0$ satisfies 
$V_0 \not \in L_{s}^{q,\alpha^*-1}(\mathbb{R}^d)$ and $V_0 \in L_{s}^{q,r}(\mathbb{R}^d)$ for any $r>\alpha^*-1$
by Proposition~\ref{thm:singular-sol}. 
Therefore, \eqref{aim-v} is proved for any $t \in (0,T]$. Thus, Theorem \ref{thm:singular_sol} is proved.
\end{proof}

% Proof of Theorem 1.6
\begin{proof}[Proof of Theorem \ref{thm:nonuniqueness}]
The proof is a combination of Proposition \ref{prop:regular1} and Theorem~\ref{thm:singular_sol}. 
In fact, by these results, 
there exist 
a regular mild solution $u$ and singular mild solution $v$ to \eqref{HH} with the same initial data $u_0$. 
When $r=\infty$, the above arguments are also valid if $L^{q,r}_{s}(\mathbb R^d)$ is replaced by $\mathcal L^{q,\infty}_{s}(\mathbb R^d)$.
\end{proof}

%%%%%%%%%%%%
%%% Section 6 %%%
%%%%%%%%%%%%
\section{Scale-supercritical case}\label{sec:6}
In this section we discuss the scale-supercritical case. 
We use the self-similar solution of \eqref{HH} to show the existence of a non-trivial mild solution of \eqref{HH} with initial data $0$. 
More precisely, we have the following:

% Proposition 6.1
\begin{prop}
\label{nonuniqhiros}
Let $d\ge 3$, $\gamma>-2$, $\alpha >\alpha_F$, $1< q \le \infty$, $1\le r\le \infty$ and $s\in \mathbb R$ be such that
\[
\frac{1}{q_c} < \frac{s}{d} + \frac{1}{q} < 1.
\]
Assume that there exists a solution $W$ of 
\begin{equation}
\label{equationprofil}
\Delta W+{1\over 2}x\cdot \nabla W+{2+\gamma\over 2(\alpha-1)}W+|x|^{\gamma}|W|^{\alpha-1} W=0,\quad 
x \in \mathbb{R}^d\setminus\{0\}
\end{equation}
such that 
\begin{enumerate}[\rm (i)]
\item $W>0$ and $W\in C(\mathbb{R}^d)\cap C^2(\mathbb{R}^d\setminus\{0\}),$
\item $\displaystyle \lim_{|x|\to 0}|x||\nabla W|=0,$
\item $\displaystyle \lim_{|x|\to \infty}|x|^mW(x)=0$ and $\displaystyle \lim_{|x|\to \infty}|x|^m|\nabla W(x)|=0$ for all $m>0$. 
\end{enumerate}
Let $\Psi(t,x)=t^{-{2+\gamma\over 2(\alpha-1)}}W(x/\sqrt{t})$ be the positive self-similar solution of \eqref{HH}.
Then $\Psi\in C([0,\infty); L^{q,r}_s(\mathbb{R}^d))$ satisfies the equation
\[
\Psi(t)=
\int_0^t
e^{(t-\tau)\Delta}\left(|\cdot|^{\gamma}|\Psi(\tau)|^\alpha
\Psi(\tau)\right)d\tau
\]
for any $t \in (0,\infty)$. 
In particular, $\Psi$ is a non-trivial mild solution to \eqref{HH} with initial data $0$ in 
$C([0,\infty); L^{q,r}_s(\mathbb{R}^d))$.
\end{prop}
\begin{proof}  By the assumptions (i)--(iii) on $W,$ it follows that
\[
{1\over 2}x\cdot \nabla W+{2+\gamma\over 2(\alpha-1)}W+|x|^{\gamma}|W|^{\alpha-1} W\in L^1(\mathbb{R}^d).
\]
Then $W$ satisfies the equation \eqref{equationprofil} in $ \mathcal{D}'(\mathbb{R}^d)$ and 
$\Psi(t,x)=t^{-{2+\gamma\over 2(\alpha-1)}}W(x/\sqrt{t})$ satisfies the equation \eqref{HH} in $ \mathcal{D}'((0,\infty)\times\mathbb{R}^d)$.
Here, $\mathcal{D}'(X)$ is the space of distributions on an open set $X$.
Hence,
\[
\Psi(t)=e^{(t-\varepsilon)\Delta}\Psi(\varepsilon)+\displaystyle\int_\varepsilon^te^{(t-\tau)\Delta}\left(|\cdot|^{\gamma}|\Psi(\tau)|^{\alpha-1}
\Psi(\tau)\right)d\tau
\]
for $0<\varepsilon<t$ in the sense of distributions. It is clear that
\[
\|\Psi(t)\|_{L^{q,r}_{s}}=t^{\frac{d}{2} ((\frac{s}{d} + \frac{1}{q}) - \frac{1}{q_c})}\|W\|_{L^{q,r}_{ s}}
<\infty,\quad t>0,
\]
where $0<\frac{s}{d}+\frac{1}{q} <1$. 
Then 
\begin{equation}\label{condi_7_1}
\lim_{t\to 0}\|\Psi(t)\|_{L^{q,r}_s}=0
\quad \text{for }\frac{1}{q_c} < \frac{s}{d} + \frac{1}{q} < 1.
\end{equation}
Finally, we prove that the integral 
\begin{equation}\label{integral-psi}
\int_0^t e^{(t-\tau)\Delta}(|\cdot|^{\gamma}|\Psi(\tau)|^\alpha
\Psi(\tau))\, d\tau
\end{equation}
converges absolutely in $L^{q,r}_s(\mathbb R^d)$. 
By Proposition \ref{prop:linear-main}, 
we have
\[
\begin{split}
& \left\|e^{(t-\tau)\Delta}\left(|\cdot|^{\gamma}|\Psi(\tau)|^{\alpha-1}
\Psi(\tau)\right)\right\|_{L^{q,r}_s}\\
& \le C(t-\tau)^{- {d\over 2}({\alpha\over \tilde{q}}-{1\over q})-{\alpha \tilde{s}-\gamma-s\over 2}}\|\Psi(\tau)\|_{L^{\tilde{q},r}_{\tilde{s}}}^{\alpha}\\
& = C(t-\tau)^{- {d\over 2}({\alpha\over \tilde{q}}-{1\over q})-{\alpha \tilde{s}-\gamma-s\over 2}}\tau^{\alpha({{d\over 2\tilde{q}}+{\tilde{s}\over 2}-{2+\gamma\over 2(\alpha-1)}})}\|W\|_{L^{\tilde{q},r}_{\tilde{s}}}^{\alpha},
\end{split}
\]
where we require that 
\begin{equation}\label{condi_7_2new}
\alpha < \tilde q <\infty,\quad 
0 < \frac{s}{d}+\frac{1}{q} < \frac{\alpha \tilde s-\gamma}{d} + \frac{\alpha}{\tilde q} < 1,
\quad s \le \alpha \tilde s-\gamma. 
\end{equation}
If $\alpha$, $q$, $s$, ${\tilde{q}}$ and ${\tilde{s}}$  satisfy
\begin{equation}\label{condi_7_3new}
{d\over 2}\left({\alpha\over \tilde{q}}-{1\over q}\right)+{\alpha \tilde{s}-\gamma-s\over 2}<1,\quad 
\alpha\left({2+\gamma\over 2(\alpha-1)}-{d\over 2}\left({{1\over \tilde{q}}+{\tilde{s}\over d}}\right)\right)<1,
\end{equation}
then \eqref{integral-psi} converges absolutely in $L^{q,r}_s(\mathbb R^d)$.
To check these conditions, let us choose $\tilde{q}$ and $\tilde{s}$ such that
\begin{equation}
\label{conditionsqstilde1}{s+\gamma\over \alpha}\leq  \tilde{s},\quad  0<{\alpha\over\tilde{q}}<1,\quad {\alpha\over \tilde{q}}+{\alpha \tilde{s}\over d}<{\gamma+d\over d},
\end{equation}
\begin{equation}
\label{conditionsqstilde2}{1\over q}+{\gamma+s\over d}< {\alpha\over \tilde{q}}+{\alpha \tilde{s}\over d}<{2\over d}+{1\over q}+{\gamma+s\over d}.
\end{equation}
It is obvious that under the assumptions in Proposition~\ref{nonuniqhiros}, it is possible to take $\tilde{q},$ $\tilde{s}$ satisfying \eqref{conditionsqstilde1} and \eqref{conditionsqstilde2}.
We now show that \eqref{condi_7_2new} and \eqref{condi_7_3new} hold if \eqref{condi_7_1}, \eqref{conditionsqstilde1} and \eqref{conditionsqstilde2} are satisfied. 
Indeed, \eqref{condi_7_2new} is already in \eqref{conditionsqstilde1} and the first inequality in \eqref{conditionsqstilde2}. 
For \eqref{condi_7_3new}, we have
\[
\begin{split}
{d\over 2}\left({\alpha\over \tilde{q}}-{1\over q}\right)+{\alpha \tilde{s}-\gamma-s\over 2} 
&=  {d\alpha\over 2\tilde{q}}-{d\over 2q}+{\alpha \tilde{s}-\gamma-s\over 2}\\
&= {d\alpha\over 2\tilde{q}}+{\alpha \tilde{s}\over 2}-{d\over 2q}-{\gamma+s\over 2}\\  
&<1+ {d\over 2q}+{\gamma+s\over 2}-{d\over 2q}-{\gamma+s\over 2}\\
&= 1,
\end{split}
\]
\[
\begin{split}
\alpha\left({2+\gamma\over 2(\alpha-1)}-{d\over 2}\left({{1\over \tilde{q}}+{\tilde{s}\over d}}\right)\right)
&=  {\alpha(2+\gamma)\over 2(\alpha-1)}-{d\over 2}\left({\alpha\over \tilde{q}}+{\alpha\tilde{s}\over d}\right)\\
&< {\alpha(2+\gamma)\over 2(\alpha-1)}-{d\over 2}\left({1\over q}+{\gamma+s\over d}\right)\\ 
&=1+ {\gamma\over 2}+{2+\gamma\over 2(\alpha-1)}-{d\over 2q}-{\gamma+s\over 2}\\
&< 1.
\end{split}
\]
Thus, we conclude Proposition \ref{nonuniqhiros}. 
\end{proof}

The existence of positive self-similar solutions $\Psi$ of \eqref{HH} with (i)--(iii) in Proposition \ref{nonuniqhiros} 
is proved for any $\alpha$ satisfying
\begin{equation}\label{Sobolev-sub}
\alpha_F <
\alpha<\alpha_{HS}
\end{equation}
with $\gamma =0$ by \cite[Propositions 3.1, 3.4 and 3.5]{HarWei1982} and 
with $\gamma$ 
satisfying 
\begin{equation}\label{restriction-gamma}
-2 < \gamma \le 
\begin{cases}
\sqrt{3}-1\quad &\text{if }d=3,\\
0 &\text{if }d\ge4
\end{cases}
\end{equation}
by Hirose \cite[Theorem 1.2 (ii)]{hi}.
Furthermore, 
\[
W(x)=C|x|^{\frac{2+\gamma}{\alpha-1}-d}e^{-{|x|^2\over 4}}\left(1+O\left(|x|^{-2}\right)\right)\quad  \text{as }|x|\to \infty.
\]
From Proposition \ref{nonuniqhiros} and this result, it immediately follows that 
the equation \eqref{HH} has three different solutions $0$ and $\pm \Psi$ with initial data $0$
in $C([0,\infty); L^{q,r}_s(\mathbb{R}^d))$ under the assumptions \eqref{Sobolev-sub} and \eqref{restriction-gamma} for 
$d,\gamma,\alpha,q,r,s$ as in Proposition \ref{nonuniqhiros}. 
Thus, Proposition \ref{prop:non-uniquness_super} is proved.

% Remark 6.3
\begin{rem}
When $\gamma$ does not satisfy \eqref{restriction-gamma}, 
the existence of self-similar solutions with {\rm (i)--(iii)} in Proposition \ref{nonuniqhiros} under the condition \eqref{Sobolev-sub} is an open problem. 
\end{rem}

The situation of the case $\alpha > \alpha_{HS}$ is different from the case \eqref{Sobolev-sub}. 
In this case, the nonexistence of positive self-similar solution $\Psi$ satisfying (i)--(iii) in Proposition~\ref{nonuniqhiros} 
is proved by 
the following result on uniqueness in the Sobolev space $H^1(\mathbb R^d)$:

% Lemma 6.4
\begin{lem}\label{lem:uniqueness-H1}
Let $T>0$ and $u=u(t,x)$ be a mild solution to \eqref{HH} satisfying 
\[
u \in C^1((0,T) ; L^2(\mathbb R^d)) \cap C^1((0,T) ; L^{\alpha+1}_{\frac{\gamma}{\alpha+1}}(\mathbb R^d)) \cap C((0,T) ; H^2(\mathbb R^d)).
\]
Assume that $u(t) \to 0$ in $H^1(\mathbb R^d)$ as $t\to 0$. Then $u\equiv 0$ on $[0,T]$. 
\end{lem}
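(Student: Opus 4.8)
The plan is to derive a differential inequality for the $H^1$ norm (or equivalently for the energy functional associated with \eqref{HH}) and then close it by a Gronwall-type argument using the vanishing of $u$ at $t=0$. First I would justify, using the regularity $u \in C^1((0,T);L^2) \cap C^1((0,T);L^{\alpha+1}_{\gamma/(\alpha+1)}) \cap C((0,T);H^2)$, that $u$ is in fact a strong solution of $\partial_t u - \Delta u = |x|^\gamma |u|^{\alpha-1}u$ on $(0,T)$, so that one may legitimately test the equation against $u$ and against $\partial_t u$. Testing against $\bar u$ and integrating by parts gives
\[
\frac{1}{2}\frac{d}{dt}\|u(t)\|_{L^2}^2 + \|\nabla u(t)\|_{L^2}^2 = \mathrm{Re}\int_{\mathbb R^d} |x|^\gamma |u(t)|^{\alpha+1}\,dx,
\]
and testing against $\overline{\partial_t u}$ gives the energy identity
\[
\|\partial_t u(t)\|_{L^2}^2 + \frac{d}{dt}E(u(t)) = 0, \qquad E(v):= \frac{1}{2}\|\nabla v\|_{L^2}^2 - \frac{1}{\alpha+1}\int_{\mathbb R^d}|x|^\gamma |v|^{\alpha+1}\,dx.
\]

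Next I would control the nonlinear term by the linear part. Since $u(t)\to 0$ in $H^1$ as $t\to 0$, by the Hardy–Sobolev inequality (recall $\gamma > -\min\{2,d\}$, so $H^1 \hookrightarrow L^{\alpha+1}_{\gamma/(\alpha+1)}$ in the admissible range of $\alpha$) we get $\|u(t)\|_{L^{\alpha+1}_{\gamma/(\alpha+1)}} \to 0$; in particular for $t$ small, $\int |x|^\gamma |u(t)|^{\alpha+1} \le \varepsilon \|\nabla u(t)\|_{L^2}^2$ with $\varepsilon$ as small as we like, since the exponent $\alpha+1 > 2$ makes the nonlinear term superquadratic in the $H^1$ scale. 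Feeding this back into the first identity yields, for $t$ in a small interval $(0,t_0)$,
\[
\frac{1}{2}\frac{d}{dt}\|u(t)\|_{L^2}^2 + (1-\varepsilon)\|\nabla u(t)\|_{L^2}^2 \le 0,
\]
hence $\|u(t)\|_{L^2}$ is non-increasing on $(0,t_0)$; combined with $\|u(t)\|_{L^2}\to 0$ as $t\to 0$ this forces $u\equiv 0$ on $(0,t_0)$. Then a standard continuation/continuity argument (the solution stays at the zero equilibrium, and the set of times where $u\equiv 0$ on an initial interval is open and closed in $(0,T)$) propagates $u\equiv 0$ to all of $[0,T]$.

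The main obstacle I expect is the rigorous justification of the integration-by-parts / energy identities near $t=0$: the stated regularity is only on the open interval $(0,T)$, with a weak limit condition at $t=0$, so one must argue on $(\varepsilon, t)$ and pass to the limit $\varepsilon \to 0$, using the $H^1$-continuity at $0$ to kill boundary terms. A secondary technical point is verifying that the Hardy–Sobolev embedding $H^1(\mathbb R^d)\hookrightarrow L^{\alpha+1}_{\gamma/(\alpha+1)}(\mathbb R^d)$ holds with the relevant constant for the range of $\alpha$ under consideration (which is exactly where $\alpha < \alpha_{HS}$ enters); once that embedding and its smallness consequence are in hand, the Gronwall step is routine.
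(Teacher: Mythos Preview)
The paper does not give its own proof, only remarking that it follows \cite[Theorem~2]{HarWei1982}. Your outline has the right energy identities, but the step where you absorb the nonlinearity has a genuine gap, and you have the intended regime of application backwards.

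You invoke an embedding $H^1(\mathbb R^d)\hookrightarrow L^{\alpha+1}_{\gamma/(\alpha+1)}(\mathbb R^d)$ to pass from $\|u(t)\|_{H^1}\to 0$ to $\int|x|^\gamma|u(t)|^{\alpha+1}\to 0$ and then to $\int|x|^\gamma|u|^{\alpha+1}\le\varepsilon\|\nabla u\|_{L^2}^2$. This embedding \emph{fails} in two situations that are both relevant here. First, for $\gamma>0$ the weight $|x|^\gamma$ is unbounded at infinity, so translating a fixed bump to large $|x|$ makes the weighted norm blow up while the $H^1$ norm stays fixed. Second, for $\alpha>\alpha_{HS}$ the exponent $\alpha+1$ is Sobolev-supercritical and no such control is available even when $\gamma\le 0$. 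The lemma is stated without any restriction $\alpha\le\alpha_{HS}$, and in fact the paper invokes it \emph{only} in the range $\alpha>\alpha_{HS}$ (this is exactly the condition that makes the self-similar profile $\Psi(t)$ tend to $0$ in $H^1$ as $t\to 0$). So your parenthetical ``which is exactly where $\alpha<\alpha_{HS}$ enters'' inverts the situation: the case you can handle is not the one that matters.

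The Haraux--Weissler argument avoids any Sobolev-type embedding and works purely from the energy structure. With $\phi(t)=\tfrac12\|u(t)\|_{L^2}^2$ and $E(t)=E(u(t))$ one has the three relations $E'=-\|\partial_t u\|_{L^2}^2\le 0$, the algebraic identity $\phi'+2E=\tfrac{\alpha-1}{\alpha+1}\int|x|^\gamma|u|^{\alpha+1}\ge 0$, and Cauchy--Schwarz $(\phi')^2\le -2\phi\,E'$; the conclusion is extracted from these together with $\phi(0^+)=\|\nabla u(0^+)\|_{L^2}=0$. For the application to the self-similar solution one can also verify directly, by scaling, that $\int|x|^\gamma|\Psi(t)|^{\alpha+1}\to 0$ as $t\to 0$ precisely when $\alpha>\alpha_{HS}$, so $E(0^+)=0$ is available without any embedding. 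You should rework the absorption step along these lines rather than through Hardy--Sobolev.
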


The proof of Lemma \ref{lem:uniqueness-H1} is almost the same as that of \cite[Theorem 2]{HarWei1982}, and so we omit the proof. 
If $\alpha > \alpha_{HS}$ and there exists a positive self-similar solution $\Psi$ satisfying (i)--(iii) in Proposition \ref{nonuniqhiros}, 
then $\Psi$ satisfies all assumptions in Lemma~\ref{lem:uniqueness-H1}, and hence, $\Psi \equiv 0$. This contradicts $\Psi >0$. Thus, we see the nonexistence of such a $\Psi$.

%%%%%%%%%%%%
%%% Section 7 %%%
%%%%%%%%%%%%
\section{Additional results and remarks}\label{sec:8}

%%% Subsection 8.1 %%%

%\subsection{Single critical case II}

\subsection{Double critical case}
We give a remark on the number of solutions in the double critical case.
Theorem \ref{thm:nonuniqueness} shows that the problem \eqref{HH} has two different solutions, where 
one is regular and the other is singular (see Section \ref{sec:5}).
In fact, however, \eqref{HH} has an uncountably infinite number of different mild solutions
in $C([0,T]; L^{q,r}_{s}(\mathbb R^d))$ for any initial data $u_0 \in L^{q,r}_{s}(\mathbb R^d)$. 
This can be confirmed by constructing the family $\{u_{t_0}\}_{t_0\in (0,T)}$ of solutions to \eqref{HH} such that 
$u_{t_0}$ is a singular solution for $0<t \le t_0$ and a regular solution for $t_0<t<T$.

\subsection{Case $\gamma = -\min\{2,d\}$}
The problem on well-posedness for \eqref{HH} in 
the critical singular case $\gamma = -\min\{2,d\}$ has not been studied.
Establishing the weighted linear estimates \eqref{linear-weight1} with the double endpoint $\frac{s_1}{d}+\frac{1}{q_1}=1$ and $\frac{s_2}{d}+\frac{1}{q_2}=0$, we can present the following results on uniqueness for the case $d=1$ and $\gamma =-1$.

\begin{thm}\label{thm:unconditional3}
Let $T>0$, and let $d=1$, $\gamma=-1$, $\alpha>1$, $\alpha \le q < \infty$, and 
$\frac{s}{d} + \frac{1}{q} = 0$. Then the following assertions hold:
\begin{enumerate}[\rm (i)]
\item  Let $0<r \le \alpha-1$. Then unconditional uniqueness holds for \eqref{HH} in $L^\infty(0,T ; L^{q,r}_s(\mathbb R))$. 

\item Let $r> \alpha-1$ and $u_0\in L^{q,r}_{s}(\mathbb R)$. Then,
if $u_1,u_2 \in L^\infty(0,T ; L^{q, r}_{s}(\mathbb R))$ 
are mild solutions to \eqref{HH} with $u_1(0)=u_2(0)=u_0$ such that 
\[
u_i(t) - e^{t\Delta} u_0 \in L^\infty(0,T ; L^{q, \alpha-1}_{s}(\mathbb R))
\quad \text{for }i=1,2,
\]
then $u_1 = u_2$ on $[0,T]$.
\end{enumerate}
\end{thm}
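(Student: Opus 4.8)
The plan is to run the argument of the single critical case I (Theorem~\ref{thm:unconditional1}\,(2) and Proposition~\ref{prop:uniqueness-sufficient}), the only genuinely new feature being that both endpoints of the weighted smoothing estimate are active at once. With $d=1$, $\gamma=-1$ and $\frac{s}{d}+\frac1q=0$ one has $Q_c=\infty$, $q_c=\alpha-1$, so the situation is $\frac{s}{d}+\frac1q=0=\frac1{Q_c}<\frac1{q_c}$, and the gain exponent in \eqref{def:delta} is $\delta=\frac{d(\alpha-1)}{2}\big[\frac1{q_c}-(\frac sd+\frac1q)\big]=\frac{2+\gamma}{2}=\frac12>0$. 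Put $\sigma:=\alpha s-\gamma$; then $(q_1,s_1)=(\frac q\alpha,\sigma)$ satisfies $\frac{s_1}{d}+\frac1{q_1}=\alpha(\frac sd+\frac1q)-\frac\gamma d=1$, while the target $(q_2,s_2)=(q,s)$ satisfies $\frac{s_2}{d}+\frac1{q_2}=0$, and the associated decay exponent is $-\frac d2(\frac\alpha q-\frac1q)-\frac{\sigma-s}{2}=-\frac12$. So the tool is the ``double endpoint'' case of Proposition~\ref{prop:linear-main} (or the mild extension flagged in Section~\ref{sec:8}) in which both \eqref{linear-condi3} (forcing second index $\le1$ in the source $L^{q/\alpha,\cdot}_{\sigma}$) and \eqref{linear-condi4} (forcing second index $\infty$ in the target $L^{q,\infty}_{s}$) are imposed; establishing this estimate is the first step.

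For (i), let $u_1,u_2\in L^\infty(0,T;L^{q,r}_s(\mathbb R))$ with $r\le\alpha-1$ be mild solutions with the same data $u_0$, so that $u_i\in L^\infty(0,T;L^{q,\alpha-1}_s)\cap L^\infty(0,T;L^{q,\infty}_s)$. Using $||a|^{\alpha-1}a-|b|^{\alpha-1}b|\le C(|a|^{\alpha-1}+|b|^{\alpha-1})|a-b|$ and the factorization $|x|^{\sigma}|x|^{\gamma}(|a|^{\alpha-1}a-|b|^{\alpha-1}b)$ controlled by $(|x|^{s}u_i)^{\alpha-1}\cdot|x|^{s}(u_1-u_2)$, Lemma~\ref{lem:Holder} (with $|x|^{s}u_i\in L^{q,\alpha-1}$, hence $(|x|^{s}u_i)^{\alpha-1}\in L^{q/(\alpha-1),1}$, and $|x|^{s}(u_1-u_2)\in L^{q,\infty}$) gives the source bound
\[
\big\||\cdot|^{\gamma}(|u_1|^{\alpha-1}u_1-|u_2|^{\alpha-1}u_2)(\tau)\big\|_{L^{q/\alpha,1}_{\sigma}}\le C\max_i\|u_i(\tau)\|_{L^{q,\alpha-1}_s}^{\alpha-1}\,\|(u_1-u_2)(\tau)\|_{L^{q,\infty}_s};
\]
here the second index $1$ on the left is dictated by \eqref{linear-condi3}, and this is exactly what forces $r\le\alpha-1$. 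Combining with the double-endpoint linear estimate and $\int_0^t(t-\tau)^{-1/2}\,d\tau=2\sqrt t$ yields
\[
\|N(u_1)(t)-N(u_2)(t)\|_{L^{q,\infty}_s}\le C\,t^{1/2}\max_i\|u_i\|_{L^\infty(0,t;L^{q,\alpha-1}_s)}^{\alpha-1}\,\|u_1-u_2\|_{L^\infty(0,t;L^{q,\infty}_s)}.
\]
Since $u_1(t)-u_2(t)=N(u_1)(t)-N(u_2)(t)$, choosing $t_0$ with $C t_0^{1/2}\max_i\|u_i\|_{L^\infty(0,T;L^{q,\alpha-1}_s)}^{\alpha-1}<1$ gives $u_1=u_2$ on $[0,t_0]$ in $L^{q,\infty}_s$, hence a.e.; iterating over $[0,T]$ as in the proof of Theorem~\ref{thm:unconditional1} proves (i).

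For (ii), with $r>\alpha-1$ and $u_i-e^{\tau\Delta}u_0\in L^\infty(0,T;L^{q,\alpha-1}_s)$, decompose $u_i=(u_i-e^{\tau\Delta}u_0)+e^{\tau\Delta}u_0$ as in Lemma~\ref{lem:sec4_nonlinear2}, splitting $N(u_1)(t)-N(u_2)(t)$ into $I+II+III$, with $I,II$ carrying a factor $|u_i-e^{\tau\Delta}u_0|^{\alpha-1}$ and $III$ a factor $|e^{\tau\Delta}u_0|^{\alpha-1}$. Terms $I,II$ are estimated exactly as in (i) with $u_i$ replaced by $u_i-e^{\tau\Delta}u_0$, giving $C t^{1/2}\|u_i-e^{\tau\Delta}u_0\|_{L^\infty(0,t;L^{q,\alpha-1}_s)}^{\alpha-1}\|u_1-u_2\|_{L^\infty(0,t;L^{q,\infty}_s)}$; for $III$ one takes an auxiliary $\tilde q>q$ with $\beta:=\frac d2(\frac1q-\frac1{\tilde q})$ small and $\frac1p=\frac{\alpha-1}{\tilde q}+\frac1q$, so that $\frac\sigma d+\frac1p=1-2(\alpha-1)\beta\in(0,1)$, applies Proposition~\ref{prop:linear-main} together with $\|e^{\tau\Delta}u_0\|_{L^{\tilde q,\infty}_s}\le C\tau^{-\beta}\|u_0\|_{L^{q,\infty}_s}$ and the identity $-\frac d2(\frac1p-\frac1q)-\frac{\sigma-s}{2}-(\alpha-1)\beta+1=\delta=\frac12$, obtaining $\|III(t)\|_{L^{q,\infty}_s}\le C t^{1/2}\big(\sup_{0<\tau<t}\tau^{\beta}\|e^{\tau\Delta}u_0\|_{L^{\tilde q,\infty}_s}\big)^{\alpha-1}\|u_1-u_2\|_{L^\infty(0,t;L^{q,\infty}_s)}$. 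All constants on the right are finite (in particular $\sup_{0<\tau<T}\tau^{\beta}\|e^{\tau\Delta}u_0\|_{L^{\tilde q,\infty}_s}\le C\|u_0\|_{L^{q,\infty}_s}$), so a short-time smallness argument plus the continuation used in Proposition~\ref{prop:uniqueness-sufficient} gives $u_1=u_2$ on $[0,T]$; when $r=\infty$ one replaces $L^{q,r}_s$ by $\mathcal L^{q,\infty}_s$ and uses Lemma~\ref{lem:density} wherever $\sup_{0<\tau<t}\tau^{\beta}\|e^{\tau\Delta}u_0\|_{L^{\tilde q,\infty}_s}$ must be made small. The main obstacle is purely the endpoint bookkeeping: one must verify that Proposition~\ref{prop:linear-main} (or its extension) really applies when $\frac{s_1}{d}+\frac1{q_1}=1$ and $\frac{s_2}{d}+\frac1{q_2}=0$ simultaneously, and then track that the composition of the source endpoint constraint $r_1\le1$ with the Hölder split produces the threshold $r=\alpha-1$ here, in contrast to $r=\alpha$ in the non-singular single critical case~I.
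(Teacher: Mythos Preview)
Your argument for part (i) is correct and matches the paper's approach exactly: run the proof of Theorem~\ref{thm:unconditional1}\,(2) with the double-endpoint linear estimate (both \eqref{linear-condi3} and \eqref{linear-condi4} active in Proposition~\ref{prop:linear-main}), which forces the target second index to be $\infty$ and the source second index to be $1$; the H\"older split $(|x|^{s}u_i)^{\alpha-1}\in L^{q/(\alpha-1),1}$ times $|x|^{s}(u_1-u_2)\in L^{q,\infty}$ then produces exactly the threshold $r\le\alpha-1$.

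For part (ii), your treatment of the terms $I$ and $II$ is fine, but there is a genuine gap in your handling of $III$. You invoke the bound
\[
\|e^{\tau\Delta}u_0\|_{L^{\tilde q,\infty}_{s}}\le C\tau^{-\beta}\|u_0\|_{L^{q,\infty}_{s}}
\]
with $\tilde q>q$. This estimate \emph{fails}: since $\frac{s}{d}+\frac{1}{q}=0$, any $\tilde q>q$ gives $\frac{s}{d}+\frac{1}{\tilde q}<0$, which violates the \emph{necessary} condition \eqref{linear-condi1} of Proposition~\ref{prop:linear-main}. Concretely, for $\tau>0$ the function $e^{\tau\Delta}u_0$ is smooth with (generically) $e^{\tau\Delta}u_0(0)\neq 0$, so $|x|^{s}e^{\tau\Delta}u_0\sim c\,|x|^{-1/q}$ near the origin, and $|x|^{-1/q}\in L^{\tilde q,\infty}$ near $0$ only when $\tilde q\le q$. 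Hence $e^{\tau\Delta}u_0\notin L^{\tilde q,\infty}_{s}$ for $\tilde q>q$, and the quantity $\sup_{0<\tau<t}\tau^{\beta}\|e^{\tau\Delta}u_0\|_{L^{\tilde q,\infty}_{s}}$ that you carry on the right-hand side is infinite. For the same reason, your appeal to Lemma~\ref{lem:density} at the end is unavailable: that lemma also requires $\frac{s}{d}+\frac{1}{\tilde q}\ge 0$. Thus the $III$ term cannot be absorbed by the auxiliary-$\tilde q$ mechanism of Lemma~\ref{lem:sec4_nonlinear2}\,(i) in this borderline situation $\frac{s}{d}+\frac{1}{q}=0$, and your argument for (ii) does not close as written.
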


\begin{proof}
The proofs of (i) and (ii) are similar to those of Theorem \ref{thm:unconditional1} (2) and Proposition \ref{prop:uniqueness-sufficient}, respectively. 
The only difference is use of Proposition \ref{prop:linear-main} with the double endpoint case \eqref{linear-condi3} and \eqref{linear-condi4}, where the restriction on $r$ is required. 
\end{proof}

In the case $d=1$ and $\gamma =-1$, the existence of a solution has not been proved, but Theorem \ref{thm:unconditional3} implies that only one solution exists at most.
It remains open whether unconditional uniqueness holds in the critical singular case $d\ge2$ and $\gamma =-2$. 
Once the weighted Meyer inequality \eqref{eq.wMi} with the endpoint case $\frac{s_2}{d}+\frac{1}{q_2}=0$ is proved, 
this problem is solved, but we do not know if the endpoint inequality holds.

\subsection{Case of the exterior problem}
It is also interesting to analyze in more detail the influence of the potential $|x|^\gamma$ at the origin or at infinity.
For this, we discuss unconditional uniqueness for the initial-boundary value problem of the Hardy-H\'enon parabolic equation on 
the exterior domain $\Omega :=\{x \in \mathbb R^d\,;\, |x| > 1\}$.
\begin{equation}\label{HH-D}
	\begin{cases}
		\partial_t u - \Delta u = |x|^{\gamma} |u|^{\alpha-1} u,
			&(t,x)\in (0,T)\times \Omega, \\
			u=0, &(t,x)\in (0,T)\times \partial\Omega,\\
		u(0) = u_0 \in L^{q,r}_{s}(\Omega),
	\end{cases}
\end{equation}
where $T>0,$ $d\in \mathbb{N}$, $\gamma \in \mathbb R$, $\alpha>1$, $q\in [1,\infty]$, $r \in (0,\infty]$ and $s\in \mathbb{R}$.
Here, $\partial \Omega$ denotes the boundary of $\Omega$.
In conclusion, the critical exponents \eqref{q_c} and \eqref{Q_c} with $\gamma=0$ (i.e. $q_c(0)=\frac{d(\alpha-1)}{2}$ and $Q_c(0) = \alpha$) appear in 
the results on unconditional uniqueness for \eqref{HH-D}, since the effect near the origin $x=0$ has been eliminated. 
The results of this subsection can be extended to more general situations such as the initial-boundary value problem on general domains $\Omega$ not containing the origin with the Robin boundary condition
(cf. \cite[Section 5]{ITW2021arxiv}). 
\medskip

In the following, we shall prove the result on unconditional uniqueness. 

\begin{prop}\label{prop:exterior-uniq}
Let $d\in \mathbb N$, $\gamma \in \mathbb R$, $\alpha>1$, $q \in [1,\infty]$ and $s\in \mathbb R$ be such that 
\begin{equation}\label{condi-exterior}
\alpha \le q \le \infty,\quad 
-\frac{d}{q} < s < d\left(1-\frac{\alpha}{q}\right)
\quad \text{and}\quad \frac{\gamma}{\alpha-1} \le s.
\end{equation}
Then the following assertions hold:
\begin{enumerate}[\rm (i)]
\item Assume either 
\begin{equation}\label{double-sub-ex}
q> \min \left\{q_c(0), Q_c(0)\right\}\quad \text{and}\quad r=\infty
\end{equation}
or 
\[
q = Q_c(0) > q_c(0)
\quad \text{and}\quad
r=\alpha.
\] 
Then unconditional uniqueness holds for \eqref{HH-D} in $L^\infty(0,T; L^{q,r}_s(\Omega))$.

\item Assume either 
\[
q = q_c(0) > Q_c(0)
\quad \text{and}\quad
r=\infty.
\] 
or 
\[
q = q_c(0) = Q_c(0)
\quad \text{and}\quad
r=\alpha-1.
\] 
Then unconditional uniqueness holds for \eqref{HH-D} in $C([0,T]; L^{q,r}_s(\Omega))$.
\end{enumerate}
\end{prop}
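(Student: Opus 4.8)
The strategy is to mimic the proofs of Theorems~\ref{thm:unconditional1} and~\ref{thm:unconditional2}, replacing the heat semigroup $e^{t\Delta}$ on $\mathbb R^d$ by the Dirichlet heat semigroup $e^{t\Delta_\Omega}$ on the exterior domain $\Omega=\{|x|>1\}$. The key point is that, because $0\notin\overline\Omega$, the weight $|x|^\gamma$ is bounded above and below on $\Omega$ by positive constants depending on $\gamma$ (precisely, $|x|^\gamma\sim 1$ for $\gamma\le 0$ with a lower decay like $|x|^\gamma$ for $\gamma>0$; in either case $|x|^\gamma$ is locally integrable and has no singularity at the origin). Consequently the ``origin-type'' exponent $Q_c$ and the scaling exponent $q_c$ both collapse to their $\gamma=0$ values $Q_c(0)=\alpha$ and $q_c(0)=\frac{d(\alpha-1)}{2}$, which explains the form of the hypotheses \eqref{condi-exterior} and \eqref{double-sub-ex}. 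First I would record the linear estimates: by domain monotonicity (or Gaussian upper bounds for the Dirichlet kernel, see e.g.\ the references in the excerpt) one has $\|e^{t\Delta_\Omega}\|_{L^{q_1,r_1}_{s_1}(\Omega)\to L^{q_2,r_2}_{s_2}(\Omega)}\le C t^{-\frac d2(\frac1{q_1}-\frac1{q_2})-\frac{s_1-s_2}{2}}$ whenever the Euclidean estimate of Proposition~\ref{prop:linear-main} holds, since the weight near $|x|=1$ is harmless and the weight at infinity behaves as in $\mathbb R^d$; the conditions \eqref{linear-condi1}--\eqref{linear-condi7} restricted to $s$-ranges relevant at infinity then suffice. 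One also needs the analogue of the weighted Meyer inequality (Proposition~\ref{l:wMeyer.inq}) on $\Omega$, obtained verbatim from the linear estimate by the same splitting argument.

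Next I would establish the nonlinear estimates on $\Omega$, exactly paralleling Lemma~\ref{lem:sec4_nonlinear1} and Lemma~\ref{lem:sec4_nonlinear2}. Writing $N_\Omega(u)(t):=\int_0^t e^{(t-\tau)\Delta_\Omega}\bigl(|\cdot|^\gamma(|u_1|^{\alpha-1}u_1-|u_2|^{\alpha-1}u_2)\bigr)\,d\tau$, one bounds $|x|^\gamma|u_1|^{\alpha-1}|u_1-u_2|$ in $L^{q/\alpha,\cdot}_{\alpha s-\gamma}(\Omega)$ via the generalized Hölder inequality (Lemma~\ref{lem:Holder}), then applies the domain linear estimate with $(q_1,s_1)=(q/\alpha,\alpha s-\gamma)$, $(q_2,s_2)=(q,s)$. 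The crucial arithmetic: since on $\Omega$ the local-integrability obstruction at the origin is absent, the relevant endpoint conditions are $\frac s d+\frac1q\le 1$ (at $|x|=1$, harmless) rather than involving $Q_c$; the condition $s<d(1-\alpha/q)$ in \eqref{condi-exterior} is exactly what makes $\frac{\alpha s-\gamma}{d}+\frac\alpha q<1$ (using $\gamma\le (\alpha-1)s$), i.e.\ keeps us off the bad endpoint, while $-d/q<s$ keeps $\frac s d+\frac1q>0$. In the double-subcritical case $q>\min\{q_c(0),Q_c(0)\}$ one gets a factor $t^\delta$ with $\delta>0$ and the contraction/iteration argument of the proof of Theorem~\ref{thm:unconditional1} gives uniqueness in $L^\infty(0,T;L^{q,\infty}_s(\Omega))$; in the single-critical cases $q=Q_c(0)$ (resp.\ $q=q_c(0)$, $q=q_c(0)=Q_c(0)$) one splits off $e^{t\Delta_\Omega}u_0$ as in Lemma~\ref{lem:sec4_nonlinear2}, uses the density Lemma~\ref{lem:density} on $\Omega$ together with continuity in $C([0,T];L^{q,r}_s(\Omega))$ to make the relevant small-time factor vanish, and concludes as in Theorem~\ref{thm:unconditional2}. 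The restrictions $r=\alpha$ (resp.\ $r=\alpha-1$) arise from the endpoint $\frac{\alpha s-\gamma}{d}+\frac\alpha q=1$ (resp.\ $\frac{\alpha^*s-\gamma}{d}+\frac{\alpha^*}q=1$) in the linear estimate, forcing $r_1\le 1$ in \eqref{linear-condi3}, exactly as in the whole-space results.

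The main obstacle is verifying the linear estimate $\|e^{t\Delta_\Omega}\|_{L^{q_1,r_1}_{s_1}(\Omega)\to L^{q_2,r_2}_{s_2}(\Omega)}\lesssim t^{-\frac d2(\frac1{q_1}-\frac1{q_2})-\frac{s_1-s_2}{2}}$ with the correct power of $t$ \emph{uniformly in $t>0$} on the unbounded domain $\Omega$: the scaling argument of Lemma~\ref{lem:scaling} is not available because $\Omega$ is not dilation-invariant, so one cannot reduce to $t=1$. The resolution is to use the Gaussian upper bound $0\le e^{t\Delta_\Omega}(x,y)\le G_t(x-y)$ (valid by the maximum principle for the Dirichlet Laplacian on any domain), which immediately gives $|e^{t\Delta_\Omega}f|\le e^{t\Delta}|\tilde f|$ on $\mathbb R^d$ where $\tilde f$ is the zero-extension of $f$; combined with the fact that $\|\cdot\|_{L^{q,r}_s(\Omega)}$ is equivalent on $\Omega$ to $\|\cdot\|_{L^{q,r}_s(\mathbb R^d)}$ of the extension (weights being comparable to $(1+|x|)^s$ on $\Omega$), the desired bound follows from Proposition~\ref{prop:linear-main} applied on $\mathbb R^d$ with possibly-adjusted $s$-parameters reflecting that near $|x|=1$ any weight exponent is admissible. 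Once this linear ingredient and its Meyer-type consequence are in place, the rest is a routine transcription of Sections~\ref{sec:3}--\ref{sec:4}; I would note the extension to Robin conditions on general origin-free domains follows the same scheme since the Gaussian upper bound persists.
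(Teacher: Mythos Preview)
Your overall strategy—transferring the whole-space argument to $\Omega$ via the Gaussian upper bound for the Dirichlet heat kernel—matches the paper's. However, there is a genuine gap in how you set up the linear and nonlinear estimates. You propose to use the linear bound with the weight-dependent time decay $t^{-\frac{d}{2}(\frac{1}{q_1}-\frac{1}{q_2})-\frac{s_1-s_2}{2}}$ and then apply it with $(q_1,s_1)=(q/\alpha,\alpha s-\gamma)$, $(q_2,s_2)=(q,s)$. After integrating in $\tau$ this yields $\delta=1-\frac{d(\alpha-1)}{2q}-\frac{(\alpha-1)s-\gamma}{2}$, whose positivity is the $\gamma$-dependent scale-subcritical condition $\frac{s}{d}+\frac{1}{q}<\frac{1}{q_c}$, \emph{not} the stated condition $q>q_c(0)$; likewise, your Meyer-type condition becomes $\frac{s}{d}+\frac{1}{q}=\frac{1}{q_c}$ rather than $q=q_c(0)$, so it would only treat the single value $s=\frac{\gamma}{\alpha-1}$. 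Worse, the linear estimate itself (condition \eqref{linear-condi1}) requires $\frac{\alpha s-\gamma}{d}+\frac{\alpha}{q}\le 1$; since Proposition~\ref{prop:exterior-uniq} allows arbitrary $\gamma\in\mathbb R$, for $\gamma$ sufficiently negative this fails outright. Your arithmetic claim that ``$s<d(1-\alpha/q)$ is exactly what makes $\frac{\alpha s-\gamma}{d}+\frac{\alpha}{q}<1$ (using $\gamma\le(\alpha-1)s$)'' has the inequality backwards: $\gamma\le(\alpha-1)s$ gives $\alpha s-\gamma\ge s$, a \emph{lower} bound, so it cannot furnish the needed upper bound.

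The missing observation, which the paper isolates as Lemma~\ref{lem:linear-main-D}~(ii), is that on the exterior domain $|x|\ge1$ one has the inclusion $L^{q,r}_{s_1}(\Omega)\subset L^{q,r}_{s_2}(\Omega)$ whenever $s_2\le s_1$. Combining this with the linear estimate at equal weights $s_1=s_2$ gives
\[
\|e^{t\Delta_D}\|_{L^{q_1,r_1}_{s_1}(\Omega)\to L^{q_2,r_2}_{s_2}(\Omega)}\lesssim t^{-\frac{d}{2}(\frac{1}{q_1}-\frac{1}{q_2})}
\]
with \emph{no} weight-dependent factor in the time decay, under the condition $s_2\le\min\{s_1,\,d(1-\tfrac{1}{q_1})\}$, which \eqref{condi-exterior} supplies with $(q_1,s_1)=(q/\alpha,\sigma)$ and $(q_2,s_2)=(q,s)$. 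This is precisely what makes the critical exponents collapse to $q_c(0)$ and $Q_c(0)$: after H\"older, $\delta=1-\frac{d(\alpha-1)}{2q}$ depends only on $q$, and the analogous weight-free version of the Meyer inequality (Lemma~\ref{l:wMeyer.inq-D}~(ii), with \eqref{l:wMeyer.inq:a3'} reading $\frac{d}{2}(\frac{1}{q_1}-\frac{1}{q_2})=1$) handles the scale-critical cases. Your remark about ``possibly-adjusted $s$-parameters'' gestures in this direction but does not capture the mechanism; once you replace your linear estimate by this weight-monotonicity version, the rest of your plan goes through exactly as in the paper.
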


\begin{rem}
Since $L^{q,r}_{s_1}(\Omega) \subset L^{q,r}_{s_2}(\Omega)$ if $s_2\le s_1$, 
the exponent $s$ should be taken as close to $\max \{-\frac{d}{q}, \frac{\gamma}{\alpha-1}\}$ as possible in the above proposition 
from the point of view of unconditional uniqueness. 
\end{rem}

We denote by $-\Delta_D$ the Laplace operator with the homogeneous Dirichlet boundary condition on $\Omega$ and 
by $\{e^{t\Delta_D}\}_{t>0}$ the semigroup generated by $-\Delta_D$. The integral kernel $G_D(t,x,y)$ of $e^{t\Delta_D}$ satisfies the Gaussian upper bound 
\begin{equation}\label{Gaussian-upper}
0\le G_{D}(t, x,y) \le G_t(x-y)
\end{equation}
for any $t >0$ and almost everywhere $x,y \in \Omega$.
Then, we have the following linear estimates.

\begin{lem}
	\label{lem:linear-main-D}
Let $d \in \mathbb N$, $1 \le q_1\le \infty$, $1<q_2\le \infty$, $0< r_1,r_2 \le \infty$ and $s_1,s_2\in \mathbb R$. 
\begin{enumerate}[\rm (i)]
\item
Assume \eqref{linear-condi1}--\eqref{linear-condi7}. 
Then there exists a constant $C>0$ such that 
\[
\|e^{t\Delta_D} f\|_{L^{q_2,r_2}_{s_2}(\Omega)}
\le C t ^{-\frac{d}{2} (\frac{1}{q_1} - \frac{1}{q_2}) 
- \frac{s_1 - s_2}{2}}
%\min \{ 1, t^{- \frac{s_1 - s_2}{2}}\}
\|f\|_{L^{q_1,r_1}_{s_1} (\Omega)}
\]
for any $t >0$ and $f \in L^{q_1,r_1}_{s_1} (\Omega)$.

\item
Assume \eqref{linear-condi7} and 
%\begin{empheq}[left={\empheqlbrace}]{alignat=2}
\[
\begin{dcases}
   -\frac{d}{q_2} \le s_2 \le \min\left\{ s_1, d\left(1-\frac{1}{q_1}\right)\right\},\\
    q_1 \le q_2, \\
   r_1\le 1 \quad \text{if } s_2 = d\left(1 -\frac{1}{q_1}\right) \text{ or } q_1=1,\\
   r_2=\infty \quad \text{if } s_2 = -\frac{d}{q_2},\\
   r_1 \le r_2 \quad \text{if } q_1=q_2. 
%\end{empheq}
\end{dcases}
\]
Then there exists a constant $C>0$ such that 
\[
\|e^{t\Delta_D} f\|_{L^{q_2,r_2}_{s_2}(\Omega)}
\le C t ^{-\frac{d}{2} (\frac{1}{q_1} - \frac{1}{q_2})}
%\min \{ 1, t^{- \frac{s_1 - s_2}{2}}\}
\|f\|_{L^{q_1,r_1}_{s_1} (\Omega)}
\]
for any $t >0$ and $f \in L^{q_1,r_1}_{s_1} (\Omega)$.
\end{enumerate}
\end{lem}

\begin{proof}
The assertion (i) is obtained by combining the upper bound \eqref{Gaussian-upper} with the argument of proof of Propositions \ref{prop:linear-main}. 
The assertion (ii) is proved by combining the assertion (i) with $s_1=s_2$ and the inclusion $L^{q,r}_{s_1}(\Omega) \subset L^{q,r}_{s_2}(\Omega)$ if $s_2\le s_1$. 
\end{proof}

Similarly, we also have the following:

\begin{lem}\label{l:wMeyer.inq-D}
Let $T\in (0,\infty]$, and let $d\in \mathbb N$, $q_1 \in [1,\infty]$, $q_2\in (1,\infty)$, $r_1 \in (0,\infty]$ and $s_1,s_2 \in \mathbb R$.
\begin{enumerate}[\rm (i)]
\item
Assume \eqref{linear-condi1'}--\eqref{linear-condi6'}.
Then there exists a constant $C>0$ such that 
\begin{equation}\label{Meyer-exterior}
	\left\|\int_0^{t} e^{(t-\tau)\Delta_D} f(\tau) \,d\tau \right\|_{L^{q_2,\infty}_{s_2}(\Omega)} 
	\le C \sup_{0 <\tau <t} \|f(\tau)\|_{L^{q_1,r_1}_{s_1}(\Omega)}
\end{equation}
for any $t\in (0,T)$ and $f \in L^\infty(0,T; L^{q_1,r_1}_{s_1}(\Omega))$.
	
\item
Assume that 
\[
\begin{dcases}
  - \frac{d}{q_2} < s_2 \le \min\left\{ s_1, d\left(1-\frac{1}{q_1}\right)\right\},\\
   \frac{d}{2} \left(\frac{1}{q_1} - \frac1{q_2} \right) = 1,\\
   r_1 \le 1 \quad \text{if }s_2 = d\left( 1-\frac{1}{q_1} \right) \text{ or } q_1=1.
\end{dcases}
\]
Then the estimate \eqref{Meyer-exterior} holds. 
\end{enumerate}
\end{lem}

\begin{proof}[Proof of Proposition \ref{prop:exterior-uniq}]
The proof is simply the same argument as the proofs of Theorems \ref{thm:unconditional1}, \ref{thm:unconditional2} and \ref{thm:uniqueness-criterion}
with Propositions \ref{prop:linear-main} and \ref{l:wMeyer.inq} replaced by Lemmas~\ref{lem:linear-main-D} and \ref{l:wMeyer.inq-D}. 
In fact, in the double subcritical case \eqref{double-sub-ex}, 
we set
$\sigma := \alpha s - \gamma$ and use 
Lemma~\ref{lem:linear-main-D} (ii) with $(q_1,r_1,s_1) = (\frac{q}{\alpha}, \infty, \sigma)$ and $(q_2,r_2,s_2) = (q,\infty,s)$
and Lemma~\ref{lem:Holder} with $(q,r)=(\frac{q}{\alpha},\infty)$, $(q_1,r_1) = (\frac{q}{\alpha-1},\infty)$, 
$(q_2,r_2)=(q,\infty)$ to obtain 
\[
\begin{split}
& \|u_1(t) - u_2(t)\|_{L^{q,\infty}_{s}} \\
& \le C \int_0^t (t-\tau) ^{-\frac{d}{2} (\frac{\alpha}{q} - \frac{1}{q})}
		\| |\cdot|^{\gamma} (|u_1(\tau)|^{\alpha-1}u_1(\tau) - |u_2(\tau)|^{\alpha-1}u_2(\tau)) \|_{L^{\frac{q}{\alpha},\infty}_{\sigma}} \, d\tau \\
& \le 
C \int_0^t (t-\tau) ^{-\frac{d}{2} (\frac{\alpha}{q} - \frac{1}{q})} \, d\tau \times \max_{i=1,2} \|u_i\|_{L^\infty(0,t; L^{q,\infty}_{s})}^{\alpha-1}
		 \| u_1 - u_2\|_{L^\infty(0,t; L^{q,\infty}_{s})}\\
& \le C t^{\delta}
 \max_{i=1,2} \|u_i\|_{L^\infty(0,t; L^{q,\infty}_{s})}^{\alpha-1}
		 \| u_1 - u_2\|_{L^\infty(0,t; L^{q,\infty}_{s})}.
\end{split}
\]
Here, the conditions  
\[
1< \frac{q}{\alpha} \le q \le \infty,\quad 
-\frac{d}{q} < s < d\left(1-\frac{\alpha}{q}\right),
\quad s \le \sigma,
\quad 
\frac{d}{2} \left(\frac{\alpha}{q} - \frac{1}{q}\right)  < 1
\]
are required in order to use Proposition~\ref{prop:linear-main} and for the above integral in $\tau$ to be finite. 
Note that the conditions amount to \eqref{condi-exterior} and \eqref{double-sub-ex}. 
Similarly to the proof of Theorem \ref{thm:unconditional1}, we conclude the assertion (i) in the case \eqref{double-sub-ex}. 
The other cases can be also proved in a similar way, and so we may omit the details. 
The proof of Proposition \ref{prop:exterior-uniq} is finished.
\end{proof}

%%%%%%%%%%%%
%%% Appendix %%%
%%%%%%%%%%%%
\appendix

%%% Appendix A %%%
\section{Some lemmas on weighted Lorentz spaces}\label{app:A}
In this appendix
we provide several lemmas on weighted Lorentz spaces. 
First, we will show the Fatou property of $L^{q,r}_s(\Omega)$. A quasi-normed space $X\subset L^0(\Omega)$ is said to satisfy the Fatou property if the following holds:
Suppose that $f_n \in X$, $f_n\ge 0$ ($n\in \mathbb N$) and $f_n\nearrow f$ a.e. as $n\to \infty$. 
If $f \in X$, then $\|f_n\|_{L^{q,r}_s} \nearrow\|f\|_{L^{q,r}_s}$ as $n\to \infty$, whereas if $f \not \in X$, then $\|f_n\|_{L^{q,r}_s} \nearrow \infty$ as $n\to \infty$.
Then we have the following lemma.

\begin{lem}\label{lem:monotone}
Let $s\in \mathbb R$, $0<q,r\le \infty$ and $r=\infty$ if $q=\infty$. Then $L^{q,r}_s(\Omega)$ satisfies the Fatou property. 
\end{lem}

\begin{rem}\label{rem:quasi-Banach}
From Lemma~\ref{lem:monotone}, we can immediately see that $L^{q,r}_s(\Omega)$ is a quasi-Banach space by using the fact that 
a quasi-normed space $X \subset L^0(\Omega)$ is complete if it satisfies the Fatou property (see e.g. \cite[Remark 2.1 (ii) and Proposition 2.2]{LNarxiv}). 
\end{rem}

\begin{proof}
Suppose that $f_n \in X$, $f_n\ge 0$ ($n\in \mathbb N$) and $f_n\nearrow f$ a.e. as $n\to \infty$. 
Then $0 \le (|x|^s f_n)^* \nearrow (|x|^s f)^*$ as $n\to \infty$ (see \cite[Proposition 1.4.5 (8)]{Gr2008}).  
If $q,r < \infty$, then the monotone convergence theorem (over $(0,\infty)$ with Lebesgue measure) yields 
\[
\|f_n\|_{L^{q,r}_s} 
=
\left( 
\int_0^\infty ( t^{\frac{1}{q}} (|x|^s f_n)^*(t) )^r\frac{dt}{t}
\right)^{\frac{1}{r}}
 \nearrow
\left( 
\int_0^\infty ( t^{\frac{1}{q}} (|x|^s f)^*(t) )^r\frac{dt}{t}
\right)^{\frac{1}{r}}
= \|f\|_{L^{q,r}_s}
\]
as $n\to \infty$. If $q<\infty$ and $r=\infty$, then we have
\[
\lim_{n\to \infty}\|f_n\|_{L^{q,\infty}_s} 
= \sup_{n \in \mathbb N}\sup_{t>0} t^\frac{1}{q} (|x|^sf_n)^*(t)
= \sup_{t>0} \sup_{n \in \mathbb N} t^\frac{1}{q} (|x|^sf_n)^*(t)
=\|f\|_{L^{q,\infty}_s},
\]
since the limit as $n\to\infty$ is the supremum over $n\in \mathbb N$ by the monotone increase of $\{\|f_n\|_{L^{q,\infty}_s}\}_n$. 
The case $q=r=\infty$ is similarly proved as above. The proof of Lemma~\ref{lem:monotone} is finished.
\end{proof}

Next, we will prove the following density result, which is used in Remark~\ref{rem:w-Lorentz} (d) and Lemma~\ref{lem:density}.

 \begin{lem}\label{lem:A-dense}
 Let $\Omega$ be a domain in $\mathbb R^d$, and let $s \in \mathbb R$ and $0<q,r<\infty$. Then the following statements hold:
 \begin{enumerate}[\rm (i)] 
\item
$L^{q,r}_s(\Omega) \cap L^\infty_0(\Omega)$ is dense in $L^{q,r}_s(\Omega)$.

\item If $1<q<\infty$ and $1 \le r <\infty$, then 
$C^\infty_0(\Omega)$ is dense in $L^{q,r}_s(\Omega)$.
\end{enumerate}
 \end{lem}
 
\begin{rem}\label{rem:A_dense}
Let us give some remarks on Lemma \ref{lem:A-dense}.
\begin{enumerate}[\rm (a)] 
\item
There are many results on density for weighted spaces such as weighted Lebesgue space and weighted Banach function spaces (see e.g. \cite[Theorems 1.1 and 1.2]{NTY2004},
\cite[Lemmas 2.4, 2.10 and 2.12]{KS2014} and \cite[Proposition 3.1 and Remark 3.2]{ELM2021}). 
However, applying the previous results to our weighted Lorentz spaces requires restrictions on the parameters $s,q,r$ in the density result. 
In particular, the condition $0<\frac{s}{d}+\frac{1}{q}<1$ is imposed on $s$.
On the other hand, we note that Lemma \ref{lem:A-dense} requires no additional conditions on $s,q,r$.

\item The case $r=q$ of Lemma \ref{lem:A-dense} (i) can be found in 
\cite[Lemma 2.12]{LN2022}.

\item 
A similar density result to Lemma \ref{lem:A-dense} (i) is proved in the first paragraph of the proof of 
\cite[Proposition 3.13]{LNarxiv}.

\item For $0<q \le 1$ or $0<r<1$, the density of $C^\infty_0(\Omega)$ in $L^{q,r}_s(\Omega)$ is not proved. In fact, 
the approximate functions $f_n$ in \eqref{f_n_approx} (which approximate the target function $f$ for $0<q \le 1$) belong to $L^{q,r}_s(\Omega) \cap L^\infty_0(\Omega)$, but not to 
$C^\infty_0(\Omega)$. 
\end{enumerate}
\end{rem}

\begin{proof}
We divide the proof into three steps.

\smallskip 

\noindent{\it Step 1:} In this step 
we show that $L^{q,r}_s(\Omega)$ has an absolutely continuous quasi-norm, i.e., for any function $f\in L^{q,r}_s(\Omega)$, 
\begin{equation}\label{pf:lem_A3_step1}
\|f \chi_{E_n} \|_{L^{q,r}_s(\Omega)} \to 0
\quad \text{as }n\to \infty
\end{equation}
holds 
for any sequence $\{E_n\}_n$ of measurable subsets of $\Omega$ such that $\chi_{E_n} \to 0$ a.e. as $n\to \infty$.
See e.g. \cite[Chapter 1]{BS1988} , \cite[Section 2]{ELM2021} and \cite[Section 2]{KS2014} for the details of absolutely continuous (quasi-)norm.

If \eqref{pf:lem_A3_step1} is shown for any non-negative function in $L^{q,r}_s(\Omega)$, then 
\eqref{pf:lem_A3_step1} is also shown for all functions $f\in L^{q,r}_s(\Omega)$
by the decomposition $f = f_+ - f_-$ with the positive part $f_+\ge0$ and negative part $f_- \ge0$ of $f$.
Hence, we may assume that $f\in L^{q,r}_s(\Omega)$ is non-negative on $\Omega$ without loss of generality. 
Let $g_n := |x|^s f \chi_{E_n}$. Then
\[
\|f \chi_{E_n} \|_{L^{q,r}_s(\Omega)}
=
\|g_n\|_{L^{q,r}(\Omega)}
= q^{\frac1r}
\left(
\int_0^\infty \left(
d_{g_n} (\lambda)^\frac{1}{q} \lambda
\right)^r \frac{d\lambda}{\lambda}
\right)^\frac{1}{r},
\]
where we recall 
\[
d_{g_n} (\lambda) = \left|\{ x \in \Omega \, ;\, |g_n(x)| >\lambda\}\right|
=
\int_\Omega \chi_{\{|g_n| > \lambda\}}(x) \, dx.
\]
Since $\chi_{E_n} \to 0$ a.e. as $n\to \infty$, we see that 
$\chi_{\{|g_n| > \lambda\}} \to 0$ a.e. as $n\to \infty$. 
There is a dominating function of $\chi_{\{|g_n| > \lambda\}}$, i.e., $\chi_{\{|g_n| > \lambda\}} \le \chi_{\{|x|^s f > \lambda\}}$ and $ \chi_{\{|x|^s f > \lambda\}} \in L^1(\Omega)$ for any $n \in \mathbb N$. 
Hence, it follows from Lebesgue's dominated convergence theorem that $d_{g_n} (\lambda) \to 0$ for any $\lambda>0$. 
Furthermore, since $d_{g_n} (\lambda) \le d_{|x|^s f}(\lambda)$ for any $\lambda>0$, we can again apply Lebesgue's dominated convergence theorem to obtain 
\[
\|g_n\|_{L^{q,r}(\Omega)}
= q^{\frac1r}
\left(
\int_0^\infty \left(
d_{g_n} (\lambda)^\frac{1}{q} \lambda
\right)^r \frac{d\lambda}{\lambda}
\right)^\frac{1}{r} \to 0\quad \text{as }n\to \infty,
\]
which implies \eqref{pf:lem_A3_step1}.

\smallskip 

\noindent{\it Step 2:} In this step 
we prove the case $s=0$ of (i) and (ii).

If $1<q<\infty$ and $1\le r < \infty$, then $L^{q,r}(\Omega)$ is a Banach function space\footnote{There are several different definitions of Banach function space. In this part, we use it in the same sense as in Bennett and Sharpley's book \cite{BS1988}.}, and moreover, it is already shown in Step 1 that $L^{q,r}(\Omega)$ has an absolutely continuous norm.
Hence, it follows from \cite[Proposition 3.1 and Remark 3.2]{ELM2021} that $C_0^\infty(\Omega)$ is dense in $L^{q,r}(\Omega)$. 
Thus, the case $s=0$ of (ii) is proved.

The cases $0<q\le 1$ or $0<r<1$ can be obtained by the argument of proof of \cite[Lemma 2.12]{LN2022}. In fact, 
we take a constant $m \in \mathbb N$ such that $2^m q >1$ and $2^m r \ge 1$ and define $g := |f|^{2^{-m}}$. Then 
$g \in L^{2^mq,2^mr}(\Omega)$, since
\[
\|g\|_{L^{2^mq,2^mr}(\Omega)}^{2^m}
=
\left\||g|^{2^m}\right\|_{L^{q,r}(\Omega)}
=
\|f\|_{L^{q,r}(\Omega)} < \infty
\]
(see \cite[Remark 1.4.7]{Gr2008} for the first equality). 
Hence, there exists a sequence $\{g_n\}_n \subset C^\infty_0(\Omega)$ such that 
\begin{equation}\label{step:convergence}
\| g - g_n \|_{L^{2^mq,2^mr}(\Omega)} \to 0 \quad \text{as }n\to \infty.
\end{equation}
Define 
\begin{equation}\label{f_n_approx}
f_n := g_n^{2^m} \mathrm{sgn}\, (f)
\end{equation}
for $n \in \mathbb N$, 
where $\mathrm{sgn}\, (f)$ denotes the sign function of $f$.
Then $f_n  \in L^{q,r}(\Omega) \cap L^\infty_0(\Omega)$ and 
\[
|f - f_n| = \left| g^{2^m} - g_n^{2^m} \right|
=
| g - g_n| \prod_{\ell=0}^{m-1} \left| g^{2^\ell} + g_n^{2^\ell} \right|.
\]
By Lemma \ref{lem:Holder} (i), we estimate
\begin{equation}\label{step2:inequality}
\begin{split}
\|f - f_n\|_{L^{q,r}(\Omega)}  
& =
\left\|
| g - g_n| \prod_{\ell=0}^{m-1} \left| g^{2^\ell} + g_n^{2^\ell} \right|
\right\|_{L^{q,r}(\Omega)} \\
& \le C
\|g -g_n\|_{L^{2^mq,2^mr}(\Omega)} 
 \prod_{\ell=0}^{m-1}
 \|g^{2^\ell} + g_n^{2^\ell}\|_{L^{2^{m-\ell}q,2^{m-\ell}r}(\Omega)} .
\end{split}
\end{equation}
Here, $ \prod_{\ell=0}^{m-1} \|g^{2^\ell} + g_n^{2^\ell}\|_{L^{2^{m-\ell}q,2^{m-\ell}r}(\Omega)}$ is bounded in sufficiently large $n$ by combining the convergence \eqref{step:convergence} and the inequality 
\[
\begin{split}
 \|g^{2^\ell} + g_n^{2^\ell}\|_{L^{2^{m-\ell}q,2^{m-\ell}r}(\Omega)}
& \le \|g^{2^\ell} \|_{L^{2^{m-\ell}q,2^{m-\ell}r}(\Omega)} + \|g^{2^\ell}_n \|_{L^{2^{m-\ell}q,2^{m-\ell}r}(\Omega)}\\
& =  \|g \|_{L^{2^{m}q,2^{m}r}(\Omega)} + \|g_n \|_{L^{2^{m}q,2^{m}r}(\Omega)}.
 \end{split}
\]
Therefore, it follows from \eqref{step:convergence} and \eqref{step2:inequality} that $\|f - f_n\|_{L^{q,r}(\Omega)} \to 0$ as $n\to \infty$.
Thus, the density in the case $0<q\le 1$ is proved, and hence, the case $s=0$ of (i) is also proved.

\smallskip 

\noindent{\it Final step:} 
Finally, we prove the case $s\not =0$ of (i) and (ii). 
Let $f \in L^{q,r}_s(\Omega)$. We decompose $f$ into
\[
f = f \chi_{\{|x|\le \delta\}} + f \chi_{\{\delta <|x| < R\}} + f \chi_{\{|x|\ge R\}} 
\]
for $0 < \delta < R$. Then $\chi_{\{|x|\le \delta\}}$ and $\chi_{\{|x|\ge R\}}$ converge to $0$ almost everywhere in $\Omega$ as $\delta \to 0$ and $R\to \infty$, respectively.
Hence it follows from \eqref{pf:lem_A3_step1} that, for any $\varepsilon >0$, there exist $\delta, R>0$ such that 
\begin{equation}\label{A3:dense1}
\|f \chi_{\{|x|\le \delta\}}\|_{L^{q,r}_s(\Omega)} + \|f \chi_{\{|x|\ge R\}}\|_{L^{q,r}_s(\Omega)} < \varepsilon.
\end{equation}
Furthermore, 
since $L^{q,r}_s(\{\delta <|x| < R\}) = L^{q,r}(\{\delta <|x| < R\})$, it follows from 
the density of 
$L^{q,r}_s(\{\delta <|x| < R\}) \cap L^\infty_0(\{\delta <|x| < R\})$ in $L^{q,r}(\{\delta <|x| < R\})$ which is shown in Step 2, that 
there exists a function $g = g_{\varepsilon, \delta, R} \in L^{q,r}_s(\{\delta <|x| < R\}) \cap L^\infty_0(\{\delta <|x| < R\})$ satisfying
\[
\| f - g\|_{L^{q,r}(\{\delta <|x| < R\})} < \frac{\varepsilon}{\max\{\delta^s, R^s\}}.
\]
This implies that 
\begin{equation}\label{A3:dense2}
\| f - g\|_{L^{q,r}_s(\{\delta <|x| < R\})} \le \max\{\delta^s, R^s\} \| f - g\|_{L^{q,r}(\{\delta <|x| < R\})} < \varepsilon.
\end{equation}
Let $\tilde{g}$ be the zero extension of $g$ to $\Omega$. Then $\tilde{g} \in L^{q,r}_s(\Omega) \cap L^\infty_0(\Omega)$ and 
\[
\begin{split}
& \| f - \tilde{g}\|_{L^{q,r}_s(\Omega)} \\
& \le \| f \|_{L^{q,r}_s(\{|x| \le \delta\})} 
+
\| f - g\|_{L^{q,r}_s(\{\delta <|x| < R\})} 
+
\| f \|_{L^{q,r}_s(\{|x| \ge R\})}\\
&  < 2\varepsilon
\end{split}
\]
by \eqref{A3:dense1} and \eqref{A3:dense2}.
Therefore, the case $s\not=0$ of (i) is proved. For (ii), we also prove the density in the same way. Only difference is to be able to take an approximate function $g \in C^\infty_0(\{\delta <|x| < R\})$ and the zero extension $\tilde{g} \in C^\infty_0(\Omega)$. 
Thus the proof of Lemma \ref{lem:A-dense} is finished.
\end{proof}

Next, we will show the following lemma on behavior of $f \in L^{q,r}_s(\mathbb R^d)$ as $|x|\to0$ or $|x|\to \infty$, 
which is used in Step 2 of the proof of the necessity part of Proposition \ref{prop:linear-main}.

% Lemma A.3
\begin{lem}\label{lem:A-cha}
Let $0<q,r<\infty$ and $s\in \mathbb R$. 
If $f \in L^{q,r}_s(\mathbb R^d)$, then 
\[
\liminf_{|x| \to 0} |x|^{s+\frac{d}{q}} |\log |x||^{\frac{1}{r}} |f(x)| =
\liminf_{|x| \to \infty} |x|^{s+\frac{d}{q}} |\log |x||^{\frac{1}{r}} |f(x)|=0.
\]
\end{lem}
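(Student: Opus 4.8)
The statement asserts a Dini-type vanishing condition at $0$ and at $\infty$ for functions in the weighted Lorentz space $L^{q,r}_s(\mathbb R^d)$ with $q,r<\infty$. The plan is to argue by contradiction: if the $\liminf$ near $0$ (say) were a positive number $2c>0$, then there would exist $\rho_0>0$ such that
\[
|x|^{s+\frac{d}{q}} |\log|x||^{\frac{1}{r}} |f(x)| \ge c \qquad \text{for all } 0<|x|\le \rho_0,
\]
i.e. $|x|^{s}|f(x)| \ge c\, |x|^{-d/q} |\log|x||^{-1/r}$ on the punctured ball $B_{\rho_0}\setminus\{0\}$. The idea is then to show that the model radial function $g(x) := |x|^{-d/q}|\log|x||^{-1/r}$ (restricted to a small ball) fails to lie in $L^{q,r}(\mathbb R^d)$, whence $|\cdot|^s f \notin L^{q,r}$, contradicting $f\in L^{q,r}_s$. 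Since $|\cdot|^s f \ge c\,g$ pointwise near $0$ and both are nonnegative, monotonicity of the Lorentz quasi-norm in the absolute value of the function (equivalently, comparison of distribution functions / decreasing rearrangements) gives $\|\,|\cdot|^s f\|_{L^{q,r}} \gtrsim c\,\|g\,\mathbf 1_{B_{\rho_0}}\|_{L^{q,r}} = \infty$.

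The key computation is therefore to check that $g\,\mathbf 1_{B_{\rho_0}} \notin L^{q,r}(\mathbb R^d)$. This is a direct radial estimate: for $t$ small the distribution function satisfies $d_g(\lambda)\sim |\{|x|\le \rho_0 : |x|^{-d/q}|\log|x||^{-1/r} > \lambda\}|$, and since $|x|^{-d/q}$ is the dominant factor one finds $g^*(t) \sim t^{-1/q} (\log(1/t))^{-1/r}$ as $t\to 0^+$, so that
\[
\int_0^{\rho_0^d} \bigl(t^{1/q} g^*(t)\bigr)^r \frac{dt}{t}
\sim \int_0^{\rho_0^d} \frac{1}{\log(1/t)} \cdot \frac{dt}{t} = +\infty,
\]
because $\int_0^\epsilon (t\log(1/t))^{-1}\,dt$ diverges (substitute $u=\log(1/t)$, giving $\int^\infty du/u=\infty$). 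The borderline exponent on the logarithm is exactly $1/r$, which is why the weight $|\log|x||^{1/r}$ appears in the statement and not a larger power. The case at $\infty$ is entirely symmetric: one uses the model function $|x|^{-d/q}|\log|x||^{-1/r}$ restricted to the exterior region $|x|\ge R_0$, whose rearrangement behaves like $t^{-1/q}(\log t)^{-1/r}$ as $t\to\infty$, and the same integral $\int^\infty (t\log t)^{-1}\,dt$ diverges.

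The main (only) obstacle is bookkeeping: making the comparison $\|\,|\cdot|^s f\|_{L^{q,r}}\gtrsim \|g\,\mathbf 1_{B_{\rho_0}}\|_{L^{q,r}}$ rigorous when $f$ is merely measurable and the lower bound holds only on a neighborhood of $0$. This is handled by noting $|\cdot|^s f \cdot \mathbf 1_{B_{\rho_0}}$ has, for every level $\lambda>0$, distribution function at least $d_{c\,g\mathbf 1_{B_{\rho_0}}}(\lambda)$ minus the measure of an exceptional null-ish set (in fact there is no exceptional set once $\rho_0$ is chosen from the $\liminf$ hypothesis, since the bound is genuinely pointwise there), and then invoking the equivalent form of the quasi-norm via $d_f$ recorded in the Remark after the definition of $L^{q,r}$. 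One should also treat $r=\infty$ — though the lemma excludes it — and verify the argument is unaffected by whether $s+d/q$ is positive, negative, or zero, since only $|x|^{-d/q}$ drives the divergence and the continuous factor $|x|^s$ on a fixed annulus is bounded above and below; I would remark on this explicitly to avoid any worry about the sign of $s$.
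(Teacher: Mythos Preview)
Your proposal is correct and follows essentially the same contradiction argument as the paper: assume the $\liminf$ is positive, extract a pointwise lower bound $|x|^s|f(x)|\gtrsim |x|^{-d/q}|\log|x||^{-1/r}$ near the origin (resp.\ at infinity), and show the model function has infinite $L^{q,r}$-norm via $\int_0 (t\log(1/t))^{-1}\,dt=\infty$. The paper simplifies to $d=1$, $s=0$ and cites Grafakos \cite[Proposition~1.4.5 (4)--(5)]{Gr2008} for the rearrangement comparison and the explicit form $(\,|x|^{-1/q}|\log|x||^{-1/r}\chi_{\{|x|\le\delta\}}\,)^*(t)\sim t^{-1/q}|\log t|^{-1/r}$, whereas you carry out the radial computation in general $d$; the substance is identical.
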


\begin{rem}
The case of weighted Lebesgue space $L^q_s(\mathbb R^d)$ can be found in \cite[Corollary A.4]{CIT2022} for instance. 
\end{rem}

\begin{proof}
For simplicity, we give the proof only for the case $d=1$ and $s=0$. 
Suppose that 
\[
\liminf_{|x| \to 0} |x|^{\frac{1}{q}} |\log |x||^{\frac{1}{r}} |f(x)|=c>0.
\]
Then there exists a positive constant $\delta$ such that $c/2 \le |x|^{\frac{1}{q}} |\log |x||^{\frac{1}{r}} |f(x)|$
for $|x|\le \delta$. Using \cite[Proposition 1.4.5 (4) and (5)]{Gr2008}, 
we have
\[
(f \chi_{|x|\le \delta})^*(t)
\ge \frac{c}{2} (|x|^{-\frac{1}{q}} |\log |x||^{-\frac{1}{r}} \chi_{\{|x|\le \delta\}})^*(t)
= \frac{c}{2} t^{-\frac{1}{q}} |\log t|^{-\frac{1}{r}} \chi_{\{t\le \delta'\}}
\]
for some $\delta'>0$. Hence,
\[
\begin{split}
\|f \chi_{|x|\le \delta} \|_{L^{q,r}}
& =
\left( 
\int_0^\infty ( t^{\frac{1}{q}} (f \chi_{|x|\le \delta})^*(t) )^r\frac{dt}{t}
\right)^{\frac{1}{r}}\\
& \ge \frac{c}{2}
\left( 
\int_0^{\delta'} 
 |\log t|^{-1} \frac{dt}{t}
\right)^{\frac{1}{r}} = +\infty,
\end{split}
\]
which implies $f \not \in L^{q,r}(\mathbb R)$. 
The second equality is similarly proved.
\end{proof}

Finally, we will prove
the continuity of heat semigroup at $t=0$,
which is used to prove the continuity of mild solutions at $t=0$ in Proposition \ref{prop:regular1} and 
Lemma \ref{lem:pertubed}. 

\begin{lem}\label{lem:A_conti}
Let $s\in \mathbb R$, $1<q \le \infty$ and $0<r \le \infty$ satisfy
\[
\begin{dcases}
   0 \le \frac{s}{d} + \frac{1}{q} \le 1, \\
  r \le 1\quad \text{if }\frac{s}{d} + \frac{1}{q}=1,\\
  r = \infty\quad \text{if } \frac{s}{d} + \frac{1}{q}=0 \text{ or } q=\infty.
\end{dcases}
\]
Then 
\[
\lim_{t\to 0}\|e^{t\Delta} f - f\|_{L^{q,r}_s} =0
\]
holds for any $f \in L^{q,r}_s(\mathbb R^d)$ 
(replace $L^{q, r}_{s}(\mathbb R^d)$ by $\mathcal{L}^{q, r}_{s}(\mathbb{R}^d)$ if $q=\infty$ or $r=\infty$).
\end{lem}

\begin{proof}
The case $r=q$ follows from the standard argument (see e.g. \cite[Theorem 5.5 on page 198]{SS2004}). 
The case $r\not =q$ can be proved by a real interpolation argument. 
In fact, it is known that $L^{q,r}(\mathbb R^d)$ coincides with the real interpolation space $(L^{q_0}(\mathbb R^d), L^{q_1}(\mathbb R^d))_{\theta,r}$, where $0<\theta<1$ and $1<q_0<q<q_1<\infty$ 
(see e.g. \cite[5.3.1 Theorem on page 113]{BL1976}). 
This implies that for any $g \in L^{q,r}(\mathbb R^d)$, there exist $g_i \in L^{q_i}(\mathbb R^d)$ ($i=0,1$) satisfying 
$g=g_0+g_1$ and 
\[
\|g\|_{L^{q,r}}\le 
\left(\int_0^\infty \left( \lambda^\theta (\|g_0\|_{L^{q_0}} + \lambda^{-1}\| g_1\|_{L^{q_1}})
\right)^r\frac{d\lambda}{\lambda}
\right)^\frac1r
\le 2 \|g\|_{L^{q,r}}.
\]
Let $f \in L^{q,r}_s$. Then, taking $g = |x|^s f$, and applying the above interpolation result to this $g$, we have two functions $g_i$ ($i=0,1$) as above.
Defining $f_i$ by $f_i := |x|^{-s} g_i$ for $i=0,1$, we see that $f_i \in L^{q_i}_s(\mathbb R^d)$ for $i=0,1$, 
the decomposition $f = f_0 + f_1$ and the inequalities 
\begin{equation}\label{A_4:dominating}
\|f\|_{L^{q,r}_s}\le 
\left(\int_0^\infty \left( \lambda^\theta (\|f_0\|_{L^{q_0}_s} + \lambda^{-1}\| f_1\|_{L^{q_1}_s})
\right)^r\frac{d\lambda}{\lambda}
\right)^\frac1r
\le 2 \|f\|_{L^{q,r}_s}.
\end{equation}
Now, we have 
\[
\|e^{t\Delta} f - f\|_{L^{q,r}_s}
\le 
\left(\int_0^\infty \left( \lambda^\theta (\|e^{t\Delta} f_0 - f_0\|_{L^{q_0}_s} + \lambda^{-1}\|e^{t\Delta} f_1 - f_1\|_{L^{q_1}_s})
\right)^r\frac{d\lambda}{\lambda}
\right)^\frac1r.
\]
The integrand in the right-hand side converges to $0$ almost everywhere in $\lambda \in (0,\infty)$ as $t\to 0$ 
by Lemma \ref{lem:A_conti} with $r=q$ (which is already proved). 
Moreover, we see that the integrand
has a dominating function by a combination of \eqref{A_4:dominating} and the inequality 
\[
\|e^{t\Delta} f_0 - f_0\|_{L^{q_0}_s} + \lambda^{-1}\|e^{t\Delta} f_1 - f_1\|_{L^{q_1}_s}
\le C ( \|f_0\|_{L^{q_0}_s} + \lambda^{-1}\| f_1\|_{L^{q_1}_s}),
\]
where we used the triangle inequality and boundedness of $e^{t\Delta}$ on $L^{q,r}_s(\mathbb R^d)$ (Proposition \ref{prop:linear-main}).
Therefore, we can use Lebesgue's dominated convergence theorem to obtain 
\[
\lim_{t\to 0}\|e^{t\Delta} f - f\|_{L^{q,r}_s} =0.
\]
Thus, the proof of Lemma \ref{lem:A_conti} is finished.
\end{proof}

%%% Appendix B %%%
\section{Proof of Theorem \ref{thm:sss_sharp} {\rm (ii)}}\label{app:B}

In this appendix, we give a proof of Theorem \ref{thm:sss_sharp} (ii) for completeness. 
The proof is based on the argument of the proof of \cite[Theorem~4.1]{GueVer1988}. 
For simplicity, we write $U = U(r)$, where $r=|x|$.
Then
$U$ satisfies the problem
\[
-(r^{d-1} U')' = r^{d-1+\gamma}U^{\frac{d + \gamma}{d-2}} ,\quad r \in (0,1).
\]
Then the upper bound of $U$ near $x=0$ was already obtained.

% Theorem 5.3
\begin{thm}[Theorem 1.1 (iv) in \cite{BidGar2001}]\label{thm:upperbound}
There exists a constant $C>0$ such that 
\[
U(r) \le C r^{-(d-2)} |\log r|^{-\frac{d-2}{\gamma+2}},\quad r \in (0,1).
\]
\end{thm}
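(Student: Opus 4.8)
The plan is to reduce \eqref{Henon-r} to an autonomous ODE by an Emden--Fowler change of variables and analyse that ODE. Write $p := \frac{d+\gamma}{d-2}$, so $p>1$ precisely because $\gamma>-2$, and $p-1 = \frac{2+\gamma}{d-2}$; note $\frac{2+\gamma}{p-1} = d-2$, which is exactly the borderline feature of this exponent. Set $t := -\log r$ (so $t\to+\infty$ as $r\to0^+$) and
\[
y(t) := r^{\,d-2}\,U(r) = e^{-(d-2)t}\,U(e^{-t}).
\]
A direct computation turns \eqref{Henon-r} into the autonomous equation
\[
\ddot y + (d-2)\,\dot y + y^{p} = 0 , \qquad t\in(0,\infty)
\]
(the general Emden--Fowler form is $\ddot y + (2\beta+2-d)\dot y + \beta(\beta+2-d)y + y^p = 0$ with $\beta = \frac{2+\gamma}{p-1}$; here $\beta = d-2$ makes the zeroth-order term vanish and the damping coefficient equal $d-2$). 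Here $y\in C^2(0,\infty)$ and $y>0$ since $U>0$. Because $\frac1{p-1} = \frac{d-2}{2+\gamma}$, the claimed estimate is equivalent, for small $r$, to $y(t)\le C\,t^{-\frac1{p-1}}$ for large $t$, which is what I would prove; the range $r\in[r_0,1)$ is elementary, since there $U$ is bounded on compact subintervals of $(0,1)$ while the right-hand side stays bounded below by a positive constant.

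The crux is a second, logarithm-type substitution that linearises the leading dynamics. Put $m := p-1 >0$ and $z := y^{-m}>0$, which is $C^2$ on $(0,\infty)$. Differentiating twice and inserting $\ddot y = -(d-2)\dot y - y^{\,m+1}$ gives the identity
\[
\ddot z = \frac{(m+1)\,\dot z^{\,2}}{m\,z} - (d-2)\,\dot z + m , \qquad t\in(0,\infty) .
\]
Since $m>0$ and $z>0$, the first term is nonnegative, so $\zeta := \dot z$ satisfies $\dot\zeta \ge -(d-2)\zeta + m$. Comparing with the scalar linear ODE $\dot\eta = -(d-2)\eta + m$ — whose every solution converges to $\frac{m}{d-2}$ — by the standard device of multiplying $\frac{d}{dt}(\zeta-\eta) + (d-2)(\zeta-\eta)\ge0$ by $e^{(d-2)t}$, one obtains $\liminf_{t\to\infty}\dot z(t)\ge \frac{m}{d-2}>0$. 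Hence $\dot z(t)\ge \frac{m}{2(d-2)}$ for $t$ beyond some $T$, and integrating, $z(t)\ge \frac{m}{4(d-2)}\,t$ for $t\ge 2T$, so $y(t)^{m} = z(t)^{-1}\le \frac{4(d-2)}{m\,t}$, i.e. $y(t)\le \bigl(\tfrac{4(d-2)}{m}\bigr)^{1/m}t^{-1/m}$ for $t\ge 2T$. Reverting via $r = e^{-t}$ and $\frac1m = \frac{d-2}{2+\gamma}$ yields $U(r)\le C\,r^{-(d-2)}|\log r|^{-\frac{d-2}{\gamma+2}}$ for all small $r>0$, and enlarging the constant together with continuity of $U$ on compact subintervals of $(0,1)$ completes the proof on $(0,1)$. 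This is the scheme of \cite{GueVer1988, BidGar2001}.

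The main obstacle is simply carrying out the two changes of variables correctly: the Emden--Fowler reduction (routine but error-prone), and above all the identity for $\ddot z$, which is the single genuinely clever step; once it is established the estimate is a one-dimensional comparison argument needing no input beyond $y>0$. Note that this identity and the subsequent comparison hold on all of $(0,\infty)$, so no qualitative phase-plane analysis is actually required for the upper bound. For orientation one can still record the expected soft behaviour: at any critical point $t_0$ of $y$ the equation forces $\ddot y(t_0) = -y(t_0)^{p}<0$, so every critical point is a strict local maximum, whence $y$ has at most one critical point and is eventually monotone; an increasing tail is ruled out by integrating $\frac{d}{dt}(e^{(d-2)t}\dot y) = -e^{(d-2)t}y^{p}<0$, so $y$ is bounded, eventually decreasing, and $y(t)\to0$ — consistent with, though not needed for, the estimate above. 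Alternatively, the preliminary bound that $y$ is bounded, i.e. $U(x)\lesssim|x|^{-(d-2)}$ near $0$, follows from superharmonicity of $U$ and a B\^ocher-type representation, but the ODE route makes this superfluous.
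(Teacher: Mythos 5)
Your proof is correct. Note first that the paper itself does not prove this statement at all: it is quoted verbatim as Theorem 1.1 (iv) of \cite{BidGar2001} and used as a black box (its only role in Appendix B is to give $\mathfrak{u}^{\frac{d+\gamma}{d-2}}\in L^1((0,\infty))$ in Lemma \ref{lem:B1}). So you have supplied a self-contained argument where the paper supplies a citation. Your Emden--Fowler reduction is exactly the paper's own change of variables \eqref{def:mathfrak_u}, and your autonomous equation $\ddot y+(d-2)\dot y+y^{p}=0$ is precisely \eqref{ODE-v}, so your argument slots seamlessly into the paper's Appendix B framework; what you add is the substitution $z=y^{-(p-1)}$, whose identity
\[
\ddot z=\frac{(m+1)\dot z^{2}}{mz}-(d-2)\dot z+m
\]
I have checked and which is correct, together with the linear comparison $\dot\zeta+(d-2)\zeta\ge m$ giving $\liminf\dot z\ge m/(d-2)$ and hence $z(t)\gtrsim t$, i.e. $y(t)\lesssim t^{-1/m}$. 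This is a clean and complete proof of the asymptotic upper bound at the origin, importantly using nothing beyond $y>0$ and the ODE, so it does not circularly rely on any consequence of the theorem itself. The only loose end is the range $r\in[r_0,1)$: $U$ is only assumed $C^2$ on the punctured open ball, so boundedness ``on compact subintervals'' does not by itself cover $r\to1^{-}$; you need the one extra observation that $r^{d-1}U'$ is nonincreasing (its derivative is $-r^{d-1+\gamma}U^{p}\le0$), whence $U'$ is bounded above and $U$ is bounded on $[r_0,1)$, while the right-hand side of the claimed estimate is bounded below by $|\log r_0|^{-\frac{d-2}{\gamma+2}}>0$ there. With that one-line addition (or by simply restricting the statement to a neighbourhood of the origin, which is all the paper ever uses), the proof is complete.
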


We make the change of variable
\begin{equation}\label{def:mathfrak_u}
U(r) = r^{-(d-2)} \mathfrak{u}(t),\quad t =  -\log r. 
\end{equation}
The properties of $\mathfrak{u}$ are as follows.

% Lemma 5.4
\begin{lem}\label{lem:B1}
The function $\mathfrak{u}$ is of $C^2$ on $(0,\infty)$ and is a positive and strictly decreasing solution of the nonlinear ordinary differential equation
\begin{equation}\label{ODE-v}
\frac{d}{dt}\left( \frac{d\mathfrak{u}}{dt}(t)+ (d-2) \mathfrak{u}(t) \right) + \mathfrak{u}(t)^{\frac{d + \gamma}{d-2}} = 0,
\quad t \in (0,\infty)
\end{equation}
with
\[
\mathfrak{u}(0) = \lim_{r\to 1}U(r)\quad \text{and}\quad \frac{d\mathfrak{u}}{dt}(0) 
= - \lim_{r\to 1}\left(\frac{dU}{dr}(r) -(d-2) U(r)\right)
\]
(and hence, $\mathfrak{u}$ is a $C^1$-diffeomorphism from $(0,\infty)$ to $(0,\mathfrak{u}(0))$).
Moreover, $\mathfrak{u}^{\frac{d + \gamma}{d-2}} \in L^1((0, \infty))$ and 
\begin{equation}\label{positive-v}
\frac{d\mathfrak{u}}{dt}(t)+ (d-2) \mathfrak{u}(t) > 0,
\quad t \in (0,\infty). 
\end{equation}
\end{lem}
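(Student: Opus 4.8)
The plan is to translate the radial equation \eqref{Henon-r} for $U$ into an autonomous-type equation for $\mathfrak{u}$ via the substitution \eqref{def:mathfrak_u}, then read off positivity, monotonicity, and the integrability $\mathfrak{u}^{(d+\gamma)/(d-2)}\in L^1$ from the rewritten equation together with the upper bound of Theorem~\ref{thm:upperbound}. First I would carry out the change of variables. With $t=-\log r$ one has $r\,\tfrac{d}{dr}=-\tfrac{d}{dt}$, and writing $U(r)=r^{-(d-2)}\mathfrak{u}(t)$ a direct computation gives
\[
r^{d-1}U'(r) = -r\,\frac{d}{dr}\!\left(r^{-(d-2)}\mathfrak{u}\right)\cdot r^{d-2}
= \left(\frac{d\mathfrak u}{dt}+(d-2)\mathfrak u\right).
\]
(One should double-check the signs and the fact that the $r$-powers cancel, which is exactly why the exponent $d-2$ is the right one.) Differentiating $r^{d-1}U'$ in $r$ and using \eqref{Henon-r} together with $r^{d-1+\gamma}U^{(d+\gamma)/(d-2)}=r^{d-1+\gamma}\,r^{-(d-2)\frac{d+\gamma}{d-2}}\mathfrak u^{(d+\gamma)/(d-2)}=r^{-1}\mathfrak u^{(d+\gamma)/(d-2)}$, and noting $r\tfrac{d}{dr}=-\tfrac{d}{dt}$ once more, one lands precisely on \eqref{ODE-v}. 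The $C^2$ regularity of $\mathfrak u$ is inherited from $U\in C^2(B\setminus\{0\})$ through the diffeomorphism $r\mapsto t=-\log r$, and the stated initial data at $t=0$ (i.e.\ $r=1$) follow by evaluating $\mathfrak u=r^{d-2}U$ and $\tfrac{d\mathfrak u}{dt}=-r\tfrac{d}{dr}(r^{d-2}U)=-(r^{d-1}U'-(d-2)r^{d-2}U)\to -(\tfrac{dU}{dr}-(d-2)U)$ as $r\to1$.

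Next I would establish \eqref{positive-v} and monotonicity. Set $\varphi(t):=\tfrac{d\mathfrak u}{dt}(t)+(d-2)\mathfrak u(t)=r^{d-1}U'(r)$. From \eqref{ODE-v}, $\varphi'(t)=-\mathfrak u(t)^{(d+\gamma)/(d-2)}<0$ as long as $\mathfrak u>0$, so $\varphi$ is strictly decreasing. The key point is that $\varphi>0$ on all of $(0,\infty)$: since $r\mapsto r^{d-1}U'(r)$ is decreasing in $r$ on $(0,1)$ by \eqref{Henon-r} (its derivative $-r^{d-1+\gamma}U^{(d+\gamma)/(d-2)}$ is negative), the quantity $r^{d-1}U'(r)$ increases as $r\to0^+$, i.e.\ as $t\to\infty$; if it were $\le 0$ at some $t_0$ it would stay $\le 0$ for all larger $t$, forcing $U'\le 0$ near $r=0$, hence $U$ bounded near $0$ — but $U$ is the singular solution, a contradiction. (Alternatively, integrate \eqref{Henon-r} from $r$ to any fixed $r_1$ and use that the singularity at $0$ forces the limit $\lim_{r\to0}r^{d-1}U'(r)$ to exist and be $\ge0$, in fact $>0$; this is the step where one must be careful and may want to invoke a known fact about the singular solution or argue directly that $r^{d-1}U'(r)$ has a nonnegative limit at $0$.) Once $\varphi>0$ everywhere, I get $\tfrac{d\mathfrak u}{dt}=\varphi-(d-2)\mathfrak u$; to get strict monotone decrease of $\mathfrak u$, observe that $\mathfrak u(t)=r^{d-2}U(r)$ and that by Theorem~\ref{thm:upperbound}, $\mathfrak u(t)\le C\,t^{-(d-2)/(\gamma+2)}\to 0$ as $t\to\infty$, while $\mathfrak u(0)>0$; combined with the fact that $\mathfrak u$ cannot have an interior local minimum (at such a point $\tfrac{d\mathfrak u}{dt}=0$, so $\varphi=(d-2)\mathfrak u>0$ and $\varphi'<0$, giving $\tfrac{d^2\mathfrak u}{dt^2}=\varphi'-(d-2)\tfrac{d\mathfrak u}{dt}=\varphi'<0$, i.e.\ a strict local maximum — contradiction), $\mathfrak u$ must be strictly decreasing on $(0,\infty)$, hence a $C^1$-diffeomorphism onto $(0,\mathfrak u(0))$.

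Finally, the integrability $\mathfrak u^{(d+\gamma)/(d-2)}\in L^1((0,\infty))$: integrate \eqref{ODE-v} over $(0,T)$ to get
\[
\varphi(T)-\varphi(0) = -\int_0^T \mathfrak u(t)^{\frac{d+\gamma}{d-2}}\,dt,
\]
so $\int_0^T\mathfrak u^{(d+\gamma)/(d-2)}\,dt = \varphi(0)-\varphi(T)\le \varphi(0)$ using $\varphi(T)>0$; letting $T\to\infty$ gives the bound, and hence $\varphi(t)$ has a finite nonnegative limit as $t\to\infty$. I expect the main obstacle to be the rigorous justification that $\varphi=r^{d-1}U'(r)$ stays strictly positive up to $r=0$ — equivalently, that the singular solution's flux at the origin is positive; everything else is a careful but routine change of variables and ODE bookkeeping. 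This step is where one genuinely uses that $U$ is the \emph{singular} (not removable) solution, together with the monotonicity of $r\mapsto r^{d-1}U'(r)$ coming from the sign of the nonlinearity.
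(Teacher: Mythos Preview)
Your change-of-variables computation and the second-derivative argument for monotonicity are fine in spirit, but there are two concrete issues, one minor and one structural.

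First, a sign slip: with $t=-\log r$ one has
\[
r^{d-1}U'(r)
= -\Bigl(\tfrac{d\mathfrak u}{dt}(t)+(d-2)\,\mathfrak u(t)\Bigr),
\]
not $+\varphi$. Correspondingly, the implication ``$U'\le0$ near $r=0$, hence $U$ bounded near $0$'' is backwards: a function with $U'\le0$ near $0$ is \emph{decreasing} and may well blow up at the origin. With the corrected sign your intended argument would read: if $\varphi(t_0)\le0$ then (since $\varphi'<0$) $\varphi\le0$ for all $t\ge t_0$, hence $U'\ge0$ near $r=0$, hence $U$ is increasing and therefore bounded near $0$.

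Second, and more importantly, Lemma~\ref{lem:B1} is stated for an \emph{arbitrary} positive radial solution $U$ of \eqref{Henon-ball}; it is the input to the dichotomy in Theorem~\ref{thm:sss_sharp}~(ii), and Lemma~\ref{lem:key_sec5_2} needs the diffeomorphism property of $\mathfrak u$ before one knows which alternative occurs. Your proof of $\varphi>0$ explicitly assumes $U$ is singular (``but $U$ is the singular solution, a contradiction''), so it does not cover the removable case, and you yourself flag this step as the main obstacle.

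The paper sidesteps the obstacle by reversing your order of arguments. It first reads off $\mathfrak u^{(d+\gamma)/(d-2)}\in L^1((0,\infty))$ \emph{directly} from Theorem~\ref{thm:upperbound}: since $\mathfrak u(t)\le C\,t^{-(d-2)/(\gamma+2)}$, one has $\mathfrak u(t)^{(d+\gamma)/(d-2)}\lesssim t^{-(d+\gamma)/(\gamma+2)}$, which is integrable because $d\ge3$. It then proves strict monotonicity by a short contradiction (if $\mathfrak u_t(t_0)=\mathfrak u_t(t_1)=0$ with $\mathfrak u_t\ge0$ on $(t_0,t_1)$, integrating \eqref{ODE-v} gives $(d-2)(\mathfrak u(t_1)-\mathfrak u(t_0))=-\int_{t_0}^{t_1}\mathfrak u^{(d+\gamma)/(d-2)}<0$), deduces $\mathfrak u,\mathfrak u_t\to0$, and finally integrates \eqref{ODE-v} over $[t,\infty)$ to obtain
\[
\varphi(t)=\mathfrak u_t(t)+(d-2)\,\mathfrak u(t)=\int_t^\infty \mathfrak u(\tau)^{\frac{d+\gamma}{d-2}}\,d\tau>0,
\]
which holds regardless of whether the singularity at $0$ is removable. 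So \eqref{positive-v} is obtained as a \emph{consequence} of integrability rather than as an input, and no analysis of the flux $r^{d-1}U'(r)$ at the origin is needed.
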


\begin{proof}
It is obvious that $\mathfrak{u}$ is positive and of $C^2$, and
a straightforward calculation gives that $\mathfrak{u}$ satisfies the nonlinear ordinary differential equation \eqref{ODE-v}. 
It is shown by Theorem \ref{thm:upperbound} that $\mathfrak{u}^{\frac{d + \gamma}{d-2}} \in L^1((0, \infty))$. 

We shall prove that $\mathfrak{u}$ is strictly decreasing on $(0,\infty)$ by contradiction.
Suppose that $\mathfrak{u}$ is not strictly decreasing on $(0,\infty)$. Then 
there exist $t_0, t_1$ such that $0<t_0<t_1$ and 
\begin{equation}\label{eq.con1}
\mathfrak{u}_t(t_0)=\mathfrak{u}_t(t_1) = 0\quad \text{and}\quad \mathfrak{u}_t \ge0 \text{ on } (t_0,t_1).
\end{equation}
Since $\mathfrak{u}$ is positive,  
we find from \eqref{eq.con1} that 
\[
\begin{split}
(d-2) \left\{\mathfrak{u}(t_1) - \mathfrak{u}(t_0)\right\}
& = \left[\mathfrak{u}_t(\tau)+ (d-2) \mathfrak{u}(\tau)\right]_{\tau=t_0}^{\tau = t_1}\\
& = - \int_{t_0}^{t_1} \mathfrak{u}(\tau)^{\frac{d + \gamma}{d-2}}\, d\tau < 0,
\end{split}
\]
which implies that $\mathfrak{u}(t_0) >\mathfrak{u}(t_1)$. This is a contradiction to $\mathfrak{u}_t \ge0$ on $(t_0,t_1)$. 
Therefore, $\mathfrak{u}$ is strictly decreasing on $(0,\infty)$. 
In addition, it is also shown by 
the inverse function theorem that $\mathfrak{u}$ is a $C^1$-diffeomorphism from $(0,\infty)$ to $(0,\mathfrak{u}(0))$. 

Lastly, 
since $\mathfrak{u} \in C^2((0,\infty))$, the fundamental theorem of calculus gives 
\[
\mathfrak{u}(t') - \mathfrak{u}(t) = \int_t^{t'} \mathfrak{u}_t(\tau)\, d\tau
\]
for $t'\ge t > 0$, and as $t' \to \infty$, 
\[
\mathfrak{u}(t) =  - \int_t^{\infty} \mathfrak{u}_t(\tau)\, d\tau
\]
for $t>0$. Since $\mathfrak{u}_t<0$, the convergence $\mathfrak{u}_t(t) \to 0$ as $t\to\infty$ must hold.
Noting $\mathfrak{u}(t), \mathfrak{u}_t(t) \to 0$ as $t\to\infty$, and integrating \eqref{ODE-v} over $[t,\infty)$, we have
\begin{equation}\label{IE-v}
\mathfrak{u}_t(t)+ (d-2) \mathfrak{u}(t)
=
 \int_{t}^{\infty} \mathfrak{u}(\tau)^{\frac{d + \gamma}{d-2}}\, d\tau
\end{equation}
for any $t>0$, which implies \eqref{positive-v}. 
The proof of Lemma \ref{lem:B1} is finished.
\end{proof}

% Lemma 5.5
\begin{lem}\label{lem:sec5_key}
Let $d\ge3$ and $\gamma >-2$. 
Assume that 
\begin{equation}\label{goal_sec5}
\lim_{t\to \infty}\frac{\mathfrak{u}_t(t)}{\mathfrak{u}(t)} = 0 \text{ or }-(d-2). 
\end{equation}
Then the assertion {\rm (ii)} in Theorem \ref{thm:sss_sharp} holds. 
\end{lem}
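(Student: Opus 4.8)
The plan is to reduce the precise asymptotics \eqref{B-behavior} to the dichotomy \eqref{goal_sec5} and then rule out the alternative $\mathfrak u_t/\mathfrak u \to -(d-2)$, which corresponds to $U$ having a removable singularity. First I would show that \eqref{goal_sec5} is the only possibility for the limit: working with $\phi(t):=\mathfrak u_t(t)/\mathfrak u(t)$, one derives from \eqref{ODE-v} a first-order equation for $\phi$, namely $\phi' = -\phi^2 - (d-2)\phi - \mathfrak u^{(2+\gamma)/(d-2)}$, using $\mathfrak u^{(d+\gamma)/(d-2)} = \mathfrak u \cdot \mathfrak u^{(2+\gamma)/(d-2)}$. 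Since $\mathfrak u^{(d+\gamma)/(d-2)} \in L^1((0,\infty))$ by Lemma~\ref{lem:B1} and $\mathfrak u$ is positive decreasing, one shows $\mathfrak u^{(2+\gamma)/(d-2)}\to 0$; the limiting behaviour of $\phi$ is then governed by the autonomous equation $\psi' = -\psi^2-(d-2)\psi = -\psi(\psi+(d-2))$, whose only bounded trajectories converge to the equilibria $0$ or $-(d-2)$. Combined with \eqref{positive-v}, which gives $\phi(t) > -(d-2)$ for all $t$, this forces $\phi(t)\to 0$ or $\phi(t)\to -(d-2)$, so \eqref{goal_sec5} always holds and the hypothesis is in fact automatic.

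Next I would separate the two cases. If $\mathfrak u_t/\mathfrak u\to -(d-2)$, then $\log \mathfrak u(t) \sim -(d-2)t$, so $\mathfrak u(t)$ decays like $e^{-(d-2)t}$ up to subexponential corrections, and translating back via \eqref{def:mathfrak_u} one gets $U(r) = r^{-(d-2)}\mathfrak u(-\log r)$ bounded (indeed tending to a finite limit) as $r\to 0$; this is the removable-singularity alternative in Theorem~\ref{thm:sss_sharp}~(ii), so nothing more is needed. The substantive case is $\mathfrak u_t/\mathfrak u \to 0$, i.e. $U$ genuinely singular. Here I would extract the sharp rate by analysing \eqref{IE-v}: write $w:=\mathfrak u_t + (d-2)\mathfrak u = \int_t^\infty \mathfrak u^{(d+\gamma)/(d-2)}\,d\tau$, so $w>0$, $w_t = -\mathfrak u^{(d+\gamma)/(d-2)}$, and $\mathfrak u_t = w-(d-2)\mathfrak u$. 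In the regime $\mathfrak u_t/\mathfrak u\to 0$ one has $w = (d-2)\mathfrak u(1+o(1))$, hence $w_t = -\mathfrak u^{(d+\gamma)/(d-2)} = -\big(w/(d-2)\big)^{(d+\gamma)/(d-2)}(1+o(1))$. This is a Bernoulli-type ODE for $w$: setting $\beta := (d+\gamma)/(d-2) - 1 = (2+\gamma)/(d-2) > 0$, we get $\frac{d}{dt}\big(w^{-\beta}\big) = \beta\, (d-2)^{-(d+\gamma)/(d-2)}(1+o(1))$, so $w^{-\beta}(t) = \beta (d-2)^{-(d+\gamma)/(d-2)} t\,(1+o(1))$ as $t\to\infty$, giving $w(t)^{\beta} \sim \dfrac{(d-2)^{(d+\gamma)/(d-2)}}{\beta\, t}$ and therefore
\[
\mathfrak u(t) \sim \frac{w(t)}{d-2} \sim \left( \frac{(d-2)^{2}}{(2+\gamma)\, t} \right)^{\frac{d-2}{2+\gamma}}.
\]
Substituting $t = -\log r$ and $U(r) = r^{-(d-2)}\mathfrak u(t)$ yields
\[
U(r) \sim r^{-(d-2)} |\log r|^{-\frac{d-2}{2+\gamma}} \left( \frac{(d-2)^2}{2+\gamma} \right)^{\frac{d-2}{2+\gamma}},
\]
which is exactly \eqref{B-behavior}.

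The main obstacle I anticipate is making the asymptotic integration of the Bernoulli-type ODE for $w$ rigorous: one must control the $o(1)$ error terms uniformly — in particular show $\mathfrak u_t/\mathfrak u\to 0$ implies $w/\mathfrak u \to (d-2)$ with an integrable-enough error — and then justify dividing by $w^{1+\beta}$ and integrating from a large fixed time to $t$ without the accumulated error destroying the leading constant. This is the standard but delicate step of upgrading a limit relation into a sharp rate, and it relies crucially on the monotonicity of $\mathfrak u$ (Lemma~\ref{lem:B1}), the positivity \eqref{positive-v}, and the integrability $\mathfrak u^{(d+\gamma)/(d-2)}\in L^1$, together with the a~priori upper bound of Theorem~\ref{thm:upperbound} to pin down which of the two alternatives in \eqref{goal_sec5} actually produces an unbounded $U$. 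A secondary technical point is verifying that the transient (non-limiting) behaviour of $\phi$ cannot oscillate between the two equilibria; this follows from the fact that $\phi' < 0$ whenever $\phi \in [-(d-2)+\epsilon, -\epsilon]$ for the autonomous part and the perturbation $-\mathfrak u^{(2+\gamma)/(d-2)}$ is negative, so once $\phi$ enters a neighbourhood of an equilibrium it stays there.
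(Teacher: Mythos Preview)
Your plan for the substantive case $\mathfrak u_t/\mathfrak u\to 0$ is essentially identical to the paper's: your $w$ is the paper's $\psi(t)=\int_t^\infty \mathfrak u^{(d+\gamma)/(d-2)}\,d\tau=\mathfrak u_t+(d-2)\mathfrak u$, and both of you use $\mathfrak u_t/\mathfrak u\to 0$ to get $\psi\sim(d-2)\mathfrak u$, rewrite $\psi_t=-\mathfrak u^{(d+\gamma)/(d-2)}$ as a Bernoulli equation for $\psi$, and integrate $(\psi^{-(2+\gamma)/(d-2)})_t\to\text{const}$ to obtain the sharp constant in \eqref{B-behavior}. The paper phrases the integration step as ``$f'(t)\to L$ implies $f(t)/t\to L$'' applied to $f=\psi^{-(2+\gamma)/(d-2)}$, which is exactly the rigorous version of your Bernoulli integration and avoids the ``integrable-enough error'' worry you flag.

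Two points deserve correction. First, the lemma \emph{assumes} the dichotomy \eqref{goal_sec5}; your opening paragraph, which argues via $\phi'=-\phi^2-(d-2)\phi-\mathfrak u^{(2+\gamma)/(d-2)}$ that the limit must be $0$ or $-(d-2)$, is actually the content of the \emph{next} lemma (Lemma~\ref{lem:key_sec5_2}) and is superfluous here. Second, your treatment of the removable case has a genuine gap: from $(\log\mathfrak u)_t\to -(d-2)$ you only get $\log\mathfrak u(t)=-(d-2)t+o(t)$, hence $U(r)=r^{-(d-2)}\mathfrak u(-\log r)=e^{o(|\log r|)}$, which is \emph{not} bounded in general (the $o(t)$ could be $\sqrt t$, say), so your claim that $U$ ``tends to a finite limit'' does not follow. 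The paper handles this correctly: for every $\varepsilon>0$ one obtains $U(r)\le C_\varepsilon r^{-\varepsilon}$ near $0$, and then invokes Serrin's removable-singularity theorem \cite{Ser1965} to conclude that $U$ extends as a $C^1$ function across the origin.
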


\begin{proof}
In the case where 
\[
\lim_{t\to \infty}\frac{\mathfrak{u}_t(t)}{\mathfrak{u}(t)} = -(d-2)
\quad \left(
\text{i.e. }\lim_{t\to \infty} (\log \mathfrak{u}(t))_t =-(d-2)
\right),
\]
then for any $\varepsilon\in (0,d-2)$, 
there exists $T=T(\varepsilon)>0$ such that 
\begin{equation}\label{eq.5-1}
-(d-2)-\varepsilon 
<(\log \mathfrak{u}(t))_t < 
-(d-2)+\varepsilon
\end{equation}
for any $t\ge T$. 
By integrating \eqref{eq.5-1} over $[T, t]$, we estimate
\[
 \mathfrak{u}(t) < \mathfrak{u}(T)e^{(-(d-2)+\varepsilon)(t-T)} < \mathfrak{u}(0)e^{(-(d-2)+\varepsilon)(t-T)},
\]
 and by recalling \eqref{def:mathfrak_u}, we find that 
\[
U(r) \le C
e^{((d-2)- \varepsilon)T}
 r^{-\varepsilon},\quad r\in (0,1). 
\]
Hence, $U$ can be extended as a $C^1$ function on $B$ (see \cite[Theorem 1]{Ser1965} and also \cite[Lemma 2.1 and Section 3]{DZarxiv}). 

Next, we consider the other case: 
\begin{equation}\label{eq.singular}
\lim_{t\to \infty}\frac{\mathfrak{u}_t(t)}{\mathfrak{u}(t)} = 0.
\end{equation}
Set 
\[
\psi (t) := \int_t^\infty \mathfrak{u}(\tau)^{\frac{d + \gamma}{d-2}} d\,\tau. 
\]
Then the following hold: 
\begin{equation}\label{eq.ss_3}
\lim_{t\to\infty} \frac{\psi(t)^{\frac{d + \gamma}{d-2}}}{\psi_t(t)} = - (d-2)^{\frac{d + \gamma}{d-2}},
\end{equation}
and 
\begin{equation}\label{eq.ss_4}
\lim_{t\to\infty} 
t^{\frac{d-2}{2+\gamma}}
\psi(t)
=
(2+\gamma)^{-\frac{d-2}{2+\gamma}}
(d-2)^{\frac{2d + \gamma-2}{2+\gamma}}.
\end{equation}
In fact, noting that $\lim_{t\to \infty} \mathfrak{u} (t) =\lim_{t\to \infty} \mathfrak{u}_t (t) =0$,  
we see from 
\eqref{ODE-v} that 
\[
\psi (t) = \mathfrak{u}_t(t)+ (d-2) \mathfrak{u}(t)
\quad \text{and}\quad 
\psi_t(t) = 
-\mathfrak{u}(t)^{\frac{d+\gamma}{d-2}}.
\]
Hence,
\[
\frac{\psi(t)^{\frac{d + \gamma}{d-2}}}{\psi_t(t)}
=
\frac{(\mathfrak{u}_t(t)+ (d-2) \mathfrak{u}(t))^{\frac{d + \gamma}{d-2}}}{-\mathfrak{u}(t)^{\frac{d+\gamma}{d-2}}}
 = -\left(\frac{\mathfrak{u}_t(t)}{\mathfrak{u}(t)}+ (d-2) \right)^{\frac{d + \gamma}{d-2}}.
\]
This and \eqref{eq.singular} imply \eqref{eq.ss_3}. 
Moreover, we see from \eqref{eq.ss_3} that 
\[
\lim_{t\to \infty}(\psi^{-\frac{2+\gamma}{d-2}}(t))_t
=
-\frac{2+\gamma}{d-2} 
\lim_{t\to \infty}\frac{\psi_t(t)}{\psi(t)^{\frac{d + \gamma}{d-2}}}
=
(2+\gamma)(d-2)^{-\frac{d + \gamma}{d-2}-1}.
\]
Integrating the above yields 
\[
\lim_{t\to\infty} 
t^{-1}\psi^{-\frac{2+\gamma}{d-2}}(t)
=
(2+\gamma)(d-2)^{-\frac{d + \gamma}{d-2}-1},
\]
which implies \eqref{eq.ss_4}.
By using \eqref{eq.ss_3} and \eqref{eq.ss_4} and noting that $\mathfrak{u}(t) = (-\psi_t(t))^{\frac{d-2}{d+\gamma}}$, 
we obtain
\[
\begin{split}
\lim_{t\to\infty} t^{\frac{d-2}{2+\gamma}}\mathfrak{u}(t)
= \lim_{t\to\infty} t^{\frac{d-2}{2+\gamma}}(-\psi_t(t))^{\frac{d-2}{d+\gamma}}
 =
\lim_{t\to\infty} t^{\frac{d-2}{2+\gamma}}
\left(
\frac{\psi(t)}{d-2}
\right)
 =
\left(
\frac{(d-2)^2}{2+\gamma}
\right)^{\frac{d-2}{2+\gamma}}.
\end{split}
\]
Thus, we conclude Lemma \ref{lem:sec5_key}.
\end{proof}

Finally, we conclude the proof of (ii) of Theorem \ref{thm:sss_sharp} by showing the following.

% Lemma 5.6
\begin{lem}\label{lem:key_sec5_2}
Let $d\ge3$ and $\gamma >-2$. Then 
\eqref{goal_sec5} holds.
\end{lem}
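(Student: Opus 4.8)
The plan is to analyse the logarithmic derivative $w(t):=\mathfrak{u}_t(t)/\mathfrak{u}(t)$ and to show it converges, necessarily to a zero of the limiting autonomous vector field. First I would record the a priori bounds on $w$: since $\mathfrak{u}$ is positive and strictly decreasing (Lemma~\ref{lem:B1}), $w<0$; and since $\mathfrak{u}_t+(d-2)\mathfrak{u}>0$ by \eqref{positive-v}, $w>-(d-2)$. Thus $w$ takes values in the bounded interval $(-(d-2),0)$. Differentiating $w$ and using \eqref{ODE-v} in the form $\mathfrak{u}_{tt}+(d-2)\mathfrak{u}_t+\mathfrak{u}^{(d+\gamma)/(d-2)}=0$ yields the scalar equation
\[
w'(t)=h(w(t))-g(t),\qquad h(w):=-w\bigl(w+(d-2)\bigr),\qquad g(t):=\mathfrak{u}(t)^{\frac{2+\gamma}{d-2}},
\]
where $g>0$ and, because $\mathfrak{u}(t)\to 0$ as $t\to\infty$ and $\frac{2+\gamma}{d-2}>0$, one has $g(t)\to 0$. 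The equilibria of the unforced equation $w'=h(w)$ are exactly $0$ and $-(d-2)$, the two values appearing in \eqref{goal_sec5}.

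The heart of the argument is that $h$ is bounded away from $0$ on compact subintervals of $(-(d-2),0)$. For $\varepsilon\in(0,\tfrac{d-2}{2})$ put $J_\varepsilon:=[-(d-2)+\varepsilon,-\varepsilon]$ and $\delta_\varepsilon:=\varepsilon(d-2-\varepsilon)>0$, so $h\ge\delta_\varepsilon$ on $J_\varepsilon$. Since $g(t)\to 0$ there is $T_\varepsilon$ with $g(t)<\delta_\varepsilon/2$ for $t\ge T_\varepsilon$, hence
\[
t\ge T_\varepsilon\ \text{ and }\ w(t)\in J_\varepsilon\ \Longrightarrow\ w'(t)\ge \tfrac{\delta_\varepsilon}{2}>0 .
\]
From this I would extract two elementary facts (proved by tracking the last/first times $w$ hits the endpoint levels $-\varepsilon$ and $-(d-2)+\varepsilon$): (a) if $w(t_0)\in J_\varepsilon$ with $t_0\ge T_\varepsilon$, then $w$ stays in $J_\varepsilon$ and increases at speed $\ge\delta_\varepsilon/2$ until it leaves through the top, i.e. it attains the value $-\varepsilon$ in finite time; (b) once $t\ge T_\varepsilon$, $w$ cannot strictly decrease while it stays in $J_\varepsilon$, so $w$ can never move from a value $\ge-\varepsilon$ to a strictly smaller value lying in $[-(d-2)+\varepsilon,-\varepsilon)$ at a later time.

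With these two facts I would pin down $M:=\limsup_{t\to\infty}w(t)$ and $m:=\liminf_{t\to\infty}w(t)$, both in $[-(d-2),0]$. If $M\in(-(d-2),0)$, choose $\varepsilon$ with $-(d-2)+\varepsilon<M<-\varepsilon$; along $t_n\to\infty$ with $w(t_n)\to M$ one eventually has $w(t_n)\in J_\varepsilon$ and $t_n\ge T_\varepsilon$, so by (a) $w$ attains $-\varepsilon$ at arbitrarily large times, forcing $M\ge-\varepsilon>M$ — a contradiction; hence $M\in\{0,-(d-2)\}$. If $M=-(d-2)$ then $m=M=-(d-2)$ and $w\to-(d-2)$. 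If $M=0$, I claim $m=0$: if instead $m<0$, pick $\varepsilon$ small enough that $\max\{m,-(d-2)\}+\varepsilon$ lies in $[-(d-2)+\varepsilon,-\varepsilon)$; then $M=0$ produces times $\tau_1\to\infty$ with $w(\tau_1)\ge-\varepsilon$ while $m=\liminf w$ produces times $\tau_2\to\infty$ with $w(\tau_2)\le\max\{m,-(d-2)\}+\varepsilon$, and interleaving these contradicts (b). Hence $m=M=0$ and $w\to0$. In all cases $w(t)\to 0$ or $w(t)\to -(d-2)$, which is \eqref{goal_sec5}.

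I expect the main obstacle to be this non-oscillation step — excluding the scenario $\limsup w=0$, $\liminf w=-(d-2)$ in which $w$ swings back and forth between the two equilibria — rather than the (routine) derivation of the equation for $w$. The point that resolves it is exactly the strict positivity of $h$ on compact subsets of $(-(d-2),0)$ combined with $g(t)\to 0$: it makes $w$ eventually strictly increasing on the "middle band" $J_\varepsilon$, hence unable to descend through it. This is the analogue, in the present Hardy–H\'enon setting, of the argument in \cite[Theorem~4.1]{GueVer1988} on which Appendix~\ref{app:B} is modelled.
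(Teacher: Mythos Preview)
Your argument is correct and takes a genuinely different route from the paper's proof. Both approaches work with the quotient $\mathfrak{u}_t/\mathfrak{u}$, but the paper first passes to the variable $\rho=\mathfrak{u}(t)$ (so $\mathfrak{w}(\rho)=w(t)$) and derives an ODE $\mathfrak{w}_\rho=F(\rho,\mathfrak{w})$, then shows by a second--derivative test that every critical point of $\mathfrak{w}$ is a strict local minimum, hence $\mathfrak{w}$ is eventually monotone and a limit $m\in[-(d-2),0]$ exists; the interior values $m\in(-(d-2),0)$ are then excluded by a separate contradiction going back to $\mathfrak{u}$, comparing the exponential decay rate forced by $m$ with the integral identity \eqref{IE-v}. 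You instead stay in the $t$--variable, write $w'=h(w)-g(t)$ with $h(w)=-w(w+(d-2))>0$ on $(-(d-2),0)$ and $g(t)\to0$, and use a trapping argument: on each compact band $J_\varepsilon\subset(-(d-2),0)$, $w$ is eventually strictly increasing, so $w$ can neither cluster at an interior value (which forces $\limsup w\in\{0,-(d-2)\}$) nor cross $-\varepsilon$ downward once it has exceeded it (which forces $\liminf w=\limsup w$). This is more self--contained; it avoids the change of variable and, in particular, the exponential--rate contradiction step. The paper's monotonicity step is very short, but its exclusion of interior limits is the real work; your method replaces that step by an elementary barrier argument at the level of the scalar equation for $w$, which is a cleaner way to handle the ``non--oscillation'' issue you flagged.
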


\begin{proof}
Since $\mathfrak{u}$ is a $C^1$-diffeomorphism from $(0,\infty)$ to $(0,\mathfrak{u}(0))$ by Lemma \ref{lem:B1}, 
we can define
\[
\rho = \mathfrak{u}(t)\quad \text{and}\quad \mathfrak{v}(\rho) = \mathfrak{u}_t (t)
\quad \left(\text{i.e. }\mathfrak{v}(\rho) = \mathfrak{u}_t(\mathfrak{u}^{-1}(\rho))\right).
\]
For convenience, we set 
\[
\mathfrak{w}(\rho) := \frac{\mathfrak{v}(\rho)}{\rho}.
\]
Then our goal is to prove that 
\begin{equation}\label{eq.goal_lem5.6}
\lim_{\rho \to +0}\mathfrak{w}(\rho) = 0 \text{ or } - (d-2).
\end{equation}

First, we will show there exists a limit of $\mathfrak{w}$ as $\rho \to +0$ such that 
\begin{equation}\label{eq.goal_lem5.6_1}
\lim_{\rho\to +0} \mathfrak{w}(\rho) = m \in [-(d-2),0].
\end{equation}
Since $\mathfrak{w}$ is continuous and $-(d-2) < \mathfrak{w} < 0$, \eqref{eq.goal_lem5.6_1} is obvious if $\mathfrak{w}$ is monotone in $(0,\mathfrak{u}(0))$.
Moreover, even if it is not, we can prove that 
\begin{equation}\label{sign1}
\mathfrak{w}_{\rho \rho}(a) >0 \quad \text{if there is }a \in (0,\mathfrak{u}(0)) \text{ such that }\mathfrak{w}_\rho(a)=0. 
\end{equation}
In fact, 
by Lemma \ref{lem:B1}, $\mathfrak{w}$ satisfies $-(d-2) < \mathfrak{w} < 0$ and 
\[
\begin{split}
0 & = \rho \mathfrak{w}(\rho)\frac{d}{d\rho}\left( \rho \mathfrak{w}(\rho)+ (d-2) \rho \right) + \rho^{\frac{d + \gamma}{d-2}}\\
& =\rho \mathfrak{w}(\rho) \left\{\left( \mathfrak{w}(\rho)+ (d-2) \right) + \rho \mathfrak{w}_\rho (\rho)\right\}+ \rho^{\frac{d + \gamma}{d-2}},
\end{split}
\]
that is,
\[
\mathfrak{w}_\rho = - \frac{1}{\rho} \left\{\frac{\rho^{\frac{\gamma+2}{d-2}}}{\mathfrak{w}} + ( \mathfrak{w}+ (d-2) )\right\}
= - \frac{\mathfrak{w}^2+(d-2) \mathfrak{w} +\rho^{\frac{\gamma+2}{d-2}}}{\rho \mathfrak{w}}
=: F(\rho, \mathfrak{w}).
\]
Then, $\mathfrak{w}_\rho(a)=0$ implies that 
\[
\mathfrak{w}_{\rho \rho}(a)  = F_\rho (a, \mathfrak{w}(a))
 = - \frac{\gamma +2}{d-2} \frac{a^\frac{\gamma+2}{d-2}}{a^2 \mathfrak{w}(a)}>0.
\]
Hence, \eqref{sign1} is proved. 
Since \eqref{sign1} implies that the sign of $\mathfrak{w}_\rho$ is constant near $\rho=+0$, 
$\mathfrak{w}$ is monotone near $\rho=+0$. Hence, since $-(d-2) < \mathfrak{w} < 0$, there exists a limit of $\mathfrak{w}$ as $\rho \to +0$ satisfying \eqref{eq.goal_lem5.6_1}.

Next, we will show that $m=0$ or $-(d-2)$. 
Suppose that $-(d-2)<m<0$ for contradiction. We calculate
\[
m= \lim_{\rho\to +0} \mathfrak{w}(\rho)
= \lim_{\rho\to +0} \frac{\mathfrak{v}(\rho)}{\rho}
= \lim_{t\to +\infty} \frac{\mathfrak{u}_t(t)}{\mathfrak{u}(t)}
= \lim_{t\to +\infty} (\log \mathfrak{u}(t))_t.
\]
Then, for any $\varepsilon\in (0,-m)$, there exists $T>0$ such that 
\begin{equation}\label{eq.111}
(m-\varepsilon) \mathfrak{u}(t) < 
\mathfrak{u}_t(t) < (m+\varepsilon) \mathfrak{u}(t) 
\end{equation}
and 
\begin{equation}\label{eq.112}
m-\varepsilon <(\log \mathfrak{u}(t))_t < m+\varepsilon
\end{equation}
for any $t> T$. Integrating \eqref{eq.112} over $[t,T]$ gives 
\begin{equation}\label{eq.113}
\mathfrak{u}(T)e^{(m-\varepsilon)(t-T)} < \mathfrak{u}(t) < \mathfrak{u}(T)e^{(m+\varepsilon)(t-T)} < \mathfrak{u}(0)e^{(m+\varepsilon)(t-T)} 
\end{equation}
for any $t> T$, and hence,
\begin{equation}\label{eq.ss_2}
\mathfrak{u}(t)^{\frac{d + \gamma}{d-2}} < \mathfrak{u}(0)^{\frac{d + \gamma}{d-2}} e^{\frac{(m+\varepsilon)(d + \gamma)}{d-2}(t-T)} 
\end{equation}
for any $t> T$. 
By \eqref{eq.111} and \eqref{eq.113},
we also have 
\begin{equation}\label{eq.ss_1}
\begin{split}
\mathfrak{u}_t(t) + (d-2)\mathfrak{u}(t) 
& > \{ (d-2) + m - \varepsilon\} \mathfrak{u}(t)\\
& >\{ (d-2) + m - \varepsilon\} \mathfrak{u}(T) e^{(m-\varepsilon)(t-T)}. 
\end{split}
\end{equation}
By combining \eqref{IE-v}, \eqref{eq.ss_2} and \eqref{eq.ss_1}, 
we have
\[
\begin{split}
\{ (d-2) + m - \varepsilon\} \mathfrak{u}(T) e^{(m-\varepsilon)(t-T)}
& < 
\mathfrak{u}_t(t)+ (d-2) \mathfrak{u}(t)\\
& = \int_t^\infty \mathfrak{u}(\tau)^{\frac{d + \gamma}{d-2}} d\,\tau \\
& < 
\int_t^\infty
\mathfrak{u}(0)^{\frac{d + \gamma}{d-2}} e^{\frac{(m+\varepsilon)(d + \gamma)}{d-2}(\tau-T)}d\,\tau\\
& = C e^{\frac{(m+\varepsilon)(d + \gamma)}{d-2}(t-T)}
\end{split}
\]
for any $t> T$. Then, if we further assume $(d-2) + m - \varepsilon>0$, 
we have
\[
\mathfrak{u}(T) <  C e^{\{\frac{(m+\varepsilon)(d + \gamma)}{d-2} - (m-\varepsilon)\}(t-T)}
\]
for any $t> T$. 
However, as $t \to \infty$, this contradicts that 
\[
\mathfrak{u}(T) \ge C_\varepsilon \quad \text{for some constant $C_\varepsilon >0$},
\] 
if we fix $\varepsilon$ sufficiently small so that
\[
\frac{(m+\varepsilon)(d + \gamma)}{d-2} - (m-\varepsilon) < 0\quad \text{i.e.}\quad 
0 < \varepsilon < \frac{(2+ \gamma)|m|}{2d + \gamma-2}. 
\]
Therefore, $m$ must be $0$ or $-(d-2)$, which means \eqref{eq.goal_lem5.6}. Thus, we prove Lemma~\ref{lem:key_sec5_2}.  
\end{proof}

%%% Acknowledgement %%%
\section*{Acknowledgement}
%\par
%The first author is supported by Grant-in-Aid for Young Scientists (B) 
%(No. 17K14216) and Challenging Research (Pioneering) (No.17H06199), 
%Japan Society for the Promotion of Science. 

The first author is supported by Grant-in-Aid for Early-Career Scientists (No. 21K13821), Japan
Society for the Promotion of Science.
The second author is supported by JST CREST (No. JPMJCR1913), Japan and 
the Grant-in-Aid for Scientific Research (B) (No.18H01132) and 
Young Scientists Research (No.19K14581), 
JSPS.
The third author is supported by 
Grant for Basic Science Research Projects from The Sumitomo Foundation (No.210788). 
The fourth author is supported by  the Laboratoire \'Equations aux D\'eriv\'ees Partielles LR03ES04.
We thank the anonymous reviewers for the careful reading of our manuscript and the insightful comments and suggestions.

%%%%%%%%%%%%%%%%%%%%%%%%%%%%%%%%%
%%%%%%%%%%%%% References %%%%%%%%%%%%%
%%%%%%%%%%%%%%%%%%%%%%%%%%%%%%%%%
\begin{bibdiv}
 \begin{biblist}[\normalsize]
 
 \bib{Avi1983}{article}{
   author={P., Aviles\text{,}},
   title={On isolated singularities in some nonlinear partial differential equations},
   journal={Indiana Univ. Math. J.},
   volume={32},
   date={1983},
   number={5},
   pages={773--791},
%   issn={0362-546X},
}

 \bib{Avi1987}{article}{
   author={P., Aviles\text{,}},
   title={Local behavior of solutions of some elliptic equations},
   journal={Comm. Math. Phys.},
   volume={108},
   date={1987},
   number={2},
   pages={177--192},
%   issn={0362-546X},
}

%\bib{BCD2011}{book}{
%   author={H., Bahouri\text{,}},
%   author={J.-Y., Chemin\text{,}},
%   author={R., Danchin\text{,}},
%   title={Fourier analysis and nonlinear partial differential equations},
%   series={Grundlehren der Mathematischen Wissenschaften},
%   volume={343},
%   publisher={Springer},
%   place={Heidelberg},
%   date={2011},
%   pages={xvi+523},
%%   isbn={978-3-642-16829-1},
%%   review={\MR{2768550 (2011m:35004)}},
%}

\bib{Bal1977}{article}{
   author={M., Ball\text{,}},
   title={Remarks on blow-up and nonexistence theorems for nonlinear evolution equations},
   journal={Quart. J. Math.},
   volume={28},
   date={1977},
   pages={473--486},
%   issn={0362-546X},
%   review={\MR{3606306}},
%   doi={10.1016/j.na.2016.12.008},
}

\bib{Bar1983}{article}{
   author={P., Baras\text{,}},
   title={Non-unicit\'e des solutions d'une \'equation d'\'evolution non-lin\'eaire},
   journal={Annales de la Facult\'e des sciences de Toulouse: Math\'ematiques},
   volume = {5}
   number = {3-4},
   date={1983},
   pages={287--302},
%   issn={0362-546X},
%   review={\MR{3606306}},
%   doi={10.1016/j.na.2016.12.008},
}

%\bib{BL1989}{article}{
%   author={C., Bandle\text{,}},
%   author={H. A., Levine\text{,}},
%   title={On the existence and nonexistence of global solutions of reaction-diffusion equations in sectorial domains},
%   journal={Trans. Amer. Math. Soc.},
%   volume={316},
%   date={1989},
%   number={2},
%   pages={595--622},
%%   issn={0362-546X},
%%   review={\MR{3606306}},
%%   doi={10.1016/j.na.2016.12.008},
%}

%\bib{BarGol1984}{article}{
%   author={P., Baras\text{,}},
%   author={J. A., Goldstein\text{,}},
%   title={The heat equation with a singular potential},
%   journal={Trans. Am. Math. Soc.},
%   volume={284},
%   date={1984},
%   number={1},
%   pages={121--139},
%%   issn={0362-546X},
%%   review={\MR{3606306}},
%%   doi={10.1016/j.na.2016.12.008},
%}

\bib{BenTayWei2017}{article}{
   author={B., Ben Slimene\text{,}},
   author={S., Tayachi\text{,}},
   author={F. B., Weissler\text{,}},
   title={Well-posedness, global existence and large time behavior for
   Hardy-H\'enon parabolic equations},
   journal={Nonlinear Anal.},
   volume={152},
   date={2017},
   pages={116--148},
%   issn={0362-546X},
%   review={\MR{3606306}},
%   doi={10.1016/j.na.2016.12.008},
}

\bib{BS1988}{book}{
   author={C., Bennett\text{,}},
   author={R. C., Sharpley\text{,}},
   title={Interpolation of operators},
%   series={Mathematical Surveys and Monographs},
%   volume={187},
   publisher={Academic press},
   date={1988},
%   pages={xxiv+299},
%   isbn={978-0-8218-9152-0},
%   review={\MR{3052352}},
%   doi={10.1090/surv/187},
}

\bib{BL1976}{book}{
   author={J., Bergh\text{,}},
   author={J., L\"ofstr\"om\text{,}},
   title={Interpolation Spaces. An Introduction},
   series={Grundlehren der Mathematishe Wissenschaften},
   volume={223},
   publisher={Springer, Berlin-Heidelberg-New York},
   date={1976},
   pages={pp. 1--202},
%   isbn={978-0-8218-9152-0},
%   review={\MR{3052352}},
%   doi={10.1090/surv/187},
}

%\bib{Bic2019}{article}{
%   author={U., Biccari\text{,}},
%   title={Boundary controllability for a one-dimensional heat equation with a singular inverse-square potential},
%   journal={Math. Control Relat. Fields},
%   volume={9},
%   date={2019},
%   number={1},
%   pages={191--219},
%%   issn={0021-7670},
%%   review={\MR{1403259}},
%%   doi={10.1007/BF02790212},
%}

\bib{BidGar2001}{article}{
   author={M.-F., Bidaut-V\'eron\text{,}},
   author={M., Garc\'ia-Huidobro\text{,}},
   title={Regular and singular solutions of a quasilinear equation with weights},
   journal={Asymptotic Analysis},
   volume={28},
   date={2001},
   pages={115--150},
%   issn={0021-7670},
%   review={\MR{1403259}},
%   doi={10.1007/BF02790212},
}

\bib{BidRao1996}{article}{
   author={M.-F., Bidaut-V\'eron\text{,}},
   author={T., Raoux\text{,}},
   title={Asymptotics of solutions of some nonlinear elliptic systems},
   journal={Commun. Partial. Differ. Equ.},
   volume={21},
   date={1996},
   number={7-8},
   pages={1035--1086},
%   issn={0021-7670},
%   review={\MR{1403259}},
%   doi={10.1007/BF02790212},
}

%\bib{Br2011}{book}{
%   author={H., Brezis\text{,}},
%   title={Functional analysis, Sobolev spaces and partial differential
%   equations},
%   series={Universitext},
%   publisher={Springer, New York},
%   date={2011},
%   pages={xiv+599},
%%   isbn={978-0-387-70913-0},
%%   review={\MR{2759829 (2012a:35002)}},
%}

\bib{BreCaz1996}{article}{
   author={H., Brezis\text{,}},
   author={T., Cazenave\text{,}},
   title={A nonlinear heat equation with singular initial data},
   journal={J. Anal. Math.},
   volume={68},
   date={1996},
   pages={277--304},
%   issn={0021-7670},
%   review={\MR{1403259}},
%   doi={10.1007/BF02790212},
}

%\bib{CGL2022}{article}{
%   author={R., Castillo\text{,}},
%   author={O., Guzm\'an-Rea\text{,}},
%   author={M., Loayza\text{,}},
%   title={On the local existence for Hardy parabolic equations with singular initial data},
%   journal={J. Math. Anal. Appl.},
%   volume={510},
%   date={2022},
%   number={2},
%   pages={126022},
%%   issn={0273-0979},
%%   review={\MR{1824891}},
%%   doi={10.1090/S0273-0979-01-00903-X},
%}

\bib{Chi2019}{article}{
   author={N., Chikami\text{,}},
   title={Composition estimates and well-posedness for Hardy-H\'{e}non parabolic
   equations in Besov spaces},
   journal={J. Elliptic Parabol. Equ.},
   volume={5},
   date={2019},
   number={2},
   pages={215--250},
%   issn={2296-9020},
%   review={\MR{4031955}},
%   doi={10.1007/s41808-019-00039-8},
}

\bib{CIT2021}{article}{
   author={N., Chikami\text{,}},
   author={M., Ikeda\text{,}},
   author={K., Taniguchi\text{,}},
   title={Well-posedness and global dynamics for the critical Hardy-Sobolev parabolic equation},
   journal={Nonlinearity},
   volume={34},
   date={2021},
   number={11},
   pages={8094--8142},
%   issn={0273-0979},
%   review={\MR{1824891}},
%   doi={10.1090/S0273-0979-01-00903-X},
}

\bib{CIT2022}{article}{
   author={N., Chikami\text{,}},
   author={M., Ikeda\text{,}},
   author={K., Taniguchi\text{,}},
   title={Optimal well-posedness and forward self-similar solution for the Hardy-H\'enon parabolic equation in critical weighted Lebesgue spaces},
   journal={Nonlinear Anal.},
   volume={222},
   date={2022},
%   number={3},
   pages={112931},
%   issn={0273-0979},
%   review={\MR{1824891}},
%   doi={10.1090/S0273-0979-01-00903-X},
}

\bib{DenapoliDrelichman}{article}{
   author={P. L., De N\'{a}poli\text{,}},
   author={I., Drelichman\text{,}},
   title={Weighted convolution inequalities for radial functions},
   journal={Ann. Mat. Pura Appl. (4)},
   volume={194},
   date={2015},
%   number={4},
   pages={167--181},
%   issn={0273-0979},
%   review={\MR{1824891}},
%   doi={10.1090/S0273-0979-01-00903-X},
}

\bib{DZarxiv}{article}{
   author={G., Du\text{,}},
   author={S., Zhou\text{,}},
   title={Local and global properties of $p$-Laplace H\'enon equations},
   journal={J. Math. Anal. Appl.},
   volume={535},
   date={2024},
   number={2},
   pages={128148},
%   issn={0273-0979},
%   review={\MR{1824891}},
%   doi={10.1090/S0273-0979-01-00903-X},
}

\bib{ELM2021}{article}{
   author={D. E., Edmunds\text{,}},
   author={J., Lang\text{,}},
   author={Z., Mihula\text{,}},
   title={Measure of noncompactness of Sobolev embeddings on strip-like domains},
   journal={J. Approx. Theory},
   volume={269},
   date={2021},
%   number={2},
   pages={105608},
%   issn={0273-0979},
%   review={\MR{1824891}},
%   doi={10.1090/S0273-0979-01-00903-X},
}

\bib{Elona_D}{article}{
   author={A., Elona\text{,}},
   title={Boundedness of the Hilbert transform on weighted Lorentz spaces},
   journal={PhD thesis, Universitat de Barcelona},
%   volume={194},
   date={2012},
%   number={4},
%   pages={167--181},
%   issn={0273-0979},
%   review={\MR{1824891}},
%   doi={10.1090/S0273-0979-01-00903-X},
}

\bib{GhoMor2013}{book}{
   author={N., Ghoussoub\text{,}},
   author={A., Moradifam\text{,}},
   title={Functional inequalities: new perspectives and new applications},
   series={Mathematical Surveys and Monographs},
   volume={187},
   publisher={American Mathematical Society, Providence, RI},
   date={2013},
   pages={xxiv+299},
%   isbn={978-0-8218-9152-0},
%   review={\MR{3052352}},
%   doi={10.1090/surv/187},
}

\bib{GidSpr1981}{article}{
   author={B., Gidas\text{,}},
   author={J., Spruck\text{,}},
   title={Global and local behavior of positive solutions of nonlinear elliptic equations},
   journal={Commun. Pure Appl. Anal.},
   volume={34},
   date={1981},
   number={4},
   pages={525--598},
%   issn={0022-0396},
%   review={\MR{964617}},
%   doi={10.1016/0022-0396(88)90068-X},
}

\bib{Gig1986}{article}{
   author={M., Giga\text{,}},
   title={Solutions for semilinear parabolic equations in $L^p$ and regularity of weak solutions of the Navier-Stokes system},
   journal={J. Differential Equations},
   volume={62},
   date={1986},
   number={2},
   pages={186--212},
%   issn={0022-0396},
%   review={\MR{964617}},
%   doi={10.1016/0022-0396(88)90068-X},
}

\bib{GN2022}{article}{
   author={M., Giga\text{,}},
   author={Q. A., Ng{\^o}\text{,}},
   title={Exhaustive existence and non-existence results for Hardy-H\'enon equations in $\mathbf{R}^n$},
   journal={Partial Differential Equations and Applications},
   volume={3},
   date={2022},
   number={6},
   pages={81},
%   issn={0022-0396},
%   review={\MR{964617}},
%   doi={10.1016/0022-0396(88)90068-X},
}

\bib{Gr2008}{book}{
   author={L., Grafakos\text{,}},
   title={Classical Fourier analysis},
   series={Graduate Texts in Mathematics},
   volume={249},
   edition={2},
   publisher={Springer, New York},
   date={2008},
   pages={xvi+489},
%   isbn={978-0-387-09431-1},
%   review={\MR{2445437 (2011c:42001)}},
}

%\bib{Gr2009}{book}{
%   author={L., Grafakos\text{,}},
%   title={Modern Fourier analysis},
%   series={Graduate Texts in Mathematics},
%   volume={250},
%   edition={2},
%   publisher={Springer, New York},
%   date={2009},
%   pages={xvi+504},
%%   isbn={978-0-387-09433-5},
%%   review={\MR{2463316 (2011d:42001)}},
%%   doi={10.1007/978-0-387-09434-2},
%}

\bib{GueVer1988}{article}{
   author={M., Guedda\text{,}},
   author={L., V\'{e}ron\text{,}},
   title={Local and global properties of solutions of quasilinear elliptic
   equations},
   journal={J. Differential Equations},
   volume={76},
   date={1988},
   number={1},
   pages={159--189},
%   issn={0022-0396},
%   review={\MR{964617}},
%   doi={10.1016/0022-0396(88)90068-X},
}

\bib{HarWei1982}{article}{
   author={A., Haraux\text{,}},
   author={F. B., Weissler\text{,}},
   title={Non-uniqueness for a semilinear initial value problem},
   journal={Indiana Univ. Math. J.},
   volume={31},
   date={1982},
%   number={1},
   pages={167--189},
%   issn={0022-0396},
%   review={\MR{964617}},
%   doi={10.1016/0022-0396(88)90068-X},
}

\bib{H-1973}{article}{
   author={M., H\'enon\text{,}},
   title={Numerical experiments on the stability of spherical stellar systems},
   journal={Astron. Astrophys},
   volume={24},
   date={1973},
%   number={9},
   pages={229--238},
%   issn={0022-0396},
%   review={\MR{3765769}},
%   doi={10.1016/j.jde.2018.01.023},
}

\bib{hi}{article}{
author={M., Hirose\text{,}},
title={Existence of global solutions for a semilinear parabolic Cauchy problem},
journal={Differential Integral Equations},
volume={21},
 date={2008},
pages={623--652}
}

\bib{IKNW2021}{article}{
author={S., Ibrahim\text{,}},
author={H., Kikuchi\text{,}},
author={K., Nakanishi\text{,}},
author={J., Wei\text{,}},
title={Non-uniqueness for an energy-critical heat equation on $\mathbb R^2$},
journal={Math. Ann.},
volume={380},
 date={2021},
pages={317--348}
}

\bib{ITW2021arxiv}{article}{
   author={M., Ikeda\text{,}},
   author={K., Taniguchi\text{,}},
   author={Y., Wakasugi\text{,}},
   title={Global existence and asymptotic behavior for semilinear damped wave equations on measure spaces},
   journal={to appear in Evol. Equ. Control Theory},
%%   volume={38},
   date={2024},
%%   number={3},
%%   pages={273--291},
%%   issn={0273-0979},
%%   review={\MR{1824891}},
%%   doi={10.1090/S0273-0979-01-00903-X},
}

\bib{IRT2022}{article}{
author={N., Ioku\text{,}},
author={B., Ruf\text{,}},
author={E., Terraneo\text{,}},
title={Non-uniqueness for a critical heat equation in two dimensions with singular data},
journal={Ann. Inst. H. Poincar\'e Anal. Non Lin\'eaire},
volume={36},
   number={7},
 date={2022},
pages={2027--2051}
}

%\bib{Kat76}{book}{
%   author={T., Kato\text{,}},
%   title={Perturbation theory for linear operators},
%   edition={2},
%   note={Grundlehren der Mathematischen Wissenschaften, Band 132},
%   publisher={Springer-Verlag, Berlin-New York},
%   date={1976},
%   pages={xxi+619},
%%   review={\MR{0407617}},
%}

\bib{KS2014}{article}{
author={A. Y., Karlovich\text{,}},
author={I. M., Spitkovsky\text{,}},
title={The Cauchy singular integral operator on weighted variable Lebesgue spaces},
journal={In: Concrete Operators, Spectral Theory, Operators in Harmonic Analysis and Approximation: 22nd International Workshop in Operator Theory and its Applications, Sevilla, July 2011, Springer Basel, 2014},
%volume={36},
%   number={7},
% date={2014},
pages={275--291}
}

\bib{Kerman}{article}{
   author={R. A., Kerman\text{,}},
   title={Convolution theorems with weights},
   journal={Trans. Amer. Math. Soc.},
   volume={280},
   date={1983},
%   number={4},
   pages={207--219},
%   issn={0273-0979},
%   review={\MR{1824891}},
%   doi={10.1090/S0273-0979-01-00903-X},
}

%\bib{KS2016}{article}{
%   author={P. S. Kumari\text{,}},
%   author={M. Sarwar\text{,}},
%   title={Some fixed point theorems in generating space of $b$-quasi-metric family},
%   journal={SpringerPlus},
%   volume={5},
%   date={2016},
%   number={268},
%%   pages={207--219},
%%   issn={0273-0979},
%%   review={\MR{1824891}},
%%   doi={10.1090/S0273-0979-01-00903-X},
%}

\bib{Le2002}{book}{
   author={P. G., Lemari\'e-Rieusset\text{,}},
   title={Recent developments in the Navier-Stokes problem},
%   series={Graduate Texts in Mathematics},
%   volume={249},
   edition={1},
   publisher={CRC Press},
   date={2002},
   pages={xvi+408},
   doi={https://doi.org/10.1201/9780367801656}
%   isbn={978-0-387-09431-1},
%   review={\MR{2445437 (2011c:42001)}},
}

\bib{Lie1983}{article}{
   author={E. H., Lieb\text{,}},
   title={Sharp constants in the Hardy-Littlewood-Sobolev and related inequalities},
   journal={Ann. Math.},
   volume={118},
   date={1983},
%   number={4},
   pages={349--374},
%   issn={0273-0979},
%   review={\MR{1824891}},
%   doi={10.1090/S0273-0979-01-00903-X},
}

\bib{LN2022}{article}{
   author={E., Lorist\text{,}},
   author={Z., Nieraeth\text{,}},
   title={Sparse domination implies vector-valued sparse domination},
   journal={Math. Z.},
   volume={301},
   date={2022},
%   number={4},
   pages={1107--1141.},
%   issn={0273-0979},
%   review={\MR{1824891}},
%   doi={10.1090/S0273-0979-01-00903-X},
}

\bib{LNarxiv}{article}{
   author={E., Lorist\text{,}},
   author={Z., Nieraeth\text{,}},
   title={Banach function spaces done right},
   journal={to appear in Indag. Math.},
%   volume={301},
   date={2024},
%   number={4},
%   pages={1107--1141.},
%   issn={0273-0979},
%   review={\MR{1824891}},
%   doi={10.1090/S0273-0979-01-00903-X},
}

\bib{MatosTerrane}{article}{
AUTHOR = {J., Matos\text{,}}
AUTHOR = {E., Terraneo\text{,}},
TITLE = {Nonuniqueness for a critical nonlinear heat equation with any
              initial data},
   JOURNAL = {Nonlinear Anal.},
    VOLUME = {55},
      YEAR = {2003},
  %  NUMBER = {7-8},
     PAGES = {927--936},
  }

\bib{Mey1997}{article}{
   author={Y., Meyer\text{,}},
   title={Wavelets, paraproducts, and Navier-Stokes equations},
   conference={
      title={Current developments in mathematics, 1996 (Cambridge, MA)},
   },
   book={
      publisher={Int. Press, Boston, MA},
   },
   date={1997},
   pages={105--212},
%   review={\MR{1724946}},
}

\bib{NTY2004}{article}{
   author={E., Nakai\text{,}},
   author={N., Tomita\text{,}},
   author={K., Yabuta\text{,}},
   title={Density of the set of all infinitely differentiable functions with compact support in weighted Sobolev spaces},
   journal={Sci. Math. Jpn.},
   volume={60},
   date={2004},
   number={1},
   pages={121--127},
}

\bib{NS1985}{article}{
   author={W. -M., Ni\text{,}},
   author={P. E., Sacks\text{,}},
   title={Singular behaviour in nonlinear parabolic equations},
   journal={Trans. Amer. Math. Soc.},
   volume={287},
   date={1985},
   pages={657--671},
}

\bib{OkaTsu2016}{article}{
   author={T., Okabe\text{,}},
   author={Y., Tsutsui\text{,}},
   title={Navier-Stokes flow in the weighted Hardy space with
applications to time decay problem},
   journal={J. Differential Equations},
   volume={216},
   date={2016},
   number={3},
   pages={1712--1755},
%   issn={0022-247X},
%   review={\MR{4080035}},
%   doi={10.1016/j.jmaa.2020.123976},
}

\bib{ONe1963}{article}{
   author={R., O'Neil\text{,}},
   title={Convolution operators and $L(p,\,q)$ spaces},
   journal={Duke Math. J.},
   volume={30},
   date={1963},
   pages={129--142},
   issn={0012-7094},
%   review={\MR{146673}},
}

\bib{Pin1997}{article}{
   author={R. G., Pinsky\text{,}},
   title={Existence and nonexistence of global solutions for $u_t = \Delta u + a(x)u^p$ in $\mathbf{R}^d$},
   journal={J. Differential Equations},
   volume={133},
   date={1997},
   pages={152--177},
}

\bib{Qi1998}{article}{
   author={Y., Qi\text{,}},
   title={The critical exponents of parabolic equations and blow-up in $\mathbf{R}^n$},
   journal={Proc. Roy. Soc. Edinburgh Sect. A},
   volume={128},
   date={1998},
   pages={123--136},
}

\bib{SS2004}{book}{
   author={T., Senba\text{,}},
   author={T., Suzuki\text{,}},
   title={Applied Analysis: Mathematical Methods in Natural Science},
%   series={Graduate Texts in Mathematics},
%   volume={249},
%   edition={1},
   publisher={Imperial College Press, London},
   date={2004},
%   pages={xvi+408},
%   doi={https://doi.org/10.1201/9780367801656}
%   isbn={978-0-387-09431-1},
%   review={\MR{2445437 (2011c:42001)}},
}

\bib{Ser1964}{article}{
   author={J., Serrin\text{,}},
   title={Local behavior of solutions of quasi-linear equations},
   journal={Acta Math.},
   volume={111},
   date={1964},
%   number={},
   pages={247--302},
%   issn={0022-247X},
%   review={\MR{4080035}},
%   doi={10.1016/j.jmaa.2020.123976},
}

\bib{Ser1965}{article}{
   author={J., Serrin\text{,}},
   title={Isolated singularities of solutions of quasi-linear equation},
   journal={Acta Math.},
   volume={113},
   date={1965},
%   number={},
   pages={219--240},
%   issn={0022-247X},
%   review={\MR{4080035}},
%   doi={10.1016/j.jmaa.2020.123976},
}

%\bib{Tao_lec}{article}{
%   author={T., Tao\text{,}},
%   title={},
%   journal={Lecture notes 1 for 245C: Interpolation of $L^p$ spaces},
%%   volume={488},
%%   date={2020},
%%   number={1},
%%   pages={123976, 51},
%%   issn={0022-247X},
%%   review={\MR{4080035}},
%%   doi={10.1016/j.jmaa.2020.123976},
%}

\bib{Tak2021}{article}{
   author={J., Takahashi\text{,}},
   title={Existence of solutions with moving singularities for a semilinear heat equation with a critical exponent},
   journal={J. Math. Pures Appl.},
   volume={148},
   date={2021},
   pages={128--149},
}

\bib{Tay2020}{article}{
   author={S., Tayachi\text{,}},
   title={Uniqueness and non-uniqueness of solutions for critical
   Hardy-H\'{e}non parabolic equations},
   journal={J. Math. Anal. Appl.},
   volume={488},
   date={2020},
   number={1},
   pages={123976, 51},
%   issn={0022-247X},
%   review={\MR{4080035}},
%   doi={10.1016/j.jmaa.2020.123976},
}

\bib{TW}{article}{
   author={S., Tayachi\text{,}},
   author={F. B., Weissler\text{,}},
   title={New life-span results for the nonlinear heat equation},
   journal={J. Differential Equations},
   volume={373},
   date={2023},
%   number={},
   pages={564--625},
%   issn={},
%   review={},
%   doi={},
}

\bib{Ter2002}{article}{
   author={E., Terraneo\text{,}},
   title={Non-uniqueness for a critical non-linear heat equation},
   journal={Comm. Partial Differential Equations},
   volume={27},
   date={2002},
   number={1-2},
   pages={185--218},
%   issn={0360-5302},
%   review={\MR{1886959}},
%   doi={10.1081/PDE-120002786},
}

\bib{Tsu2011}{article}{
   author={Y., Tsutsui\text{,}},
   title={The Navier-Stokes equations and weak Herz spaces},
   journal={Adv. Differential Equations},
   volume={16},
   date={2011},
   number={11-12},
   pages={1049--1085},
%   issn={0360-5302},
%   review={\MR{1886959}},
%   doi={10.1081/PDE-120002786},
}

%\bib{Qi1998}{article}{
%   author={Y., Qi\text{,}},
%   title={The critical exponents of parabolic equations and blow-up in ${\bf R}^n$},
%   journal={Proc. Roy. Soc. Edinburgh Sect. A},
%   volume={128},
%   date={1998},
%   number={1},
%   pages={123--136},
%%   issn={0308-2105},
%%   review={\MR{1606357}},
%%   doi={10.1017/S0308210500027190},
%}
%
%\bib{WanHuoHaoGuo2011}{book}{
%   author={B., Wang\text{,}},
%   author={Z., Huo\text{,}},
%   author={C., Hao\text{,}},
%   author={Z., Guo\text{,}},
%   title={Harmonic analysis method for nonlinear evolution equations. I},
%   publisher={World Scientific Publishing Co. Pte. Ltd., Hackensack, NJ},
%   date={2011},
%   pages={xiv+283},
%%   isbn={978-981-4360-73-9},
%%   isbn={981-4360-73-2},
%%   review={\MR{2848761}},
%%   doi={10.1142/9789814360746},
%}

\bib{Wan1993}{article}{
   author={X., Wang\text{,}},
   title={On the Cauchy problem for reaction-diffusion equations},
   journal={Trans. Amer. Math. Soc.},
   volume={337},
   date={1993},
   number={2},
   pages={549--590},
%   issn={0002-9947},
%   review={\MR{1153016}},
%   doi={10.2307/2154232},
}

\bib{WWYarxiv}{article}{
   author={Y., Wang\text{,}},
   author={W., Wei\text{,}},
   author={Y., Ye\text{,}},
   title={Gagliardo-Nirenberg inequalities in Lorentz type spaces and energy equality for the Navier-Stokes system},
   journal={arXiv:2106.11212},
%   volume={38},
%   date={2001},
%   number={3},
%   pages={273--291},
%   issn={0273-0979},
%   review={\MR{1824891}},
%   doi={10.1090/S0273-0979-01-00903-X},
}
%

%\bib{Wei1979}{article}{
%   author={F. B., Weissler\text{,}},
%   title={Semilinear evolution equations in Banach spaces},
%   journal={J. Functional Analysis},
%   volume={32},
%   date={1979},
%   number={3},
%   pages={277--296},
%%   issn={0022-1236},
%%   review={\MR{538855}},
%%   doi={10.1016/0022-1236(79)90040-5},
%}

%\bib{WeiWu2022}{article}{
%   author={J., Wei\text{,}},
%   author={K., Wu\text{,}},
%   title={Local behavior of solutions to a fractional equation with isolated singularity and critical Serrin exponent},
%   journal={Discrete Contin. Dyn. Syst.},
%   volume={42},
%   date={2022},
%%   number={3},
%   pages={4031--4050},
%%   issn={0022-1236},
%%   review={\MR{538855}},
%%   doi={10.1016/0022-1236(79)90040-5},
%}

\bib{Wei1980}{article}{
   author={F. B., Weissler\text{,}},
   title={Local existence and nonexistence for semilinear parabolic equations in $L^p$},
   journal={Indiana Univ. Math. J.},
   volume={29},
   date={1980},
%   number={3},
   pages={79--102},
%   issn={0022-1236},
%   review={\MR{538855}},
%   doi={10.1016/0022-1236(79)90040-5},
}

\bib{Wei1981}{article}{
   author={F. B., Weissler\text{,}},
   title={Existence and nonexistence of global solutions for a semilinear heat equation},
   journal={Israel J. Math.},
   volume={38},
   date={1981},
%   number={3},
   pages={29--40},
%   issn={0022-1236},
%   review={\MR{538855}},
%   doi={10.1016/0022-1236(79)90040-5},
}

\bib{Yap1969}{article}{
   author={L. Y. H., Yap\text{,}},
   title={Some remarks on convolution operators and $L(p,q)$ spaces},
   journal={Duke Math. J.},
   volume={36},
   date={1969},
   number={4},
   pages={647--658},
}

%\bib{Yom2022}{article}{
%   author={G. D., Yomgne\text{,}},
%   title={On the generalized parabolic Hardy-H\'enon equation: Existence, blow-up, self-similarity and large-time asymptotic behavior},
%   journal={Differ. Integral Equ.},
%   volume={35},
%   date={2022},
%   number={1},
%   pages={57--88},
%}

 \end{biblist}
\end{bibdiv} 

\end{document}